\newcommand\tenq[2][1]{%
\def\useanchorwidth{T}%
\ifnum#1>1%
\stackunder[0pt]{\tenq[\numexpr#1-1\relax]{#2}}{\!\scriptscriptstyle\thicksim}%
\else%
\stackunder[1pt]{#2}{\!\scriptstyle\thicksim}%
\fi%
}
\def\tr{\mathop{\text{tr}}\kern.2ex}
\def\P{{\mathrm P}}
\def\Q{{\mathrm Q}}
\def\U{{\mathrm U}}
\def\E{{\mathrm E}}
\def\R{{\mathbbm R}}
\def\Z{{\mathbbm Z}}
\def\d{{\mathrm d}}
\newcommand{\sfi}{\mathsf{i}}
\newcommand{\ac}{\mathrm{ac}}
\newcommand{\sgn}{\mathrm{sgn}}
\newcommand{\dCov}{\mathrm{dCov}}
\renewcommand{\Pr}{\mathrm{P}}
\newcommand{\card}{\mathrm{card}}
\newcommand{\TV}{\mathrm{TV}}
\newcommand{\HL}{\mathrm{HL}}
\newcommand{\Arc}{\mathsf{Arc}}
\newcommand{\mbinom}{\binom}
\newcommand{\zahl}[1]{\llbracket #1\rrbracket}
\newcommand\yestag{\addtocounter{equation}{1}\tag{\theequation}}
\newcolumntype{L}[1]{>{\raggedright\let\newline\\\arraybackslash\hspace{0pt}}m{#1}}
\newcolumntype{C}[1]{>{  \centering\let\newline\\\arraybackslash\hspace{0pt}}m{#1}}
\newcolumntype{R}[1]{>{ \raggedleft\let\newline\\\arraybackslash\hspace{0pt}}m{#1}}
\newcolumntype{d}[1]{D{.}{.}{#1}}
\newcolumntype{H}{>{\setbox0=\hbox\bgroup}c<{\egroup}@{}}
\newcolumntype{Z}{>{\setbox0=\hbox\bgroup}c<{\egroup}@{\hspace*{-\tabcolsep}}}
\newcommand{\n}{^{(n)}}
\def\pms{\mspace{-1mu}{\scriptscriptstyle{\pm}}}
\numberwithin{equation}{section}
\newtheorem{theorem}{Theorem}[section]
\newtheorem{lemma}{Lemma}[section]
\newtheorem{proposition}{Proposition}[section]
\newtheorem{assumption}{Assumption}[section]
\newtheorem{corollary}{Corollary}[section]
\providecommand{\customgenericname}{}
\newcommand{\newcustomtheorem}[2]{%
  \newenvironment{#1}[1]
  {%
   \renewcommand\customgenericname{#2}%
   \renewcommand\theinnercustomgeneric{##1}%
   \innercustomgeneric
  }
  {\endinnercustomgeneric}
}
\theoremstyle{definition}
\newtheorem{definition}{Definition}[section]
\newtheorem{example}{Example}[section]
\newtheorem{remark}{Remark}[section]
\begin{document}

\setlength{\abovedisplayskip}{5pt}
\setlength{\belowdisplayskip}{5pt}
\setlength{\abovedisplayshortskip}{5pt}
\setlength{\belowdisplayshortskip}{5pt}
\hypersetup{colorlinks,breaklinks,urlcolor=blue,linkcolor=blue}

\title{\LARGE On universally consistent and fully
  distribution-free\\ rank tests of vector independence}
  
\author{
Hongjian Shi\thanks{Department of Statistics, University of Washington, Seattle, WA 98195, USA; e-mail: {\tt hongshi@uw.edu}},~~
Marc Hallin\thanks{ECARES and Department of Mathematics, Universit\'e Libre de Bruxelles, Brussels, Belgium; email: {\tt mhallin@ulb.ac.be}},~~
Mathias Drton\thanks{Department of Mathematics, Technical University
  of Munich, 85748 Garching b.\  M\"unchen, Germany; e-mail: {\tt mathias.drton@tum.de}},~~and~
Fang Han\thanks{Department of Statistics, University of Washington, Seattle, WA 98195, USA; e-mail: {\tt fanghan@uw.edu}}
}

\date{}

\maketitle

\begin{abstract}
  Rank correlations have found many innovative applications in the
  last decade.  In particular, suitable 
  rank correlations
  have been used for consistent tests of independence between pairs of
  random variables.
  Using ranks is especially appealing for
  continuous data as tests become distribution-free.
  However, the
  traditional concept of ranks relies on ordering data and is, thus,
  tied to univariate observations.  As a result, it has long remained
  unclear how one may construct distribution-free yet consistent   tests of
  independence between 
  random vectors.  This is the
  problem addressed in this paper, in which we lay out a general
  framework for designing dependence measures that give tests of
  multivariate independence that are not only consistent and
  distribution-free but which we also prove to be statistically
  efficient.  Our framework leverages the recently introduced concept
  of center-outward ranks and signs, a multivariate generalization of
  traditional ranks, and adopts a common standard form for
  dependence measures that encompasses many popular examples.
  In a unified study, we derive a general asymptotic
  representation of center-outward rank-based test statistics under independence,
  extending to the multivariate setting the classical H\'{a}jek   asymptotic
  representation results.  This representation
  permits direct calculation of limiting null distributions
  and facilitates a local power
  analysis that provides strong support for the center-outward
  approach
  by establishing, for the first time,
  the  {nontrivial power of center-outward rank-based tests over root-$n$ neighborhoods
  within the class of quadratic mean differentiable alternatives}.
\end{abstract}

{\bf Keywords:} 
{Multivariate ranks and signs}
{center-outward ranks and signs}
{multivariate dependence measure}
{independence test}
{H\'{a}jek representation}
{Le~Cam's third lemma}

\section{Introduction}\label{sec:intro}
\color{black}Quantifying the dependence between two variables and  testing for their independence are among the oldest and most fundamental problems of statistical inference. The (marginal) distributions of the two variables under study, in that context, typically play the role of nuisances, and the need for a nonparametric approach naturally leads, when they are univariate,   to distribution-free methods based on their ranks. This paper  is dealing with the multivariate extension of that approach. \color{black}

\subsection{Measuring vector dependence and testing independence}

Consider two absolutely continuous random vectors $\mX_1$ and $\mX_2$, 
with values in $\R^{d_1}$ and~$\R^{d_2}$, respectively. The problems
of measuring the dependence between $\mX_1$ and $\mX_2$ and testing
their independence  when~$d_1=d_2=1$ (call this the univariate case)
have a long history that goes back more than a century
\citep{Pearson1895,Spearman1904}. The same problem when $d_1$ and
$d_2$  are possibly unequal and larger than one (the multivariate
case)  is of equal practical interest but considerably more   challenging. Following early attempts \citep{Wilks1935},  a large  literature has emerged,  with renewed interest in recent years.

%$d_1$ and $d_2$  are possibly unequal and larger than one.  How to measure
%and test for dependence between $\mX_1$ and $\mX_2$ based on a finite number of
%observations of this pair is a fundamental statistical challenge that
%is also of considerable practical interest.  The treatment of the univariate case
%($d_1=d_2=1$) has a long history \citep{Pearson1895,Spearman1904}, but
%early attempts also exist for the multivariate case \citep{Wilks1935}.  A
%large literature has emerged since these early efforts, with renewed
%interest in recent years.
%, as we will review in Section \ref{sec:sgc}.

When the marginal distributions of $\mX_1$ and $\mX_2$ are unspecified
and $d_1=d_2=1$,
rank %-based
correlations provide a natural and appealing
nonparametric approach to testing for independence, as initiated in
the work of
% (the best known examples are due to
\cite{Spearman1904} and  \cite{Kendall1938}; cf.~Chapter III.6 in \cite{MR0229351}.  On one hand, ranks yield
distribution-free tests because, under the null hypothesis of  independence, their   distributions do not depend on the unspecified marginal distributions. On the other hand, they can be designed \citep{MR0029139,MR0125690,MR3178526,yanagimoto1970measures} 
to consistently  estimate 
dependence measures that vanish if and only if independence holds, and so detect any type of dependence---something  Spearman and Kendall's rank correlations cannot.

%yield tests that are consistent as rank correlations may be designed
%to consistently estimate 
%dependence measures that vanish if and only if independence holds
%\citep{MR0029139,MR0125690,MR3178526,yanagimoto1970measures}.  On the other hand, those tests  are distribution-free \citep{MR3737306,MR4185806}.  In other words, a rank correlation
%admits a unique null distribution that is not dependent on the data distribution (in finite samples and, thus, also
%asymptotically).  %In the following we will also speak of exact and
%asymptotic distribution-freeness to distinguish whether
%finite-sample or asymptotic distributions are considered.  The term
%distribution-dependent (exact or asymptotic) will be used to indicate
%that the null distribution of a test statistic varies with the
%distribution of $\mX_1$ or $\mX_2$.

New subtleties arise, however, when attempting to extend the   rank-based approach to the
multivariate case.
While $d_k$ ranks can be constructed separately for each coordinate of
$\mX_k$, $k=1,2$, their joint distribution  depends on the  distribution
of the underlying $\mX_k$, preventing distribution-freeness of the $(d_1+d_2)$-tuple of ranks.  As a consequence, the existing  tests of
multivariate independence based on componentwise ranks  \citep[e.g.,][]{MR298847} are not distribution-free,
which has both computational implications (e.g., through a need for
permutation analysis) and statistical implications (as we shall detail
soon).

% For example, while $d_k$ ranks can be constructed
% componentwise for each~$\mX_k$, $k=1,2$, their joint
% distribution %of these $d_k$ ranks
%  depends on the  distribution
% of the underlying $\mX_k$, preventing distribution-freeness of the $(d_1+d_2)$-tuple of ranks.  As a consequence, the existing  tests of
% multivariate independence based on componentwise ranks (see, e.g., \cite{MR298847}) are not distribution-free,
% which has both computational implications (e.g., through a need for
% permutation analysis) and statistical implications (as we shall detail
% soon).

\subsection{Desirable properties}\label{sec:desiredsec}
In this paper, we develop a general framework 
for 
%that allows one to specify
 multivariate analogues of popular rank-based measures of dependence for
 the univariate case.
 Our objective is  to achieve
 %propose a class of statistics and testing procedures achieving 
% % , and that ensures efficiency of the resulting
% %independence tests.  Our generalization is
% % guided by
   the following %list of 
 five desirable properties. % (and possible trade-offs between them).
% %\smallskip

\begin{enumerate}[itemsep=0ex,label=(\arabic*), wide, labelwidth=!, labelindent=0pt]
\item\label{pty:1} {\it Full distribution-freeness.}  Many statistical tests exploit
asymptotic distribution-freeness for computationally efficient
distributional approximations  yielding pointwise asymptotic control
of their size.  This is the case, for instance, with \cite{MR1926170,MR1963662,MR2001322,MR2462206} due to estimation of a scatter matrix, or  with  \cite{
MR1965367,MR2088309}, \cite{MR2201019}.
Pointwise asymptotics yield, for any given significance level $\alpha\in(0,1)$, 
a sequence of tests $\phi^{(n)}_\alpha$ indexed by the sample size~$n$ such
that $\lim_{n\to\infty}\E_{\rm P} [\phi^{(n)}_\alpha]=\alpha$ for every
distribution $\rm P$ from a class  $\mathcal{P}$ of null distributions. 
Generally, however, the size fails to be controlled in a uniform
sense,
%as it should be, 
that is, it does not hold that 
$\lim_{n\to\infty}\sup_{{\rm P}\in\mathcal{P}}\E_{\rm P} [\phi^{(n)}_\alpha]\leq \alpha$,
which may explain poor finite-sample properties \citep[see,
e.g.,][]{MR1784901,MR2422862,MR3207983}.  While uniform inferential
validity is impossible to achieve for some problems, e.g., when 
testing for conditional independence
\citep{MR4124333,azadkia2019simple}, we shall see that it is
achievable for testing (unconditional) multivariate independence.
Indeed, for fully distribution-free tests, as obtained from our
rank-based approach,
pointwise validity automatically implies uniform validity.
%\smallskip

\item\label{pty:2} {\it Transformation invariance.} A dependence measure $\mu$ is
said to be invariant under orthogonal transformations, shifts, and
global rescaling if
$$\mu(\mX_1,\mX_2)=\mu(\mv_1+a_1\fO_1\mX_1, \mv_2+a_2\fO_1\mX_2)$$ for
any scalars $a_k>0$, vectors $\mv_k\in\R^{d_k}$, and orthogonal
$d_k\times d_k$ matrices $\fO_k$,   $k=1,2$.  This invariance, here simply termed ``transformation invariance'', is a natural
requirement in cases where the components of $\mX_1,\mX_2$ do not have
specific meanings and observations could have been recorded in
another coordinate system.  Such invariance is
% properties are
of
considerable interest in multivariate statistics 
\citep[see, e.g.,][]{MR1467849,MR1965367,MR2201019,MR3544291}.
%\smallskip

\item\label{pty:3} {\it Consistency.}
  %Following 
  \citet{MR3842884} call a dependence measure $\mu$ %is called
  {\it
  I-consistent} within a family of distributions $\mathcal{P}$ if
independence between %two random vectors
$\bm{X}_1$ and $\bm{X}_2$ with
joint distribution in $\mathcal{P}$   implies
$\mu(\bm{X}_1,\bm{X}_2) =~\!0$.  %Conversely, if
If $\mu(\bm{X}_1,\bm{X}_2)
= 0$ implies independence of $\bm{X}_1$ and $\bm{X}_2$ (i.e., dependence of $\bm{X}_1$ and $\bm{X}_2$ implies
$\mu(\bm{X}_1,\bm{X}_2) \ne 0$), then $\mu$ is {\it D-consistent}
within $\mathcal{P}$.
  {Note that the measures considered in this paper do
   not necessarily take  maximal value 1 if and only if  one random
   vector is a measurable function of the other.} While any reasonable
 dependence measure should be I-consistent, prominent %some of the
 % best known
 examples (Pearson's correlation, Spearman's $\rho$, Kendall's $\tau$) fail to be D-consistent. If a dependence measure~$\mu$ is I- and D-consistent, then the consistency of tests based on an estimator $\mu\n$ of~$\mu$ is guaranteed by the (strong or weak) consistency of that estimator.  Dependence measures
that are both I- and D-consistent (within a large nonparametric
family) serve an important purpose as they are able to capture
nonlinear dependences.
%and yield consistent tests of independence. 
Well-known I- and D-consistent measures for the univariate
case  include Hoeffding's $D$ \citep{MR0029139},
Blum--Kiefer--Rosenblatt's~$R$ \citep{MR0125690}, and
Bergsma--Dassios--Yanagimoto's $\tau^*$
\citep{MR3178526,yanagimoto1970measures,MR4185806}.  Multivariate
extensions have been proposed, e.g., in  \citet{gretton:aistats2005},
\citet{MR2382665}, \citet{MR2956796}, \citet{MR3068450},
\citet{NIPS2016_6220}, \citet{MR3737307}, \citet{MR3842884},
\citet{MR4185814}, \citet{deb2019multivariate},
\citet{shi2019distribution}, \cite{berrett2020optimal}.
%, among many others. 

%\smallskip

\item\label{pty:4} {\it Statistical efficiency.}  Once its size is controlled, the
  performance of a test may be evaluated through %by considering
  its power
against local alternatives.  
 {For the proposed tests,  our focus is on quadratic mean
  differentiable alternatives \citep[Sec.~12.2]{MR2135927}, which form a popular class for conducting local power
analyses; for related recent examples see 
%\cite{MR2691505,MR1467849,MR1965367,MR2088309,MR2201019,MR2462206}) 
%Section 3 in
\citet[Section 3]{MR3961499} and %Section~4.4 in
\citet[Section~4.4]{cao2020correlations}.
%. %To be more specific, we consider a multivariate extension of the Konijn local alternatives, denoted as $\P_{\fA_{\Delta_n}}$ with $\Delta_n:=n^{-1/2}\Delta_0$ representing the deviation from the null. 
%that we also take up here.
%Specifically, we will %call an independence
%test rate-optimal against a family of Konijn local alternatives if, 
%within this family, it achieves the detection boundary  in the minimax sense. 
Our results then show the nontrivial local power of our tests in $n^{-1/2}$ neighborhoods
within this class.}
%
% the detection boundary (up to
%constants) in the minimax sense.
%\smallskip

\item\label{pty:5} {\it Computational efficiency.}  Statistical
  properties aside, modern applications require the evaluation of a
  dependence measure and the corresponding test to be as computationally
  efficient as possible.   We thus prioritize measures leading to  low computational complexity.
%\smallskip
% in terms of time of
%computation.
% in practice how fast an implementation of the estimation/test takes is also of vital importance. In this paper a preference will be given to tests of lower time complexities.
%a worst sub-cubic time complexity.

\end{enumerate}

The main challenge, with this list of  five properties, lies in
combining the full distribution-freeness from property~\ref{pty:1}
with  properties~\ref{pty:2}--\ref{pty:5}.
 The solution, as we shall see, involves an adequate multivariate
 extension of the univariate concepts  of ranks and signs. 

\subsection{Contribution of this paper}

This paper proposes a class of dependence measures and tests that achieve the five properties %(1)-(5)
from Section~\ref{sec:desiredsec} by leveraging
%.  The  class Those  measures, and the corresponding test statistics,
%are based on
the   recently introduced multivariate center-outward  ranks and signs \citep{MR3611491,hallin2017distribution}; see 
\citet{hallin2020distribution} for a complete account. In contrast to earlier related
concepts such as componentwise ranks \citep{MR0298844}, spatial ranks
\citep{MR2598854,MR3803462}, depth-based ranks \citep{MR1212489,MR2329471}, and
pseudo-Mahalanobis ranks and signs \citep{MR1926170}, the new concept
yields statistics that enjoy full distribution-freeness (in finite
samples and, thus, asymptotically) % (property \ref{pty:1} above)
as soon as the
underlying probability measure is Lebesgue-absolutely continuous. 
%with respect to the 
%Lebesgue measure. 
 This allows for a general
 multivariate strategy, in which the %original
 observations are replaced
by functions of  their center-outward ranks~and signs when forming dependence
measures and corresponding test statistics.  This is also the idea put
forth in \citet{shi2019distribution} and, in a slightly different
way, in \citet{deb2019multivariate}, where the focus is on distance
covariance between center-outward ranks and signs.
% From a methodological point of view,

Methodologically, we are  generalizing this
approach in two important ways.  First, we introduce a class of   
\emph{generalized symmetric covariances}  (GSCs) 
%that allows one to specify different
%(consistent) measures of dependence
along with their center-outward rank versions,  of which  the distance covariance
concepts from \citet{deb2019multivariate} and
\citet{shi2019distribution} are but particular cases.  Second,
we show how considerable additional flexibility and   power results
from incorporating score functions in the definition. 
%separating the roles of  center-outward signs and ranks   
%and applying score functions  to the center-outward ranks. 
 Our
simulations in Section \ref{sec:simulation} exemplify the benefits of
this  ``score-based'' approach.

From a theoretical point of view, we %the present paper also offers
offer a new approach to asymptotic theory for the
proposed 
rank-based statistics.  Indeed, handling this general class 
% of our new general class of %rank-based statistics
with the methods of \citet{shi2019distribution} or
\citet{deb2019multivariate} would be highly nontrivial. Moreover,
these methods would not provide any insights into local power---an
issue %(property~\ref{pty:4})
receiving much attention also in other
contexts
\citep{hallin2019center,MR4154446,hallin2020multivariate,hallin2020efficient}.
%Indeed, deriving the limiting null distribution for the test statistics from
%\citet{shi2019distribution} and \citet{deb2019multivariate} is highly
%nontrivial and their approaches did not yield any insights on power
%(Property (4)).  However, addressing possible optimality of the
%center-outward approach is an important issue as the approach is
%receiving much attention also in other contexts
%\citep{hallin2019center,MR4154446,hallin2020multivariate}.
We thus develop a completely different method, based on a
general asymptotic representation result applicable to all
center-outward rank-based GSCs under the null hypothesis of
independence and contiguous alternatives of  dependence. 
%Our specific theoretical contributions consist of two main
%ingredients.  First, we develop a general asymptotic representation
%that is applicable to all studied center-outward test statistics under
%independence.  This
Our result (Theorem~\ref{thm:smallop}) is a multivariate extension of
%extends to the multivariate setting 
H\'{a}jek's  classical asymptotic representation %result
for univariate linear rank statistics 
\citep{MR0229351} and also simplifies 
% The new general
%representation not only
% simplifies the long combinatorial calculations
%for
the  derivation of limiting null distributions. 
% in
%\citet{shi2019distribution}, but
% It also facilitates an analysis of
%new statistics with scoring of ranks.  Second, the new theory of
%asymptotic representation, 
Combined with a nontrivial use of Le~Cam's
third lemma in a context of non-Gaussian limits, our approach allows for 
%for 
the first  {local power} results in the area; the  {statistical efficiency} of the
tests of \citet{deb2019multivariate}  and  \citet{shi2019distribution} follows as a special
case.  {In Proposition \ref{prop:sGSC}, we establish the strong consistency of our rank-based tests against any fixed alternative under a regularity condition on the score function. Thanks to a recent result by \cite{deb2021efficiency}, that assumption can be relaxed: our tests, thus, enjoy  {\it universal consistency} against fixed dependence alternatives. }  

\vspace{-2mm}

\paragraph*{Outline of the paper}

The 
paper begins with a review of
% several
important dependence measures from the
literature (Section~\ref{sec:sgc}).  Generalizing the idea of symmetric rank covariances
put forth in \citet{MR3842884}, we %are able to
 show that a single
formula unifies them all;   
%the  dependence
%measures considered in the literature. 
 we term the concept  {\it generalized symmetric covariance} (GSC).
As further background, Section~\ref{sec:hallin} introduces %briefly
% reviews
the notion of
center-outward ranks and signs.  Section~\ref{sec:measure} %then
presents our streamlined approach of defining multivariate dependence
measures, along with sample counterparts, and highlights
some of their basic properties. Section \ref{sec:test} treats
% the problem of independence testing
tests of independence and develops  a theory of asymptotic
representation for center-outward rank-based GSCs
(Section~\ref{sec:representation}) as well as the local
power analysis of the corresponding tests  {against classes of quadratic mean differentiable alternatives} (Section~\ref{sec:local-power}).
 {Specific alternatives are exemplified in
  Section~\ref{sec:example-new}, and} %the potential
benefits of  {choosing standard} score functions (%sign test scores, 
%Wilcoxon, and
such as normal scores) are illustrated in the numerical study in
Section \ref{sec:simulation}. All proofs are deferred to the appendix.

\paragraph*{Notation}
 {For integer~$m\ge 1$, put~$\zahl{m}:=~\!\{1,2,\ldots,m\}$, and let 
$\kS_m$ be  the symmetric group, i.e., the group of all permutations
of $\zahl{m}$. We write $\sgn(\sigma)$ for the sign of
$\sigma\in\kS_m$.  In the sequel, the subgroup 
\begin{align}
  \label{eq:H*m}
&H_{*}^{m}:=\langle(1~4),(2~3)\rangle=\{(1),(1~4),(2~3),(1~4)(2~3)\}
                 \;\subset\; \kS_m
\end{align}
will play an important role.  Here, we have made use of the cycle
notation (omitting 1-cycles) so that, e.g.,
$(1)$ denotes the identity permutation and
\[
  (1~4)\equiv\bigg(\begin{matrix}1&2&3&4&5&6&\cdots&m\\4&2&3&1&5&6&\cdots&m\end{matrix}\bigg), \qquad
  (1~4)(2~3)\equiv\bigg(\begin{matrix}1&2&3&4&5&6&\cdots&m\\4&3&2&1&5&6&\cdots&m\end{matrix}\bigg),\]
where the right-hand sides are in classical two-line notation listing
$\sigma(i)$ below $i$,  $i\in\zahl{m}$.
}

A set with distinct elements
$x_1,\dots,x_n$ is written  either as~$\{x_1,\dots,x_n\}$ or~$\{x_i\}_{i=1}^{n}$.  The corresponding sequence is denoted 
by~$[x_1,\dots,x_n]$ or
$[x_i]_{i=1}^{n}$. %Letting $\sim$ be any binary relation over a set
                   %$\cS$, for
                   %$z_{1},\dots,z_{n},w_{1},\dots,w_{m}\in\cS$, we
                   %write $z_{1},\dots,z_{n}\sim w_{1},\dots,w_{m}$ to
                   %mean $z_{i}\sim w_{j}$ for all
                   %$i\in\zahl{n}:=\{1,2,\ldots,n\}$ and
                   %$j\in\zahl{m}$.
An arrangement of $\{x_i\}_{i=1}^{n}$ %a finite
% set~$\cS=~\!\{x_1,\dots,x_n\}$
is a sequence
$[x_{\sigma(i)}]_{i=1}^{n}$, where $\sigma\in\kS_n$.  An
$r$-arrangement %of $\cS$
is % then
a sequence
$[x_{\sigma(i)}]_{i=1}^{r}$ for $r\in\zahl{n}$. 
Write $I^{n}_{r}$ for the family of all $(n)_r:=n!/(n-r)!$ possible~$r$-arrangements of $\zahl{n}$.

 {The set of nonnegative reals is denoted $\R_{\geq
    0}$, and $\bm0_d$ stands for the} origin in $\R^{d}$. For~$\bmu,\mv\in~\!\R^d$, we write $\bmu\preceq\mv$ if~$u_{\ell}\le v_{\ell}$
for all $\ell\in\zahl{d}$, { and $\bmu\not\preceq\mv$ otherwise}. 
%, we define
Let $\Arc(\bmu,\mv):=(2\pi)^{-1}\arccos\{\bmu^{\top}\mv/\\(\lVert\bmu\rVert\lVert\mv\rVert)\}$ if $\bmu,\mv\ne\bm0_d$; 
 $\Arc(\bmu,\mv):=0$ otherwise.  {Here,  $\norm{\cdot}$ stands for the
 Euclidean norm.} For  vectors
$\mv_1,\ldots,\mv_k$, we use $(\mv_1,\ldots,\mv_k)$ as a shorthand for
$(\mv_1^\top,\ldots,\mv_k^\top)^\top$. We write $\fI_d$ for the ${d\times d}$ 
identity matrix. % in $\R^{d\times d}$.
 For a function~$f:\cX\to \R$, we
define $\norm{f}_{\infty}:=\max_{x\in\cX}|f(x)|$.  The
symbols $\lfloor \cdot\rfloor$ and $\ind(\cdot)$  stand for the floor and indicator functions.  

%Throughout, $c$ and $C$ refer to positive absolute constants whose values may differ in different parts of the paper. For any two real sequences $[a_n]_n$ and $[b_n]_n$, we write $a_n=O(b_n)$ if there exists $C>0$ such that $|a_n|\leq C|b_n|$ for all $n$ large enough, and $a_n=o(b_n)$ if for any $c>0$, $|a_n|\leq c|b_n|$ holds for all $n$ large enough.  %The symbols $\bS_d$ and $\cS_{d-1}$ stand for the open unit ball and the unit sphere in $\R^d$, respectively. 

The cumulative
distribution function and the probability distribution of a real-valued random variable/vector $\mZ$ are
denoted as $F_{\mZ}(\cdot)$ and ${\rm P}_{\mZ}$, respectively. 
%For a real-valued random variable/vector $\mZ$,  the cumulative
%distribution function and the probability distribution of $\mZ$ 
%are denoted as $F_{\mZ}(\cdot)$ and $P_{\mZ}$, respectively. 
The class of
probability measures on~$\R^d$ that are
absolutely continuous (with respect to the Lebesgue measure) is denoted as $\cP_d^{\ac}$.  We
use~$\rightsquigarrow$ and $\stackrel{\sf a.s.}{\longrightarrow}$ to
denote convergence in distribution and almost sure convergence,
respectively.  For any symmetric kernel $h(\cdot)$ on $(\R^d)^m$, any
integer $\ell\in\zahl{m}$,  and any probability measure $\Pr_{\mZ}$,
we write $h_{\ell}(\mz_1\ldots,\mz_{\ell}; \Pr_{\mZ})$ for $\E
h(\mz_1\ldots,\mz_{\ell},\mZ_{\ell+1},\ldots,\mZ_m)$ where
$\mZ_1,\ldots,\mZ_m$ are $m$ independent copies of
${\mZ}\sim\Pr_{\mZ}$, and $\E h:=\E h(\mZ_1,\ldots,\mZ_m)$.  The
product measure of two distributions $\P_1$ and~$\P_2$ is denoted
$\P_1\otimes\P_2$.
%denote the product measure of $\P_1$ and $\P_2$.

%\hf{[if the notation is used only in the proof section, relegate it to the beginning of the proof section]}

\section{Generalized symmetric covariances}
%
%\label{sec:basic}
%\subsection{A common standard form for dependence measures}
\label{sec:sgc}

Let $\mX_1$ and $\mX_2$ be two random vectors with values in
$\mathbbm{R}^{d_1}$ and $\mathbbm{R}^{d_2}$, respectively, and assume throughout this
paper that they are both absolutely continuous with respect to the
Lebesgue measure. \citet[Def.~3]{MR3842884} introduced a general
approach to defining rank-based measures of dependence
% between $\mX_1$ and $\mX_2$
via %, which is based on
signed sums of indicator functions
that are acted upon by  subgroups of the symmetric group.
% This creates
% their family of
% % dependence measures they call
% {\it symmetric rank
%   covariances}.  In this section, we highlight that their approach
% % developed in \citet{MR3842884}
In this section, we highlight that their resulting family of {\it symmetric rank
  covariances}
 can be extended to cover a much
wider range of dependence measures including, in particular, the
celebrated {\it distance covariance} \citep{MR2382665}.  This
% observation is important, as it
enables us to handle a broad family of dependence
measures in the following common standard form.
% , %. 
% %Specifically, % fixing a positive integer $m$, %let $\kS_m$ denote the symmetric group whose elements are all permutations of $\zahl{m}$, and for each $\sigma\in\kS_m$, let $\sgn(\sigma)=1$ if $\sigma$ is an even permutation, and $\sgn(\sigma)=-1$ otherwise. 
% %we introduce the following 
% the following broad family.
% % of {\it generalized symmetric  covariances} (GSCs).

\begin{definition}[Generalized symmetric covariance]\label{def:sc} 
{\rm 
  A  measure of dependence $\mu$ is said to be
  an $m$-th order 
{\it generalized symmetric covariance (GSC)} if there exist
  %a triple $(f_1,f_2,H)$ with
  two kernel functions~$f_1:(\R^{d_1})^m\to\R_{\geq 0}$ and $f_2:(\R^{d_2})^m\to\R_{\geq 0}$,
  and a subgroup $H\subseteq\kS_m$ containing an equal number of even and odd permutations such that
\[
\mu(\mX_1,\mX_2)=\mu_{f_1,f_2,H}(\mX_1,\mX_2):= \E [k_{f_1,f_2,H}((\mX_{11},\mX_{21}),\dots,(\mX_{1m},\mX_{2m}))].
\]
Here $(\mX_{11},\mX_{21}),\dots,(\mX_{1m},\mX_{2m})$ are $m$
independent copies of $(\mX_1,\mX_2)$, and
the dependence kernel function $k_{f_1,f_2,H}(\cdot)$ is defined as
\begin{align*}
k_{f_1,f_2,H}&\Big((\mx_{11},\mx_{21}),\dots,(\mx_{1m},\mx_{2m})\Big)\\
:=\;&
\Big\{\sum_{\sigma\in H}\sgn(\sigma) f_1(\mx_{1\sigma(1)},\dots,\mx_{1\sigma(m)})\Big\}
\Big\{\sum_{\sigma\in H}\sgn(\sigma) f_2(\mx_{2\sigma(1)},\dots,\mx_{2\sigma(m)})\Big\}.
\yestag\label{eq:sc} 
\end{align*}
}
\end{definition}

As the group $H$ is required to have equal numbers of even and odd
permutations, the order of a GSC satisfies $m\ge 2$. This requirement
also justifies the term ``generalized covariance'' through the
following property; compare %---in analogy to 
\citet[Prop.~2]{MR3842884}.

% The order $m$ of a GSC, by the requirement that $H$ is a subgroup with equal numbers of even and odd
% permutations,  satisfies $m\ge 2$.  The same requirement implies the
% following property---the proof of which follows along the same lines as for Proposition~2 in 
% \citet{MR3842884}---that justifies the terminology ``generalized covariance''.

\begin{proposition}
  \label{prop:GSC-Iconsistency}
  All GSCs are I-consistent.  More precisely, the GSC
  $\mu_{f_1,f_2,H}(\mX_1,\mX_2)$   is I-consistent in the family
  of  distributions such that  %with $\E[ f_1],\,\E[ f_2]<\infty$, where 
   $\E[
   f_k]:=\E[ f_k(\mX_{k1},\ldots,\mX_{km})]<\infty$, $k=1,2$,  {where
    $\mX_{k1},\ldots,\mX_{km}$ are $m$ independent copies
  of $\mX_k$}.
\end{proposition}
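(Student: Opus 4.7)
The plan is to exploit two structural features of the definition: (i) under independence, the arrays $[\mathbf{X}_{1i}]_{i=1}^m$ and $[\mathbf{X}_{2i}]_{i=1}^m$ are independent, which allows the expectation of the product kernel in \eqref{eq:sc} to factor into a product of expectations; and (ii) within each factor, the iid structure of $\mathbf{X}_{k1},\ldots,\mathbf{X}_{km}$ renders $\E[f_k(\mathbf{X}_{k\sigma(1)},\ldots,\mathbf{X}_{k\sigma(m)})]$ permutation-invariant in $\sigma$, so the signed-sum collapses via the balance assumption on $H$.

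First, I would verify integrability so that all manipulations are legitimate. Since $f_1,f_2\geq 0$, the hypothesis $\E[f_k]<\infty$ together with the iid property gives $\E[f_k(\mathbf{X}_{k\sigma(1)},\ldots,\mathbf{X}_{k\sigma(m)})]=\E[f_k]<\infty$ for every $\sigma\in\kS_m$ and $k=1,2$. Consequently each of the two bracketed signed sums in \eqref{eq:sc} has a finite expected absolute value bounded by $|H|\,\E[f_k]$, and their product is integrable by Cauchy–Schwarz (or directly, since under independence the factors live on product spaces) once one notes that the two factors depend only on $\mathbf{X}_1$- and $\mathbf{X}_2$-coordinates respectively.

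Next, under independence of $\mathbf{X}_1$ and $\mathbf{X}_2$, the random vectors $\mathbf{Y}_1:=(\mathbf{X}_{11},\ldots,\mathbf{X}_{1m})$ and $\mathbf{Y}_2:=(\mathbf{X}_{21},\ldots,\mathbf{X}_{2m})$ are independent. The first bracketed factor in \eqref{eq:sc} is a measurable function of $\mathbf{Y}_1$ only, and the second of $\mathbf{Y}_2$ only. Therefore
\[
\mu_{f_1,f_2,H}(\mathbf{X}_1,\mathbf{X}_2)=\Big\{\sum_{\sigma\in H}\sgn(\sigma)\,\E f_1(\mathbf{X}_{1\sigma(1)},\ldots,\mathbf{X}_{1\sigma(m)})\Big\}\Big\{\sum_{\sigma\in H}\sgn(\sigma)\,\E f_2(\mathbf{X}_{2\sigma(1)},\ldots,\mathbf{X}_{2\sigma(m)})\Big\}.
\]

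Finally, using exchangeability of the iid copies $\mathbf{X}_{k1},\ldots,\mathbf{X}_{km}$, each expectation in the $k$-th factor equals the common value $\E[f_k]$, independent of $\sigma$. Hence each factor equals $\E[f_k]\cdot\sum_{\sigma\in H}\sgn(\sigma)$, and by the standing hypothesis that $H$ contains equal numbers of even and odd permutations this signed sum vanishes. Thus both factors equal $0$, which yields $\mu_{f_1,f_2,H}(\mathbf{X}_1,\mathbf{X}_2)=0$ and establishes I-consistency. No real obstacle arises; the only care needed is the integrability check that legitimizes the permutation/expectation exchange, which is automatic given $f_k\geq 0$ and $\E[f_k]<\infty$.
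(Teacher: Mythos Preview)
Your proof is correct and follows exactly the approach the paper intends: the paper's own proof simply refers to Proposition~2 in \citet{MR3842884}, which proceeds by factoring the expectation under independence and then using exchangeability of the iid copies to collapse each signed sum via $\sum_{\sigma\in H}\sgn(\sigma)=0$. One small remark: your Cauchy--Schwarz aside is not quite the right tool (you only have $L^1$ control, not $L^2$), but your parenthetical is the correct justification---under independence, the product of an integrable function of $\mathbf{Y}_1$ with an integrable function of $\mathbf{Y}_2$ is integrable with factored expectation.
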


The concept of GSC unifies a surprisingly large number of well-known
dependence measures.  %Moreover, only two types of subgroups are
We consider here  {five noteworthy   examples, namely,} the distance covariance of \cite{MR2382665} and
\cite{MR3053543}, the multivariate version of Hoeffding's $D$ based on
marginal ordering  \citep[Section~2.2, p.~549]{MR3842884}, and the
projection-averaging %multivariate
extensions of Hoeffding's $D$ 
\citep{MR3737307}, of Blum--Kiefer--Rosenblatt's $R$ \citep[Proposition~D.5]{MR4185814supp}, and of Bergsma--Dassios--Yanagimoto's $\tau^*$
\citep[Theorem.~7.2]{MR4185814}. 
 {Only one type of subgroup, namely,
%$H_{\tau}^m:=\langle(1~2)\rangle=\{(1),(1~2)\}\subseteq\kS_m$ for~$m=2$ and
  $H_*^m:=\langle(1~4),(2~3)\rangle\subseteq\kS_m$ for~$m\ge 4$ is needed; recall~\eqref{eq:H*m}.}
\textcolor{black}{For simplicity, %of the examples,
 we write
  $\mw = (\mw_1,\ldots, \mw_m)\mapsto f_k (\mw)$   for the kernel
  functions of an  $m$th order multivariate GSC for which the
  dimension of $\mw _\ell$, $\ell =1,\ldots, m$, is~$d_k$, hence may
  differ for $k=1$ and~$k=2$.  Not all components of $\mw$ need to
  have an impact on $ f_k (\mw)$. For instance, the kernels  of
  distance covariance, a
  4th order GSC, map~$\mw=(\mw_1,\ldots,\mw_4)$ to
  $\R_{\geq 0}$ but depend neither on $\mw_3$ nor~$\mw_4$. }

\begin{example}[Examples of multivariate GSCs]\label{prop:sgc} \mbox{ }
\begin{enumerate}[itemsep=-.5ex,label=(\alph*),labelindent=0pt]
\item\label{example:dCov} Distance covariance is a 4th order GSC
  with $H=H_*^4$ and
  $$f_k^{\dCov}(\mw)=\frac12\lVert \mw_1-\mw_2\rVert\ \text{ on } \
  (\mathbbm{R}^{d_k})^4,\quad k=1,2.$$
   {Indeed, with $c_d:=\pi^{(1+d)/2}/\Gamma((1+d)/2)$, we have
    % It holds by definition that
\begin{align*}
&\mkern-20mu\mu_{f_1^{\dCov},f_2^{\dCov},H_*^4}(\mX_1,\mX_2)\\
&=\frac14
\E[(\lVert\mX_{11}-\mX_{12}\rVert
    -\lVert\mX_{11}-\mX_{13}\rVert-\lVert\mX_{14}-\mX_{12}\rVert
    +\lVert\mX_{14}-\mX_{13}\rVert)\\
& \mkern57mu
    \times (\lVert\mX_{21}-\mX_{22}\rVert
    -\lVert\mX_{21}-\mX_{23}\rVert-\lVert\mX_{24}-\mX_{22}\rVert
    +\lVert\mX_{24}-\mX_{23}\rVert)]\\
&=\frac{1}{c_{d_1}c_{d_2}}\int_{\bR^{d_1}\times \bR^{d_2}}
\frac{\lvert\varphi_{(\mX_1,\mX_2)}(\mt_1,\mt_2)-\varphi_{\mX_1}(\mt_1)\varphi_{\mX_2}(\mt_2)\rvert^2}
{\lVert\mt_1\rVert^{d_1+1}\lVert\mt_2\rVert^{d_2+1}}
{\rm d}\mt_1 {\rm d}\mt_2\yestag\label{eq:dCovID}.
\end{align*}
% where $c_d:=\pi^{(1+d)/2}/\Gamma((1+d)/2)$.
Identity \eqref{eq:dCovID} was established in 
\citet[Remark~3]{MR2382665},
\citet[Thm.~8]{MR2752127}, and
\citet[Sec.~3.4]{MR3178526};}
%\item The HSIC \citep{MR2255909} is a 4th order GSC with  $f_1=\frac12 k(\mw_1,\mw_2)$, $f_2=\frac12 l(\mw_1,\mw_2)$ for some reproducing kernels $k(\cdot),l(\cdot)$, and $H=H_*^4$;
\item\label{example:WDM}   Hoeffding's multivariate marginal ordering $D$  
   is a 5th order GSC with $H=H_*^5$ and
  $$f_k^{M}(\mw)=\frac12\ind(\mw_1,\mw_2\preceq\mw_5)\ \text{ on } \
  (\mathbbm{R}^{d_k})^5,\quad k=1,2 {,}$$
 {
since, by  \citet[Prop.~1]{MR3842884},
}\begin{align*}
 {\mkern-15mu\mu_{f_1^{M},f_2^{M},H_*^5}(\mX_1,\mX_2)\!=\!\int_{\bR^{d_1}\times \bR^{d_2}}
\!\!\!\!\!\{F_{(\mX_1,\mX_2)}(\bmu_1,\bmu_2)\!-\!F_{\mX_1}(\bmu_1)F_{\mX_2}(\bmu_2)\}^2\d F_{(\mX_1,\mX_2)}(\bmu_1,\bmu_2);}
\end{align*}
%where the identity \eqref{eq:WDMID} was shown by;
%where by Proposition~1(ii) in \cite{MR3842884} we have
%\begin{align*}
%&\mu_{f_1^{M},f_2^{M},H_*^5}(\mX_1,\mX_2)\\
%&=\int_{\bR^{d_1}\times \bR^{d_2}}
%\{F_{(\mX_1,\mX_2)}(\bmu_1,\bmu_2)
%-F_{\mX_1}(\bmu_1)F_{\mX_2}(\bmu_2)\}^2
%\d F_{(\mX_1,\mX_2)}(\bmu_1,\bmu_2);
%\end{align*}
\item\label{example:mD}  Hoeffding's multivariate projection-averaging $D$ 
   is a 5th order GSC with $H=H_*^5$ and
  $$f_k^{D}(\mw)=\frac12\Arc(\mw_1-\mw_5,\mw_2-\mw_5)\ \text{ on } \
  (\mathbbm{R}^{d_k})^5,\quad k=1,2.$$ 
 {Indeed, by \citet[Equation~(3)]{MR3737307}, we have
\begin{multline*}
\mu_{f_1^{D},f_2^{D},H_*^5}(\mX_1,\mX_2)
=\int_{\cS_{d_1-1}\times \cS_{d_2-1}}\int_{\bR^2}
\{F_{(\malpha_1^{\top}\mX_1,\malpha_2^{\top}\mX_2)}(u_1,u_2)\\
-F_{\malpha_1^{\top}\mX_1}(u_1)F_{\malpha_2^{\top}\mX_2}(u_2)\}^2 
\d F_{(\malpha_1^{\top}\mX_1,\malpha_2^{\top}\mX_2)}(u_1,u_2)
\d \lambda_{d_1}(\malpha_1)\d \lambda_{d_2}(\malpha_2),
\end{multline*}
with $\lambda_d$ the uniform measure on the unit sphere $\cS_{d-1}$;}
\item\label{example:mR}  Blum--Kiefer--Rosenblatt's multivariate projection-averaging $R$ is a 6th order GSC  with~$H=~\!H_*^6$ and
  \begin{align*}
    f_1^{R}(\mw)&=\frac12\Arc(\mw_1-\mw_5,\mw_2-\mw_5)\ \text{ on } \
              (\mathbbm{R}^{d_1})^6,\\
    f_2^{R}(\mw)&=\frac12\Arc(\mw_1-\mw_6,\mw_2-\mw_6)\ \text{ on } \
              (\mathbbm{R}^{d_2})^6 {;}
  \end{align*}
 {this follows from \citet[Prop.~D.5]{MR4185814supp}, who showed %that
}\begin{multline*}
 {\mu_{f_1^{R},f_2^{R},H_*^6}(\mX_1,\mX_2)
=\int_{\cS_{d_1-1}\times \cS_{d_2-1}}\int_{\bR^2}
\{F_{(\malpha_1^{\top}\mX_1,\malpha_2^{\top}\mX_2)}(u_1,u_2)}\\
 {-F_{\malpha_1^{\top}\mX_1}(u_1)F_{\malpha_2^{\top}\mX_2}(u_2)\}^2 
\d F_{\malpha_1^{\top}\mX_1}(u_1)
\d F_{\malpha_2^{\top}\mX_2}(u_2)
\d \lambda_{d_1}(\malpha_1)\d \lambda_{d_2}(\malpha_2);}
\end{multline*}
\item\label{example:mBDY}   Bergsma--Dassios--Yanagimoto's
  multivariate projection-averaging $\tau^*$  is a 4th order GSC  with $H=~\!H_*^4$ and
  $$f_k^{\tau^*}(\mw)=\Arc(\mw_1-\mw_2,\mw_2-\mw_3)+\Arc(\mw_2-\mw_1,\mw_1-\mw_4)\ \text{ on } \
              (\mathbbm{R}^{d_k})^4,\quad k=1,2 {,}$$
 {since, by \citet[Theorem~7.2]{MR4185814}, we have
\begin{multline*}
\mu_{f_1^{\tau^*},f_2^{\tau^*},H_*^4}(\mX_1,\mX_2)
=\int_{\cS_{d_1-1}\times \cS_{d_2-1}}
\E \{a_{\sign}(\malpha_1^{\top}\mX_{11},\malpha_1^{\top}\mX_{12},\malpha_1^{\top}\mX_{13},\malpha_1^{\top}\mX_{14})\\
\times a_{\sign}(\malpha_2^{\top}\mX_{21},\malpha_2^{\top}\mX_{22},\malpha_2^{\top}\mX_{23},\malpha_2^{\top}\mX_{24})\}
\d \lambda_{d_1}(\malpha_1)\d \lambda_{d_2}(\malpha_2),
\end{multline*}
with $a_{\sign}(w_1,w_2,w_3,w_4):=\sign(|w_1-w_2|-|w_1-w_3|-|w_4-w_2|+|w_4-w_3|).$}
\end{enumerate}
\end{example} 

%\begin{remark}\label{rmk:uniq:multi}
% {Notice that some kernels $f_1$ and $f_2$ presented in  {Example}~\ref{prop:sgc} are not in the same forms as the ones in those original papers, while the equivalence can be proved easily. 
%For example, \citet[Proposition~1(ii)]{MR3842884} showed that
%one can verify that
%Hoeffding's multivariate marginal ordering $D$ in  {Example}~\ref{prop:sgc}\ref{example:WDM} is a 5th order GSC with $H=H_*^5$ also for 
%  $$f_k^{\rm WDM}(\mw)=\frac12\ind(\mw_1,\mw_2\preceq\mw_5)\ind(\mw_3,\mw_4\not\preceq\mw_5)\ \text{ on } \
%  (\mathbbm{R}^{d_k})^5,\quad k=1,2,$$
%where $\ind(\mw_3,\mw_4\not\preceq\mw_5):=\ind(\mw_3\not\preceq\mw_5)\ind(\mw_4\not\preceq\mw_5)$.}
%\end{remark}

\begin{remark}\label{rmk:hsic}
 { \citet{MR3127866} recognize distance covariance as an example of an HSIC-type
  statistic
  \citep{gretton:aistats2005,MR2255909,MR2249882,MR2320675}.  The HSIC-type statistics are all 4th order
  multivariate GSCs, and
  we note that 
  our results for distance covariance readily extend to other HSIC-type statistics.}
 %  {HSIC-type statistics
 %  \citep{gretton:aistats2005,MR2255909,MR2249882,MR2320675} are also
 %  4th order multivariate GSCs. As pointed out by \citet{MR3127866},
 %  the distance covariance can be recognized as an example of HSIC-type
 %  statistic.
 %  % , with which it shares similar structures.
 %  In view of these similarities, we do not include further examples from the HSIC family. The reader should, however, notice that all results for distance covariance readily extend to HSIC-type statistics.}
\end{remark}

\begin{remark}\label{rmk:sgc:1d}
   {In the univariate case, the GSCs from
    Example~\ref{prop:sgc}\ref{example:WDM}--\ref{example:mBDY} reduce
  to the~$D$ of
\cite{MR0029139}, $R$ of \cite{MR0125690}, and $\tau^*$ of
\cite{MR3178526}, respectively. As shown by \cite{MR4185806}, the
latter is connected to the work of
\cite{yanagimoto1970measures}.  In Appendix~\ref{appendix:sectionB1},
we simplify the kernels for the univariate case, and show that the GSC
framework also covers the $\tau$ of \cite{Kendall1938}.
}
%  {The concept of GSC similarly unifies
% four classical examples of  univariate 
% dependence measures, namely, the $\tau$ of \cite{Kendall1938}, the $D$ of
% \cite{MR0029139}, the $R$ of \cite{MR0125690}, and the  $\tau^*$ of
% \cite{MR3178526} which, as shown by \cite{MR4185806}, is connected to the work of
% \cite{yanagimoto1970measures}. 
% Details are relegated to Example~\ref{prop:sgc:1d} and Remark~\ref{rmk:uniq} in the appendix Section~\ref{appendix:sectionB1}.}
\end{remark}

%It is well known that Kendall's tau is not D-consistent.  For example, tau
%is zero for $X_2=X_1^2$ and~$X_1$ symmetric about 0.  
All  {the multivariate} dependence
measures we have introduced are D-consistent, albeit with some
variations in  the  families of distributions for
which this holds; see, e.g., the discussions in Examples 2.1--2.3 of \citet{MR4185806}.  As these dependence
measures all involve the group $H_*^m$, we highlight the following fact.

\begin{lemma}\label{lem:kernel:kernel2}
  A GSC $\mu=\mu_{f_{1},f_{2},H_{*}^m}$ with $m\ge 4$ is
  D-consistent in a family $\mathcal{P}$ if and only if the pair~$(f_1,f_2)$ is D-consistent in $\mathcal{P}$---namely, if and only if
  \begin{align*}
  \E\Big[&\prod_{k=1}^{2}
        \Big\{f_{k}(\mX_{k1},\mX_{k2},\mX_{k3},\mX_{k4},\mX_{k5},\dots,\mX_{km})
              -f_{k}(\mX_{k1},\mX_{k3},\mX_{k2},\mX_{k4},\mX_{k5},\dots,\mX_{km})\\
             &-f_{k}(\mX_{k4},\mX_{k2},\mX_{k3},\mX_{k1},\mX_{k5},\dots,\mX_{km})
              +f_{k}(\mX_{k4},\mX_{k3},\mX_{k2},\mX_{k1},\mX_{k5},\dots,\mX_{km})\Big\}\Big]
%   \cdot\Big\{&f_{2}(\mX_{21},\mX_{22},\mX_{23},\mX_{24},\mX_{25},\dots,\mX_{2m})
%              -f_{2}(\mX_{21},\mX_{23},\mX_{22},\mX_{24},\mX_{25},\dots,\mX_{2m})\\
%             -&f_{2}(\mX_{24},\mX_{22},\mX_{23},\mX_{21},\mX_{25},\dots,\mX_{2m})
%              +f_{2}(\mX_{24},\mX_{23},\mX_{22},\mX_{21},\mX_{25},\dots,\mX_{2m})\Big\}
\end{align*}
is finite,  nonnegative, and equal to $0$ only if $\mX_1$ and $\mX_2$ are independent.
\end{lemma}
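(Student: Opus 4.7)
The plan is to show that the expectation displayed in the lemma is literally $\mu_{f_1,f_2,H_*^m}(\mX_1,\mX_2)$, after which the ``if and only if'' reduces to unpacking the definition of D-consistency from Section~\ref{sec:desiredsec}.

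First, I would unfold the inner sum of the kernel \eqref{eq:sc} using the four-element subgroup $H_*^m=\{(1),(1~4),(2~3),(1~4)(2~3)\}$ from \eqref{eq:H*m}. The two even elements $(1)$ and $(1~4)(2~3)$ carry a plus sign, while the two odd transpositions $(1~4)$ and $(2~3)$ carry a minus sign; indices $5,\ldots,m$ are fixed by every element of the subgroup. Reading off the two-line notation in \eqref{eq:H*m}, the corresponding four arrangements of $(1,2,3,4)$ are $(1,2,3,4)$, $(1,3,2,4)$, $(4,2,3,1)$, and $(4,3,2,1)$, which match precisely the four index patterns appearing inside the curly braces in the lemma's display.

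Second, substituting this expansion into the GSC kernel $k_{f_1,f_2,H_*^m}$ writes it as the product of two signed four-term sums, one built from $f_1$ with arguments $\mx_{11},\ldots,\mx_{1m}$ and the other from $f_2$ with arguments $\mx_{21},\ldots,\mx_{2m}$. Taking the expectation over the $m$ iid copies $(\mX_{1i},\mX_{2i})$ of $(\mX_1,\mX_2)$ and using linearity of expectation yields exactly the quadruple (sixteen-term) expression inside the expectation in the lemma. Hence that expression is identically equal to $\mu_{f_1,f_2,H_*^m}(\mX_1,\mX_2)$.

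With this identity in hand, the stated equivalence is immediate: the three conditions ``finite, nonnegative, and equal to $0$ only under independence'' are, by the identity, properties of $\mu$ itself, and the last of these is the very definition of D-consistency. The only potential obstacle is the bookkeeping---correctly pairing each of the four permutations in $H_*^m$ with its sign and argument rearrangement---but since $|H_*^m|=4$ this amounts to a finite and mechanical verification with no analytic content.
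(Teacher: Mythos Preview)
Your proposal is correct and follows essentially the same approach as the paper's own proof: both simply expand the four-element sum over $H_*^m$, match each permutation with its sign and the corresponding rearrangement of the first four arguments, and observe that the displayed expectation is literally $\mu_{f_1,f_2,H_*^m}(\mX_1,\mX_2)$, after which the equivalence is definitional. The paper's proof is in fact even terser---it writes out the expansion once and ends with ``The result follows.''
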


%The following theorem summarizes the D-consistency properties of the  GSCs
%considered in  {Example}~\ref{prop:sgc}. 

\begin{theorem}\label{thm:kernel:kernel2}
%  The univariate GSCs \ref{example:D}--\ref{example:BDY}  in    {Example}~\ref{prop:sgc:1d} and 
 {All the} multivariate GSCs in  {Example}~\ref{prop:sgc}   are D-consistent within the family $\big\{{\rm P}\in\cP_{d_1+d_2}^{\ac} \big\vert\, \E_{\rm P}[f_k(\mX_{k1},\ldots,\mX_{km})]<\infty  ,\ k=1,2
  \big\}$ (with~$f_k$,~$k=1,2$ denoting their respective kernels).
%    intersection of $\cP_{d_1+d_2}^{\ac}$ of absolutely continuous joint distributions and distributions with $\E[f_1],\E[ f_2]<\infty$.
\end{theorem}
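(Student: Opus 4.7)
The overall strategy is to combine Lemma~\ref{lem:kernel:kernel2} with the integral identities already displayed in Example~\ref{prop:sgc}. Since every GSC in that example uses $H=H_*^m$ with $m\geq 4$, Lemma~\ref{lem:kernel:kernel2} reduces D-consistency of each $\mu_{f_1,f_2,H_*^m}$ to the statement that the expectation of the symmetrized product of $f_1$ and $f_2$ vanishes only under independence. But the integral identities given alongside each example in Example~\ref{prop:sgc} are precisely closed-form expressions for that symmetrized product, so D-consistency collapses to showing that each such integral equals zero only when $\mathbf{X}_1$ and $\mathbf{X}_2$ are independent. The finiteness part is immediate from the integrability assumption on the kernels (via Cauchy--Schwarz applied to the product of symmetrized kernels), and nonnegativity is visible from the explicit integral representations.

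For the distance-covariance case~\ref{example:dCov}, the identity \eqref{eq:dCovID} writes the GSC as a weighted $L^2$-integral of $|\varphi_{(\mathbf{X}_1,\mathbf{X}_2)}(\mathbf{t}_1,\mathbf{t}_2)-\varphi_{\mathbf{X}_1}(\mathbf{t}_1)\varphi_{\mathbf{X}_2}(\mathbf{t}_2)|^2$ against a strictly positive weight on $\mathbb{R}^{d_1}\times\mathbb{R}^{d_2}$. Since characteristic functions are continuous, vanishing of this integral forces $\varphi_{(\mathbf{X}_1,\mathbf{X}_2)}=\varphi_{\mathbf{X}_1}\otimes\varphi_{\mathbf{X}_2}$ everywhere, which by the uniqueness theorem for characteristic functions is equivalent to $\mathbf{X}_1\indep\mathbf{X}_2$. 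For the marginal-ordering Hoeffding's $D$ in~\ref{example:WDM}, the identity expresses the GSC as $\int\{F_{(\mathbf{X}_1,\mathbf{X}_2)}-F_{\mathbf{X}_1}F_{\mathbf{X}_2}\}^2\,\mathrm{d}F_{(\mathbf{X}_1,\mathbf{X}_2)}$; the integrand is continuous, so if the integral is zero then $F_{(\mathbf{X}_1,\mathbf{X}_2)}=F_{\mathbf{X}_1}F_{\mathbf{X}_2}$ on the (closed) support of $(\mathbf{X}_1,\mathbf{X}_2)$, and monotonicity together with absolute continuity of the joint law propagates the equality to all of $\mathbb{R}^{d_1+d_2}$, giving independence.

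For the projection-averaging measures~\ref{example:mD},~\ref{example:mR},~\ref{example:mBDY}, each integral identity exhibits the GSC as the average, over uniformly chosen unit directions $(\boldsymbol{\alpha}_1,\boldsymbol{\alpha}_2)\in\mathcal{S}_{d_1-1}\times\mathcal{S}_{d_2-1}$, of a nonnegative one-dimensional D-consistent dependence functional of the projections $\boldsymbol{\alpha}_1^\top\mathbf{X}_1$ and $\boldsymbol{\alpha}_2^\top\mathbf{X}_2$ (namely the univariate $D$, $R$, or $\tau^*$ in the projected variables, each of which is known to be D-consistent on the absolutely continuous univariate class). Thus the GSC vanishing implies $\boldsymbol{\alpha}_1^\top\mathbf{X}_1\indep\boldsymbol{\alpha}_2^\top\mathbf{X}_2$ for $\lambda_{d_1}\otimes\lambda_{d_2}$-almost every $(\boldsymbol{\alpha}_1,\boldsymbol{\alpha}_2)$. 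Writing the joint characteristic function of $(\mathbf{X}_1,\mathbf{X}_2)$ at $(\mathbf{t}_1,\mathbf{t}_2)\ne(\mathbf{0},\mathbf{0})$ in terms of the projections along $\boldsymbol{\alpha}_k=\mathbf{t}_k/\lVert\mathbf{t}_k\rVert$ and using continuity in $(\boldsymbol{\alpha}_1,\boldsymbol{\alpha}_2)$, this almost-sure factorization extends to a pointwise factorization $\varphi_{(\mathbf{X}_1,\mathbf{X}_2)}=\varphi_{\mathbf{X}_1}\otimes\varphi_{\mathbf{X}_2}$, hence again $\mathbf{X}_1\indep\mathbf{X}_2$.

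The main obstacle is the passage from ``support of the joint distribution'' or ``almost every pair of directions'' to a pointwise identity on all of $\mathbb{R}^{d_1+d_2}$ (or on all $(\boldsymbol{\alpha}_1,\boldsymbol{\alpha}_2)$); this is where absolute continuity of $\mathrm{P}_{(\mathbf{X}_1,\mathbf{X}_2)}$ and continuity of the relevant characteristic-function/CDF expressions must be used carefully, but it is the standard Cram\'er--Wold/uniqueness step underlying all the cited one-dimensional D-consistency results of \citet{MR4185806} and \citet{MR4185814}. The proof therefore consists in applying Lemma~\ref{lem:kernel:kernel2}, quoting the integral identities of Example~\ref{prop:sgc}, and invoking these classical uniqueness arguments in each of the five cases.
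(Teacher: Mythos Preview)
Your approach is correct and structurally identical to the paper's: both reduce D-consistency of each GSC to D-consistency of the kernel pair via Lemma~\ref{lem:kernel:kernel2}, then handle the five cases separately. The difference is purely in presentation. The paper's proof is a bare list of citations---one reference per case (\citealp{MR2382665}, \citealp{MR3842884}, \citealp{MR3737307}, \citealp{MR4185814})---whereas you unpack the mechanism, using the integral identities already displayed in Example~\ref{prop:sgc} to make explicit why each functional vanishes only under independence (characteristic-function uniqueness for~\ref{example:dCov}, CDF equality on the support for~\ref{example:WDM}, and a Cram\'er--Wold reduction to the univariate $D$, $R$, $\tau^*$ for~\ref{example:mD}--\ref{example:mBDY}). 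Your version is more self-contained and pedagogical; the paper's is terser and defers all work to the literature.

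One small caution: your argument for case~\ref{example:WDM} is the sketchiest. Passing from $F_{(\mX_1,\mX_2)}=F_{\mX_1}F_{\mX_2}$ on the support to equality on all of $\R^{d_1+d_2}$ via ``monotonicity together with absolute continuity'' is not entirely routine in the multivariate setting and is precisely the content of \citet[Theorem~1]{MR3842884}, which the paper cites. You acknowledge this obstacle at the end, so there is no real gap---but if you intend your write-up to be fully self-contained, that step would need a few more lines.
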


The invariance/equivariance properties of GSCs depend on those of their kernels. We say that a kernel   function $f: (\R^d)^m\to\R$ is   {\it orthogonally invariant} if, for any orthogonal matrix~$\fO \in\R^{d\times d}$ and any~$\mw_1,\ldots,\mw_m\in~\!(\R^d)^m$,  %$\mz_1,\ldots,\mz_m\in~\!\R^d$, 
 $%\[
f(\mw_{1},\dots,\mw_{m})=f(\fO\mw_{1},\dots,\fO\mw_{m}).
%f(\mz_{1},\dots,\mz_{m})=f(\fO\mz_{1},\dots,\fO\mz_{m}).
%\yestag\label{eq:kernel:k1}
$%\]

\begin{lemma}\label{lem:kernel:kernel1}
If $f_1$ and $f_2$ both  are orthogonally invariant, then 
%it also holdsthat 
any GSC of the  form~$\mu=\mu_{f_1,f_2,H}$ is orthogonally
invariant, i.e., 
 $%\[
\mu(\mX_1,\mX_2) = \mu(\fO_1\mX_1, \fO_2\mX_2)
$ %\]
for any pair of random vectors $(\mX_1,\mX_2)$ and %any pair of
orthogonal matrices $\fO_1 \in\R^{d_1\times d_1}$  and~$\fO_2
\in\R^{d_2\times d_2}$.
\end{lemma}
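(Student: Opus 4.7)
The proof will be a direct, pointwise verification: the product structure of the dependence kernel $k_{f_1,f_2,H}$ completely decouples the two arguments, so that the orthogonal transformations $\fO_1$ and $\fO_2$ act independently on the two factors, and each factor is invariant by assumption on $f_1,f_2$.

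First I would set $\mX_1':=\fO_1\mX_1$ and $\mX_2':=\fO_2\mX_2$, and note that if $[(\mX_{1i},\mX_{2i})]_{i=1}^{m}$ are $m$ independent copies of $(\mX_1,\mX_2)$, then $[(\fO_1\mX_{1i},\fO_2\mX_{2i})]_{i=1}^{m}$ are $m$ independent copies of $(\mX_1',\mX_2')$. So, by the definition of the GSC,
\begin{align*}
\mu(\mX_1',\mX_2')
&=\E\bigl[k_{f_1,f_2,H}\bigl((\fO_1\mX_{11},\fO_2\mX_{21}),\dots,(\fO_1\mX_{1m},\fO_2\mX_{2m})\bigr)\bigr].
\end{align*}

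Next I would expand the integrand using \eqref{eq:sc}, which gives
\begin{align*}
&k_{f_1,f_2,H}\bigl((\fO_1\mX_{11},\fO_2\mX_{21}),\dots,(\fO_1\mX_{1m},\fO_2\mX_{2m})\bigr)\\
&\quad=\Bigl\{\sum_{\sigma\in H}\sgn(\sigma)\, f_1(\fO_1\mX_{1\sigma(1)},\dots,\fO_1\mX_{1\sigma(m)})\Bigr\}\Bigl\{\sum_{\sigma\in H}\sgn(\sigma)\, f_2(\fO_2\mX_{2\sigma(1)},\dots,\fO_2\mX_{2\sigma(m)})\Bigr\}.
\end{align*}
Invoking the orthogonal invariance of $f_k$ term-by-term in each of the two sums replaces the arguments $\fO_k\mX_{k\sigma(j)}$ by $\mX_{k\sigma(j)}$ without changing any summand, so the kernel evaluated at $[(\fO_1\mX_{1i},\fO_2\mX_{2i})]_{i=1}^{m}$ is identical (almost surely, and in fact everywhere) to the kernel evaluated at $[(\mX_{1i},\mX_{2i})]_{i=1}^{m}$. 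Taking expectations then yields $\mu(\fO_1\mX_1,\fO_2\mX_2)=\mu(\mX_1,\mX_2)$.

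There is no real obstacle here; the argument is purely bookkeeping. The only thing to emphasize is that the product form of $k_{f_1,f_2,H}$ is essential, since it decouples the two blocks of arguments and lets $\fO_1$ and $\fO_2$, which may have entirely different dimensions, act separately on the two factors.
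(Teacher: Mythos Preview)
Your proof is correct and follows exactly the same approach as the paper, which simply notes that the claim follows directly from Definition~\ref{def:sc} and the orthogonal invariance of $f_1$ and $f_2$. You have merely spelled out the bookkeeping in more detail than the paper does.
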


%The next proposition shows that the following invariance properties hold for the multivariate GSCs listed in  {Example}
%  \ref{prop:sgc}. 
%of our multivariate examples.

\begin{proposition}\label{prop:kernel:kernel1}
  \label{prop:kernel:ortho:invariance} The kernels \ref{example:dCov},\ref{example:mD}--\ref{example:mBDY}  in  {Example}
  \ref{prop:sgc}, hence the corresponding GSCs, are
  orthogonally invariant.
\end{proposition}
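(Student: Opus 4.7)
The plan is to verify orthogonal invariance of each listed kernel directly and then invoke Lemma \ref{lem:kernel:kernel1} to lift this to the corresponding GSC. The essential point is that every kernel in \ref{example:dCov}, \ref{example:mD}, \ref{example:mR}, \ref{example:mBDY} is built from two primitive operations---Euclidean distances and the $\Arc$ function---applied to differences of its arguments, and both primitives depend only on inner products of differences, which are preserved under orthogonal transformation.

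Concretely, for any orthogonal $\fO\in\R^{d\times d}$ and any $\bmu,\mv\in\R^d$, the identities $\fO\bmu-\fO\mv=\fO(\bmu-\mv)$, $\|\fO\bmu\|=\|\bmu\|$, and $(\fO\bmu)^\top(\fO\mv)=\bmu^\top\mv$ are immediate from $\fO^\top\fO=\fI_d$. The first of these settles \ref{example:dCov}, since $f_k^{\dCov}(\fO\mw_1,\ldots,\fO\mw_m)=\tfrac12\|\fO(\mw_1-\mw_2)\|=\tfrac12\|\mw_1-\mw_2\|=f_k^{\dCov}(\mw_1,\ldots,\mw_m)$. For \ref{example:mD}, \ref{example:mR}, \ref{example:mBDY}, I would observe that
\[
\Arc(\fO\bmu,\fO\mv)=\frac{1}{2\pi}\arccos\!\left\{\frac{(\fO\bmu)^\top(\fO\mv)}{\|\fO\bmu\|\,\|\fO\mv\|}\right\}=\Arc(\bmu,\mv)
\]
(with the convention $\Arc(\bm0_d,\cdot)=\Arc(\cdot,\bm0_d)=0$ preserved under $\fO$), and then apply this to the argument pairs $(\mw_i-\mw_j,\mw_k-\mw_\ell)$ appearing in each kernel. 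This yields orthogonal invariance of $f_1^{D},f_2^{D},f_1^{R},f_2^{R},f_1^{\tau^*},f_2^{\tau^*}$ term by term.

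Having verified that each kernel $f_k$ in \ref{example:dCov}, \ref{example:mD}, \ref{example:mR}, \ref{example:mBDY} is orthogonally invariant in the sense of the definition preceding Lemma \ref{lem:kernel:kernel1}, Lemma \ref{lem:kernel:kernel1} immediately implies that $\mu_{f_1,f_2,H_*^m}(\fO_1\mX_1,\fO_2\mX_2)=\mu_{f_1,f_2,H_*^m}(\mX_1,\mX_2)$ for any orthogonal $\fO_1,\fO_2$, establishing the claim for the GSCs. The only mild subtlety---hardly an obstacle---is the boundary case in the definition of $\Arc$ when one of its arguments is $\bm0_d$; this is handled by noting that $\fO\bm0_d=\bm0_d$, so the convention is preserved. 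Note that the marginal-ordering kernel \ref{example:WDM} is deliberately excluded, as the componentwise order $\preceq$ is not preserved under general orthogonal transformations.
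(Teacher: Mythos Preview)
Your proof is correct and follows essentially the same approach as the paper: both rely on the three elementary identities $\fO\bmu-\fO\mv=\fO(\bmu-\mv)$, $(\fO\bmu)^\top(\fO\mv)=\bmu^\top\mv$, and $\|\fO\bmu\|=\|\bmu\|$ for orthogonal $\fO$, then invoke Lemma~\ref{lem:kernel:kernel1}. Your version is in fact more detailed, explicitly spelling out the $\Arc$ computation, the boundary case $\Arc(\bm0_d,\cdot)=0$, and the reason \ref{example:WDM} is excluded.
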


Turning from theoretical dependence measures to their empirical counterparts, it is clear that 
any GSC admits a natural unbiased estimator in
the form of a U-statistic, which we call the {\it sample generalized symmetric
   covariance} (SGSC).

\begin{definition}[Sample  generalized symmetric covariance] \label{def:SGSC} 
{\rm 
The sample generalized symmetric covariance of $\mu=\mu_{f_1,f_2,H}$ is $\hat\mu\n= \hat\mu\n ([(\mx_{1i},\mx_{2i})]_{i=1}^n; f_1,f_2,H)$, of the form
\begin{align*}
\hat\mu\n= {n \choose m}^{-1}\sum_{i_1<i_2<\cdots<i_m}\overline k_{f_1,f_2,H}\Big((\mx_{1i_1},\mx_{2i_1}),\dots,(\mx_{1i_m},\mx_{2i_m})\Big),
\end{align*}
where $\overline k_{f_2,f_2,H}$ is the ``symmetrized'' version of $k_{f_2,f_2,H}$:
\begin{align*}
\overline k_{f_1,f_2,H}\Big(\big[(\mx_{1\ell},\mx_{2\ell})\big]_{\ell=1}^{m}\Big):=\frac{1}{m!}\sum_{\sigma\in\kS_{m}}
k_{f_1,f_2,H}\Big(\big[(\mx_{1\sigma(\ell)},\mx_{2\sigma(\ell)}\big]_{\ell=1}^{m}\Big).
\end{align*}
}
\end{definition}

If the kernels $f_1$ and $f_2$ are orthogonally invariant,
then it also holds that all  SGSCs of the form 
$\hat\mu\n ( \;\cdot\;; f_1,f_2,H)$ are 
orthogonally invariant, in the sense of remaining unaffected when
 the   input $[(\mx_{1i},\mx_{2i})]_{i=1}^n$ is transformed  into
$[(\fO_1\mx_{1i},\fO_2\mx_{2i})]_{i=1}^n$ where  $\fO_1 \in\R^{d_1\times d_1}$ and~$\fO_2
\in~\!\R^{d_2\times d_2}$ are arbitrary orthogonal matrices. 
Proposition~\ref{prop:kernel:ortho:invariance} thus also implies the 
orthogonal invariance of SGSCs associated with  kernels \ref{example:dCov} and \ref{example:mD}--\ref{example:mBDY}  in  {Example}~\ref{prop:sgc}.\medskip

The SGSCs associated with the examples listed in  {Example}~\ref{prop:sgc}, unfortunately, all fail to satisfy  the  crucial property of  distribution-freeness. However, as we will 
show in Section~\ref{sec:measure}, distribution-freeness, along with
 transformation invariance, can be
obtained by computing SGSCs from (functions of) the center-outward
ranks and signs of the observations.  %We introduce this concept next.

\section{Center-outward ranks and signs} \label{sec:hallin}

This section   briefly  introduces  the concepts of
center-outward ranks and signs to be used in the sequel.  The main purpose  
is to fix notation and terminology; for a 
comprehensive coverage, we refer to~\citet{hallin2020distribution}.
% of the concepts can be found in \citet{MR3611491},
%\citet{hallin2017distribution}, and also \citet{hallin2020distribution}.

We are concerned with defining multivariate ranks for a sample of
$d$-dimensional observations drawn from a distribution in
 the class $\cP_d^{\ac}$ of absolutely continuous probability measures
on $\R^d$ with~$d\ge 2$. 
Let $\bS_d$ and $\cS_{d-1}$ 
denote the open unit ball and the unit sphere in $\R^d$, respectively. 
  Denote by~$\U_d$ the spherical uniform measure on~$\bS_d$, that is, 
  the product of the uniform measures on~$[0,1)$ (for the distance to the origin)
and on $\cS_{d-1}$ (for the direction).  
 The push-forward of a measure~$\Q$ by a measurable transformation~$T$   
is denoted as $T\sharp\Q$.

\begin{definition}[Center-outward distribution
  function] \label{def:centerdistr}  
  {\rm 
  The \emph{center-outward distribution function} of a probability
  measure $\P\in\cP_d^{\ac}$ is the $\P$-a.s. unique function $\fF_{\pms}$
  that (i) maps
  $\R^d$ to the open unit ball $\bS_d$, (ii) is the gradient of a
  convex function on $\R^d$, and (iii) pushes~$\P$ forward
  to $\U_d$ (i.e., such that $\fF_{\pms}\sharp\P=\U_d$).
  % \begin{enumerate}[label=(\roman*)]
  % \item maps
  % $\R^d$ to the open unit ball $\bS_d$,  
  % \item is the gradient of a convex function on $\R^d$,
  % \item pushes $\P$ forward
  % to $\U_d$ (i.e., such that $\fF_{\pms}\sharp\P=\U_d$).
  % \end{enumerate}
%To be explicit, property (iii) requires $\U_d(B)=\P(\fF_{\pms}^{-1}(B))$ for any Borel set $B\subseteq \bS_d$.
}
\end{definition}

 {The center-outward distribution  function ${\bf F}_{\pms}$ of $\rm P$ entirely characterizes $\rm P$ provided that~$\rm P\in \cP_d^{\ac}$; cf.~\citet[Prop.~2.1(iii)]{hallin2020distribution}. 
Also, $\fF_{\pms}$ is invariant under shift, global rescaling, and orthogonal transformations.
We refer the readers to %Proposition~\ref{prop:Hallin_full} in
Appendix~\ref{appendix:sectionB2} for details about  these elementary properties of center-outward distribution functions.}

%Note that for any absolutely continuous probability measure $\P$, $\fF_{\pms}$ is invertible $\P$-almost surely \citep[Section 6.2.3 and Remark 6.2.11]{MR2401600}. 
%Next we introduce the
The sample counterpart $\fF_{\pms}\n$ of
$\fF_{\pms}$ is based on an $n$-tuple of data points $\mz_1,\dots,\mz_n\in\R^d$. %, $d\ge 2$.
% $n$ independent copies of $\mZ\sim \P$.
The key idea is to construct $n$ grid points in the unit ball $\bS_d$
such that the corresponding discrete uniform distribution converges weakly to 
$\U_d$ as $n\to\infty$.
For $d\geq 2$, the %following
   construction  proposed  in
\citet[Sec.~4.2]{hallin2017distribution}  %; more details are referred to discussions therein.  
%
%The construction
 starts by factorizing $n$ into %the following form:
\[
  n = n_R n_S+n_0,\qquad
  n_R,n_S\in\Z_{>0},\qquad 0\le n_0<\min\{n_R,n_S\}, 
\]
where in asymptotic scenarios $n_R$ and $n_S\to\infty$, hence $n_0/n\to0$, as $n\to\infty$.
Next consider   the intersection points between 
\begin{itemize}[itemsep=-.5ex,label=--]\label{dsp:grid}
\item the $n_R$ hyperspheres centered at $\bm0_d$, with radii $r/(n_R+1)$, $r\in\zahl{n_R}$, and
\item   $n_S$  {rays given by} distinct unit vectors
  $\{\ms_{s}^{(n_S)}\}_{s\in\zahl{n_S}}$ that divide the unit circle
  into arcs of equal length~$2\pi/n_S$ for $d=2$, and are
  distributed as regularly as possible on the unit sphere $\cS_{d-1}$
  for $d\ge~\!3$; asymptotic statements merely require 
 that the discrete uniform distribution
 over~$\{\ms_{s}^{(n_S)}\}_{s=1}^{n_S}$ converges weakly  to the
 uniform distribution on $\cS_{d-1}$ as $n_S\to\infty$. 
\end{itemize}
Letting $\mn:=(n_R,n_S,n_0)$, the grid
$\kG_{\mn}^{d}$ is defined as the set %consisting 
 of  $n_Rn_S$ 
% intersection
  points
$\big\{\frac{r}{n_R+1}\ms_{s}^{(n_S)}\big\}$ with~$r\in\zahl{n_R}$ and $s\in\zahl{n_S}$
as described above  {along with,} whenever $n_0>1$,  the 
 $n_0$  points~$\big\{\frac{1}{2(n_R+1)}\ms_{s}^{(n_S)}\big\}$, ${s\in \cS}$ where $\cS$ is chosen as a random sample  {of size $n_0$} without replacement from $\zahl{n_S}$. 
 For $d=1$, letting $n_S=2$, $n_R=\lfloor n/n_S\rfloor$, $n_0=n-n_Rn_S=0$ or 1, %the grid 
 $\kG^{d}_{\mn}$ reduces to
%set consisting of the origin $\bm0$ whenever $n_0=1$ and 
the points $\{\pm r/(n_R+1): r\in\zahl{n_R}\}$, along with the origin $0$ in case $n_0=1$.  

% {We remark that, any sequence of grids, either deterministic or generated independently from the data such that the corresponding empirical distribution converges weakly to the uniform ${\rm U}_d$ over the unit ball is fine. 
%In addition, we emphasize that the construction and analysis of 
%the proposed tests does not depend on this particular way of choosing a grid.}

The empirical version $\fF_{\pms}^{(n)}$ 
of $\fF_{\pms}$ is then defined as the optimal coupling between the
observed data points and the  grid
$\kG_{\mn}^{d}$.

\begin{definition}[Center-outward ranks and signs]
\label{def:empdistr}
{\rm 
Let $\mz_1,\dots,\mz_n$ be distinct data points in $\R^d$.
Let~$\cT$ be the collection of all bijective mappings between the set $\{\mz_{i}\}_{i=1}^{n}$ and the  grid~$\kG^{d}_{\mn} {=\{\bmu_{i}\}_{i=1}^{n}}$. The \emph{sample center-outward distribution function} is defined as
\begin{equation}\label{eq:assignment}
\fF_{\pms}^{(n)}:=\argmin_{T\in\cT}\sum_{i=1}^{n}\Big\lVert\mz_{i}-T(\mz_{i})\Big\rVert^2,
\end{equation}
and $(n_R+1)\lVert {\bf F}^{(n)}_{\pms}(\mz_i)\rVert$ and ${\bf
  F}^{(n)}_{\pms}(\mz_i)/\lVert {\bf F}^{(n)}_{\pms}(\mz_i)\rVert$ are
called the \emph{center-outward rank} and \emph{center-outward sign}
of $\mz_i$, respectively.
}
\end{definition}

\begin{remark}\label{remark:31}  {The particular 
way the grid $\kG_{\mn}^{d}$ is constructed here produces
center-outward ranks and signs
%~($(n_R+1)\lVert {\bf F}^{(n)}_{\pms}(\mz_i)\rVert$ and   ${\bf
%  F}^{(n)}_{\pms}(\mz_i)/\lVert {\bf F}^{(n)}_{\pms}(\mz_i)\rVert$,
%  respectively)
that enjoy all the properties --- uniform distributions and mutual
independence --- that are expected from ranks and signs (see Section
\ref{appendix:sectionB2} of the online Appendix). These properties,
however, are not required for the finite-sample validity and
asymptotic properties of the rank-based tests we are pursuing in the
subsequent sections. Any sequence of grids $\kG_{\mn}^{d}$, whether
stochastic (defined over a different probability space than the observations) or deterministic, is fine provided that the corresponding empirical distribution converges to the spherical uniform ${\rm U}_d$. In addition, for the reasons developed, e.g., in \cite{hallin2021measure}, we deliberately only consider the spherical uniform ${\rm U}_d$. In practice, the uniform distribution over the unit cube $[0,1]^d$ could be considered as well, yielding similar tests enjoying similar properties, with proofs following along similar lines.}
\end{remark}

%Solving problem \eqref{eq:assignment} amounts to a linear sum assignment problem (LSAP), and we refer to \citet{MR0254076}, \citet{MR0297347}, \citet{MR0266680}, \citet{edmonds1972theoretical}, \citet{MR1015271}, \citet{MR3205219}, \citet{MR3238983}, \citet{altschuler2018approximating} for fast exact and approximate algorithms to compute the solution.

%The next proposition collects  {properties of empirical} center-outward  distribution functions. %; we refer the readers to \citet{MR3611491}, \citet{hallin2017distribution}, and \citet{hallin2020distribution} for more properties.

%\begin{proposition}\label{prop:Hallin}
%Letting $\mZ_1,\ldots,\mZ_n$ be independent copies of $\mZ\sim\P\in~\!\cP_d^{\ac}$ with center-outward distribution function~$\fF_{\pms}$,
%\begin{enumerate}[label=(\roman*)]
%\setcounter{enumi}{3}
%\item\label{prop:Hallin3} (\citealp[Proposition~6.1(ii)]{hallin2017distribution}, \citealp[Proposition~2.5(ii)]{hallin2020distribution}) for any decomposition $n_0,n_R,{n_S}$ of $n$, the random
%vector $[\fF_{\pms}^{(n)}(\mZ_1),\dots,\fF_{\pms}^{(n)}(\mZ_n)]$ is
%uniformly distributed over all distinct arrangements of the grid
%$\kG^{d}_{\mn}$;
%\item\label{prop:Hallin:add} (\citealp[Proof of Theorem~3.1]{del2018smooth}, \citealp[Proof of Proposition 3.3]{hallin2020distribution}) as $n_R$ and $n_S\to\infty$, for every $i\in\zahl{n}$, 
%\[
%\Big\lVert\fF_{\pms}^{(n)}(\mZ_{i})-\fF_{\pms}(\mZ_{i})\Big\rVert\stackrel{\sf a.s.}{\longrightarrow}0.
%\]
%\end{enumerate}
%\end{proposition}

 {The next proposition describes the Glivenko--Cantelli property of
  empirical center-outward distribution functions, a result we shall
  heavily rely on.}
  % More properties about the empirical center-outward distribution functions, including e.g. its finite-sample distributional permutation invariance, are relegated to appendix Section~\ref{appendix:sectionB2}. 

\begin{proposition}\label{prop:Hallin4}
(\citealp[Proposition~5.1]{hallin2017distribution},
\citealp[Theorem~3.1]{del2018smooth},
%\citealp[Theorem~2.5]{MR4147635}, 
and
\citealp[Proposition~2.3]{hallin2020distribution})
 {Consider the following classes of distributions:
\begin{itemize}[itemsep=-.5ex]
\item the class $\cP_d^{+}$  of distributions $\P\in\cP_d^{\ac}$
  with {\it nonvanishing probability density}, namely,  with Lebesgue density $f$ such that, for all $D>0$ there exist constants
  $\lambda_{D;f}<\Lambda_{D;f}\in(0,\infty)$ such that
  $\lambda_{D;f}\le f(\mz)\le \Lambda_{D;f}$ for all
  $\lVert\mz\rVert\le D$;
%\item the class $\cP_d^{\mathrm{conv}}$  of distributions
%  $\P\in\cP_d^{\ac}$ with convex support 
%  $\overline{\mathrm{supp}}(\P)$ and a density  that is
%  nonvanishing over this support, namely,  with density $f$ such that, for all $D>0$ there exist constants
%  $\lambda_{D;f}< \Lambda_{D;f}\in(0,\infty)$ such that
%  $\lambda_{D;f}\le f(\mz)\le \Lambda_{D;f}$ for all $\mz\in {\mathrm{supp}}(\P)$ with~$\lVert\mz\rVert\le D$;
%
% with a probability density $q$ 
% whose support $\overline{\mathrm{spt}}(\P_q)$ is convex and 
%for all $D>0$ there exist constants $\lambda_{D;q},\Lambda_{D;q}\in(0,+\infty)$ 
%such that $\lambda_{D;q}\le q(\mz)\le \Lambda_{D;q}$ for all $\mz\in\overline{\mathrm{supp}}(\P_q)$ with $\lVert\mz\rVert\le D$;
%\item the class $\cP_d^{\pms}$  of distributions $\P\in\cP_d^{\ac}$
%  that are push-forwards of $\U_d$ of the form
%  $\P=\nabla\Upsilon\sharp\U_d$ ($\nabla\Upsilon$  the
%  gradient of a convex function)   
%%  $\Upsilon$, $\nabla\Upsilon$ is
% and   a homeomorphism from the punctured ball~$\bS_d\backslash\{\bm0_d\}$ to~$\nabla\Upsilon(\bS_d\backslash\{\bm0_d\})$ such that $\nabla\Upsilon(\{\bm0_d\})$ is  compact, convex, and has Lebesgue measure zero;
%   there exists a mapping
% $\nabla\Upsilon$ pushes forward $\U_d$ to $\P_q$, 
% where $\Upsilon$ is convex and $\nabla\Upsilon$ is a homeomorphism from
% $\bS_d\backslash\{\bm0_d\}$ to $\nabla\Upsilon(\bS_d\backslash\{\bm0_d\})$
% such that $\nabla\Upsilon(\{\bm0_d\})$ is a compact convex set of Lebesgue measure zero;
\item the class  $\cP_d^{\#}$ of all distributions $\P\in\cP_d^{\ac}$ 
such that, denoting by $\fF_{\pms}\n$ the sample distribution function computed from an $n$-tuple $\mZ_1,\ldots,\mZ_n$ of  independent copies of $\mZ\sim\P$,  
\begin{align}\label{eq:GC-new}
\max_{1\le i\le n}\Big\lVert\fF_{\pms}\n(\mZ_{i})-\fF_{\pms}(\mZ_{i})\Big\rVert\stackrel{\sf a.s.}{\longrightarrow}0\quad\text{as $n_R$ and $n_S\to\infty$. }
\end{align}
%whenever $n\to\infty$ and \eqref{eq:key} holds.
\end{itemize}
  It holds that
  $\cP_d^{+}\subsetneq\cP_d^{\#}\subsetneq\cP_d^{\ac}$. }
\end{proposition}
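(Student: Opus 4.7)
The plan is to establish the chain $\cP_d^{+}\subseteq\cP_d^{\#}\subseteq\cP_d^{\ac}$ of inclusions and then produce witnesses of strictness, piecing together the three cited results along the way.

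For the existence of $\fF_{\pms}$ underlying the very definition of $\cP_d^{\#}$, I would first invoke McCann's theorem (which is available precisely when $\P\in\cP_d^{\ac}$): it guarantees a $\P$-a.s.\ unique gradient of a convex function pushing $\P$ forward to $\U_d$, matching Definition~\ref{def:centerdistr}. This immediately yields $\cP_d^{\#}\subseteq\cP_d^{\ac}$ since (\ref{eq:GC-new}) presupposes that $\fF_{\pms}$ exists. On the sample side, the assignment problem in (\ref{eq:assignment}) is a finite optimal transport between two discrete uniform measures; its solution is cyclically monotone and therefore (by Rockafellar) extends to the subdifferential of a convex function, providing the empirical counterpart $\fF_{\pms}\n$.

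For $\cP_d^{+}\subseteq\cP_d^{\#}$, I would use stability of optimal transport. The plan has three steps: (i) by construction, the discrete uniform measure on $\kG_{\mn}^d$ converges weakly to $\U_d$ as $n_R,n_S\to\infty$; (ii) by Glivenko--Cantelli (standard), the empirical measure $\frac1n\sum\delta_{\mZ_i}$ converges weakly to $\P$ almost surely; (iii) by Brenier/McCann stability together with Caffarelli regularity, which applies under the nonvanishing density condition defining $\cP_d^{+}$, $\fF_{\pms}$ is continuous and the optimal couplings between these empirical laws converge uniformly on compact sets to $\fF_{\pms}$. Then a separate tail argument controls those sample points $\mZ_i$ falling outside compacta: their images lie near the boundary of $\bS_d$, where $\fF_{\pms}(\mZ_i)$ also lies, and a comparison of radii (e.g., via the cyclically monotone structure and the ordering of grid radii) bounds the residual max. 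This delivers the uniform convergence (\ref{eq:GC-new}).

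For the strictness of both inclusions, I would exhibit concrete examples following \citet{hallin2020distribution}. For $\cP_d^{+}\subsetneq\cP_d^{\#}$, I would take a distribution with density vanishing on a Lebesgue-null set (for instance, supported on a ball with density vanishing near the boundary, or a mixture creating a gap): Caffarelli's theory breaks but one can still verify (\ref{eq:GC-new}) via a localization argument away from the vanishing set. For $\cP_d^{\#}\subsetneq\cP_d^{\ac}$, I would exhibit an absolutely continuous $\P$ whose center-outward distribution function has essential discontinuities (e.g., when the support is disconnected so that Brenier's potential is not continuously differentiable); the maximum in (\ref{eq:GC-new}) cannot vanish across the discontinuity, contradicting the almost sure convergence. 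The main obstacle is the uniform-in-$i$ control in the inclusion $\cP_d^{+}\subseteq\cP_d^{\#}$: ordinary weak stability of optimal transport gives convergence in measure or $L^1$, but promoting this to a $\max_i$ statement at the observations requires both the regularity of $\fF_{\pms}$ from $\cP_d^{+}$ and a careful handling of sample points near the tail, which is precisely where \citet{del2018smooth} contribute the decisive technical step.
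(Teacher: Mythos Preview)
The paper does not supply its own proof of this proposition: both the statement in the main text and the fuller version in the appendix (Proposition~\ref{prop:Hallin4_full}, with the intermediate classes $\cP_d^{\mathrm{conv}}$ and $\cP_d^{\pms}$) are attributed entirely to \citet{hallin2017distribution}, \citet{del2018smooth}, and \citet{hallin2020distribution}. Your sketch of the inclusion $\cP_d^{+}\subseteq\cP_d^{\#}$ via weak stability of optimal transport plus Caffarelli regularity, followed by a tail argument, is indeed the strategy carried out in those references, and your identification of the $\max_i$ control as the delicate point is accurate. For the first strictness $\cP_d^{+}\subsetneq\cP_d^{\#}$ the simplest witness is any compactly supported distribution whose density is bounded away from zero on its (convex) support---such a $\P$ lies in $\cP_d^{\mathrm{conv}}\subseteq\cP_d^{\#}$ but fails the $\cP_d^{+}$ condition for any $D$ exceeding the support radius; this is cleaner than your ``vanishing on a null set'' suggestion.

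Your argument for $\cP_d^{\#}\subsetneq\cP_d^{\ac}$, however, has a genuine gap. You propose a $\P$ with disconnected support so that the Brenier potential is not $C^1$, and claim ``the maximum in~\eqref{eq:GC-new} cannot vanish across the discontinuity.'' But the maximum in~\eqref{eq:GC-new} is taken only at the sample points $\mZ_i$, all of which lie in the support. If the support consists of two well-separated pieces, $\fF_{\pms}$ restricted to each piece can be perfectly continuous; the discontinuity lives in the gap, where no sample point falls, so nothing prevents the max from tending to zero. To produce an honest counterexample one needs a $\P$ whose support is connected yet non-convex in such a way that $\fF_{\pms}$ is discontinuous \emph{on the support itself} (as in Caffarelli's dumbbell-type examples), and then one must argue that sample points near the singular set are, with positive probability, assigned by the empirical matching to the ``wrong side'' of the jump, producing a non-vanishing error. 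This last step is not automatic and is precisely what the cited works handle; your sketch does not supply it.
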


\section{Rank-based dependence measures}\label{sec:measure}
%\section{Center-outward dependence measures}\label{sec:measure}

We are now ready to present our proposed family of %center-outward
dependence measures based on the notions of GSCs and center-outward
ranks and signs.
% Let us first fix the setting.
Throughout, %we denote by % the rest of this paper, our presentation considers
$(\mX_1,\mX_2)$ is   a pair of random vectors with~$\P_{\mX_1}\in\cP_{d_1}^{\ac}$ and~$\P_{\mX_2}\in~\!\cP_{d_2}^{\ac}$, and  
%we let
$%\[
(\mX_{11},\mX_{21}), (\mX_{12},\mX_{22}), \ldots, (\mX_{1n},\mX_{2n})
$ %\]
is an $n$-tuple of independent copies of~$(\mX_1,\mX_2)$. Let $\fF_{k,\pms}$ denote the center-outward distribution function of $\mX_k$, and write $\fF_{k,\pms}^{(n)}(\cdot)$ for the sample center-outward distribution function corresponding to  $\{\mX_{ki}\}_{i=1}^n$,   $k=1,2\vspace{1mm}$. 

Our ideas build on \citet{shi2019distribution} and, in slightly
different form, also on \citet{deb2019multivariate},  where the authors introduce 
% the following
a multivariate dependence measure  {by applying distance covariance to
$\fF_{1,\pms}(\mX_1)$ and $\fF_{2,\pms}(\mX_2)$, with a sample
counterpart  involving~$\fF_{1,\pms}^{(n)}(\mX_{1i})$ and
$\fF_{2,\pms}^{(n)}(\mX_{2i})$, $i\in\zahl{n}$.}
%   multivariate-rank-based dependence measures of the form 
% \begin{align*}
% \mu_{\dCov}(\mX_1,\mX_2):=&
%                             \mu_{f_1^{\dCov},f_2^{\dCov},H^4_*}(\fF_{1,\pms}(\mX_1),\fF_{2,\pms}(\mX_2))
% \end{align*}
% with sample counterparts
% \begin{align*}
%   % \text{\rm and their sample counterparts}~~~
%   \tenq\mu\n_{\dCov}%_{ {\rm MR}}
%  =\,& \hat\mu\n \Big(\big[\big(\fF_{1,\pms}^{(n)}(\mX_{1i}),\fF_{2,\pms}^{(n)}(\mX_{2i})\big)\big]_{i=1}^n; f_1^{\dCov},f_2^{\dCov},H_*^4\Big),
% \end{align*}
% for which
% $f_k^{\dCov}(\mw_{k1},\ldots,\mw_{k4}):=\frac{1}{2}\|\mw_{k1}-\mw_{k2}\|$,
% $\mw_{k1},\ldots,\mw_{k4}\in\R^{d_k}$, $k=1,2$ corresponds to the
% distance covariance kernel; see  {Example}~\ref{prop:sgc}\ref{example:dCov}.
Our generalization of this particular
dependence measure involves  {\it score functions} and   requires further 
notation.  The  score functions  
are continuous
%square integrable 
functions  {$J_1,J_2:[0,1)\to\R_{\geq 0}$}. Classical examples  include the {\it normal} or {\it van der Waerden score function} $J_{\text{\tiny{\rm vdW}}}(u):=\big(F_{\chi^2_d}^{-1}(u)\big)^{1/2}$ (with $F_{\chi^2_d}$ the $\chi^2_d$ distribution function), 
 the {\it Wilcoxon score function} $J_{\text{\tiny{\rm W}}}(u):=u$, and the {\it sign test score function} $J_{\text{\tiny{\rm sign}}}(u):=1$. 
%\citep[Definition~2.2]{MR2088309}. 
For~$k=1,2$, let  $\fJ_k(\bmu):=J_k(\lVert\bmu\rVert)\bmu/\lVert \bmu\rVert$ if $\bmu\in\bS_{d_k}\backslash\{\bm0_{d_k}\}$ and $\bm0_{d_k}$ if $\bmu=\bm0_{d_k}$. 
%\[
%\fJ_k(\bmu):=
%\begin{cases}
%J_k(\lVert\bmu\rVert)\frac{\bmu}{\lVert \bmu\rVert},
%           ~~~&\text{if}~\bmu\in\bS_{d_k}\backslash\{\bm0_{d_k}\},\\
%\bm0_{d_k},~~~&\text{if}~\bmu=\bm0_{d_k}
%\end{cases}
%\]
Define the population and sample \emph{scored center-outward distribution functions}  as 
%\[
$\fG_{k,\pms}(\cdot):=\fJ_k(\fF_{k,\pms}(\cdot))$ and $\fG^{(n)}_{k,\pms}(\cdot):=\fJ_k(\fF^{(n)}_{k,\pms}(\cdot))$, %~~~\text{ for all }\mz\in\R^{d_k}.
%\]
respectively. 

\begin{definition}[Rank-based dependence measures]
  \label{def:rGSC}
     {Let  $J_1,J_2$ be two score
      functions.  The {\it (scored) rank-based version}
      of a dependence measure $\mu$ is obtained by applying $\mu$ to
      the pair $(\fG_{1,\pms}(\mX_1),\fG_{2,\pms}(\mX_2))$.  For a GSC
      $\mu=\mu_{f_1,f_2,H}$, the rank-based version is denoted
\begin{equation}\label{GSCpm}
\mu_{\pms}(\mX_1,\mX_2)=\mu_{\pms;J_1,J_2,f_1,f_2,H}(\mX_1,\mX_2):=\mu_{f_1,f_2,H}(\fG_{1,\pms}(\mX_1),\fG_{2,\pms}(\mX_2))
\end{equation}
and termed a {\it rank-based GSC} for short.  The associated {\it
  rank-based SGSC} is
\begin{equation}\label{Wpm}
\tenq{W}\n_{\mu}=\tenq{W}\n_{J_1,J_2,\mu_{f_1,f_2,H}}:= \hat\mu\n \Big(\big[\big(\fG^{(n)}_{1,\pms}(\mX_{1i}),\fG^{(n)}_{2,\pms}(\mX_{2i})\big)\big]_{i=1}^n; f_1,f_2,H\Big).
\end{equation}}
% For any   GSC $\mu=\mu_{f_1,f_2,H}$ and score functions~$J_1$ and $J_2$,  define the population and sample {\it center-outward dependence measures} as
% \begin{equation}\label{GSCpm}
% \mu_{\pms}(\mX_1,\mX_2)=\mu_{\pms;J_1,J_2,f_1,f_2,H}(\mX_1,\mX_2):=\mu_{f_1,f_2,H}(\fG_{1,\pms}(\mX_1),\fG_{2,\pms}(\mX_2))
% \end{equation}
% and
% \begin{equation}\label{Wpm}
% \tenq{W}\n_{\mu}=\tenq{W}\n_{J_1,J_2,\mu_{f_1,f_2,H}}:= \hat\mu\n \Big(\big[\big(\fG^{(n)}_{1,\pms}(\mX_{1i}),\fG^{(n)}_{2,\pms}(\mX_{2i})\big)\big]_{i=1}^n; f_1,f_2,H\Big),
% \end{equation}
% respectively. 
\end{definition}

%\vspace{-2cm}

\begin{remark}
   {
  There is no immediate reason why a rank-based GSC should
  itself by a GSC in the sense of Definition~\ref{def:sc}. In this
  context, an observation of
  \citet{bergsma2006new,bergsma2011nonparametric} is of interest.  For
  distance covariance in the univariate case} %with $d_1=d_2=1$
  (equivalent to
   $4\kappa$ in his notation),  %and in this univariate case
   % as an extension of the traditional    Pearson covariance.
   Lemma 10 in \cite{bergsma2006new} implies that
   %when % $d_1=d_2=1$
   % and
   %$J_1(u)=J_2(u)=u$, %we have
\[\frac1{16}\mu_{f_1^{\rm dCov},f_2^{\rm dCov},H_*^4}(\fG_{X_1,\pms}(X_1),\fG_{X_2,\pms}(X_2))
  =\int (F_{(X_1,X_2)}-F_{X_1}F_{X_2})^2 \d F_{X_1}\d F_{X_2}.\]
In other words, for $d_1=d_2=1$ and~$J_1(u)=J_2(u)=u$, the rank-based
distance covariance coincides with $R$ of \citet{MR0125690} up to a
scalar multiple.  Recall that $R$ is 
%where the right-hand side is precisely the $R$ of \citet{MR0125690},
%which is
a GSC, but of higher order than distance covariance; see  {Example~\ref{prop:sgc:1d}\ref{example:R} in
  Appendix~\ref{appendix:sectionB1}}.
\end{remark}

Plugging the   center-outward ranks and signs into the multivariate
dependence measures from Section \ref{sec:sgc} in combination
with various score functions, one immediately obtains  a large variety
of rank-based GSCs and SGSCs, as we exemplify below.   {In particular, the choice}~$f_1=f_1^{\rm dCov}$,
$f_2=f_2^{\rm dCov}$, $J_1(u)=J_2(u)=u$, and $H=H_*^4$   {recovers the
multi\-variate}~rank-based distance covariance  {from
\cite{shi2019distribution}}.

%  {We highlight here %Below is a  list of
%   center-outward }
% versions of the
% widely used dependence measures from  {Example}~\ref{prop:sgc}; each case can be combined with arbitrary scores $J_1$ and $J_2$.

\begin{example}\label{ex:rbsgc} Some rank-based  SGSCs.
\mbox{}
\begin{enumerate}[itemsep=-.5ex,label=(\alph*)]
\item\label{ex:rbsgc-dCov} Rank-based distance covariance 
\[
\tenq{W}\n_{{\dCov}} :=\mbinom{n}{4}^{-1}
\sum_{i_1<\dots<i_4}h_{\dCov}\Big(\big(\fG_{1,\pms}^{(n)}(\mX_{1i_1}),\fG_{2,\pms}^{(n)}(\mX_{2i_1})\big),\dots,\big(\fG_{1,\pms}^{(n)}(\mX_{1i_4}),\fG_{2,\pms}^{(n)}(\mX_{2i_4})\big)\Big)
\]
with $h_{\dCov}:=\overline k_{f_1^{\dCov},f_2^{\dCov},H_{*}^4}$ as given in  {Example}~\ref{prop:sgc}\ref{example:dCov}.  {We have by definition that}
\begin{align*}
 {\tenq{W}\n_{{\dCov}}\! =
\binom{n}{4}^{-1}\!\!\!\!\sum_{i_1\ne \dots\ne i_4}\!\!\frac1{4\cdot 4!}
%\\
%\times
\Big[\Big\{
%    &
   }& {\big\lVert\fG_{1,\pms}^{(n)}(\mX_{1i_1})-\fG_{1,\pms}^{(n)}(\mX_{1i_2})\big\rVert
    -\big\lVert\fG_{1,\pms}^{(n)}(\mX_{1i_1})-\fG_{1,\pms}^{(n)}(\mX_{1i_3})\big\rVert}\\
 {-}& {\big\lVert\fG_{1,\pms}^{(n)}(\mX_{1i_4})-\fG_{1,\pms}^{(n)}(\mX_{1i_2})\big\rVert
    +\big\lVert\fG_{1,\pms}^{(n)}(\mX_{1i_4})-\fG_{1,\pms}^{(n)}(\mX_{1i_3})\big\rVert
    \Big\}}\\
 {\times\Big\{}
    & {\big\lVert\fG_{2,\pms}^{(n)}(\mX_{2i_1})-\fG_{2,\pms}^{(n)}(\mX_{2i_2})\big\rVert
    -\big\lVert\fG_{2,\pms}^{(n)}(\mX_{2i_1})-\fG_{2,\pms}^{(n)}(\mX_{2i_3})\big\rVert}\\
 {-}& {\big\lVert\fG_{2,\pms}^{(n)}(\mX_{2i_4})-\fG_{2,\pms}^{(n)}(\mX_{2i_2})\big\rVert
    +\big\lVert\fG_{2,\pms}^{(n)}(\mX_{2i_4})-\fG_{2,\pms}^{(n)}(\mX_{2i_3})\big\rVert
    \Big\}\Big];}
\end{align*}
\item  {Similarly, Hoeffding's rank-based multivariate marginal ordering   $D$ 
\big(giving $\tenq{W}\n_{M}$\big),
Hoeffding's rank-based multivariate   projection-averaging    $D$ 
\big($\tenq{W}\n_{D}$\big),
Blum--Kiefer--Rosenblatt's rank-based multivariate projection-averaging     $R$ 
\big($\tenq{W}\n_{R}$\big), and
Bergsma--Dassios--Yanagimoto's rank-based multivariate projection-averaging $\tau^*$ 
\big($\tenq{W}\n_{\tau^*}$\big) 
can be defined with kernels
$h_{M}:=\overline k_{f_1^{M},f_2^{M},H_{*}^5}$, 
$h_{D}:=\overline k_{f_1^{D},f_2^{D},H_{*}^5}$, 
$h_{R}:=\overline k_{f_1^{R},f_2^{R},H_{*}^6}$, and 
$h_{\tau^*}:=\overline k_{f_1^{\tau^*},f_2^{\tau^*},H_{*}^4}$ 
as given in  {Example}~\ref{prop:sgc}, respectively.}
\end{enumerate}
\end{example}

Having proposed a general class of dependence measures, we now
examine, for each rank-based GSC, the five desirable properties listed 
in Section~\ref{sec:desiredsec}. To this end, we
first introduce two regularity conditions on the score functions. 
% in the
%definition of a rank-based GSC.

\begin{definition}\label{asp:score}
{\rm 
A score function $J:[0,1)\to \R_{\ge 0}$ is called \emph{weakly regular} if it is continuous over~$[0,1)$ and nondegenerate: $\int_{0}^{1}J^2(u)\d u>0$. 
%A score function $J$ is said to be  if it further satisfies $\int_{0}^{1}J^2(u)\d u<\infty.$
If, moreover, $J$ is  Lipschitz-continuous, strictly monotone, and satisfies $J(0)=0$, it is called  \emph{strongly regular}. % score function.
}
\end{definition}

%The  regularity properties of  the standard normal/van der Waerden, Wilcoxon, and sign test score  functions are as follows.  
\begin{proposition}\label{prop:score}
%For the standard  examples of score functions $J_k$, 
The {normal} and {sign test} score functions
% (a.k.a.\ van der Waerden-score) function},  defined as $J_k(u):=\{\Phi_{d_k}^{-1}(u)\}^{1/2}$ with $\Phi_{d_k}$ representing the cumulative distribution function of chi-square distribution with $d_k$ degrees of freedom,
  are weakly but not strongly regular; the {Wilcoxon} score function is strongly regular. 
\end{proposition}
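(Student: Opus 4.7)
The plan is to verify each clause of Definition~\ref{asp:score} for the three score functions in turn. Two halves must be handled: the positive claims (three verifications of weak regularity; one verification of strong regularity) are essentially arithmetic, while the negative claims (van der Waerden and sign test fail strong regularity) require identifying which defining property breaks.

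For the Wilcoxon score $J_{\text{\tiny{\rm W}}}(u)=u$, continuity on $[0,1)$ is immediate, and $\int_{0}^{1}u^{2}\d u=1/3>0$, giving weak regularity. Strong regularity follows since the identity map is $1$-Lipschitz, strictly increasing, and satisfies $J_{\text{\tiny{\rm W}}}(0)=0$. For the sign-test score $J_{\text{\tiny{\rm sign}}}\equiv 1$, continuity is trivial and $\int_{0}^{1}1\,\d u=1>0$, so weak regularity holds. Strong regularity fails on two counts: $J_{\text{\tiny{\rm sign}}}(0)=1\neq 0$ and a constant function is not strictly monotone.

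For the van der Waerden score $J_{\text{\tiny{\rm vdW}}}(u)=\bigl(F_{\chi^{2}_{d}}^{-1}(u)\bigr)^{1/2}$, the plan is as follows. Continuity on $[0,1)$ follows from the continuity and strict monotonicity of $F_{\chi^{2}_{d}}$, and the identity
\[
\int_{0}^{1}J_{\text{\tiny{\rm vdW}}}^{2}(u)\,\d u=\int_{0}^{1}F_{\chi^{2}_{d}}^{-1}(u)\,\d u=\E[\chi^{2}_{d}]=d>0
\]
supplies nondegeneracy, so $J_{\text{\tiny{\rm vdW}}}$ is weakly regular. Note that $J_{\text{\tiny{\rm vdW}}}(0)=0$ and $J_{\text{\tiny{\rm vdW}}}$ is strictly monotone (composition of strictly increasing maps), so the only candidate for failure of strong regularity is the Lipschitz property.

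The main obstacle is therefore establishing that $J_{\text{\tiny{\rm vdW}}}$ is \emph{not} Lipschitz on $[0,1)$. I would argue via the derivative
\[
J_{\text{\tiny{\rm vdW}}}'(u)=\frac{1}{2\sqrt{F_{\chi^{2}_{d}}^{-1}(u)}\,f_{\chi^{2}_{d}}\!\bigl(F_{\chi^{2}_{d}}^{-1}(u)\bigr)},
\]
and observe that as $u\to 1^{-}$, $F_{\chi^{2}_{d}}^{-1}(u)\to\infty$ while the $\chi^{2}_{d}$ density $f_{\chi^{2}_{d}}(x)\propto x^{d/2-1}e^{-x/2}$ decays exponentially. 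Thus the denominator tends to $0$, the derivative is unbounded on $[0,1)$, and $J_{\text{\tiny{\rm vdW}}}$ cannot be Lipschitz. This rules out strong regularity and completes the proposition. The only delicate point is the asymptotic analysis of $F_{\chi^{2}_{d}}^{-1}$ near $1$, which is handled by the exponential tail bound for $f_{\chi^{2}_{d}}$.
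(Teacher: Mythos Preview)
Your proposal is correct and follows the same overall structure as the paper: handle the Wilcoxon and sign-test cases by direct inspection, and for the van der Waerden score verify weak regularity via the identity $\int_0^1 F_{\chi^2_d}^{-1}(u)\,\d u=\E[\chi^2_d]=d$, then rule out strong regularity by showing the Lipschitz condition fails.

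The one genuine difference is in how you dispatch the Lipschitz failure. You compute the derivative $J_{\text{\tiny vdW}}'(u)$ and argue it blows up as $u\to1^-$ via the exponential tail of the $\chi^2_d$ density. The paper instead makes the more direct observation that $J_{\text{\tiny vdW}}$ is \emph{unbounded} on $[0,1)$ (since $F_{\chi^2_d}^{-1}(u)\to\infty$ as $u\to1^-$), and any Lipschitz function on a bounded interval is automatically bounded ($|J(u)|\le|J(0)|+L$). Both arguments are valid; the paper's avoids differentiating and any tail analysis, while yours gives slightly more information (the derivative itself diverges) at the cost of a short asymptotic computation.
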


%The following proposition then summarizes the main properties of   rank-based~GSCs and their rank-based sample counterparts. 

\begin{proposition}\label{prop:sGSC}
% Let $%\[
% (\mX_{11},\mX_{21}),  \ldots, (\mX_{1n},\mX_{2n})
% $ %\]
%  be $n$ independent copies of the pair
%  $(\mX_1,\mX_2)\sim\P_{(\mX_1,\mX_2)}$ with marginals
%  $\P_{\mX_1}\in\cP_{d_1}^{\ac}$ and $\P_{\mX_2}\in\cP_{d_2}^{\ac}$.
  {Suppose the considered pair $(\mX_1,\mX_2)$ has marginal distributions~$\P_{\mX_1}\in\cP_{d_1}^{\ac}$ and $\P_{\mX_2}\in\cP_{d_2}^{\ac}$.
 Con\-sider any rank-based
 GSC~$\mu_{\pms}:=\mu_{\pms;J_1,J_2,f_1,f_2,H}$ and its rank-based
 SGSC~$\tenq{W}\n_\mu :=\tenq{W}\n_{J_1,J_2,\mu_{f_1,f_2,H}}$ as
 defined in \eqref{GSCpm} and~\eqref{Wpm}.  Further,
 let~$\mu_{*\pms}:=\mu_{\pms;J_1,J_2,f_1,f_2,H_*^{m}}$ be an instance
 using the group from~\eqref{eq:H*m}.} Then, 
%, the following are true:
\begin{enumerate}[itemsep=-.5ex,label=(\roman*)]
\item \label{prop:sGSC1} (Exact distribution-freeness) Under independence of $\mX_1$ and $\mX_2$, the distribution of~$\tenq{W}\n_{{\mu}}$ does not depend on $\P_{\mX_1}$ nor $\P_{\mX_2}$;
\item \label{prop:sGSC2} (Transformation invariance) If the kernels $f_1$ and $f_2$ 
are   orthogonally invariant, it holds for any orthogonal matrix $\fO_k\in\R^{d_k\times d_k}$, any vector
$\mv_k\in\R^{d_k}$, and any scalar $a_k\in\R_{> 0}$ that
  % The
  % dependence measure $\mu_{\pms}$ satisfies 
%\begin{align}\label{eq:rotation-invariance}
$\mu_{\pms}(\mX_1,\mX_2) = \mu_{\pms}\big(\mv_1+a_1\fO_1\mX_1,\mv_2+a_2\fO_2\mX_2\big)$; 
%\end{align}
% for any orthogonal matrix $\fO_k\in\R^{d_k\times d_k}$, vector
% $\mv_k\in\R^{d_k}$, and scalar $a_k\in\R_{> 0}$ provided that the kernels $f_1$ and $f_2$ 
% are   orthogonally invariant;% (recall \eqref{eq:kernel:k1}). 
\item \label{prop:sGSC3} (I- and D-Consistency) %\vspace{-.1cm}
\begin{enumerate}[itemsep=-.5ex,label=(\alph*)]
\item $\mu_{\pms}$ is I-consistent in the family 
\[
\big\{\P_{(\mX_1,\mX_2)}\big\vert\, \P_{\mX_k}\in\cP_{d_k}^{\ac}\text{\,\,\rm and }
\E \big[f_k\big([\fG_{k,\pms}(\mX_{ki})]_{i=1}^{m}\big)\big]<\infty\text{ for }k=1,2\big\};
\]
%namely, independence between $\mX_1,\mX_2$ implies $\mu_{\pms}(\mX_1,\mX_2)=0$. 
\item If the pair of kernels is D-consistent in the class $$%\cP_{d_1+d_2}^{\ac}\bigcap 
\big\{\P_{(\mX_1,\mX_2)}\in\cP_{d_1+d_2}^{\ac}\big\vert\, \E \big[f_k (\mX_{k1},\ldots,\mX_{km})\big]<\infty\text{ for }k=1,2\big\}$$ (cf.~Lemma~\ref{lem:kernel:kernel2}), then $\mu_{*\pms}%,J_1,J_2,f_1,f_2,H_*^{m}}
$ is D-consistent in the family 
\[
\cP_{d_1,d_2,\infty}^{\ac} := \big\{\P_{(\mX_1,\mX_2)}\in \cP_{d_1+d_2}^{\ac}\big\vert\,  \E \big[f_k\big([\fG_{k,\pms}(\mX_{ki})]_{i=1}^{m}\big)\big]<\infty\text{ for }k=1,2\big\}
\yestag\label{eq:Dconsifam}
\]
provided that the score functions $J_1$ and $J_2$ are strictly monotone; %; namely, dependence between $\mX_1,\mX_2$ implies $\mu_{\pms}(\mX_1,\mX_2)\ne 0$. %\hf{\fbox{double check}}
\end{enumerate}
\item \label{prop:sGSC4} (Strong consistency) If %$\P_{\mX_1}\in \cP_{d_1}^{\#}$, $\P_{\mX_2}\in \cP_{d_2}^{\#}$ \hf{\fbox{double check if we still need it}}, the score functions $J_1, J_2$ are strongly regular (cf. Definition~\ref{asp:score}), and 
%the following condition:
%\begin{align*}
%&
$f_k\big([\fG_{k,\pms}^{(n)}(\mX_{ki_\ell})]_{\ell=1}^{m}\big)$ and 
%\text{ and }
$f_k\big([\fG_{k,\pms}(\mX_{ki_\ell})]_{\ell=1}^{m}\big)$ %\text{
 are almost surely bounded, that is, if  there exists a constant $C$ (depending on $f_k$, $J_k$, and $\P_{{\bf X}_k}$) such that for any $n$ and $k=1,2$,  
 $$\P\Big(\, \big\vert f_k\big(\big[\fG_{k,\pms}^{(n)}(\mX_{ki_\ell})\big]_{\ell=1}^{m}\big) \big\vert \leq C\Big)=1=\P\Big(\, \big\vert f_k\big(\big[\fG_{k,\pms}(\mX_{ki_\ell})\big]_{\ell=1}^{m}\big) \big\vert \leq C\Big),$$
   and %}\\
%&
\begin{equation}
(n)_{m}^{-1}\sum_{[i_1,\dots,i_m]\in I_{m}^{n}}
\Big\lvert f_k\Big(\big[\fG_{k,\pms}^{(n)}(\mX_{ki_\ell})\big]_{\ell=1}^{m}\Big)
          -f_k\Big(\big[\fG_{k,\pms}(\mX_{ki_\ell})\big]_{\ell=1}^{m}\Big)\Big\rvert
\stackrel{\sf a.s.}{\longrightarrow} 0, %\yestag
\label{condition:as}
\end{equation}%\end{align*}
%holds,
 then
\[
\tenq{W}\n_{{\mu}}=\tenq{W}\n_{J_1,J_2,\mu_{f_1,f_2,H}}
\stackrel{\sf a.s.}{\longrightarrow}\mu_{\pms}(\mX_1,\mX_2).     
\yestag\label{eq:testconsi}
\]
%as long as $n\to\infty$ and \eqref{eq:key} holds. 
\end{enumerate}
\end{proposition}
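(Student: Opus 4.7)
The overall plan is to address the four claims in turn, leaning on the structural properties of the center-outward distribution function from Section~\ref{sec:hallin} together with results already established for GSCs in Section~\ref{sec:sgc}. For part~\ref{prop:sGSC1}, I would begin by observing that, by construction of $\fF_{k,\pms}^{(n)}$ as an optimal bijection, the multiset $\{\fF_{k,\pms}^{(n)}(\mX_{k1}),\ldots,\fF_{k,\pms}^{(n)}(\mX_{kn})\}$ coincides exactly with the grid $\kG_{\mn}^{d_k}$ (which, if stochastic at all, is drawn independently of the data). Since $\P_{\mX_k}\in\cP_{d_k}^{\ac}$, ties occur with probability zero and the random permutation relating the data indices to the grid is uniform on $\kS_n$; under independence of $\mX_1$ and $\mX_2$, these two indexing permutations are mutually independent uniform. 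Consequently, the joint law of $\{(\fG_{1,\pms}^{(n)}(\mX_{1i}),\fG_{2,\pms}^{(n)}(\mX_{2i}))\}_{i=1}^n$ depends only on the two grids and the scores $J_1,J_2$, whence $\tenq{W}\n_{\mu}$ is distribution-free. For part~\ref{prop:sGSC2}, I would invoke the equivariance of $\fF_{k,\pms}$ under shifts, positive rescaling, and orthogonal transformations: writing $\mX_k':=\mv_k+a_k\fO_k\mX_k$, one has $\fF_{\mX_k',\pms}(\mX_k')=\fO_k\fF_{k,\pms}(\mX_k)$. Since the radial map $\fJ_k$ commutes with orthogonal transformations (being purely a radial rescaling), this yields $\fG_{\mX_k',\pms}(\mX_k')=\fO_k\fG_{k,\pms}(\mX_k)$; the hypothesized orthogonal invariance of $f_1,f_2$ then closes the argument via Lemma~\ref{lem:kernel:kernel1}.

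For~\ref{prop:sGSC3}(a), independence of $\mX_1,\mX_2$ transfers trivially to $\fG_{1,\pms}(\mX_1),\fG_{2,\pms}(\mX_2)$, and Proposition~\ref{prop:GSC-Iconsistency} applied to the latter pair (under the stated integrability) yields $\mu_\pms(\mX_1,\mX_2)=0$. For D-consistency~\ref{prop:sGSC3}(b), I would assume $\mu_{*\pms}(\mX_1,\mX_2)=0$ and invoke Lemma~\ref{lem:kernel:kernel2} together with the D-consistency assumption on the pair $(f_1,f_2)$ to obtain independence of $\fG_{1,\pms}(\mX_1)$ and $\fG_{2,\pms}(\mX_2)$. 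The reverse implication---from independence of the scored center-outward distribution functions back to independence of $\mX_1,\mX_2$---is where strict monotonicity and continuity of $J_k$ enter: they make $\fJ_k$ a bijection of $\bS_{d_k}\setminus\{\bm0_{d_k}\}$ onto its image, while the center-outward distribution function $\fF_{k,\pms}$ is $\P_{\mX_k}$-a.s.\ invertible on its support for $\P_{\mX_k}\in\cP_{d_k}^{\ac}$. Hence the sigma-algebras generated by $\fG_{k,\pms}(\mX_k)$ and by $\mX_k$ coincide modulo null sets, and independence transfers back.

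The main obstacle is part~\ref{prop:sGSC4}. Here I would introduce the oracle U-statistic
\begin{equation*}
\tilde U\n := \binom{n}{m}^{-1}\sum_{i_1<\cdots<i_m}\overline k_{f_1,f_2,H}\Big(\big[\big(\fG_{1,\pms}(\mX_{1i_\ell}),\fG_{2,\pms}(\mX_{2i_\ell})\big)\big]_{\ell=1}^m\Big)
\end{equation*}
built from the \emph{population} scored center-outward distribution functions. Under the uniform boundedness hypothesis, $\overline k_{f_1,f_2,H}$ is bounded along the sample path, so the classical strong law for U-statistics gives $\tilde U\n\stackrel{\sf a.s.}{\longrightarrow}\mu_\pms(\mX_1,\mX_2)$, and it remains to show $\tenq{W}\n_\mu-\tilde U\n\stackrel{\sf a.s.}{\longrightarrow}0$. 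Writing the unsymmetrized kernel $k_{f_1,f_2,H}=A_1\cdot A_2$ as a product, with $A_k:=\sum_{\sigma\in H}\sgn(\sigma)f_k(\cdots)$, I would apply the telescoping identity $A_1^{(n)}A_2^{(n)}-A_1A_2=(A_1^{(n)}-A_1)A_2^{(n)}+A_1(A_2^{(n)}-A_2)$ and use the uniform boundedness of $A_k$ and $A_k^{(n)}$ to reduce $|\tenq{W}\n_\mu-\tilde U\n|$ to an arrangement average of terms $|f_k(\fG_{k,\pms}^{(n)}(\cdots))-f_k(\fG_{k,\pms}(\cdots))|$, with at most a $|H|^2/m!$ combinatorial prefactor surviving the symmetrization over $\kS_m$. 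Hypothesis~\eqref{condition:as} then forces this remainder to zero almost surely. The delicate point is propagating the near-coincidence of $\fG_{k,\pms}^{(n)}$ and $\fG_{k,\pms}$ through a \emph{product} kernel without losing uniformity in the summation indices; this is precisely what the boundedness condition, together with the arrangement-average formulation of~\eqref{condition:as}, is tailored to accommodate.
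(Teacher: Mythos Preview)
Your proposal is correct and follows essentially the same approach as the paper's proof: part~\ref{prop:sGSC1} via the uniform-permutation property of the empirical center-outward map (the paper's Proposition~\ref{prop:Hallin_full}\ref{prop:Hallin3}), part~\ref{prop:sGSC2} via equivariance of $\fF_{k,\pms}$ (Proposition~\ref{prop:Hallin_full}\ref{prop:Hallin5}) combined with Lemma~\ref{lem:kernel:kernel1}, part~\ref{prop:sGSC3} via almost-sure invertibility of $\fF_{k,\pms}$ and hence of $\fG_{k,\pms}$, and part~\ref{prop:sGSC4} via the oracle U-statistic and the same telescoping/product bound (the paper obtains the constant $\card(H)^2\cdot C$ rather than $|H|^2/m!$, but this is immaterial).
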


\begin{theorem}[Examples]\label{thm:sGSC5}  
As long as $\P_{\mX_1}\in \cP_{d_1}^{\#}$, $\P_{\mX_2}\in \cP_{d_2}^{\#}$, and $J_1, J_2$ are strongly regular, all the kernel functions in  {Example}~\ref{prop:sgc}\ref{example:dCov}--\ref{example:mBDY} satisfy Condition \eqref{condition:as}.
\end{theorem}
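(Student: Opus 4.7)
The plan is to reduce Condition~\eqref{condition:as} to a uniform convergence statement for the scored center-outward distribution functions, and then verify the bounded-and-averaging property kernel by kernel. First I would verify that, under strong regularity, the scoring map $\fJ_k$ is Lipschitz on $\bS_{d_k}$. With $L$ denoting the Lipschitz constant of $J_k$ and using $J_k(0)=0$, one has $J_k(\lVert\bmu\rVert)\le L\lVert\bmu\rVert$. Combining the decomposition
\[
\fJ_k(\bmu_1)-\fJ_k(\bmu_2)
=J_k(\lVert\bmu_1\rVert)\Big(\tfrac{\bmu_1}{\lVert\bmu_1\rVert}-\tfrac{\bmu_2}{\lVert\bmu_2\rVert}\Big)
+\bigl(J_k(\lVert\bmu_1\rVert)-J_k(\lVert\bmu_2\rVert)\bigr)\tfrac{\bmu_2}{\lVert\bmu_2\rVert}
\]
with the elementary estimate $\lVert\bmu_1/\lVert\bmu_1\rVert-\bmu_2/\lVert\bmu_2\rVert\rVert\le 2\lVert\bmu_1-\bmu_2\rVert/\min(\lVert\bmu_1\rVert,\lVert\bmu_2\rVert)$ yields a global Lipschitz constant of $3L$ on $\bS_{d_k}$ (the case when one argument vanishes is handled directly from $\lVert\fJ_k(\bmu)\rVert\le L\lVert\bmu\rVert$). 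Combined with the Glivenko--Cantelli statement in Proposition~\ref{prop:Hallin4} (applicable since $\P_{\mX_k}\in\cP_{d_k}^{\#}$), this gives
\[
\epsilon_n^{(k)}:=\max_{1\le i\le n}\lVert\fG_{k,\pms}^{(n)}(\mX_{ki})-\fG_{k,\pms}(\mX_{ki})\rVert\stackrel{\sf a.s.}{\longrightarrow}0,\qquad k=1,2.
\]

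Next I would verify~\eqref{condition:as} kernel by kernel. For the distance covariance kernel $f_k^{\dCov}$, its $1$-Lipschitz property in each argument bounds every summand by $\epsilon_n^{(k)}$ and the conclusion is immediate. For Hoeffding's marginal-ordering indicator $f_k^{M}$, the difference of two indicators vanishes unless some coordinate comparison flips, which forces $|(\fG_{k,\pms}(\mX_{ki_\ell}))_c-(\fG_{k,\pms}(\mX_{ki_5}))_c|\le 2\epsilon_n^{(k)}$ for some $\ell\in\{1,2\}$ and coordinate $c$. The left-hand side of~\eqref{condition:as} is then dominated by
\[
\sum_{\ell=1}^{2}\sum_{c=1}^{d_k}\frac{1}{(n)_m}\sum_{[i_1,\ldots,i_m]\in I_m^n}\ind\bigl(|(\fG_{k,\pms}(\mX_{ki_\ell}))_c-(\fG_{k,\pms}(\mX_{ki_5}))_c|\le 2\epsilon_n^{(k)}\bigr).
\]
For any fixed $\delta>0$, once $2\epsilon_n^{(k)}\le\delta$, the strong law of large numbers for $U$-statistics---applicable since the kernel is bounded and effectively of order two---bounds this display by $2d_k\,\P(|(\fG_{k,\pms}(\mX_{k1}))_c-(\fG_{k,\pms}(\mX_{k2}))_c|\le\delta)+o(1)$. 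Strict monotonicity of $J_k$ makes $\fJ_k$ a homeomorphism between $\bS_{d_k}\setminus\{\bm0_{d_k}\}$ and its image, and the pushforward of the spherical uniform law $\U_{d_k}$ under $\fJ_k$ is therefore absolutely continuous; hence the limit tends to $0$ as $\delta\downarrow 0$. The three $\Arc$-based kernels are handled similarly: $\Arc$ is bounded by $1/2$ and Lipschitz on $\{(\bmu,\mv):\lVert\bmu\rVert,\lVert\mv\rVert\ge\delta\}$ with some constant $C_\delta$. Splitting the sum according to whether the relevant argument differences $\fG_{k,\pms}(\mX_{ki_\ell})-\fG_{k,\pms}(\mX_{ki_{\ell'}})$ have norm $\ge 2\delta$ yields an upper bound of the form $C_\delta\epsilon_n^{(k)}+(\text{bad-index frequency})$, where the frequency converges a.s.\ to $\P(\lVert\fG_{k,\pms}(\mX_{k1})-\fG_{k,\pms}(\mX_{k2})\rVert<2\delta)$, again vanishing as $\delta\downarrow 0$.

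The main technical obstacle is the discontinuity of the Hoeffding marginal-ordering indicator kernel and of the $\Arc$ kernels, for which no single Lipschitz bound applies. The remedy is the two-scale split just sketched: one first lets $n\to\infty$ to neutralize the Lipschitz piece via $\epsilon_n^{(k)}\to 0$, and only then lets $\delta\downarrow 0$ to neutralize the boundary-crossing frequency via absolute continuity of the distribution of $\fG_{k,\pms}(\mX_k)$. Strict monotonicity of $J_k$, which is part of the strong-regularity requirement, is precisely what secures this absolute continuity.
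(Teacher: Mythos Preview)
Your approach is correct and follows the same overall architecture as the paper: first establish that strong regularity makes $\fJ_k$ Lipschitz on $\bS_{d_k}$ (the paper obtains constant $2L_k$ via a slightly different split, you get $3L$), combine this with the Glivenko--Cantelli property to get $\epsilon_n^{(k)}\to 0$ a.s., and then treat the kernels case by case. For the distance-covariance kernel the two proofs are essentially identical.

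The one genuine difference is how you handle the discontinuous kernels $f_k^M$ and the $\Arc$-based ones. The paper bounds the ``bad'' index set in terms of the \emph{empirical} scored values $\mY_{ki}^{(n)}$, then exploits that these are a permutation of the deterministic grid $\{\bm{y}_{ki}^{(n)}\}$, reducing the problem to a purely combinatorial count of close grid points (which tends to zero because the grid's empirical measure converges weakly to a diffuse limit). You instead bound the bad set in terms of the \emph{population} values $\mY_{ki}=\fG_{k,\pms}(\mX_{ki})$, which are i.i.d., and invoke the SLLN for U-statistics at a fixed threshold $\delta$ before letting $\delta\downarrow 0$. Both routes work; the paper's avoids any probabilistic limit theorem for the bad set, while yours stays within standard U-statistic machinery and is arguably more transparent. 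Incidentally, the paper additionally restricts to $\Arc$ bounded away from $0$ and $1/2$; as your argument implicitly uses, this is not needed, since the spherical angle satisfies the triangle inequality and is therefore Lipschitz in each argument on $\{\lVert\bmu\rVert,\lVert\mv\rVert\ge\delta\}$.

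One small imprecision: you assert that the pushforward of $\U_{d_k}$ under $\fJ_k$ is absolutely continuous. Strict monotonicity plus Lipschitz continuity of $J_k$ does not immediately give this (the inverse need not be Lipschitz). Fortunately you only need that the relevant coordinates and differences are atomless, and this does follow: $\fJ_k$ is injective on $\bS_{d_k}\setminus\{\bm0\}$ by strict monotonicity, so $\fJ_k(\mU)$ inherits atomlessness from the spherical uniform, and the coordinate marginals are atomless because $J_k(\lVert\mU\rVert)$ is (pushforward of a uniform by a strictly increasing map). Replacing ``absolutely continuous'' by ``atomless'' suffices for your $\delta\downarrow 0$ step.
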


%\begin{remark}\label{ex:no-rotation-invariance}
%Notice that  \eqref{eq:rotation-invariance} may not hold for the rank-based version of  Hoeffding's multivariate marginal ordering $D$   (cf.~ {Example}~\ref{prop:sgc}\ref{example:WDM}). %: this is easily checked by letting, for instance, $d_1=d_2=1$, $\mX_1=(Z_1,Z_2)$ and~$\mX_2=Z_1$, where $Z_1$ and $Z_2$ are independent standard normal,   
%\[\text{
%$a_1=a_2=1$, $\mv_1=\bm0_2$, $\mv_2=0$, $\fO_1=\bigg($$\begin{matrix}\sqrt{2}/2 & -\sqrt{2}/2\\\sqrt{2}/2 &  \sqrt{2}/2\end{matrix}$$\bigg)$, $\fO_2=1$, and $J_1(u)=J_2(u)=u$.}
%\]
%\end{remark}

\begin{remark} \label{rmk:feuerverger}
 {Unfortunately, Theorem~\ref{thm:sGSC5}  does not imply that the
  rank-based SGSCs  with  normal score
functions satisfy
\eqref{eq:testconsi} although, in view of Proposition
\ref{prop:sGSC}\ref{prop:sGSC3}, their population counterparts are both I-
and D-consistent within a fairly large nonparametric
family of distributions.  A weaker version (replacing a.s. convergence
by convergence in probability) of \eqref{eq:testconsi} holds in the
univariate case with~$d_1=d_2=1$ by
\citet[Sec.~6]{feuerverger1993}. Consistency for normal scores,
however, follows from a recent and yet unpublished result of
\citet[Proposition 4.3]{deb2021efficiency}, which was not available to
us at the time this paper was written and which is obtained via a completely different technique.} %We believe that this lack of strong consistency
%is not essential, though,  and conjecture that \eqref{eq:testconsi} will still
%hold for   normal scores. Establishing this property,
%however, is likely to involve  nontrivial modifications
%of the proof of Proposition~\ref{prop:Hallin4}.
\end{remark}

We conclude this section with a discussion of computational
issues. Two steps, in the evaluation of  multivariate rank-based SGSCs, are potentially %computationally
costly: (i) calculating the center-outward ranks and signs in \eqref{eq:assignment}, and (ii) computing a~GSC $\hat\mu\n (\cdot)$ 
%in Definition \eqref{def:SGSC}
 with $n$ inputs. %Problem \eqref{eq:assignment} is a linear sum
% assignment problem (LSAP): 
%given $2n$ points $\mz_i$ and $\bmu_j$ in $\R^d$ (here, $\bmu_j\in \mathbb{S}_d$) %, $i,j\in\zahl{n}$ 
%%, introduce edges between $\mz_i$ and $\bmu_j$ %, denoted by $(\mx_i,\ms_j)$, 
%%with
%and $n^2$ nonnegative costs $c_{ij}:=\lVert\mz_i-\bmu_j\rVert^2$,~$i,j\in\zahl{n}$, it consists of finding 
%%We want to find 
%an \emph{optimal matching}, i.e., a bijection $\sigma$
%from $\zahl{n}$ to itself such that $\sum_{i=1}^{n}c_{i\sigma(i)}$ is minimized.
%The time complexity of computing the optimal matching and nearly optimal matchings 
%is summarized  {as follows}.
The optimal matching problem  \eqref{eq:assignment} yielding $[\fG_{1,\pms}\n(\mX_{1i})]_{i=1}^{n}$ and
$[\fG_{2,\pms}\n(\mX_{2i})]_{i=1}^{n}$  
%can be solved in $O(n^3)$ time via the refined Hungarian algorithm \citep{MR0254076,MR0297347,MR0266680,edmonds1972theoretical}. Moreover, 
 {can be solved in~$O(n^{5/2}\log(nN))$ time if %we assume that
 the %nonnegative
  costs $\lVert\mz_i-\bmu_j\rVert^2$,~$i,j\in\zahl{n}$ are
  integers bounded by $N$ \citep{MR1015271};  in
  dimension~$d=2$, this can improved to $O(n^{3/2+\delta}\log(N))$
  time for some arbitrarily small constant $\delta>0$
  \citep{MR3205219}.
  The problem can also be solved approximately in 
$O(n^{3/2}\Omega(n,\epsilon,\Delta))$ time if $d\ge3$, 
where 
$$\Omega(n,\epsilon,\Delta):=\epsilon^{-1}\tau(n,\epsilon)\log^{4}(n/\epsilon)\log(\Delta)$$ depends on $n$, $\epsilon$ (the accuracy of the approximation) and $\Delta:={\max c_{ij}}/{\min c_{ij}}$, with~$\tau(n, \epsilon)$ a small term \citep{MR3238983}.
Further details are deferred to
% Proposition~\ref{thm:matching-computation} in
Appendix~\ref{appendix:sectionB3}.}

Once  $[\fG_{1,\pms}^{(n)}(\mX_{1i})]_{i=1}^{n}$ and
$[\fG_{2,\pms}^{(n)}(\mX_{2i})]_{i=1}^{n}$ are obtained, a na\"{\i}ve
 {evaluation} of~$\tenq{W}\n\!$, on the other hand,  requires %at
% most   a
$O(n^m)$ operations. %time complexity.
Great speedups are possible, however, in particular
cases %and  {in}
%the next  {we} summarize the results for 
 {such as the rank-based SGSCs from} Example~\ref{ex:rbsgc}.
%
%Assuming that $[\fG_{1,\pms}\n(\mX_{1i})]_{i=1}^{n}$ and $[\fG_{2,\pms}\n(\mX_{2i})]_{i=1}^{n}$ have been previously obtained, one can compute 
%\begin{enumerate*}[label=(\roman*)]
%\item $\tenq{W}\n_{{\dCov}}$  in $O(n^2)$ time 
%(\citealp[Definition~1]{MR3053543}, 
%\citealp[Definition~2, Proposition~1]{MR3269983}, 
%\citealp[Lemma~3.1]{MR3556612}), 
%\item $\tenq{W}\n_{{{M}}}$    in $O(n (\log n)^{d_1+d_2-1})$ time \citep[Sec.~5.2]{MR3842884},
%\item $\tenq{W}\n_{{D}}$      in $O(n^3)$ time \citep[Theorem~1]{MR3737307},
%\item $\tenq{W}\n_{{R}}$ and $\tenq{W}\n_{{\tau^*}}$
%                              in $O(n^4)$ time. 
%\end{enumerate*}
%If, moreover, approximate values are allowed, one can compute 
%approximate $\tenq{W}\n_{{\dCov}}$, $\tenq{W}\n_{{D}}$, $\tenq{W}\n_{{R}}$, $\tenq{W}\n_{{\tau^*}}$ in $O(nK\log n)$ time 
%(\citealp[Theorem~4.1]{MR3556612}, 
%\citealp[Theorem~3.1]{MR3913146}, 
%\citealp[Sec.~5.2]{MR3842884}, 
%\citealp[Equation~(6.1)]{MR4185806}, 
%\citealp[Corollary~4]{doi:10.1137/1.9781611976465.136}) 
%using random projections; $K$ stands for the number of random projections
%(see, e.g., \citealp[Sec.~3.1]{huang2017statistically}).
%
 {A detailed summary is provided in
  Proposition~\ref{thm:correlation-computation} of the Appendix.  The
  total computational complexity of the five statistics in
  Example~\ref{ex:rbsgc} is given in the last three rows of Table~\ref{tab:propofsGSC}.}

\section{ {Local power of rank-based tests of independence}} \label{sec:test}
 
Besides quantifying  the dependence between two groups of
random variables, the  {rank-based GSCs} from Section
\ref{sec:measure} allow for constructing tests of the null hypothesis % of independence
\begin{align*}%\label{eq:H0}
H_0: \mX_1 \text{ and }\mX_2 \text{ are mutually independent},
\end{align*}
based on a sample $(\mX_{11},\mX_{21}),\dots,(\mX_{1n},\mX_{2n})$ of~$n$ independent copies of $(\mX_1,\mX_2)$.
\citet{shi2019distribution}, and, in a slightly different manner,  
\citet{deb2019multivariate}, studied the particular  case of  a test based on the Wilcoxon version of the   {rank-based} distance covariance $\tenq{W}\n_{{\dCov}}$. 
%
%
%.
%In the present framework their test corresponds to the special case of
%using the Wilcoxon score together with the distance covariance (a GSC
%by  {Example}~\ref{prop:sgc}\ref{example:dCov}).  
Among other results, they derive the limiting null distribution of
$\tenq{W}\n_{{\dCov}}$,
%the  test statistic,
%with proofs 
%which provides useful and easy to calculate distributional
%approximations.  The proofs of the limiting distribution are
%involving
using
combinatorial limit theorems and ``brute-force'' calculation of
permutation statistics.  Although this led to a fairly general 
combinatorial non-central limit theorem \citep[Theorems~4.1 and~4.2]{shi2019distribution},
the derivation is not intuitive and difficult to generalize. 
 {In contrast,}
in this paper, we take a  {new} and more powerful approach to the 
asymptotic analysis of rank-based SGSCs, which
%.  Compared with
%\citet{shi2019distribution} and \citet{deb2019multivariate}, that new approach 
resolves the following three main  issues:
\begin{enumerate}[itemsep=-.5ex,label=(\roman*)]
\item Intuitively, the asymptotic behavior of  rank-based
  dependence measures follows from that of their H\'{a}jek {\it
    asymptotic representations}, which are oracle versions in which
  the observations are transformed using the unknown actual center-outward
  distribution function~${\bf F}_{\pms}$ 
  rather than its sample version ${\bf F}\n_{\pms}$.  Here, we show
the correctness of this intuition by proving asymptotic equivalence
between  {rank-based SGSCs} and their oracle versions. 
\item Previous work does not perform any power analysis for the new
   rank-based tests.  Here, we fill this gap by proving that these 
  tests  {have nontrivial power} in the context of 
  the  {class of quadratic mean differentiable alternatives \citep[Def.~12.2.1]{MR2135927}}.
\item Finally, our rank-based tests allow for the   incorporation of
  score functions, which  {may improve} their performance.   
%
%for the center-outward ranks.
%  Score functions, in particular, have been very useful in the classical setting and
%  we also see improvements to empirical performance when working with
%  a normal-/van der Waerden-score instead of the Wilcoxon-score.
  % improved performance from The empirical performance under a Gaussian experiment
  % (cf. \citet[Sec.~6.1]{shi2019distribution}) suggests that the power
  % of their proposed rank-based tests can be relatively low compared to
  % the original tests when the within-group dependence is small;
\end{enumerate}
This novel approach 
%The progress we are able to make
rests on a
generalization of the classical 
H\'{a}jek  representation method \citep{MR0229351} to the multivariate
setting of center-outward ranks and signs, which simplifies the derivation of
asymptotic null distributions 
%by revealing the connection between
%sample and oracle dependence measures, and it readily accommodates the
%extension to score functions.  Importantly, the theory
and,  via a nontrivial use of Le Cam's third lemma for
 non-normal limits, enables our local
power analysis. 
%
% general %theory of
% asymptotic representation result  that extends to center-outward ranks and signs and the 
% multivariate setting the classical 
% H\'{a}jek  representation method \citep{MR0229351}, thereby simplifying the derivation of
% asymptotic null distributions 
% %by revealing the connection between
% %sample and oracle dependence measures, and it readily accommodates the
% %extension to score functions.  Importantly, the theory
% and,  via a nontrivial use of Le Cam's third lemma for
%  non-normal limits, enabling local
% power analysis. 

\subsection{%Theory of 
Asymptotic representation}\label{sec:representation}

In order to develop our multivariate asymptotic representation, we
first introduce formally the oracle counterpart to the
rank-based SGSC $\tenq W\n_{\mu}$.

\begin{definition}[Oracle  {rank-based SGSCs}] 
{\rm 
The oracle
%  sample center-outward
   version of the rank-based SGSC $\tenq W\n_{J_1,J_2,\mu_{f_1,f_2,H}}$
%with score functions $J_1,J_2$
  associated with  the GSC 
  $\mu=\mu_{f_1,f_2,H}$    is %defined as
\[
W\n_{{\mu}}=W\n_{J_1,J_2,\mu_{f_1,f_2,H}}:= \hat\mu\n \Big(\big[\big(\fG_{1,\pms}(\mX_{1i}),\fG_{2,\pms}(\mX_{2i})\big)\big]_{i=1}^n; f_1,f_2,H\Big).
\]
}
\end{definition}

Note that the
% In contrast to the rank-based $\tenq W\n_{\mu}=\tenq
% W\n_{J_1,J_2,\mu_{f_1,f_2,H}}$, the
oracle~$W\n_{\mu}$ %$W\n_{\mu}=W\n_{J_1,J_2,\mu_{f_1,f_2,H}}$
cannot be computed from the observations as it involves the population scored center-outward distribution functions~$\fG_{1,\pms}$ and $\fG_{2,\pms}$. 
% cannot be evaluated in practice, where we are only given a
%finite sample. 
 However, the limiting null
distribution of~$W\n$, unlike that of~$\tenq{W}\n$,    %directly
 follows from 
standard theory for degenerate U-statistics
\citep[Chap.~5.5.2]{MR595165}. This point can be summarized as follows.

\begin{proposition}\label{prop:infea}
  Let $\mu=\mu_{f_1,f_2,H_{*}^{m}}$ be a GSC with $m\ge 4$. %, and let
%  $J_1,J_2$ be score functions.   
   Let the kernels %and score functions
   $f_1,f_2$ and the score functions~$J_1,J_2$
  satisfy
\[
0<\Var(g_{k}(\mW_{k1},\mW_{k2}))<\infty,~~~k=1,2,
\yestag\label{eq:uvar}
\]
where $\mW_{ki}:=\fJ_{k}(\mU_{ki})$ with  $(\mU_{1i},\mU_{2i})$,
$i\in\zahl{m}$  independent and distributed according to the product
of spherical uniform
distributions $\U_{d_1}\otimes \U_{d_2}$, 
\begin{equation}\label{eq:suey}
g_{k}(\bm{w}_{k1},\bm{w}_{k2}):=
\E \Big[2f_{k,H_{*}^{m}}\Big(\bm{w}_{k1},\bm{w}_{k2},\mW_{k3},\mW_{k4},\dots,\mW_{km}\Big)\Big],
\end{equation}
and
%\[
$f_{k,H_{*}^{m}}:=\sum_{\sigma\in H_{*}^{m}}\sgn(\sigma) f_{k}(\mx_{k\sigma(1)},\dots,\mx_{k\sigma(m)})$, $k=1,2$.  
%~~~\mW_{ki}=\fJ_{k}(\mW_{ki}^{*}),
%\yestag\label{eq:suey'}
%\]
%end{align*}
%and $(\mW_{1i}^{*},\mW_{2i}^{*})$, $i\in\zahl{m}$ are independent with
%distribution $\U_{d_1}\otimes \U_{d_2}$. 
% Suppose that the hypothesis of independence $H_0$ holds for ${\mX_1}\sim\P_{\mX_1}\in\cP_{d_1}^{\ac}$ and
%  ${\mX_2}\sim\P_{\mX_2}\in\cP_{d_2}^{\ac}$. 
  Then,  under the null hypothesis~$H_0$ that ${\mX_1}\sim\P_{\mX_1}\in\cP_{d_1}^{\ac}$ and
  ${\mX_2}\sim\P_{\mX_2}\in\cP_{d_2}^{\ac}$ are independent, 
%We then have for $\mu=\mu_{f_1,f_2,H_{*}^{m}}$ with $m\ge 4$,
\[
n  W\n_{\mu}=nW\n_{J_1,J_2,\mu_{f_1,f_2,H_{*}^{m}}}
\rightsquigarrow  
\sum_{v=1}^{\infty}\lambda_{\mu,v}(\xi_v^2-1),
%\yestag\label{eq:limitnulldistr}
\]
%as long as $n\to\infty$ and \eqref{eq:key} holds. 
where $[\xi_v]_{v=1}^\infty$ are independent standard Gaussian random
variables and $[\lambda_{\mu,v}]_{v=1}^\infty$ are the non-zero eigenvalues of the integral equation
\[
\E\big[g_1(\bm{w}_{11},\mW_{12})g_2(\bm{w}_{21},\mW_{22})
         \psi(\mW_{12},\mW_{22})\big]
=\lambda \psi(\bm{w}_{11},\bm{w}_{21}).
\yestag\label{eq:eigen}
\]
%in which $\mU_1=\fJ_1(\mU_1^*)$, $\mU_2=\fJ_2(\mU_2^*)$,
%$\mU_1^*\sim \U_{d_1}$ is independent of $\mU_2^*\sim \U_{d_2}$, 
%and $\xi_v$, $v\in\Z_{>0}$ are independent standard Gaussian random variables.
\end{proposition}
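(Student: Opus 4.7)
}
The plan is to view $W\n_{\mu}$ as a $U$-statistic of order $m$ with symmetric kernel $\overline{k}:=\overline{k}_{f_1,f_2,H_{*}^{m}}$ evaluated on the transformed $n$-tuple $\{(\mW_{1i},\mW_{2i})\}_{i=1}^{n}$, where $\mW_{ki}:=\fJ_{k}(\fF_{k,\pms}(\mX_{ki}))$. Under $H_0$, because $\fF_{k,\pms}\sharp \P_{\mX_k}=\U_{d_k}$ and $\mX_{1i}\indep \mX_{2i}$, the pairs $(\mW_{1i},\mW_{2i})$ are i.i.d.\ with law $(\fJ_1\sharp \U_{d_1})\otimes(\fJ_2\sharp \U_{d_2})$. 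The conclusion will then follow from the standard limit theory for degenerate $U$-statistics (Serfling~1980, Ch.~5.5.2), once I verify (i) $\E \overline{k}=0$, (ii) first-order Hoeffding degeneracy, and (iii) that the second-order Hoeffding kernel is proportional to $g_{1}(\mw_{11},\mw_{12})g_{2}(\mw_{21},\mw_{22})$, whose associated integral operator is the one appearing in \eqref{eq:eigen}.

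For (i) and (ii), I will exploit the sign structure of $H_{*}^{m}$. Under independence the unsymmetrized kernel factorizes as $k=\{\sum_{\sigma}\sgn(\sigma)f_{1}\}\{\sum_{\sigma}\sgn(\sigma)f_{2}\}$, and because $H_{*}^{m}$ contains equal numbers of even and odd permutations, $\sum_{\sigma\in H_{*}^{m}}\sgn(\sigma)=0$, giving $\E k=0$. For the first-order projection, $\psi_{1}(\mw_{1},\mw_{2})=m^{-1}\sum_{i=1}^{m}\alpha_{i}(\mw_{1})\beta_{i}(\mw_{2})$, where $\alpha_{i}(\mw_{1}):=\sum_{\sigma\in H_{*}^{m}}\sgn(\sigma)\E[f_{1}(\mx_{1\sigma(1)},\dots,\mx_{1\sigma(m)})\mid \mx_{1i}=\mw_{1}]$. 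Each conditional expectation depends on $\sigma$ only through the position $\sigma^{-1}(i)$ at which $\mw_{1}$ lands; a direct case analysis on the Klein-four action of $H_{*}^{m}$ on $\{1,2,3,4\}$ (together with the triviality of the action on indices $\geq 5$) shows that the four signed contributions cancel in pairs for every $i$. Hence $\alpha_{i}\equiv 0$ and, symmetrically, $\beta_{i}\equiv 0$, so $\psi_{1}\equiv 0$.

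For (iii), the second-order Hoeffding kernel reads
\[
\psi_{2}\big((\mw_{11},\mw_{21}),(\mw_{12},\mw_{22})\big)=\frac{1}{m(m-1)}\sum_{i\neq j}\alpha_{ij}(\mw_{11},\mw_{12})\,\beta_{ij}(\mw_{21},\mw_{22}),
\]
where $\alpha_{ij}$ is defined analogously to $\alpha_{i}$ but with two positions fixed. The same signed-cancellation argument shows $\alpha_{ij}=0$ whenever $\{i,j\}\not\subset\{1,2,3,4\}$, reducing the sum to the twelve ordered pairs in $\{1,2,3,4\}$. A further case analysis, grouping the $\sigma\in H_{*}^{m}$ by the orbit $\sigma^{-1}(\{i,j\})$, shows that every nonzero $\alpha_{ij}$ equals (up to sign) a fixed expression in $(\mw_{11},\mw_{12})$, and similarly for $\beta_{ij}$; collecting these terms and matching them against the definition of $f_{k,H_{*}^{m}}$ in \eqref{eq:suey} identifies $\psi_{2}$ with a positive multiple of $g_{1}(\mw_{11},\mw_{12})g_{2}(\mw_{21},\mw_{22})$. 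The positivity of $\Var(g_{k}(\mW_{k1},\mW_{k2}))$ assumed in \eqref{eq:uvar} guarantees that $\psi_{2}$ is nondegenerate and Hilbert--Schmidt.

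Once (i)--(iii) are in place, I invoke the standard degenerate $U$-statistic CLT: because $\psi_{1}\equiv 0$, the Hoeffding decomposition of $n W\n_{\mu}$ is dominated by the second-order term, which converges weakly to $\sum_{v}\lambda_{v}(\xi_{v}^{2}-1)$ with $\{\lambda_{v}\}$ the eigenvalues of the Hilbert--Schmidt integral operator associated with $\psi_{2}$. Since $\psi_{2}\propto g_{1}g_{2}$, rescaling the eigenvalue problem yields exactly the integral equation~\eqref{eq:eigen}, giving $\lambda_{v}=\lambda_{\mu,v}$. I expect the main obstacle to be the combinatorial bookkeeping in step (iii): carefully tracking the action of $H_{*}^{m}$ on pairs of indices, absorbing the normalization $1/[m(m-1)]$ against the factor of $2$ in the definition of $g_{k}$, and confirming that the proportionality constant ultimately cancels against the $\binom{m}{2}$ factor appearing in the generic degenerate-$U$-statistic limit. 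Everything else is a direct appeal to classical theory.
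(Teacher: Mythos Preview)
Your proposal is correct and follows the same approach as the paper: reduce to the classical degenerate $U$-statistic CLT (Serfling, Ch.~5.5.2) after establishing first-order degeneracy and identifying the second-order Hoeffding kernel as $g_1g_2$. The only difference is that the paper outsources steps (i)--(iii) to Lemma~3 of \citet{MR3842884}, whereas you rederive that combinatorial content directly; in particular, the relation $\binom{m}{2}\widetilde h_{\mu,2}=g_1g_2$ (which you anticipate in your remark about constants) is exactly what that lemma provides and is used verbatim in the proof of Theorem~\ref{thm:smallop}.
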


The tests we are considering reject for large values of test
statistics that %unbiasedly
estimate a nonnegative (I- and
D-)consistent dependence measure.  In all these tests
% , and in particular the one from \citet{shi2019distribution}, it holds that
\[
\text{all eigenvalues of the integral equation \eqref{eq:eigen} are non-negative.} 
\yestag\label{eq:pd}
\]
However, it should be noted that, in view of the following  multivariate representation result, a valid test of $H_0$ can be implemented  also   when  \eqref{eq:pd} does not hold.

%In practice, for facilitating one-sided test as done in , we will sometimes also enforce that all eigenvalues in~\eqref{eq:eigen} to be nonnegative:
%\[
%\text{all eigenvalues of the integral equation \eqref{eq:eigen} are non-negative.} 
%and the only eigenvector with $\lambda=0$ is $1$.}
%\yestag\label{eq:pd}
%\]
%However, it should be noted that, in view of the following   multivariate representation result, a test of $H_0$ could be implemented completely without \eqref{eq:pd}.

%The following asymptotic representation is the main result of this section.
 
\begin{theorem}[Multivariate  {H\' ajek} % asymptotic
  representation]\label{thm:smallop}
%  Suppose that $\P_{\mX_1}\in\cP_{d_1}^{\ac}$ and
%  $\P_{\mX_2}\in\cP_{d_2}^{\ac}$, and that the independence hypothesis $H_0$
%  holds.
  %Let $(\mX_{11},\mX_{21}),\dots,(\mX_{1n},\mX_{2n})$ be independent
  %copies of $(\mX_1,\mX_2)$ with $\mX_1,\mX_2$ independent of each
  %other and
  %$\P_{\mX_1}\in\cP_{d_1}^{\ac},
  %\P_{\mX_2}\in\cP_{d_2}^{\ac}$. %Assume $J_1,J_2$ to be weakly regular.
  Let $f_1,f_2$ be kernel functions of order~$m\ge~\!4$, and let
  $J_1$, $J_2$ be weakly regular score functions.  Writing
  $\U_{d_k}^{(n)}$ for
  the discrete uniform distribution over the grid $\kG_{\mn}^{d_k}$, let $\mW_{ki}^{(n)}:=\fJ_{k}(\mU_{ki}^{(n)})$ where 
$(\mU_{1i}^{(n)},\mU_{2i}^{(n)})$ for~$i\in\zahl{m}$ are independent
with distribution $\U_{d_1}^{(n)}\otimes \U_{d_2}^{(n)}$. 
  Define  $g_k$, $k=1,2$,
 as in \eqref{eq:suey}, and %define furthermore
\[
g_{k}^{(n)}(\bm{w}_{k1},\bm{w}_{k2}):=
\E \Big[2f_{k,H_{*}^{m}}\Big(\bm{w}_{k1},\bm{w}_{k2},\mW_{k3}^{(n)},\mW_{k4}^{(n)},\dots,\mW_{km}^{(n)}\Big)\Big],\quad k=1,2.
\yestag\label{eq:WDMlemma}
\]
%\text{and}~~~
%g_{k}(\bmu_{k1},\bmu_{k2})&:=
%\E \Big[2f_{k,H_{*}^{m}}\Big(\bmu_{k1},\bmu_{k2},\mW_{k3},\mW_{k4},\dots,\mW_{km}\Big)\Big]
%with
%and 
%\[f_{k,H_{*}^{m}}:=\sum_{\sigma\in H_{*}^{m}}\sgn(\sigma) f_{k}(\mx_{k\sigma(1)},\dots,\mx_{k\sigma(m)}). 
%%~~~\mW_{ki}^{(n)}=\fJ_{k}(\mW_{ki}^{*(n)}),
%%~~~\text{and}~~~\mW_{ki}=\fJ_{k}(\mW_{ki}^{*}), 
%\]
Assume that 
\begin{align*}
&\text{
$f_{k}$ and $g_{k}$ are Lipschitz-continuous,~~~ 
$g_{k}^{(n)}$ converges uniformly to $g_{k}$,} \yestag\label{eq:key-condition}\\
&
\sup_{i_1,\dots,i_m\in\zahl{m}}\E[f_k([\mW_{ki_\ell}]_{\ell=1}^{m})^2]<\infty,\quad\text{and}\quad 
\int_{0}^{1}J_k^2(u)\d u<\infty, \quad k=1,2.
\end{align*}
  Then,  under the hypothesis $H_0$ that ${\mX_1}\sim\P_{\mX_1}\in\cP_{d_1}^{\ac}$ and
  ${\mX_2}\sim\P_{\mX_2}\in\cP_{d_2}^{\ac}$ are independent,  the
  rank-based SGSC $\tenq{W}\n_{\mu}=\tenq{W}\n_{J_1,J_2,\mu}$ associated to the GSC 
$\mu=\mu_{f_1,f_2,H_{*}^{m}}$ is asymptotically equivalent to its
oracle version $W\n_{\mu}$, i.e., $\tenq{W}\n_{\mu} - W\n_{\mu}=o_{\Pr}(n^{-1})$ as $n_R,n_S\to\infty$.\end{theorem}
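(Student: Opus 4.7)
The plan is to establish the asymptotic equivalence by comparing the Hoeffding decompositions of $\tenq{W}\n_{\mu}$ and $W\n_{\mu}$, which share the same symmetric kernel $\overline k_{f_1,f_2,H_*^m}$ but are applied, respectively, to the sample scored ranks $\tilde{\mW}_{ki}:=\fG^{(n)}_{k,\pms}(\mX_{ki})$ and the oracle scored ranks $\mW_{ki}:=\fG_{k,\pms}(\mX_{ki})$. The strategy is Hájek-like: peel off the vanishing low-order projections, match the leading second-order term in both expansions, and bound the higher-order remainder.

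First I would prove the crucial first-order degeneracy under $H_0$. Thanks to the product structure $k_{f_1,f_2,H_*^m}=\bigl[\sum_{\sigma\in H_*^m}\sgn(\sigma)f_1(\cdots)\bigr]\bigl[\sum_{\sigma\in H_*^m}\sgn(\sigma)f_2(\cdots)\bigr]$ and the independence of $\mX_1$ and $\mX_2$, it suffices to verify that
\[
\E\Bigl[\sum_{\sigma\in H_*^m}\sgn(\sigma)\, f_k\bigl(\mW_{k\sigma(1)},\ldots,\mW_{k\sigma(m)}\bigr)\,\Bigm|\,\mW_{k1}\Bigr]=0,\qquad k=1,2,
\]
which follows from the pairings $(1)\leftrightarrow(2~3)$ and $(1~4)\leftrightarrow(1~4)(2~3)$: the two elements of each pair agree on position~$1$ and differ only by a transposition among positions $2,3$, so integrating out the remaining arguments makes their signed contributions cancel. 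The analogous identity holds when the expectation is taken over the grid $\kG_{\mn}^{d_k}$ rather than $\U_{d_k}$. Hence both $W\n_{\mu}$ and $\tenq{W}\n_{\mu}$ are second-order degenerate U-statistics of order $m$ under $H_0$, and each is $O_{\Pr}(n^{-1})$.

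Next I would expand $\tenq{W}\n_{\mu}-W\n_{\mu}$ via the Hoeffding projection. The dominant ($\ell=2$) projection has kernel proportional to $g_1^{(n)}(\tilde{\mW}_{1i},\tilde{\mW}_{1j})\,g_2^{(n)}(\tilde{\mW}_{2i},\tilde{\mW}_{2j})$ for $\tenq{W}\n_{\mu}$ and to $g_1(\mW_{1i},\mW_{1j})\,g_2(\mW_{2i},\mW_{2j})$ for $W\n_{\mu}$, with $g_k$ and $g_k^{(n)}$ as defined in \eqref{eq:suey} and \eqref{eq:WDMlemma}. The difference is bounded by a telescope combining (i) the uniform gap $\lVert g_k^{(n)}-g_k\rVert_\infty\to 0$ granted by assumption \eqref{eq:key-condition}, and (ii) the input gap $\max_i\lVert\tilde{\mW}_{ki}-\mW_{ki}\rVert\to 0$, which follows from continuity of $J_k$ and the Glivenko--Cantelli conclusion \eqref{eq:GC-new} of Proposition~\ref{prop:Hallin4}, together with Lipschitz-continuity of $f_k$ and $g_k$. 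Higher-order ($\ell\ge 3$) Hoeffding projections contribute $O_{\Pr}(n^{-\ell/2})=o_{\Pr}(n^{-1})$ by standard variance bounds for degenerate U-statistics, and their oracle-vs-sample differences decay at the same rate via Cauchy--Schwarz using the finite second-moment assumption on $f_k$.

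The main obstacle will be the $L^2$ control of the second-order projection difference. The sample inputs $\{\tilde{\mW}_{ki}\}_{i=1}^n$ form a random permutation of the deterministic set $\fJ_k(\kG_{\mn}^{d_k})$, so they are not i.i.d.\ and the elementary U-statistic variance identities do not apply directly to $\tenq{W}\n_{\mu}$. To handle this I would condition separately on the two marginal matching permutations $\sigma_1,\sigma_2$ (which are independent under $H_0$ by the exact distribution-freeness of Proposition~\ref{prop:sGSC}\ref{prop:sGSC1}), average against the grid, and replace the resulting grid-based second moments by the continuum ones using $g_k^{(n)}\to g_k$. A secondary subtlety is that Proposition~\ref{prop:Hallin4} guarantees the a.s.\ Glivenko--Cantelli property only for the subclass $\cP_d^{\#}\subsetneq \cP_d^{\ac}$; since the bound only depends on the data through the empirical distributions of the (scored) center-outward ranks, one can pass to a dominating distribution in $\cP_d^{\#}$ or weaken the mode of convergence to ``in probability'' without compromising the final $o_{\Pr}(n^{-1})$ rate.
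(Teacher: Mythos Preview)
Your high-level architecture (Hoeffding decompose both statistics, kill the $\ell=1$ term by the $H_*^m$ sign cancellation, match the $\ell=2$ terms, bound $\ell\ge 3$) is exactly the paper's. The substantive gap is in the $\ell=2$ step. The telescope you describe in your second paragraph bounds each summand $g_1^{(n)}(\tilde{\mW}_{1i},\tilde{\mW}_{1j})g_2^{(n)}(\tilde{\mW}_{2i},\tilde{\mW}_{2j})-g_1(\mW_{1i},\mW_{1j})g_2(\mW_{2i},\mW_{2j})$ by $o_{\Pr}(1)$, which after averaging over $\binom{n}{2}$ pairs still only yields $\tenq H_{n,2}-H_{n,2}=o_{\Pr}(1)$; you need $o_{\Pr}(n^{-1})$. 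The factor $n$ cannot be recovered from Lipschitz continuity and uniform closeness of inputs alone: you must exploit the degeneracy-induced cancellation in the \emph{difference}, and that requires an honest second-moment computation. The paper does this by expanding $\E[(n\tenq H_{n,2}-nH_{n,2})^2]$ into $\E[(nH_{n,2})^2]$, $\E[(n\tenq H_{n,2})^2]$, and the cross term, showing all three converge to the same limit $2\E[g_1(\mW_{11},\mW_{12})^2]\E[g_2(\mW_{21},\mW_{22})^2]$. The middle term is a double-indexed permutation statistic, handled via the exact variance formula of \citet{MR857081}; the cross term uses the exchangeability identity $\sum_{\ell\ne j}g_k^{(n)}(\mY_{k\ell}^{(n)},\mY_{kj}^{(n)})=-g_k^{(n)}(\mY_{kj}^{(n)},\mY_{kj}^{(n)})$ (a consequence of $\sum_\ell g_k^{(n)}(\bm y_\ell^{(n)},\cdot)=0$ over the grid) to reduce mixed moments to tractable ones. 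Your ``condition on the matching permutations'' sketch gestures at this but does not supply the mechanism.

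Two further issues. First, the Glivenko--Cantelli property \eqref{eq:GC-new} you invoke is only known on $\cP_d^{\#}\subsetneq\cP_d^{\ac}$, and your proposed workaround via distribution-freeness fails: the marginal law of $\tenq W\n_\mu$ is distribution-free, but the \emph{joint} law of $(\tenq W\n_\mu,W\n_\mu)$ is not, because the optimal matching is computed in the original data space and its coupling with $\fF_{k,\pms}(\mX_{ki})$ depends on $\P_{\mX_k}$. The paper avoids this entirely: it uses only the pointwise a.s.\ convergence $\fF_{k,\pms}^{(n)}(\mX_{ki})\to\fF_{k,\pms}(\mX_{ki})$ for fixed $i$ (valid on all of $\cP_d^{\ac}$; see Proposition~\ref{prop:Hallin_full}\ref{prop:Hallin:add}) together with $\E\lVert\mY_{ki}^{(n)}\rVert^2\to\E\lVert\mY_{ki}\rVert^2$ and Vitali's theorem to get $\mY_{ki}^{(n)}\stackrel{\sf L^2}{\to}\mY_{ki}$, which suffices for the moment computations above. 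Second, the $\ell\ge 3$ terms of $\tenq W\n_\mu$ are not covered by ``standard variance bounds for degenerate U-statistics'' because the inputs $\{\tilde{\mW}_{ki}\}$ are a random permutation of a deterministic grid, not i.i.d.; the paper defers this to a separate combinatorial argument (Theorem~4.2 of \citealp{shi2019distribution}).
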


\begin{theorem}\label{thm:smallop2}
The conclusion of Theorem~\ref{thm:smallop} still holds with \eqref{eq:key-condition} replaced by
\[\text{$f_k$ is uniformly bounded, and almost everywhere continuous},\quad k=1,2.
\yestag\label{eq:key-condition2}\]
\end{theorem}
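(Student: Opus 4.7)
The plan is to deduce Theorem~\ref{thm:smallop2} from Theorem~\ref{thm:smallop} by a smoothing argument. For each $\epsilon > 0$, since $f_k$ is bounded and almost-everywhere continuous, Lusin's theorem followed by a standard mollification yields bounded Lipschitz kernels $f_k^{(\epsilon)}$ with $\lVert f_k^{(\epsilon)}\rVert_\infty \le 2\lVert f_k\rVert_\infty$ and
$$\E\bigl[(f_k - f_k^{(\epsilon)})^2(\mW_{k1},\dots,\mW_{km})\bigr] \;\le\; \epsilon^2,$$
where the $\mW_{ki}$ are i.i.d.\ with distribution $\fJ_k\sharp\U_{d_k}$. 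Lipschitz continuity of $f_k^{(\epsilon)}$ is inherited by the associated $g_k^{(\epsilon)}$ from~\eqref{eq:suey}, and uniform convergence $g_k^{(n,\epsilon)} \to g_k^{(\epsilon)}$ follows from the weak convergence of $\U_{d_k}^{(n)}$ to $\U_{d_k}$ combined with a common Lipschitz modulus. Hence condition~\eqref{eq:key-condition} holds for $(f_1^{(\epsilon)}, f_2^{(\epsilon)})$, and Theorem~\ref{thm:smallop} gives, for every fixed $\epsilon > 0$,
$$\tenq{W}\n_{\mu^{(\epsilon)}} - W\n_{\mu^{(\epsilon)}} = o_{\Pr}(n^{-1}),\qquad \mu^{(\epsilon)} := \mu_{f_1^{(\epsilon)}, f_2^{(\epsilon)}, H_*^m}.$$

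Writing $r_k := f_k - f_k^{(\epsilon)}$ and exploiting the bilinearity of $k_{f_1,f_2,H_*^m}$ in $(f_1, f_2)$, the difference splits as
$$\tenq{W}\n_\mu - W\n_\mu \;=\; \bigl(\tenq{W}\n_{\mu^{(\epsilon)}} - W\n_{\mu^{(\epsilon)}}\bigr) \;+\; \sum_{(a,b)} \bigl(\tenq{W}\n_{a,b} - W\n_{a,b}\bigr),$$
where $\tenq{W}\n_{a,b}$ and $W\n_{a,b}$ denote the rank-based and oracle SGSCs associated with the GSC $\mu_{a,b,H_*^m}$, and the sum runs over the three cross-pairs $(a,b)\in\{(r_1,f_2^{(\epsilon)}),(f_1^{(\epsilon)},r_2),(r_1,r_2)\}$. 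It will then suffice to show that each residual is $O_{\Pr}(\epsilon)$ after multiplication by $n$, uniformly in $n$; letting $\epsilon\downarrow 0$ along a suitable diagonal sequence completes the proof.

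For the oracle residuals, under $H_0$ the variables $\fG_{k,\pms}(\mX_{ki})$ are i.i.d.\ with distribution~$\fJ_k\sharp\U_{d_k}$, so $W\n_{a,b}$ is an i.i.d.\ U-statistic whose bounded kernel is mean zero and first-order degenerate---this is precisely the mechanism behind Proposition~\ref{prop:GSC-Iconsistency} and is a direct consequence of the antisymmetrization induced by $H_*^m$. Standard variance bounds for first-order degenerate U-statistics of order $m$ then give
$$\mathrm{Var}(n W\n_{a,b}) \;\le\; C_m\, \E\bigl[k_{a,b,H_*^m}^2\bigr] \;\le\; C\lVert a\rVert_{L^2}^2\, \lVert b\rVert_\infty^2 \;\le\; C'\epsilon^2,$$
and Chebyshev's inequality yields $n|W\n_{a,b}| = O_{\Pr}(\epsilon)$. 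For the rank-based residuals, exact distribution-freeness (Proposition~\ref{prop:sGSC}\ref{prop:sGSC1}) recasts $\tenq{W}\n_{a,b}$ under $H_0$ as the $m$th-order U-statistic with kernel $k_{a,b,H_*^m}$ evaluated on a uniformly random pairing between the fixed grids $\fJ_1(\kG_{\mn}^{d_1})$ and $\fJ_2(\kG_{\mn}^{d_2})$. The same $H_*^m$-cancellation makes its mean vanish, and a direct combinatorial variance computation---tracking the number of common indices across the sampled $m$-tuples---gives the analogous upper bound, with the $L^2$ norm replaced by the discrete $L^2_n$-norm over the grid. Boundedness of $f_k$ and continuity of $f_k^{(\epsilon)}$, together with the weak convergence of $\U_{d_k}^{(n)}$ to $\U_{d_k}$, then force $\lVert r_k\rVert_{L^2_n} \to \lVert r_k\rVert_{L^2} \le \epsilon$, so $n|\tenq{W}\n_{a,b}| = O_{\Pr}(\epsilon)$ for $n$ large as well.

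The main obstacle is this final combinatorial variance bound for the rank-based residuals: whereas the oracle side reduces to a textbook Hoeffding decomposition of an i.i.d.\ U-statistic, the rank-based side requires a careful permutation-statistic computation that propagates the first-order degeneracy enforced by $H_*^m$ through the transition from i.i.d.\ samples to random pairings of deterministic grid points, while simultaneously quantifying the bound in terms of $\lVert r_k\rVert_{L^2_n}$ rather than in terms of Lipschitz moduli as in the proof of Theorem~\ref{thm:smallop}.
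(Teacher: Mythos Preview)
Your smoothing strategy is a substantial detour compared with the paper's argument, and it carries a genuine gap.

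The paper does \emph{not} reduce to Theorem~\ref{thm:smallop}. It simply reruns the three-step proof of Theorem~\ref{thm:smallop} and observes that the only places where condition~\eqref{eq:key-condition} was invoked are the convergences \eqref{eq:convA}--\eqref{eq:convC} and \eqref{eq:barbE}--\eqref{eq:barbVar-half} in Step~II. Under~\eqref{eq:key-condition2} these follow immediately: use the Skorokhod representation to couple $\mW_{ki}^{(n)}\stackrel{\sf a.s.}{\longrightarrow}\mW_{ki}$, note that $f_k$ (hence $g_k$ and $g_k^{(n)}$) is bounded and a.e.\ continuous, and apply dominated convergence. No smoothing, no residual control, no new combinatorics.

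Your route, by contrast, has two problems. First, applying Theorem~\ref{thm:smallop} to $(f_1^{(\epsilon)},f_2^{(\epsilon)})$ as a black box requires \emph{all} of~\eqref{eq:key-condition}, including $\int_0^1 J_k^2(u)\,\d u<\infty$. This is a condition on the score functions, not on the kernels, and it is deliberately dropped when~\eqref{eq:key-condition} is replaced by~\eqref{eq:key-condition2}; weak regularity alone does not supply it. You could salvage this by reopening the proof of Theorem~\ref{thm:smallop} and arguing that when $f_k^{(\epsilon)}$ is additionally bounded the $L^2$-convergence $\mY_{ki}^{(n)}\stackrel{\sf L^2}{\longrightarrow}\mY_{ki}$ is no longer needed (dominated convergence suffices)---but that is exactly the paper's direct argument, so the smoothing layer buys nothing.

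Second, the ``main obstacle'' you flag---the combinatorial variance bound for $n\tenq{W}_{a,b}\n$---is not a side issue; carrying it out means reproducing the double-indexed permutation-statistic calculation of Step~II-3 (and the higher-order bounds of Step~III) for the residual kernels, then arguing that the discrete $L^2_n$-norms of $r_k$ converge to their continuous counterparts. That is at least as much work as the paper's direct proof, which performs the same Step~II-3/Step~III computation once, for the original bounded kernels, and is done.
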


\begin{proposition}[Examples]\label{ex:smallop}
If $\mX_1\!\sim\!\P_{\mX_1}\!\in\!\cP_{d_1}^{\ac}$ is independent of 
$\mX_2\!\sim\!\P_{\mX_2}\!\in~\!\cP_{d_2}^{\ac}$  and $J_1,J_2$ are
weakly regular, then the kernel functions from
 {Example}~\ref{prop:sgc}\ref{example:WDM}--\ref{example:mBDY} satisfy~\eqref{eq:uvar},\,\eqref{eq:pd},\,and\,\eqref{eq:key-condition2}.  
% For the kernel functions from
%  {Example}~\ref{prop:sgc}\ref{example:WDM}--\ref{example:mBDY},  condi\-tions~\eqref{eq:uvar},\,\eqref{eq:pd},\,and\,\eqref{eq:key-condition2}  are satisfied  as soon as $\mX_1\!\sim\!\P_{\mX_1}\!\in\!\cP_{d_1}^{\ac}$ is independent of 
% $\mX_2\!\sim\!\P_{\mX_2}\!\in~\!\cP_{d_2}^{\ac}$  and $J_1,J_2$ are weakly regular.
If, moreover,   $J_1,J_2$  are    square-integrable (viz.,~$\int_{0}^{1}J_k^2(u)\d u<\infty$ for~$k=1,2$),  
then  \eqref{eq:uvar}, {{\eqref{eq:pd}},}
and~\eqref{eq:key-condition} hold also for 
the kernels in  {Example}~\ref{prop:sgc}\ref{example:dCov}.  
\end{proposition}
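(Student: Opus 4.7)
The plan is to verify conditions \eqref{eq:uvar}, \eqref{eq:pd}, and either \eqref{eq:key-condition2} or \eqref{eq:key-condition} separately for each of the five kernels. The five kernels split into two structural families: those built from $\tfrac{1}{2}\ind(\cdot\preceq\cdot)$ or $\tfrac{1}{2}\Arc(\cdot,\cdot)$ (cases \ref{example:WDM}--\ref{example:mBDY}), which are uniformly bounded by $1/2$, and the distance-covariance kernel $\tfrac{1}{2}\lVert\mw_1-\mw_2\rVert$ (case \ref{example:dCov}), which is Lipschitz but unbounded. This split dictates which of \eqref{eq:key-condition2} and \eqref{eq:key-condition} is appropriate.

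For the bounded kernels in \ref{example:WDM}--\ref{example:mBDY}, condition \eqref{eq:key-condition2} follows immediately from $\lvert f_k\rvert\le 1/2$ together with the observation that $\ind$ and $\Arc$ are continuous outside a Lebesgue-null set (the boundary of the orthant for the indicator kernel, and the set where an argument equals $\bm{0}_{d_k}$ for the $\Arc$ kernel). For the distance-covariance kernel, $f_k^{\dCov}$ is globally $1$-Lipschitz, hence so is $g_k$ after integrating out; the square-integrability $\int_0^1 J_k^2<\infty$ gives $\E\lVert \mW_{k1}\rVert^2<\infty$ and in turn finiteness of the second moment of $f_k(\mW_{k1},\dots,\mW_{km})$. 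Uniform convergence $g_k^{(n)}\to g_k$ then follows from Lipschitz continuity of $f_k$ combined with the Wasserstein-1 convergence of the discrete uniform $\U_{d_k}^{(n)}$ on the grid $\kG_{\mn}^{d_k}$ to the spherical uniform $\U_{d_k}$; this Wasserstein convergence is a consequence of the construction of $\kG_{\mn}^{d_k}$ recalled in Section~\ref{sec:hallin}.

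For \eqref{eq:uvar}, finiteness $\Var(g_k(\mW_{k1},\mW_{k2}))<\infty$ is immediate from boundedness (cases \ref{example:WDM}--\ref{example:mBDY}) or from the second-moment bound above (case \ref{example:dCov}). Strict positivity is the less automatic half: one must rule out $g_k$ being a.s.\ constant. To this end, fix any $\P_{\mX_k}\in\cP_{d_k}^{\ac}$ with nontrivial dependence on the second marginal and invoke the integral representations displayed in Example~\ref{prop:sgc}. Each such representation exhibits the population GSC as an $L^2$-norm of a difference involving characteristic functions (case \ref{example:dCov}) or joint-minus-product distribution functions (cases \ref{example:WDM}--\ref{example:mBDY}). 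A nonzero population GSC forces the leading Hoeffding projection to be non-degenerate; conversely, $g_k\equiv\mathrm{const}$ would collapse the U-statistic's variance at the first Hoeffding level, which together with the $H_*^m$-antisymmetrization in \eqref{eq:sc} would force the associated dependence measure to vanish for all alternatives in $\cP_{d_1,d_2,\infty}^{\ac}$, contradicting the D-consistency established in Theorem~\ref{thm:kernel:kernel2}.

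For \eqref{eq:pd}, the same integral representations yield the eigenvalue non-negativity. Writing the kernel $(\bm{w}_{11},\bm{w}_{21};\mW_{12},\mW_{22})\mapsto g_1(\bm{w}_{11},\mW_{12})g_2(\bm{w}_{21},\mW_{22})$ as a tensor product and identifying the integral equation \eqref{eq:eigen} as the eigenvalue equation of the self-adjoint operator on $L^2(\U_{d_1}\otimes\U_{d_2})$ whose trace is $\E[g_1(\mW_{11},\mW_{12})g_2(\mW_{21},\mW_{22})]$, the sum $\sum_v \lambda_{\mu,v}$ coincides (up to a positive combinatorial factor) with the limiting degenerate U-statistic's expectation, which by \eqref{eq:dCovID} and the analogous CDF-based identities from Example~\ref{prop:sgc} is manifestly a nonnegative quadratic form in characteristic functions (respectively, in joint-minus-product CDFs). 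More strongly, the same representations show that the bilinear operator underlying \eqref{eq:eigen} factors through a nonnegative-definite integral kernel, so every eigenvalue is nonnegative individually. The main obstacle I expect is executing this last step carefully: turning each of the four non-dCov integral representations into an explicit positive-semidefinite factorization of the operator in \eqref{eq:eigen} requires matching the $H_*^m$-symmetrization inside $f_{k,H_*^m}$ with the structure of the representation (e.g., expressing $\Arc$ as an integral of an indicator over a direction, as in \citet{MR3737307}) so that the operator becomes a genuine Gram operator; the dCov case is easier because \eqref{eq:dCovID} already exhibits the PSD structure via Fourier.
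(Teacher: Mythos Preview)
Your overall structure matches the paper's: split into the bounded kernels (verify \eqref{eq:key-condition2} by boundedness plus a.e.\ continuity) and the distance-covariance kernel (verify \eqref{eq:key-condition} via Lipschitz continuity and square-integrability of the scores). For the uniform convergence $g_k^{(n)}\rightrightarrows g_k$ in the dCov case, the paper argues slightly differently from your Wasserstein route: it couples $\mW_{ki}^{(n)}$ to $\mW_{ki}$ almost surely (Skorokhod) and then uses Vitali's theorem together with $\E\lVert\mW_{ki}^{(n)}\rVert^2\to\E\lVert\mW_{ki}\rVert^2$ (which follows from square-integrability of $J_k$) to get $L^2$ convergence, after which a direct term-by-term bound gives the uniform estimate. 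Your Wasserstein argument would also work, but the paper's is more elementary. For \eqref{eq:pd}, the paper simply cites external sources (\citealp[Theorem~5]{MR2382665} for dCov; \citealp[Theorem~3(i)]{MR3737307} for $h_D$, with the remaining cases ``similar'') rather than extracting a Gram factorization from the integral representations in Example~\ref{prop:sgc}; your self-contained route is more work but would yield the same conclusion if carried out.

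There is one genuine confusion in your argument for strict positivity in \eqref{eq:uvar}. The quantity $g_k(\mW_{k1},\mW_{k2})$ is defined purely in terms of the scored spherical uniforms $\mW_{ki}=\fJ_k(\mU_{ki})$; it has nothing to do with any choice of $\P_{\mX_k}$ or with any joint distribution of $(\mX_1,\mX_2)$. Your contradiction via D-consistency conflates the Hoeffding decomposition of the \emph{U-statistic under the null} with the \emph{population GSC under an alternative}: even if $g_k\equiv 0$ (so the second Hoeffding level of the null U-statistic vanishes), this does not by itself force $\mu_{\pms}(\mX_1,\mX_2)=0$ for every dependent alternative, since the population measure is an expectation under the joint law, not a null-distribution variance. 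The paper treats positivity as ``obvious'', and indeed the direct argument suffices: for dCov, $g_k(\bm{w}_1,\bm{w}_2)$ contains the non-constant term $\lVert\bm{w}_1-\bm{w}_2\rVert$; for the bounded kernels, one checks directly that the antisymmetrized $f_{k,H_*^m}$ evaluated at scored spherical uniforms is not a.s.\ constant (the weak regularity of $J_k$, in particular $\int_0^1 J_k^2>0$, ensures $\mW_{ki}$ is non-degenerate).
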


%Combining Proposition~\ref{prop:infea} and Theorem~\ref{thm:smallop},
%one immediately obtains the  limiting null distribution of the rank-based 
%statistic $\tenq{W}\n_\mu$. 

\begin{corollary}[Limiting null distribution]\label{thm:null}
Suppose the conditions in Proposition~\ref{prop:infea} and Theorem~\ref{thm:smallop} hold. Then, for $\mu=\mu_{f_1,f_2,H_{*}^{m}}$ with $m\ge 4$,  under the hypothesis~$H_0$ that ${\mX_1}\sim\P_{\mX_1}\in\cP_{d_1}^{\ac}$ and
  ${\mX_2}\sim\P_{\mX_2}\in\cP_{d_2}^{\ac}$ are independent,
\[
n%\cdot 
\tenq{W}\n_{\mu}=n\tenq{W}\n_{J_1,J_2,\mu_{f_1,f_2,H_{*}^{m}}}
\rightsquigarrow  
\sum_{v=1}^{\infty}\lambda_{\mu,v}(\xi_v^2-1)
\yestag\label{eq:limitnulldistrtilde}
\]
%as long as $n\to\infty$ and \eqref{eq:key} holds. 
with $[\lambda_{\mu,v}]_{v=1}^{\infty}$ and $[\xi_v]_{v=1}^{\infty}$
 as defined in Proposition~\ref{prop:infea}. 
\end{corollary}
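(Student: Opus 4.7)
The corollary follows immediately by combining the two preceding results via Slutsky's lemma, so my proposal is essentially to verify that the hypotheses line up and then conclude.

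The plan is as follows. First, I invoke Theorem~\ref{thm:smallop} (whose assumptions are in force by the hypothesis of the corollary), which gives the asymptotic equivalence
\[
\tenq{W}\n_{\mu} - W\n_{\mu} = o_{\Pr}(n^{-1}) \qquad \text{as } n_R,n_S\to\infty,
\]
or equivalently, after multiplying by $n$,
\[
n\tenq{W}\n_{\mu} - nW\n_{\mu} = o_{\Pr}(1).
\]
Second, I apply Proposition~\ref{prop:infea} (again, its hypotheses are assumed) to the oracle version $W\n_{\mu}$, obtaining
\[
nW\n_{\mu} \;\rightsquigarrow\; \sum_{v=1}^{\infty}\lambda_{\mu,v}(\xi_v^2-1),
\]
where the eigenvalues $[\lambda_{\mu,v}]_{v=1}^{\infty}$ come from the integral equation \eqref{eq:eigen} and $[\xi_v]_{v=1}^{\infty}$ are i.i.d.\ standard Gaussians.

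Third, I combine the two displays via Slutsky's lemma: writing
\[
n\tenq{W}\n_{\mu} = nW\n_{\mu} + \bigl(n\tenq{W}\n_{\mu} - nW\n_{\mu}\bigr),
\]
the second summand tends to $0$ in probability while the first converges weakly to $\sum_{v=1}^{\infty}\lambda_{\mu,v}(\xi_v^2-1)$; the sum therefore converges in distribution to the same limit, which is exactly~\eqref{eq:limitnulldistrtilde}.

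There is essentially no real obstacle here, since all the substantial work has been carried out in Proposition~\ref{prop:infea} (which handles the degenerate U-statistic asymptotics of the oracle via the standard spectral decomposition, cf.~Chapter~5.5.2 of \citet{MR595165}) and in Theorem~\ref{thm:smallop} (the H\'ajek-type multivariate asymptotic representation, which is the technical heart of the paper). The only point worth double-checking is that the convergence $\rightsquigarrow$ in Proposition~\ref{prop:infea} is to a genuine random variable---i.e., that the series $\sum_{v=1}^{\infty}\lambda_{\mu,v}(\xi_v^2-1)$ converges almost surely---which follows from the square-integrability of the kernel $g_1 g_2$ implicit in \eqref{eq:uvar} and the $L^2$-summability $\sum_v \lambda_{\mu,v}^2<\infty$ of the eigenvalues of \eqref{eq:eigen}; this ensures Slutsky's lemma applies without issue.
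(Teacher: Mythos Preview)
Your proposal is correct and matches the paper's own proof, which is the one-line observation that combining Proposition~\ref{prop:infea} with Theorem~\ref{thm:smallop} immediately yields the limiting null distribution of $\tenq{W}\n_{\mu}$. Your write-up simply spells out the Slutsky step that the paper leaves implicit.
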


\begin{remark} \label{rmk:berry-esseen}
 {Corollary~\ref{thm:null} gives no rate, i.e., no Berry--Ess\' een type bound
 for the convergence in~\eqref{eq:limitnulldistrtilde}.    Indeed,
 deriving such bounds in the present context is quite
 challenging.  Results for the univariate case with $d_1=d_2=1$ were
 established  for simpler statistics such as Spearman's $\rho$ and
 Kendall's $\tau$ by \citet[Chap.~6.2]{MR1472486} and, more recently,
 by \citet{MR3486424}. Extending these results to the multivariate
 % from ordinary univariate ranks to the
 measure-transportation-based ranks considered here %center-outward
 % ranks and signs
 is highly nontrivial and requires properties of empirical transports
 that have not yet been obtained.  This pertains, in particular, to
 working out
 the rate of
convergence in the Glivenko--Cantelli result for the center-outward
distribution function given in~\eqref{eq:GC-new};  %Working out this rate
%  (center-outward distribution functions)?in particular, the rate of
% convergence in the Glivenko--Cantelli result~\eqref{eq:GC-new}?that are not available yet in the literature. 
% Working out the convergence rate in~\eqref{eq:GC-new}
an open problem in the recent survey by  \citet[Section~5]{hallin2021measure}.
}
\end{remark}

%To facilitate a one-sided test of independence, we assume that 
%\[
%\text{all eigenvalues of the integral equation \eqref{eq:eigen} are non-negative.} 
%and the only eigenvector with $\lambda=0$ is $1$.}
%\yestag\label{eq:pd}
%\]

For any significance level $\alpha\in(0,1)$, define the quantile
\[
%\mathsf{T}_{\mu,\alpha}:= \ind\Big(n\cdot \tenq{W}\n_{\mu} > q_{\mu,1-\alpha}\Big),~~~
q_{\mu,1-\alpha}:=\inf\Big\{x\in\R:\P\Big(\sum_{v=1}^{\infty}\lambda_{\mu,v}(\xi_v^2-1)\le x\Big)\ge 1-\alpha \Big\},
\yestag\label{eq:testinfea3}
\]
where $[\lambda_{\mu,v}]_{v=1}^{\infty}$ and $[\xi_v]_{v=1}^{\infty}$
are as in Proposition~\ref{prop:infea}. Let
$\tenq{W}\n_{\mu}$ be as in Theorem \ref{thm:smallop}, and define
the test
\[
  \mathsf{T}\n_{\mu,\alpha}:= \ind\big(n \tenq{W}\n_{\mu} >
q_{\mu,1-\alpha}\big).
\]
The next proposition summarizes the  asymptotic validity and
properties of this test.

\begin{proposition}[Uniform validity and consistency]\label{prop:uvc}
  Let $J_1,J_2$ be weakly regular score functions, and let $\mu=\mu_{f_1,f_2,H_{*}^{m}}$
  be a GSC with $m\ge 4$ such that Conditions \eqref{eq:uvar} %\hf{\fbox{no need to enforce \eqref{eq:pd} here}} 
  and one of 
  \eqref{eq:key-condition} and \eqref{eq:key-condition2} hold.    Then, \vspace{-2mm}
  %\[
\begin{enumerate}[itemsep=-.5ex]
\item[(i)]  $\lim_{n\to \infty}
  \Pr(\mathsf{T}\n_{\mu,\alpha}=1)=\alpha$ %\]
  for any $\P\in \cP^{\ac}_{d_1}\otimes \cP^{\ac}_{d_2}$, i.e., for $\mX_1$ and $\mX_2$ independent with~$\mX_1\sim\P_{\mX_1}\in\cP^{\ac}_{d_1}$ and 
   $\mX_2\sim\P_{\mX_2}\in\cP^{\ac}_{d_2}$; 
% \begin{align}\label{eq:working-model}
% \cP^{\ac}_{d_1,d_2,\otimes}:=\Big\{\P_{(\mX_1,\mX_2)}=\P_{\mX_1}\times \P_{\mX_2}: \P_{\mX_1}\in\cP_{d_1}^{\ac}, \P_{\mX_2}\in\cP_{d_2}^{\ac}\Big\},
% \end{align}
% we have \[\lim_{n\to \infty}\Pr(\mathsf{T}_{\mu,\alpha}=1)=\alpha.\] 
%Furthermore, by exact distribution-freeness (
\item[(ii)] it follows from Proposition \ref{prop:sGSC}\ref{prop:sGSC1} that 
$%\[
\lim_{n\to \infty}\sup_{\P\in\cP^{\#}_{d_1}\otimes\cP^{\#}_{d_2}}\Pr(\mathsf{T}\n_{\mu,\alpha}=1)=\alpha;
$ %\]
\item[(iii)] if, moreover,   the pair of kernels $(f_1,f_2)$ is D-consistent,
% the score functions
$J_1, J_2$ are strictly monotone, and~\eqref{eq:testconsi} holds,  
 %\textcolor{red}{please check that I am correct} 
%\[
$\lim_{n\to \infty}\Pr(\mathsf{T}\n_{\mu,\alpha}=1)=1$ for any fixed alternative 
$\P_{(\mX_1,\mX_2)}\in\cP_{d_1,d_2,\infty}^{\ac}$ as defined in \eqref{eq:Dconsifam}. %\] 
%In particular, those $\mu$'s that were considered in  {Example}~\ref{prop:sgc}\ref{example:dCov}--\ref{example:mBDY} are such that \eqref{eq:key-condition} holds.
\end{enumerate}\end{proposition}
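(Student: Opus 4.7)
The proposition has three parts, and my plan is to handle them in turn, with all heavy lifting already done by earlier results in the paper.

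For (i), I would apply Corollary~\ref{thm:null}: the hypotheses on $(f_1,f_2,J_1,J_2)$ imposed in Proposition~\ref{prop:uvc} coincide with those of the corollary, so under $H_0$,
\[
n\tenq{W}\n_{\mu}\;\rightsquigarrow\;Z_\mu:=\sum_{v=1}^{\infty}\lambda_{\mu,v}(\xi_v^2-1).
\]
To convert this weak convergence into convergence of the tail probability at $q_{\mu,1-\alpha}$, I would verify that the CDF of $Z_\mu$ is continuous at this point. Condition~\eqref{eq:uvar} forces at least one eigenvalue $\lambda_{\mu,v}$ to be nonzero, and together with \eqref{eq:pd} this makes $Z_\mu$ a weighted sum of independent centered chi-squared random variables with at least one positive coefficient, hence absolutely continuous with CDF continuous on all of $\R$. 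The Portmanteau lemma applied to the open set $(q_{\mu,1-\alpha},\infty)$ then yields $\Pr(\mathsf{T}\n_{\mu,\alpha}=1)\to\alpha$.

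For (ii), the key ingredient is the exact distribution-freeness stated in Proposition~\ref{prop:sGSC}\ref{prop:sGSC1}: under $H_0$, the law of $\tenq{W}\n_{\mu}$ depends neither on $\Pr_{\mX_1}$ nor on $\Pr_{\mX_2}$. Consequently, $\Pr\mapsto \E_{\Pr}[\mathsf{T}\n_{\mu,\alpha}]$ is \emph{constant} on $\cP_{d_1}^{\ac}\otimes\cP_{d_2}^{\ac}$, so the supremum over the smaller class $\cP_{d_1}^{\#}\otimes\cP_{d_2}^{\#}$ equals the value at any single such $\Pr$; by (i) that value converges to $\alpha$.

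For (iii), I would combine the almost-sure convergence \eqref{eq:testconsi}, assumed in the statement, with the D-consistency of $\mu_{*\pms}$ established in Proposition~\ref{prop:sGSC}\ref{prop:sGSC3}(b) (which requires precisely the D-consistency of $(f_1,f_2)$ and strict monotonicity of $J_1,J_2$ that are also assumed). Under a fixed alternative $\Pr_{(\mX_1,\mX_2)}\in\cP_{d_1,d_2,\infty}^{\ac}$, dependence of $\mX_1$ and $\mX_2$ forces $\mu_{\pms}(\mX_1,\mX_2)>0$, and~\eqref{eq:testconsi} gives $\tenq{W}\n_{\mu}\stackrel{\sf a.s.}{\longrightarrow}\mu_{\pms}(\mX_1,\mX_2)>0$; since $q_{\mu,1-\alpha}$ is a fixed constant, $n\tenq{W}\n_{\mu}\to+\infty$ a.s., so $\ind(n\tenq{W}\n_{\mu}>q_{\mu,1-\alpha})\to 1$ a.s.\ and bounded convergence delivers $\Pr(\mathsf{T}\n_{\mu,\alpha}=1)\to 1$.

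The only delicate point in this plan is the continuity of the CDF of $Z_\mu$ at $q_{\mu,1-\alpha}$, for which I would appeal to the classical absolute-continuity property of non-degenerate weighted chi-squared mixtures; everything else is pure bookkeeping that stitches together distribution-freeness, D-consistency, and strong consistency established elsewhere in the paper.
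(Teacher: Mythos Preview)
Your proposal is correct and follows the same route as the paper's proof, which is extremely terse: validity from Corollary~\ref{thm:null}, uniform validity from distribution-freeness, and consistency from $\tenq{W}\n_{\mu}\stackrel{\sf a.s.}{\longrightarrow}\mu_{\pms}(\mX_1,\mX_2)>0$. You supply more detail than the paper on why the tail probability converges exactly to $\alpha$, namely continuity of the limiting CDF. One small remark: you invoke~\eqref{eq:pd} for this, but \eqref{eq:pd} is not among the hypotheses of Proposition~\ref{prop:uvc}; fortunately it is unnecessary, since a convergent series $\sum_v \lambda_{\mu,v}(\xi_v^2-1)$ with at least one $\lambda_{\mu,v}\neq 0$ (guaranteed by~\eqref{eq:uvar}) is absolutely continuous regardless of the signs of the weights.
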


\subsection{Local power analysis}\label{sec:local-power}

In this section,  {we conduct local power analyses of the proposed
  tests for quadratic mean differentiable  classes of alternatives
  \citep[Def.~12.2.1]{MR2135927}, % coupled with certain technical
  % assumptions.
  % THE NEXT SENTENCE IS ALREADY COVERED BY THE INTRODUCTION.  
% Such alternatives are commonly used for local power analysis; see,
% e.g., Section~3 in \citet{MR3961499} and Section~4.4 in
% \citet{cao2020correlations} for some recent examples.
for which  we establish nontrivial power in $n^{-1/2}$ neighborhoods. 
We begin with a model 
$\{q_{\mX}(\mx;\delta)\}_{|\delta|<\delta^*}$ with $\delta^*>0$, 
under which $\mX=(\mX_1,\mX_2)$ has
Lebesgue-density~$q_{\mX}(\mx;\delta)=q_{(\mX_1,\mX_2)}\big((\mx_1,\mx_2);\delta\big)$,
with $q_{\mX_1}(\mx_1;\delta)$ and $q_{\mX_2}(\mx_2;\delta)$ being the marginal densities. %is a joint density of $\mX$ 
%with respect to Lebesgue measure. 
%Moreover, $q_\delta(\mx,y,\mz)$ is assumed to be absolutely continuous. 
We then make the following assumptions.}

\begin{assumption} \label{asp:only}\mbox{}
 { %It is assumed that
\begin{enumerate}[itemsep=-.5ex,label=(\roman*)]
\item\label{asp:only1} Dependence of  %of the vectors
 $\mX_1$ and $\mX_2$: %, i.e.,
  $q_{\mX}(\mx;\delta)=q_{\mX_1}(\mx_1;\delta)
  q_{\mX_2}(\mx_2;\delta)$ holds if and only if $\delta=0$. 
%   %denoting by $q_{\mX_1}(\mx_1;\delta)$ and $q_{\mX_2}(\mx_2;\delta)$ the marginal densities of $q_{\mX}(\mx;\delta)$, 
% \[
% q_{\mX}(\mx;\delta)=q_{\mX_1}(\mx_1;\delta) q_{\mX_2}(\mx_2;\delta)
% \]
% if and only if $\delta=0$; 
\item\label{asp:only2}
  The
family $\{q_\delta(\mx)\}_{|\delta|<\delta^*}$ is quadratic mean differentiable at $\delta=0$ with score function~$\dot{\ell}(\cdot;0)$, that is,
\[
\int\Big(\sqrt{q_{\mX}(\mx;\delta)}-\sqrt{q_{\mX}(\mx;0)}
-\frac12\delta\dot{\ell}(\mx;0)\sqrt{q_{\mX}(\mx;0)}\Big)^2{\rm d}\mx
= o(\delta^2)
\quad\text{as} \;\; \delta\to0.
\]
% as $\delta\to0$; 
%   The score
%   function 
% \[
% \dot{\ell}(\mx;\delta)
% :=\frac{\partial}{\partial\delta}\log q_{\mX}(\mx;\delta)
% \]
% exists at $\delta=0$, and the family $\{q_\delta(\mx)\}_{|\delta|<\delta^*}$ is quadratic mean differentiable at $\delta=0$ with score function $\dot{\ell}(\cdot;0)$, that is,
% \[
% \int\Big(\sqrt{q_{\mX}(\mx;\delta)}-\sqrt{q_{\mX}(\mx;0)}
% -\frac12\delta\dot{\ell}(\mx;0)\sqrt{q_{\mX}(\mx;0)}\Big)^2{\rm d}\mx = o(\delta^2)
% \]
% as $\delta\to0$; 
\item\label{asp:only3} The Fisher information is positive, i.e., $\cI_{\mX}(0)
:=\int \{\dot{\ell}(\mx;0)\}^2q_{\bf X}(\mx,0){\rm d}\mx>0$; of note, 
Assumption~\ref{asp:only}\ref{asp:only2} implies that  
$\cI_{\mX}(0)<\infty$  and~$\int \dot{\ell}(\mx;0) q_{\bf X}(\mx,0){\rm d}\mx=0$. 
\item\label{asp:only4} The score function $\dot{\ell}(\mx;0)$ is not additively separable, i.e., there
  do not exist functions~$h_1$ and $h_2$ such that~$\dot{\ell}(\mx;0)=h_1(\mx_1)+h_2(\mx_2)$.
\end{enumerate}}
\end{assumption}

\begin{remark}
 {For the sake of simplicity, we have restricted ourselves to one-parameter classes. Analogous results hold for families indexed by a multivariate parameter $\mdelta$.}
\end{remark}

For a local power analysis, we consider a sequence of local  alternatives
 {obtained as
\begin{equation}\label{eq:local-alternative}
H_{1}\n(\delta_0):\delta=\delta\n, ~~~\text{where}~\delta\n:= n^{-1/2}\delta_0
\end{equation}
with some constant $\delta_0\ne 0$.} In this local model, testing the null
hypothesis of independence reduces to testing 
$%\[
H_0: \delta_0 = 0$ versus $H_1: \delta_0\ne 0
$. %\]
%For given  $\P_{\mX ^*_1},\, \P_{\mX ^*_2},\,  \fM_1$, and $ \fM_2$, we obtain the
%following results on the power of the  tests ${\mathsf{T}}\n_{\mu,\alpha}$.
%the following theorem describes the power of the discussed tests. %For expository purpose, we only present the power analysis for those kernels discussed in Example \ref{}.

\begin{theorem}[Power analysis]\label{thm:power-general}
  Consider a GSC $\mu=\mu_{f_1,f_2,H_{*}^{m}}$ with $m\ge 4$ and kernel
  functions $f_1,f_2$ picked from  {Example}~\ref{prop:sgc}. Assume
  that $J_1,J_2$ are
  weakly regular score functions that satisfy the assumptions of Proposition \ref{ex:smallop}. 
%   Consider a GSC $\mu=\mu_{f_1,f_2,H_{*}^{m}}$ with $m\ge 4$ and
%   weakly regular score functions $J_1,J_2$.  Assume that the kernel
%   functions $f_1,f_2$ are picked from  {Example}~\ref{prop:sgc} and the score functions $J_1,J_2$ satisfy the assumptions of Proposition \ref{ex:smallop}. 
%   %Conditions \eqref{eq:uvar}, {{\eqref{eq:pd}}}, one of  
%   %\eqref{eq:key-condition} and \eqref{eq:key-condition2}.
% %  and $g_k$ defined as in \eqref{eq:suey} is continuous. 
% %  and the integral equation $\E\big[g_k(\bm{w}_{k1},\mW_{k2})\psi_k(\mW_{k2})\big]=0$ 
% %  has the only (normalized) eigenfunction $\psi_k=1$.  
  Then if  {Assumption~\ref{asp:only}} holds, %and 
%  \[\E\Big[g_1\Big(\bm{w}_{11},\fG ^*_{1,\pms}(\mX ^*_1)\Big)
  %         g_2\Big(\bm{w}_{21},\fG ^*_{2,\pms}(\mX ^*_2)\Big)
    %       L^{\prime}\Big((\mX ^*_1, \mX ^*_2);0\Big)\Big]\ne 0,\]
 % then, 
  for any~$\beta>0$,
%   , as long as the kernel functions $f_1,f_2$ and the score
%   functions $J_1,J_2$ satisfy conditions \eqref{eq:uvar} and
%   \eqref{eq:key-condition}
% %\[
% %0<\Var(g_{k}(\mW_{k1},\mW_{k2}))<\infty,~~~\text{for}~k=1,2,
% %\yestag\label{eq:uvar}
% %\]
% %where $g_k$ is defined in \eqref{eq:suey}, $\mW_{ki}=\fJ_{k}(\mW_{ki}^{*})$, and $\mW_{ki}^{*},~i=1,2$ are independent with law $\U_{d_k}$, 
%we have
  there exists a %some sufficiently large
  constant 
  $C_\beta>0$ depending only on $\beta$ such that, as long as~$|\delta_0|>C_\beta$, 
$%\[
\lim_{n\to\infty}\P\big\{{\mathsf{T}}\n_{\mu,\alpha}=1\big\vert
H_{1}\n(\delta_0)\big\}\ge1-\beta$.
%\qquad\text{provided} \ \ |\delta_0|>C_\beta.
%\]
%where the infimum is taken over all distributions $\P_{\fA_{\delta\n}}$
%%$\mX$ with $\fA_{\delta\n}$
% such that $|\delta_0|\ge C_\beta$. 
\end{theorem}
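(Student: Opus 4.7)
The plan is to combine the multivariate H\'ajek representation (Theorem~\ref{thm:smallop}) with Le~Cam's third lemma in a non-Gaussian limit context, and then exploit Assumption~\ref{asp:only}\ref{asp:only4} to produce a positive quadratic drift in $\delta_0$. First, quadratic mean differentiability with positive Fisher information (Assumption~\ref{asp:only}\ref{asp:only2}--\ref{asp:only3}) implies mutual contiguity between the null and the local alternative sequences \citep[Cor.~12.3.1]{MR2135927}; hence the approximation $n(\tenq{W}\n_\mu-W\n_\mu)=o_{\mathrm P}(1)$ delivered by Theorem~\ref{thm:smallop} persists under $H_1\n(\delta_0)$. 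It therefore suffices to analyse the oracle $nW\n_\mu$.

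Next I would establish the joint limit, under $H_0$, of $(nW\n_\mu,\Lambda_n)$, where $\Lambda_n$ is the local log-likelihood ratio. Under $H_0$, $W\n_\mu$ is a first-order degenerate U-statistic; its Hoeffding decomposition, combined with Mercer's theorem applied to the integral equation~\eqref{eq:eigen}, yields
\[
nW\n_\mu=\sum_{v\ge 1}\lambda_{\mu,v}(\xi_{n,v}^2-1)+o_{\mathrm P}(1),\qquad \xi_{n,v}:=\frac{1}{\sqrt n}\sum_{i=1}^{n}\psi_v\big(\fG_{1,\pms}(\mX_{1i}),\fG_{2,\pms}(\mX_{2i})\big),
\]
where $\{\psi_v\}$ are the orthonormal centred eigenfunctions of~\eqref{eq:eigen}. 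The LAN expansion (Assumption~\ref{asp:only}\ref{asp:only2}) gives
\[
\Lambda_n=\frac{\delta_0}{\sqrt n}\sum_{i=1}^{n}\dot\ell(\mX_i;0)-\frac12\delta_0^2\,\cI_{\mX}(0)+o_{\mathrm P}(1).
\]
A multivariate CLT, together with a truncation/diagonal argument in $v$, delivers joint convergence of $([\xi_{n,v}]_v,\Lambda_n)$ to a jointly Gaussian limit $([\xi_v]_v,Z)$ with $\xi_v$ i.i.d.\ standard normal, $Z\sim N(-\tfrac12\delta_0^2\cI_{\mX}(0),\,\delta_0^2\cI_{\mX}(0))$, and $\mathrm{Cov}(\xi_v,Z)=\delta_0\gamma_v$, where $\gamma_v:=\E_{H_0}\big[\psi_v\big(\fG_{1,\pms}(\mX_1),\fG_{2,\pms}(\mX_2)\big)\dot\ell(\mX_1,\mX_2;0)\big]$.

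Le~Cam's third lemma, applied jointly (valid because the limit $([\xi_v]_v,Z)$ is Gaussian), shows that, under $H_1\n(\delta_0)$, $\xi_{n,v}\rightsquigarrow \xi_v+\delta_0\gamma_v$ jointly across $v$, whence
\[
nW\n_\mu\;\rightsquigarrow\;\sum_v\lambda_{\mu,v}\big((\xi_v+\delta_0\gamma_v)^2-1\big)=\sum_v\lambda_{\mu,v}(\xi_v^2-1)+2\delta_0\sum_v\lambda_{\mu,v}\gamma_v\xi_v+\delta_0^2\kappa_\mu,
\]
with $\kappa_\mu:=\sum_v\lambda_{\mu,v}\gamma_v^2\ge 0$. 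The crux is that $\kappa_\mu>0$ under Assumption~\ref{asp:only}\ref{asp:only4}: if, on the contrary, $\gamma_v=0$ for every $v$ with $\lambda_{\mu,v}>0$, then $\dot\ell(\cdot;0)$ would be $L^2(q_{\mX}(\cdot;0))$-orthogonal to the closed span $\mathcal H$ of $\{\psi_v:\lambda_{\mu,v}>0\}$; but by D-consistency of the kernels in Example~\ref{prop:sgc} (Theorem~\ref{thm:kernel:kernel2}), $\mathcal H$ can be identified with the orthogonal complement, in $L^2(q_{\mX}(\cdot;0))$, of the subspace of additively separable functions, contradicting Assumption~\ref{asp:only}\ref{asp:only4}.

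Finally, the deterministic drift $\delta_0^2\kappa_\mu$ grows quadratically in $\delta_0$ whereas the random part has variance of order $C_1+C_2\delta_0^2$, with $C_1,C_2$ depending only on $[\lambda_{\mu,v},\gamma_v]_v$. A Chebyshev inequality then yields, for any $\beta>0$, a constant $C_\beta$ such that the limiting probability of $n\tenq{W}\n_\mu>q_{\mu,1-\alpha}$ under $H_1\n(\delta_0)$ is at least $1-\beta$ whenever $|\delta_0|>C_\beta$, which is the claim. The hard part is the identification of $\mathcal H$ with the non-separable part of $L^2$ for each of the kernels in Example~\ref{prop:sgc}; this requires a case-by-case spectral analysis of the relevant integral operators (projection-averaging, marginal ordering, and distance-based kernels), together with a careful justification of joint convergence in Le~Cam's third lemma over the infinite-dimensional eigenbasis.
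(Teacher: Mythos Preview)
Your approach is essentially the paper's: contiguity from QMD, H\'ajek representation to reduce to the oracle, joint CLT for $([\xi_{n,v}]_v,\Lambda_n)$ via truncation in $v$, then Le~Cam's third lemma. Two points of divergence are worth flagging.

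First, your final bound via the aggregate drift $\delta_0^2\kappa_\mu$ and Chebyshev is a clean alternative to what the paper does. The paper instead isolates a single index $v^*$ with $\gamma_{v^*}\ne0$, uses the crude inequality $\lambda_{v^*}\xi_{v^*}^2\le q_{1-\alpha}+\sum_v\lambda_v$ (valid since all $\lambda_v\ge0$ by \eqref{eq:pd}), and bounds directly the probability that the shifted Gaussian $\xi_{v^*}$ falls in a fixed interval. Both routes work; yours requires checking that $\sum_v\lambda_v\gamma_v^2<\infty$ (immediate from $\sum_v\lambda_v^2<\infty$ and $\sum_v\gamma_v^2\le\cI_{\mX}(0)$), the paper's needs $\sum_v\lambda_v<\infty$. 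Relatedly, you apply the Gaussian-shift version of Le~Cam's third lemma to the $\xi_v$'s and then push through the continuous map; the paper applies the general (likelihood-ratio reweighting) form directly to the pair $(n\tenq W\n_\mu,\Lambda\n)$.

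Second, your justification of $\kappa_\mu>0$ via D-consistency (Theorem~\ref{thm:kernel:kernel2}) is not quite the right lever: D-consistency is a statement about probability measures, not about $L^2$-completeness of the eigenbasis, and the identification of $\mathcal H$ with the orthogonal complement of additively separable functions does not follow from it alone. The paper's argument is more concrete: it invokes the tensor-product form $\psi_v=\psi_{1,v_1}\otimes\psi_{2,v_2}$ of the eigenfunctions (via Lemma~4.2 of \citet{MR3541972}) and then asserts that the enlarged family $\{\psi_{1,v_1}\otimes\psi_{2,v_2}\}_{v_1,v_2\ge0}$, with $\psi_{k,0}\equiv1$, is a complete orthogonal basis of $L^2$. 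Given completeness, $\gamma_v=0$ for all $v_1,v_2\ge1$ forces $\dot\ell(\cdot;0)$ into the span of products with at least one index zero---precisely the additively separable functions---contradicting Assumption~\ref{asp:only}\ref{asp:only4}. So your ``case-by-case spectral analysis'' is indeed what is needed, but the route is through marginal completeness of each $g_k$, not through D-consistency of the joint measure.
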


%We emphasize that by Proposition~\ref{ex:smallop},
%Theorem~\ref{thm:power-general} covers the kernel functions considered
%in  {Example}~\ref{prop:sgc}\ref{example:dCov}--\ref{example:mBDY};
%recall also Example~\ref{ex:rbsgc}.

%\begin{remark} \label{rmk:power}
%
%\end{remark}

 {Following the arguments from the proof of Theorem~\ref{thm:power-general}, 
one should be able to obtain similar local power results for the original 
(non-rank-based) tests associated with the kernels listed in
 {Example}~\ref{prop:sgc}. However, to the best of our knowledge,
this analysis has not been performed in the literature,  except for
$d_1=d_2=1$ where results can be found, e.g.,  in \citet{MR3466185} and \cite{shi2020power}.
We also emphasize that, although Theorem \ref{thm:power-general} only
considers the specific cases listed also in Example~\ref{ex:rbsgc},
the proof technique applies more generally. We refrain, however, from
stating a more general version of Theorem~\ref{thm:power-general} as
this would require a number of tedious technical conditions.  }

Combined with the following result, Theorem~\ref{thm:power-general} yields  {nontrivial power of the proposed tests in $n^{-1/2}$ neighborhoods of $\delta=0$}.

\begin{landscape}
{
\renewcommand{\tabcolsep}{4pt}
\renewcommand{\arraystretch}{1}
\begin{table}%[H]
\begin{center}
\caption{Properties of the center-outward GSCs in Example~\ref{ex:rbsgc} with   weakly regular score functions $J_k$.}\label{tab:propofsGSC}
%\begin{tabular}{ccccccccccc}
%\toprule
% $\utilde W_{\mu}$    &   (1)   &  (2)   &  (3)   &  (3')  &  (4)   & \multicolumn{3}{c}{(5)}                                    \\
%                      &         &        &        &        &        & \multicolumn{2}{c}{Exact}                   & Fast approx. \\
%                      &         &        &        &        &        & $d_1\vee d_2=2$     & $d_1\vee d_2\ge3$     &              \\
%\midrule
% $\utilde W_{\dCov}$  & $\cP^{\ac}_{d_1,d_2,\otimes}$ & \cmark & \cmark & \cmark & \cmark & $O(n^2)$ & $O(n^{5/2}\log(nN))$ & $O(n^{3/2}\Omega\vee nK\log n)$      \\
% $\utilde W_{M}$      & $\cP^{\ac}_{d_1,d_2,\otimes}$ & \xmark & \cmark & \cmark & \cmark & $O(n^{3/2+\delta}\log N)$ & $O(n^{5/2}\log(nN))$ & $O(n^{3/2}\Omega)$  \\
% $\utilde W_{D}$      & $\cP^{\ac}_{d_1,d_2,\otimes}$ & \cmark & \cmark & \cmark & \cmark & $O(n^3)$ & $O(n^3)$             & $O(n^{3/2}\Omega\vee nK\log n)$      \\
% $\utilde W_{R}$      & $\cP^{\ac}_{d_1,d_2,\otimes}$ & \cmark & \cmark & \cmark & \cmark & $O(n^4)$ & $O(n^4)$             & $O(n^{3/2}\Omega\vee nK\log n)$      \\
% $\utilde W_{\tau^*}$ & $\cP^{\ac}_{d_1,d_2,\otimes}$ & \cmark & \cmark & \cmark & \cmark & $O(n^4)$ & $O(n^4)$             & $O(n^{3/2}\Omega\vee nK\log n)$      \\
%\bottomrule
%\end{tabular}
\begin{tabular}{cccC{1.4in}C{1.2in}C{1.2in}C{1.2in}C{1.2in}}
\toprule
\multicolumn{3}{c}{$\tenq{W}\n_{\mu}$}                             
        & $\tenq{W}\n_{\dCov}$ 
        & $\tenq{W}\n_{M}$ 
        & $\tenq{W}\n_{D}$ 
        & $\tenq{W}\n_{R}$ 
        & $\tenq{W}\n_{\tau^*}$ \\
\midrule
  (1)   & \multicolumn{2}{C{1.2in}}{Distribution-freeness} 
        & $\P_{(\mX_1,\mX_2)}\in\cP^{\ac}_{d_1}\otimes \cP^{\ac}_{d_2}${\color{blue}$^{(a)}$}
        & $\P_{(\mX_1,\mX_2)}\in\cP^{\ac}_{d_1}\otimes \cP^{\ac}_{d_2}$ 
        & $\P_{(\mX_1,\mX_2)}\in\cP^{\ac}_{d_1}\otimes \cP^{\ac}_{d_2}$ 
        & $\P_{(\mX_1,\mX_2)}\in\cP^{\ac}_{d_1}\otimes \cP^{\ac}_{d_2}$ 
        & $\P_{(\mX_1,\mX_2)}\in\cP^{\ac}_{d_1}\otimes \cP^{\ac}_{d_2}$ \\[2em]
%\hline
  (2)   & \multicolumn{2}{C{1.2in}}{Transformation invariance}     
        & Orthogonal transf., shifts, and global scales 
        & Shifts and global scales
        & Orthogonal transf., shifts, and global scales  
        & Orthogonal transf., shifts, and global scales  
        & Orthogonal transf., shifts, and global scales  \\[2em]
%\hline
  (3)   & \multicolumn{2}{C{1.2in}}{D-consistency}        
        & $J_k$  strictly monotone ~~~~~~ and integrable, 
          $\P_{(\mX_1,\mX_2)}\in\cP^{\ac}_{d_1+d_2}${\color{blue}$^{(b)}\!\!\!$} 
        & $J_k$  strictly monotone, 
          $\P_{(\mX_1,\mX_2)}\in\cP^{\ac}_{d_1+d_2}$  
        & $J_k$  strictly monotone, 
          $\P_{(\mX_1,\mX_2)}\in\cP^{\ac}_{d_1+d_2}$  
        & $J_k$  strictly monotone, 
          $\P_{(\mX_1,\mX_2)}\in\cP^{\ac}_{d_1+d_2}$  
        & $J_k$  strictly monotone, 
          $\P_{(\mX_1,\mX_2)}\in\cP^{\ac}_{d_1+d_2}$  \\[2.5em]
%\cline{2-8}
  (3')  & \multicolumn{2}{C{1.2in}}{Consistency of test}  
        & $J_k$  strongly regular, 
          $\P_{(\mX_1,\mX_2)}\in\cP^{\#}_{d_1,d_2}${\color{blue}$^{(c)}$}
        & $J_k$  strongly regular, 
          $\P_{(\mX_1,\mX_2)}\in\cP^{\#}_{d_1,d_2}$
        & $J_k$  strongly regular, 
          $\P_{(\mX_1,\mX_2)}\in\cP^{\#}_{d_1,d_2}$
        & $J_k$  strongly regular, 
          $\P_{(\mX_1,\mX_2)}\in\cP^{\#}_{d_1,d_2}$
        & $J_k$  strongly regular, 
          $\P_{(\mX_1,\mX_2)}\in\cP^{\#}_{d_1,d_2}$ \\[2.5em]
%\hline
  (4)   & \multicolumn{2}{C{1.2in}}{Efficiency}            
        & $J_k$  square-integrable 
        & $J_k$  weakly regular (as assumed)
        & $J_k$  weakly regular (as assumed) 
        & $J_k$  weakly regular (as assumed) 
        & $J_k$  weakly regular (as assumed) \\[2em]
%\hline
        & \multirow{2}{*}{Exact} 
        & $d_1\vee d_2=2$  
        & $O(n^2)$ 
        & %\raisebox{3mm}
          {$O(n^{3/2+\delta}\log N)${\color{blue}$^{(d)}\!\!\!$}}
        & $O(n^3)$
        & $O(n^4)$
        & $O(n^4)$\\[1em]
%\cline{3-8}
\multirow{2}{*}
 {(5)}  &                        
        & $d_1\vee d_2=3$  
        & %\raisebox{3mm}
          {$O(n^{5/2}\log(nN))${\color{blue}$^{(d)}$}}
        & $O(n^{5/2}\log(nN))$
        & $O(n^3)$
        & $O(n^4)$
        & $O(n^4)$\\[1em]
%\cline{2-8}
        & \multicolumn{2}{C{1.2in}}{Fast approximation}    
        & $O(n^{3/2}\Omega\vee nK\log n)${\color{blue}$^{(d)}$} 
        & $O(n^{3/2}\Omega)$ 
        & $O(n^{3/2}\Omega\vee nK\log n)$ 
        & $O(n^{3/2}\Omega\vee nK\log n)$ 
        & $O(n^{3/2}\Omega\vee nK\log n)$ \\
\bottomrule
\end{tabular}
\begin{minipage}{8.3in}
\vspace{0.15cm}
{\color{blue}$^{(a)}$} $\cP^{\ac}_{d_1}\otimes \cP^{\ac}_{d_2}$ is the family of all
  $\P_{(\mX_1,\mX_2)}$ such that $\mX_1,\mX_2$ independent, $\P_{\mX_1}\in\cP^{\ac}_{d_1}$
  and $\P_{\mX_2}\in\cP^{\ac}_{d_2}$ \\
{\color{blue}$^{(b)}$} $\cP_{d_1+d_2}^{\ac}$ is the family of all absolutely continuous distributions on $\R^{d_1+d_2}$ \\
{\color{blue}$^{(c)}$} $\cP^{\#}_{d_1,d_2}:=  \big\{\P_{(\mX_1,\mX_2)}\in\cP_{d_1+d_2}^{\ac}\vert\,  \P_{\mX_1}\in\cP_{d_1}^{\#}, \P_{\mX_2}\in\cP_{d_2}^{\#}\big\}$ \\
{\color{blue}$^{(d)}$} Here we assume without loss of generality that
$c_{ij},~i,j\in\zahl{n}$ are all integers and bounded by integer $N$, 
$\delta$ is some arbitrarily small constant, 
$\Omega$ is defined as $\epsilon^{-1}\tau(n,\epsilon)\log^{4}(n/\epsilon)\log({\max c_{ij}}/{\min c_{ij}})$,
and $K$ is sufficiently large;  as usual, 
$q_1\vee q_2$ stands for the minimum of two quantities $q_1$ and $q_2$.
 {Also refer to %the notations used in 
Propositions \ref{thm:matching-computation} and \ref{thm:correlation-computation} in Section~\ref{sec:help} of the appendix.} 
\vspace{-0.5cm}
\end{minipage}
\end{center}
\end{table}
}
\end{landscape}

\begin{theorem}%[Rate-optimality]
\label{thm:opt}
 {Let Assumption~\ref{asp:only} hold. Then, }
%Assume that the score functions are weakly regular and Assumption \ref{asp:power-opt}(ii) holds. 
for any   $\beta>0$ such that~$\alpha+\beta<~\!1$, 
there exists an absolute constant $c_\beta>0$ %depending on $\beta$
 such that, as long as $|\delta_0|\le c_\beta$, 
\[
\inf_{\overline{\mathsf{T}}\n_{\alpha}\in\cT\n_{\alpha}}\Pr\big\{\overline{\mathsf{T}}\n_{\alpha}=0\big\vert H_{1}\n(\delta_0)\big\}\geq 1-\alpha-\beta
\]
for all sufficiently large $n$. %it holds that
Here the infimum is taken over the class $\cT\n_{\alpha}$ of all size-$\alpha$ tests. 
%and the supremum   over all distributions $\P_{\fA_{\delta\n}}$ 
%%$\mX$
% considered in Theorem \ref{thm:power-general} with %$\fA_{\delta_n}$ such that
%  $|\delta_0|\ge c_\beta$.
\end{theorem}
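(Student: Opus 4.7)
The plan is to invoke the classical minimax lower bound from Le~Cam's asymptotic theory. A crucial observation is that the infimum on the left-hand side is over \emph{all} size-$\alpha$ tests, so no specific structure of the center-outward rank-based statistics enters the argument---the result is a universal information-theoretic limit that any test of independence must satisfy under the local alternative~$H_1^{(n)}(\delta_0)$. The two-point bound
\[
\Pr_{H_1\n(\delta_0)}\big(\overline{\mathsf{T}}\n_{\alpha}=0\big)\ge 1-\alpha-\big\lVert \Pr_{H_1\n(\delta_0)}-\Pr_{H_0}\big\rVert_{\TV},
\]
valid for every size-$\alpha$ test~$\overline{\mathsf{T}}\n_{\alpha}$, reduces the problem to bounding the total variation between the product measures under the null and under the local alternative.

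The first step is to upgrade Assumption~\ref{asp:only}\ref{asp:only2} (quadratic mean differentiability) into local asymptotic normality of the experiment $\{q_{\mX}(\cdot;\delta)\}$ at $\delta=0$: following, e.g., \citet[Thm.~12.2.3]{MR2135927}, under $H_0$,
\[
\Lambda\n :=\log\frac{\d\Pr_{H_1\n(\delta_0)}}{\d \Pr_{H_0}}=\frac{\delta_0}{\sqrt{n}}\sum_{i=1}^{n}\dot{\ell}(\mX_i;0)-\frac12\delta_0^2\,\cI_{\mX}(0)+o_{\Pr_{H_0}}(1),
\]
so $\Lambda\n\rightsquigarrow \mathcal{N}(-\tfrac12 \sigma^2,\sigma^2)$ with $\sigma^2:=\delta_0^2\cI_{\mX}(0)$, which is finite by Assumption~\ref{asp:only}\ref{asp:only3}. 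By Le~Cam's first lemma the sequences $\Pr_{H_0}$ and $\Pr_{H_1\n(\delta_0)}$ are mutually contiguous, and Le~Cam's third lemma yields $\Lambda\n\rightsquigarrow\mathcal{N}(\tfrac12\sigma^2,\sigma^2)$ under the local alternative.

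The second step is to translate this weak convergence into convergence of total variation. Since the log-likelihood ratio is a sufficient statistic for the two-point testing problem $\{\Pr_{H_0},\Pr_{H_1\n(\delta_0)}\}$, the data-processing inequality is tight, so
\[
\big\lVert\Pr_{H_1\n(\delta_0)}-\Pr_{H_0}\big\rVert_{\TV}
=\big\lVert\cL(\Lambda\n\mid H_0)-\cL(\Lambda\n\mid H_1\n(\delta_0))\big\rVert_{\TV}
\longrightarrow 2\Phi\!\Big(\tfrac{|\delta_0|\sqrt{\cI_{\mX}(0)}}{2}\Big)-1,
\]
where the right-hand side is the TV distance between $\mathcal{N}(-\tfrac12\sigma^2,\sigma^2)$ and $\mathcal{N}(\tfrac12\sigma^2,\sigma^2)$. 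Because this limit is a continuous increasing function of $|\delta_0|$ vanishing at~$\delta_0=0$, for any prescribed $\beta>0$ one can pick
\[
c_\beta:=\frac{2}{\sqrt{\cI_{\mX}(0)}}\,\Phi^{-1}\!\Big(\tfrac{1+\beta/2}{2}\Big),
\]
so that $|\delta_0|\le c_\beta$ forces the limiting TV distance below $\beta/2$, and then the bound $\alpha+\beta$ on the power follows from the two-point inequality for all sufficiently large~$n$.

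The main technical point is justifying the TV limit, which is entirely standard once LAN and contiguity are in hand; if one prefers to bypass this computation, an essentially equivalent route applies Pinsker's inequality together with the LAN expansion of the KL divergence, $\KL(\Pr_{H_1\n(\delta_0)}\|\Pr_{H_0})\to \tfrac12\delta_0^2\cI_{\mX}(0)$, to obtain the same $\delta_0\to 0$ vanishing. Assumption~\ref{asp:only}\ref{asp:only1} and \ref{asp:only4} (identifiability and non-separability of the score) are not needed here---they are used for the matching upper bound in Theorem~\ref{thm:power-general}---so the only substantive ingredient is QMD at $\delta=0$ with finite Fisher information.
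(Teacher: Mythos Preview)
Your proposal is correct and follows the same opening reduction as the paper---both start from the two-point inequality $\Pr_{H_1\n(\delta_0)}(\overline{\mathsf T}\n_\alpha=0)\ge 1-\alpha-\TV(\Q\n,\P\n)$---but they diverge in how the total variation is controlled.

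The paper never computes the TV limit. Instead it bounds $\TV\le\HL$ and exploits the tensorization of Hellinger affinity, $1-\tfrac12\HL^2(\Q\n,\P\n)=\prod_i\bigl(1-\tfrac12\HL^2(\Q_i\n,\P_i\n)\bigr)$, together with the fact that QMD \emph{directly} yields $n\,\HL^2(\Q_i\n,\P_i\n)\to\tfrac14\delta_0^2\cI_{\mX}(0)$. This gives $\HL^2(\Q\n,\P\n)\to 2\bigl(1-\exp\{-\delta_0^2\cI_{\mX}(0)/8\}\bigr)$, and $c_\beta$ is read off from this limit. The argument is short and uses nothing beyond the definition of QMD.

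Your route instead passes through LAN and computes the exact TV limit $2\Phi(|\delta_0|\sqrt{\cI_{\mX}(0)}/2)-1$. This is legitimate and yields a sharper constant, but two points deserve care. First, the step ``weak convergence of $\Lambda\n$ under both hypotheses $\Rightarrow$ convergence of $\TV$'' is not a consequence of weak convergence alone; it uses the special structure that the law of $\Lambda\n$ under $\Q\n$ has density $e^\lambda$ with respect to its law under $\P\n$, so that $\TV=1-\E_{\P\n}[\min(1,e^{\Lambda\n})]$ and one can invoke boundedness and continuity of $\lambda\mapsto\min(1,e^\lambda)$ (plus contiguity to kill any singular mass). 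You gesture at this being ``entirely standard,'' which is fair, but it is precisely the convergence-of-experiments machinery that the Hellinger route sidesteps. Second, your Pinsker alternative has a gap: QMD gives Hellinger expansion, not KL expansion, and $\KL(\Q\n\|\P\n)\to\tfrac12\delta_0^2\cI_{\mX}(0)$ requires additional integrability that Assumption~\ref{asp:only} does not supply. So your main argument is sound, but the fallback is not.

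Your closing remark that Assumptions~\ref{asp:only}\ref{asp:only1} and~\ref{asp:only4} are unused here is correct and worth noting.
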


% We conclude this section by summarizing
Table \ref{tab:propofsGSC} summarizes our results for the rank-based
SGSCs from Example~\ref{ex:rbsgc}
%in   Table \ref{tab:propofsGSC}
%It gives
%gives, for each  
%of them, %se dependence measures,
by giving  an
overview of the five properties listed in the
Introduction.  It also indicates consistency of the tests.  In all
cases, it is assumed that the  score functions involved are weakly
regular.

\subsection{Examples in the quadratic mean differentiable class} \label{sec:example-new}
 {This section presents two specific examples in the quadratic mean
  differentiable class that satisfy Assumption \ref{asp:only}. First,}
we consider parametrized families that extend the bivariate  {\it
  Konijn alternatives} \citep{MR79384}. These alternatives are
classical in the context of testing for multivariate  independence and
have also been considered by  \citet{MR2691505}, \citet{MR1467849},
\citet{MR1965367,MR2088309}, \citet{MR2201019}, and \citet{MR2462206}. 
%Within these families, we establish the rate-optimality (the rate here is the usual root-$n$ rate) of our tests. 
%\textcolor{black}{Other families of bivariate alternatives also have been considered (see  \cite{MR2288045} for a survey and \cite{MR3466185} for a more recent result) and similarly could be extended to the multivariate case; as far as rate-optimality is concerned, the results and the proofs would be quite similar, though.} 

%classical choice 
%
%. These families  alternatives extending a classic 
%
%
% similar to
%those in \citet{MR2691505}, \citet{MR1467849},
%\citet{MR1965367,MR2088309,MR2201019}, and \citet{MR2462206}.  The
%setup extends a classic alternative due to \citet{MR79384}. 
% In this
%setting, we prove rate-optimality of the proposed test.

Konijn families are constructed as follows. Let $\mX ^*_1\sim \P_{\mX ^*_1}\in\cP_{d_1}^{\ac}$ and $\mX ^*_2\sim \P_{\mX ^*_2}\in\cP_{d_2}^{\ac}$ be two (without loss of generality) mean zero (unobserved) independent random vectors with densities $q_1$ and $q_2$, respectively. 
Let $\fG ^*_{1,\pms}$ and $\fG ^*_{2,\pms}$ denote their respective population scored center-outward distribution functions, 
$\P _{\mX ^*}\in\cP_{d_1+d_2}^{\ac} $   their joint distribution,~$q_{\mX ^*}(\mx)=q_{\mX ^*}((\mx_1,\mx_2))=q_1(\mx_1)q_2(\mx_2)$  their 
joint density. 
%In detail, the following setup will be fixed throughout the rest of
%this section.  We consider two \emph{independent} and absolutely
%continuous random vectors $\mY_1\in\R^{d_1}$ and $\mY_2\in\R^{d_2}$,
%so $\P_{\mY_1}\in\cP_{d_1}^{\ac}$, $\P_{\mY_2}\in\cP_{d_2}^{\ac}$.
%Let $q_1$ and $q_2$ be the density functions of $\mY_1$ and $\mY_2$,
%respectively, and let
%$$q_{\mY}(\my)=q_{\mY}((\my_1,\my_2))=q_1(\my_1)q_2(\my_2)$ be the
%joint density. 
Define, for~$\delta\in~\!\R$, 
\begin{equation}\label{Kondef}
\mX = \bigg(\begin{matrix}\mX_1 \\ \mX_2\end{matrix}\bigg)
:=\bigg(\begin{matrix}\fI_{d_1} & \delta\, \fM_1\\
                      \delta\, \fM_2 & \fI_{d_2}\end{matrix}\bigg) 
              \bigg(\begin{matrix}\mX ^*_1 \\ \mX ^*_2\end{matrix}\bigg)
= \fA_{\delta}\bigg(\begin{matrix}\mX ^*_1 \\ \mX ^*_2\end{matrix}\bigg)
= \fA_{\delta}\mX ^*,
\end{equation}
where  $\fM_1\in\R^{d_1\times d_2}$ and
$\fM_2\in\R^{d_2\times d_1}$ are two deterministic matrices.  For
$\delta=0$, the
matrix~$\fA_\delta$ is the identity and, thus, invertible. %; all the parameters are
                                %chosen such that $\fA_{\delta}^{-1}$
                                %exists.
By continuity, $\fA_\delta$ is also invertible for $\delta$ in a sufficiently small neighborhood $\Theta$ of $0$. 
%all
%$\delta\in\Theta:=(-\delta^*,\delta^*)$, where
%$\delta^*=\delta^*(\fM_1,\fM_2)>0$ is some 
%(sufficiently small) constant. 
 For $\delta\in\Theta$, the density of
$\mX$ can be expressed as~$%\[
q_{\mX}(\mx;\delta)=\big|\det(\fA_{\delta})\big|^{-1}q_{\mX ^*}(\fA_{\delta}^{-1}\mx)$, 
%\]
which is differentiable with respect to $\delta$. 
% where 
% \[
% q_{\mY}(\my)=q_{\mY}((\my_1,\my_2))=q_1(\my_1)q_2(\my_2).
% \] 
%Let 
%\[
%L(\mx;\delta):=\frac{q_{\mX}(\mx;\delta)}{q_{\mX}(\mx;0)} ~~~{\rm and}~~~
%L^\prime (\mx;\delta):=\frac{\partial}{\partial\delta}L(\mx;\delta). 
%\]
The following additional assumptions will be made on the generating
scheme \eqref{Kondef}. 

\begin{assumption}\label{asp:power-opt}\mbox{}
%It is assumed that %$\mY_1,\mY_2,\fM_1,\fM_2$ is such that 
\begin{enumerate}[itemsep=-.5ex,label=(\roman*)]
%\item $\mY_1$ and $\mY_2$ are mutually independent with $\P_{\mY_1}\in\cP_{d_1}^{\ac}$, $\P_{\mY_2}\in\cP_{d_2}^{\ac}$;
%\item  $\fM_1,\fM_2$ are nonzero deterministic matrices such that $\fA_{\Delta}$ is full rank for all $\Delta\in\Theta:=(-\Delta^*,\Delta^*)$ for some (sufficiently small) constant $\Delta^*>0$; 
%%\item For all $\fM_1,\fM_2$, noticing that $\fA_{\Delta}^{-1}$ exists for all $\Delta\in(-\Delta^*,\Delta^*)$ with some small $\Delta^*>0$, hence we can assume without loss of generality that $\Delta\in\Theta\subseteq\R$, where $\Theta$ is an open set and satisfies $0\in\Theta$ such that $\fA_{\Delta}^{-1}$ exists for all $\Delta\in\Theta$; 
\item\label{asp:power-opt1} The distributions of $\mX$ have a common support for all $\delta\in\Theta$. Without loss of generality,  we assume ${\bm\cX}:=\{\mx: q_{\mX}(\mx;\delta) > 0\}$ does not  depend on  $\delta$.
\item\label{asp:power-opt2} %the gradient $\nabla q_{\mX ^*}$ of ${\bf x}\mapsto q_{\mX ^*}({\bf x})$ exists almost everywhere over ${\bm\cX}$ with\\ $\E[\nabla q_{\mX ^*}(\mX ^*)/q_{\mX ^*}(\mX ^*)]=0$;
The map ${\mx}\mapsto \sqrt{q_{\mX ^*}({\mx})}$ is continuously differentiable.
  %exists for all $\mx\in{\bm\cX}$; 
%\item $\nabla q_{\mX}(\mx;\Delta)$, the gradient of $q_{\mX}(\mx;\Delta)$ with respect to $\mx$, almost everywhere exists for all $\mX\in{\bm\cX}$ and $\Delta\in\Theta$; 
%\item $q'_{\mX}(\mx;\Delta):=\partial q_{\mX}(\mx;\Delta)/\partial \Delta$ almost everywhere exists for all $\mX\in{\bm\cX}$ and $\Delta\in\Theta$; 
%\item\label{asp:power-opt-ac}
%the function $\delta \mapsto \sqrt{q_{\mX}(\mx;\delta)}$ is absolutely continuous in some neighborhood of $0$;
\item\label{asp:power-opt3} 
The Fisher information $\cI_{\mX}(0):=\int \{\dot{\ell}(\mx;0)\}^2 q_{\mX}(\mx;0){\rm d}\mx$ of $\mX$ relative to $\delta$ at~$\delta=0$ is strictly positive and finite.
\end{enumerate}
\end{assumption}

%Assumption~\ref{asp:power-opt} is borrowed from the regularity conditions for Cram\'{e}r--Rao bound. %\citep[Equation~(5.12)]{MR1639875}.

\begin{example}\label{ex:ellip}%
\mbox{}$\,$\vspace{-2mm}%If one of the following two conditions %holds 
\begin{enumerate}[itemsep=-.5ex,label=(\roman*)]
\item\label{ex:ellip1}   {Suppose} ${\mX ^*_1}$ and ${\mX ^*_2}$ are elliptical with centers $\bm0_{d_1}$ and
  $\bm0_{d_2}$ and   covariances  $\mSigma_1$ and $\mSigma_2$, respectively, that is, 
% More specifically, assume that 
$%  \[
  q_k(\mx_k)
  % =C_k
  \propto
  \phi_k\Big(\mx_k^{\top}\mSigma_k^{-1}\mx_k\Big)$,~$k=1,2$, %\]
  where $\phi_k$ is %chosen
   such that~$\Var(\mX ^*_k)=\mSigma_k$ and 
  $%  \[
  \E\left[\lVert\mZ ^*_k \rVert^2
          \rho_k(\lVert\mZ ^*_k\rVert^2)^2\right] <\infty$, %~~~
%  \E\lVert\mZ_k\rVert^2<\infty,
$k=1,2$ 
%  \yestag\label{eq:ellipmoments}
%  \]
 where $\mZ ^*_k$ has density
  function proportional to $\phi_k(\lVert\mz_k\rVert^2)$ and  $\rho_k(t):=\phi_k^{\prime}(t)/\phi_k(t)$.  
  % $q_k'(\mz_k)=C_k'\phi_k(\lVert\mz_k\rVert^2)$, 
   {Then Assumption~\ref{asp:power-opt} is satisfied for any $\fM_1,\fM_2$ such that 
  $\mSigma_1\fM_2^{\top}+\fM_1\mSigma_2\ne \bm0$.}
\item\label{ex:ellip2}   {As a specific example of \ref{ex:ellip1}, if} 
  $\mX ^*_1$ and $\mX ^*_2$ are centered
  multivariate normal or follow centered multivariate $t$-distributions with
  degrees of freedom %(not necessarily integer-valued)
  strictly
  greater than
  two, %respectively, 
  then Assumption~\ref{asp:power-opt} is satisfied for any~$\fM_1,\fM_2$ such that 
  $\mSigma_1\fM_2^{\top}+\fM_1\mSigma_2\ne \bm0$. 
\end{enumerate}
\end{example}

 {Next,  
consider the following mixture model extending the alternatives treated in
\citet[Sec.~3]{MR3466185}.  Let $q_{1}$ and $q_{2}$ be fixed 
(Lebesgue-)density functions for~$\mX_1$ an $\mX_2$, respectively. The joint density of $\mX=(\mX_1,\mX_2)$ under %yielding the hypothesis of
 independence is $q_{1}q_{2}$. Letting $q^{*}\ne q_{1}q_{2}$ denote a fixed joint density, 
%such that $(\mX_1,\mX_2)$ are dependent under $q^{*}$. 
%with marginals $F_{1}$ and $F_{2}$,
%Let the density functions $q_{\mX_1}q_{\mX_2}$ and $q$,
%be continuous and have compact supports.  
%joint densities under
 mixture alternatives indexed by $\delta\in[0,1]$ are defined as 
%define the
%following alternative model for the density of $\mX=(\mX_1,\mX_2)$:
$q_{\mX}(\mx;\delta):=(1-\delta)q_{1}q_{2}+\delta q^*$. 
%with $0\le \delta\le 1$. 

%We make the following additional assumptions on $q_{1},q_{2}$, and $q^{*}$.

\begin{assumption} \label{asp:far}
It is assumed that \vspace{-2mm}%$F_0,K$ is such that 
\begin{enumerate}[itemsep=-.5ex,label=(\roman*)]
\item \label{asp:far1}
$(1+\delta^*)q_{1}q_{2}-\delta^* q^*$ is a bonafide joint density for some $\delta^*>0$; 
\item \label{asp:far2}
$q^{*}$ and $q_{1}q_{2}$ are mutually absolutely continuous; %\\
%and $s(\mx):=q^*(\mx)/\{q_{\mX_1}(\mx_1)q_{\mX_2}(\mx_2)\}-1$ is continuous; 
%\item $\E[s(Y)|Y^{(1)}]=0$ almost surely for $Y=(Y^{(1)},Y^{(2)})\sim F_0$;
% ,  
% $Y^{(k)}$ is distributed with distribution function $F_k$ for $k=1,2$,
% $Y^{(1)}$ and $Y^{(2)}$ are independent;
%\item\label{asp:far3} $\E[s(Y)|Y^{(2)}]=0$ almost surely for $Y=(Y^{(1)},Y^{(2)})\sim F_0$; 
\item \label{asp:far3}
the function $\delta \mapsto \sqrt{q_{\mX}(\mx;\delta)}$ is continuously differentiable in some neighborhood of $0$;
%the function $\delta \mapsto \{q^*(\mx)-q_{1}(\mx_1)q_{2}(\mx_2)\}/\sqrt{q_{\mX}(\mx;\delta)}$ is continuous in some neighborhood of $0$;
\item \label{asp:far4}
the Fisher information 
%$\cI_{\mX}(\delta):=\int \{\dot{\ell}(\mx;\delta)\}^2 q_{\mX}(\mx,\delta){\rm d}\mx$ 
$\cI_{\mX}(\delta):=\int (q^{*}-q_1q_2)^2/\{(1-\delta)q_{1}q_{2}+\delta q^*\} {\rm d}\mx$ 
of $\mX$ relative to $\delta$ is finite, strictly positive, and continuous at~$\delta=0$; 
\item \label{asp:far5}
$\dot{\ell}(\mx;0)=q^*(\mx)/\{q_{1}(\mx_1)q_{2}(\mx_2)\}-1$ is 
%(continuous and???) 
not additively separable. 
%and $h_k(x^{(k)})$ depend only on $x^{(k)}$ for $k=1,2$;
%\item $s(x)$ is a sum of multiplicatively separable functions, i.e., $s(x)=\sum_{v=1}^{d}h_{1,v}(x^{(1)})h_{2,v}(x^{(2)})$, where $h_{k,v}(x^{(k)})$ depends only on $x^{(k)}$ for $k=1,2$, $v=1,\dots,d$, $d<\infty$;
\end{enumerate}
\end{assumption}

\begin{example}\label{ex:mixture}
If $q_k(\mx_k)=1$ for $\mx_k\in[0,1]^{d_k}$, $k=1,2$, 
and $q^*(\mx)\not\equiv1$ is continuous and supported on $[0,1]^{d_1+d_2}$,
then Assumption~\ref{asp:far} holds.
\end{example}

\begin{proposition}\label{rmk:imply1}
Assumption \ref{asp:only}  is satisfied by the Konijn alternatives
under Assumption \ref{asp:power-opt}, and by the mixture alternatives under Assumption \ref{asp:far}.
\end{proposition}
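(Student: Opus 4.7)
I plan to verify the four items of Assumption~\ref{asp:only} separately for the Konijn and the mixture families, handling items~\ref{asp:only3} and~\ref{asp:only4} first (they come essentially for free in both cases), then item~\ref{asp:only2} via the standard square-root criterion for QMD, and leaving item~\ref{asp:only1} as the main obstacle. Positivity of the Fisher information in~\ref{asp:only3} is exactly Assumption~\ref{asp:power-opt}\ref{asp:power-opt3} (Konijn) or Assumption~\ref{asp:far}\ref{asp:far4} (mixture); non-separability of the score in~\ref{asp:only4} is either read off from the Konijn score computation below or is Assumption~\ref{asp:far}\ref{asp:far5}.

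\textbf{QMD.} The plan is to invoke the classical sufficient criterion that continuous differentiability of $\delta\mapsto\sqrt{q_{\mX}(\mx;\delta)}$ together with a finite and continuous Fisher information implies QMD (e.g., Lemma~7.6 of van~der~Vaart, 1998). For Konijn, $q_{\mX}(\mx;\delta)=|\det\fA_{\delta}|^{-1}q_{\mX^{*}}(\fA_{\delta}^{-1}\mx)$, with $\delta\mapsto\fA_{\delta}^{-1}$ smooth on a neighborhood of $0$ (as $\fA_{0}=\fI$ is invertible) and $\sqrt{q_{\mX^{*}}}$ continuously differentiable by Assumption~\ref{asp:power-opt}\ref{asp:power-opt2}; the chain rule combined with Assumption~\ref{asp:power-opt}\ref{asp:power-opt1},\ref{asp:power-opt3} closes the argument. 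For the mixture, $q_{\mX}(\cdot;\delta)=(1-\delta)q_{1}q_{2}+\delta q^{*}$ is affine in $\delta$, and the criterion reduces to Assumption~\ref{asp:far}\ref{asp:far3}--\ref{asp:far4}.

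\textbf{Score and non-separability.} For Konijn, differentiating $\log q_{\mX}(\mx;\delta)=-\log|\det\fA_{\delta}|+\log q_{\mX^{*}}(\fA_{\delta}^{-1}\mx)$ and using $\partial_{\delta}\log|\det\fA_{\delta}|\big|_{\delta=0}=\tr(\fA)=0$ with $\fA=\bigl(\begin{smallmatrix}\bm{0}&\fM_{1}\\\fM_{2}&\bm{0}\end{smallmatrix}\bigr)$ yields
\[
\dot{\ell}(\mx;0)=-\bigl(\nabla\log q_{1}(\mx_{1})\bigr)^{\top}\fM_{1}\mx_{2}-\bigl(\nabla\log q_{2}(\mx_{2})\bigr)^{\top}\fM_{2}\mx_{1}.
\]
The mixed partial $\partial_{\mx_{1}}\partial_{\mx_{2}}\dot{\ell}(\mx;0)$ splits into an $\mx_{1}$-dependent piece $-(\nabla^{2}\log q_{1}(\mx_{1}))\fM_{1}$ plus an $\mx_{2}$-dependent piece $-\fM_{2}^{\top}(\nabla^{2}\log q_{2}(\mx_{2}))$; if $\dot{\ell}(\cdot;0)$ were additively separable this mixed partial would vanish, forcing each piece to be independently zero, which by absolute continuity of $q_{k}$ gives $\fM_{1}=\fM_{2}=\bm{0}$ and collapses $\fA_{\delta}\equiv\fI$, contradicting $\cI_{\mX}(0)>0$. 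For the mixture, item~\ref{asp:only4} is Assumption~\ref{asp:far}\ref{asp:far5} verbatim.

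\textbf{Dependence away from $\delta=0$ (the main obstacle).} For Konijn, the joint characteristic function factors as $\varphi_{\mX}(\mt_{1},\mt_{2})=\varphi_{1}(\mt_{1}+\delta\fM_{2}^{\top}\mt_{2})\,\varphi_{2}(\delta\fM_{1}^{\top}\mt_{1}+\mt_{2})$; reading off the marginals at $\mt_{2}=\bm{0}$ and $\mt_{1}=\bm{0}$, a hypothetical factorization $\varphi_{\mX}=\varphi_{\mX_{1}}\varphi_{\mX_{2}}$ at $\delta\ne0$ reduces to a Cauchy-type functional equation for $\log\varphi_{1},\log\varphi_{2}$ whose only solutions under $\P_{\mX_{k}^{*}}\in\cP_{d_{k}}^{\ac}$ are degenerate, forcing $\fM_{1}=\fM_{2}=\bm{0}$ and contradicting $\cI_{\mX}(0)>0$. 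For the mixture, equating $(1-\delta_{1})q_{1}q_{2}+\delta_{1}q^{*}$ with $[(1-\delta_{1})q_{1}+\delta_{1}q^{*}_{1}][(1-\delta_{1})q_{2}+\delta_{1}q^{*}_{2}]$ at some $\delta_{1}\ne0$, dividing by $q_{1}q_{2}$ and simplifying gives
\[
\dot{\ell}(\mx;0)=g_{1}(\mx_{1})+g_{2}(\mx_{2})+\delta_{1}\,g_{1}(\mx_{1})g_{2}(\mx_{2}),\qquad g_{k}:=q^{*}_{k}/q_{k}-1;
\]
the case $g_{1}g_{2}\equiv0$ yields $\dot{\ell}(\cdot;0)=g_{1}+g_{2}$ and contradicts Assumption~\ref{asp:far}\ref{asp:far5}, while if $g_{1}g_{2}\not\equiv0$ the identity uniquely pins down $\delta_{1}$, so shrinking the model parameter $\delta^{*}$ (allowed by the setup) to exclude that single value finishes item~\ref{asp:only1}. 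Controlling these possible ``accidental'' independence points away from $\delta=0$ is the bottleneck of the proof; everything else follows from routine calculation.
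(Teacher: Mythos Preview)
Your overall strategy---verify QMD via the square-root/continuous Fisher information criterion, read off items~\ref{asp:only3} and~\ref{asp:only4} from the assumptions or from the explicit score---matches the paper's own proof, which is very terse: for the Konijn family it declares items~\ref{asp:only1} and~\ref{asp:only3} ``clear'', cites an external reference for QMD, and for item~\ref{asp:only4} simply displays the score formula
\[
\dot{\ell}(\mx;0)=-(\nabla\log q_1(\mx_1))^{\top}\fM_1\mx_2-(\nabla\log q_2(\mx_2))^{\top}\fM_2\mx_1
\]
without further argument. For the mixture family the paper just writes down $\dot{\ell}(\mx;0)$ and invokes Theorem~12.2.1 in Lehmann--Romano. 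So on QMD and item~\ref{asp:only3} you agree, and your treatment of the mixture case (including your careful handling of item~\ref{asp:only1}, which the paper does not spell out) is actually more complete than the paper's.

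There is, however, a genuine gap in your argument for item~\ref{asp:only4} in the Konijn case. From additive separability you correctly get that the mixed partial vanishes, hence
\[
(\nabla^{2}\log q_1(\mx_1))\,\fM_1 \;=\; -\,\fM_2^{\top}(\nabla^{2}\log q_2(\mx_2))
\]
for all $\mx_1,\mx_2$, so each side is a \emph{constant} matrix---not zero. Your step ``by absolute continuity of $q_k$ this gives $\fM_1=\fM_2=\bm0$'' fails: take $q_k$ Gaussian, so $\nabla^{2}\log q_k\equiv-\mSigma_k^{-1}$ is already constant, and the equation becomes $\mSigma_1^{-1}\fM_1+\fM_2^{\top}\mSigma_2^{-1}=0$, which admits nonzero $(\fM_1,\fM_2)$. (Moreover, second derivatives of $\log q_k$ are not guaranteed by Assumption~\ref{asp:power-opt}\ref{asp:power-opt2}.) The clean fix is to avoid mixed partials entirely: if $\dot{\ell}(\mx;0)=h_1(\mx_1)+h_2(\mx_2)$ with $\E h_k(\mX_k^*)=0$, take conditional expectation given $\mX_1^*=\mx_1$ and use $\E\mX_2^*=\bm0$ and $\E\nabla\log q_2(\mX_2^*)=\bm0$ to get $h_1\equiv0$, and symmetrically $h_2\equiv0$; then $\dot{\ell}(\cdot;0)\equiv0$ and $\cI_{\mX}(0)=0$, contradicting Assumption~\ref{asp:power-opt}\ref{asp:power-opt3}. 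Regarding item~\ref{asp:only1} for Konijn, your characteristic-function sketch is not a proof as stated (the ``Cauchy-type functional equation'' does have nondegenerate solutions, e.g.\ in the Gaussian case precisely when $\mSigma_1\fM_2^{\top}+\fM_1\mSigma_2=0$); the paper simply asserts this is clear, and the honest route is to note that under Assumption~\ref{asp:power-opt}\ref{asp:power-opt3} such degenerate configurations are excluded.
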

}

\subsection{Numerical experiments}\label{sec:simulation}

%This section is focused on illustrating the potential benefits of a
%non-trivial choice of score functions.  
%Indeed, in our framework, the
%The tests used
% in \citet{shi2019distribution} correspond to using Wilcoxon
%scores with distance covariance.  Instead, we  explore here the use of normal
%scores.  {The specific example considered is borrowed from Examples 6.1 and 6.2 in
%\citet{shi2019distribution}}.  

% give an exploratory finite-sample study on a
% comparison of the normal-score-based tests to several competitors
% listed below. The main purpose is to show the potential advantage of
% using the normal scores over the Wilcoxon scores as employed in
% \citet{shi2019distribution} for testing against independence. The
% scope is accordingly limited to the settings and dependence measures
% used in \citet{shi2019distribution}.

% In particular, we are focused on the following Gaussian experiment extracted from Example 6.1 in \citet{shi2019distribution}. 

 {Extensive simulations of \cite{shi2019distribution} 
  give evidence for the superiority, under non-Gaussian densities, of
  the Wilcoxon versions of our tests over the original
  distance covariance tests. That superiority is more substantial when
  non-Wilcoxon scores, such as the Gaussian ones, are considered (Figure \ref{fig:cauchy}). In view of these results, there is little
  point in pursuing simulations with non-Gaussian densities, and we
  instead focus on Gaussian cases (Figures
  \ref{fig:power1a}--\ref{fig:power1c}) to study the impact on
  finite-sample performance of the dimensions $d_1$ and $d_2$,
  sample size $n$, and within- and between-sample correlations.
 }

%  {Extensive simulations have been conducted in \cite{shi2019distribution} bringing evidence of the superiority, under non-Gaussian densities, of the Wilcoxon versions of our tests over the original distance-covariance tests. That superiority is more substantial when non-Wilcoxon scores, such as the Gaussian ones, are considered: see Figure \ref{fig:cauchy}. In view of these results, there is little point in pursuing simulations with non-Gaussian densities. Therefore, we are concentrating here (Figures \ref{fig:power1a}--\ref{fig:power1c}) on the Gaussian case, with a study of the impact on finite-sample performance of the dimensions $d_1$ and $d_2$, the sample size $n$, the within-sample, and the between-sample correlations. }

\begin{example}\label{ex:sim-power1}
The data are a sample of $n$ independent copies of the multivariate normal vector $(\mX_1,\mX_2)$ in $\R^{d_1+d_2}$, with mean zero and covariance matrix 
%$\fI_{d_1+d_2}+\tau\fL_{d_1+d_2}^{(1,2)}+\rho\fL_{d_1+d_2}^{(1,d_1+1)}$, where
%$\fL_{d}^{(i,j)}:=\me_{d}^{(i)}\me_{d}^{(j)\top}+\me_{d}^{(j)}\me_{d}^{(i)\top}$ and
%$\me_{d}^{(i)}\in\R^{d}$ is the $i$-th standard basis vector. %in$d$-dimensional space, i.e., all entries are zero except for the one at the $i$-th position) 
%Isn't this a complicated description of the matrix 
$\mSigma$, %_{\tau ,\rho}$, %\in\R^{(d_1+d_2)\times (d_1+d_2)}$ 
where %entries 
$$\Sigma_{ij}=\Sigma_{ji}=\left\{
\begin{array}{rl}1,&i=j,\\
\tau,&i=1,\, j=2,\\
\rho,&i=1,\, j=d_1+1,\\
0,&\text{otherwise}.
\end{array}
\right.
$$\color{black}
 {Here $\tau$ characterizes the within-group correlation and we consider} (a) $\tau=0$, (b) $\tau=0.5$, and (c) $\tau=0.9$. Independence holds if and only if $\rho$, a between-group correlation, is zero. 
\end{example}
\begin{example}\label{ex:sim-power2}  {The data are $n$ independent copies of $(\mX_1,\mX_2)$ with $X_{1i}=Q_{t(1)}(\Phi(X_{1i}^*))$ and $X_{2j}=Q_{t(1)}(\Phi(X_{2j}^*))$ for $i\in\zahl{d_1}$ and $j\in\zahl{d_2}$; here $Q_{t(1)}$ denotes the quantile function of the standard Cauchy distribution and $(\mX_1^*, \mX_2^*)$ is generated according to Example \ref{ex:sim-power1}(b).}
\end{example}

%\vspace{-0.05cm}
We compare %Our focus is on comparing
the empirical performance of the following  {five} tests: \vspace{-2mm}
%\color{black}
\begin{enumerate}[itemsep=-.5ex,label=(\roman*)]
\item permutation test using the original distance covariance
  % -based test from
  \citep{MR3053543}; % implemented as permutation test;
  %(permutational critical value);
\item permutation test applying original distance
  covariance to marginal ranks \citep{lin2017copula};
  %, based on distance
  %covariance computed from the marginal ranks  (permutational critical value);
\item  center-outward  rank-based   distance covariance test 
with Wilcoxon scores and critical values from  the asymptotic
  distribution \citep{shi2019distribution};
\item new center-outward  rank-based   distance covariance test 
with normal scores and critical values from  the asymptotic
  distribution;
  % with score function $J_k(u):=\{\Phi^{-1}_{d_k}(u)\}^{1/2}$, where $\Phi_{d_k}$ is the cumulative distribution function of chi-square distribution with $d_k$ degrees of freedom.
\item  {likelihood ratio test in the Gaussian model  (\citealp[Chap.~9.3.3 \& 8.4.4]{MR1990662}).}
\end{enumerate}
\color{black}
% {Note that here the parametric Gaussian-distribution--based likelihood ratio test (v) is added as a gold standard test under the additional Gaussian assumption to contrast the rest four.} 
 {The parametric test (v) is tailored for Gaussian densities and
  plays the role of a benchmark.
  % and is tailored for Gaussian densities.
  Unsurprisingly, in the Gaussian experiments in Figures \ref{fig:power1a}--\ref{fig:power1c}, it uniformly outperforms tests~(i)-(iv). See Figure \ref{fig:cauchy} for its unsatisfactory performance for non-Gaussian densities.}

Figures \ref{fig:power1a}--\ref{fig:cauchy}  report empirical powers 
(rejection frequencies) of these  {five} tests, based on~$1,000$
simulations with nominal significance level $0.05$, dimensions
$d_1=d_2\in\{2,3,5,7\}$, and sample size
$n\in\{216,432,864,1728\}$. The parameter $\rho$ in the true covariance
matrix takes values  $\rho\in\{0,0.005,\dots,0.15\}$. \color{black}The critical values for tests (i) and
(ii)  were computed on the basis of  $n$ random
permutations. %All data sets are generated as an i.i.d.~sample from a distribution as specified below.
\color{black}
 {For tests (iii) and (iv), to determine the critical values from the asymptotic distribution given in Corollary \ref{thm:null}, we numerically compute the eigenvalues by adopting the same strategy as in \citet[Sec.~5.2]{shi2019distribution}; see also \citet[p.~3291]{MR3127883}.}

 {It is evident from  Figure~\ref{fig:cauchy} that, in non-Gaussian experiments, the potential benefits of rank-based tests are huge, particularly so when Gaussian scores are adopted (note the very severe bias of the Gaussian likelihood ratio test as $d$ increases). In} Gaussian experiments, the
performance of the normal score--based test (iv) is uniformly better than
that of its Wilcoxon score counterpart~(iii); that superiority   increases  with the dimension  and decreases with  the within-group
dependence $\tau$. The superiority of  both    center-outward  rank-based tests (iii) and (iv)  over the  traditional distance covariance one and its marginal rank version is quite significant for high values of the within-group correlation $\tau$.

% In particular, in cases such as the last row of
%Figure~\ref{fig:power1b}, the normal-score-based test can compete with the
%original test and marginal-rank-based test while the Wilcoxon-score-based
%test cannot.
%

%\noindent 
% {We note that this superiority is plausible as the distance covariance test is also a nonparametric test that does not fully use the information of the Gaussian. Thusly, it is not guaranteed to outperform its rank-based version. The well-known Hodges-Lehmann and Chernoff--Savage phenomena \citep{MR79383,MR100322,MR2462206}, which testify the efficiency of rank-based tests (in a general sense), should also be pointed here.}  {Lastly, the likelihood ratio test, as expected, dominates the other four, a phenomenon not surprising as it is devised for the Gaussian case. However, when the dimension is relatively small and the sample size is relatively high, some of the first four achieve comparable performance.}

% as well as testing consistency of the (sample) center-outward GSCs laid out in Example~\ref{ex:rbsgc}. Note that therein the score functions have been assumed to be weakly regular. %There Propertiy (1) applies to the distribution family $\cP^{\#}_{d_1,d_2,\otimes}$ in \eqref{eq:working-model}, and Property (3) applies to the distribution family $\cP_{d_1+d_2}^{\rm ac}$ in light of Proposition \ref{prop:sGSC}\ref{prop:sGSC3}.%\hf{please rephrase the table in line with the new paper}

 {The way the normal-score rank-based test (and also the
  Wilcoxon-score one) outperforms the original distance covariance
  test may come as a surprise.
  However, the original distance covariance does not yield a
  % The traditional distance covariance test, however, is not a
  Gaussian parametric test but rather a nonparametric test  %why would it be better, under Gaussian distributions, than its rank-based versions? 
  for which there is no reason to expect superiority over
  %that it would perform better
  %than
  its rank-based versions in Gaussian settings. In a different context, we have long been used to the celebrated
Chernoff--Savage phenomenon that normal-score rank statistics may
(uniformly) outperform their pseudo-Gaussian counterparts
\citep{MR100322}. This is best known in the context of two-sample
location problems; see, however, \citet{MR1312324},
\citet{MR2462206}, and \cite{deb2021efficiency}  for Chernoff--Savage
results for linear time series (traditional univariate ranks and
correlogram-based pseudo-Gaussian procedures) and vector independence
(Mahalanobis ranks and signs under elliptical symmetry and Wilks' test
as the pseudo-Gaussian procedure; measure-transportation-based ranks
under elliptical symmetry or independent component assumptions). Although the
present context is different, %, since rank-based distance
%covariance tests are not the rank-based counterparts of any Gaussian
%procedure. Y
their superiority is another example in which
restricting to rank-based methods brings distribution-freeness at no substantial
cost in terms of efficiency/power.}
%does not result in any loss of power, bringing distribution-freeness at no cost% in terms of efficiency.}

\begin{figure}[!htbp]
\centering
\includegraphics[width=\textwidth]{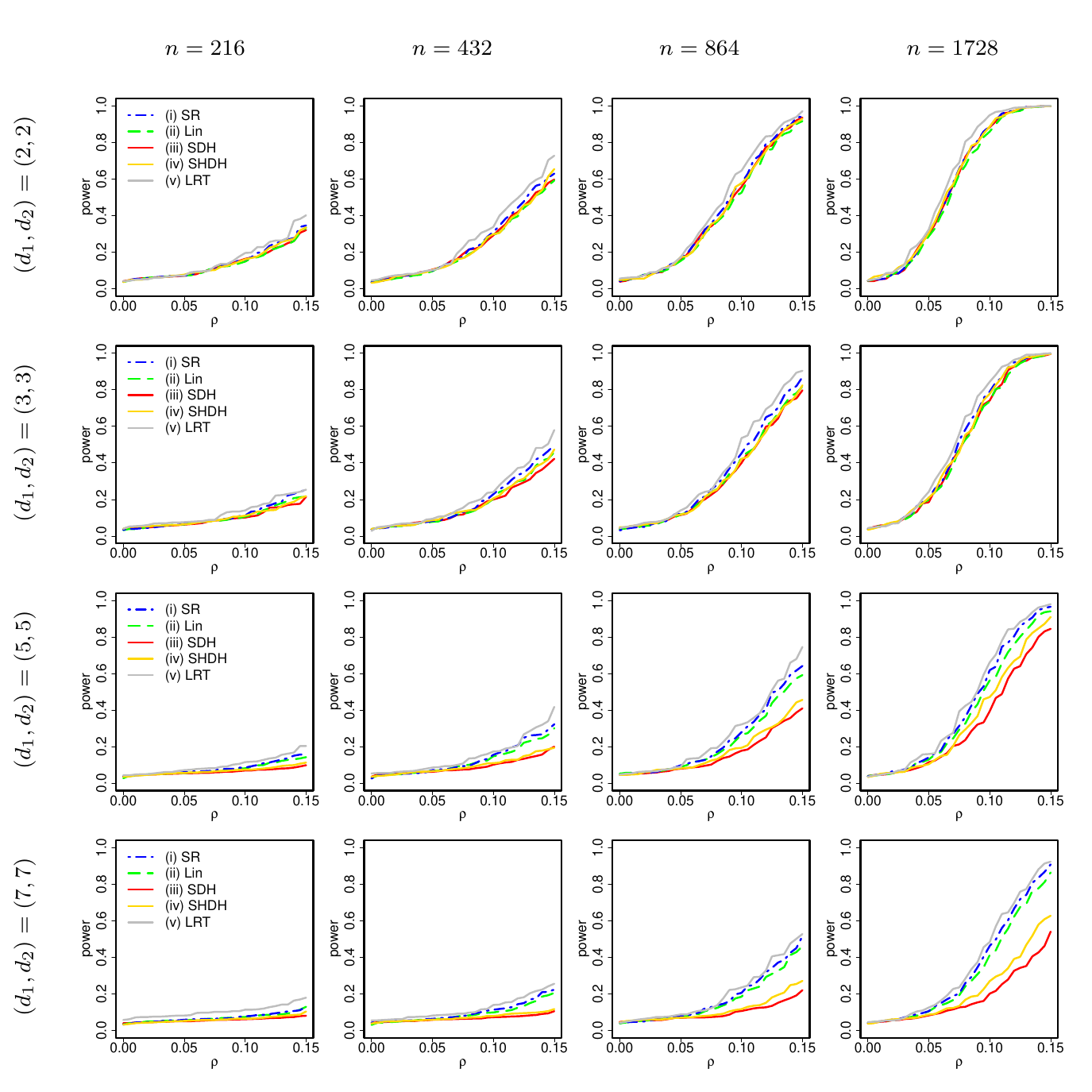}\vspace{-3mm}
\caption{Empirical powers of the  {five} competing tests 
in Example~\ref{ex:sim-power1}(a) ($\tau =0$, no within-group correlation). 
The $y$-axis represents {rejection frequencies} based on 1,000 replicates, 
the~$x$-axis represents $\rho$ (the between-group correlation), 
and the blue, green, red, and gold lines represent the performance of (i) Szekely and Rizzo's original distance covariance test, (ii) Lin's marginal rank version of the distance covariance test, (iii) Shi--Drton--Han's center-outward Wilcoxon version of the distance covariance test, (iv) the center-outward normal-score version of the distance covariance test,  {and (v) the likelihood ratio test,} respectively.\vspace{-5mm}}\label{fig:power1a}
\end{figure}

%\clearpage

\begin{figure}[!htbp]
\centering
\includegraphics[width=\textwidth]{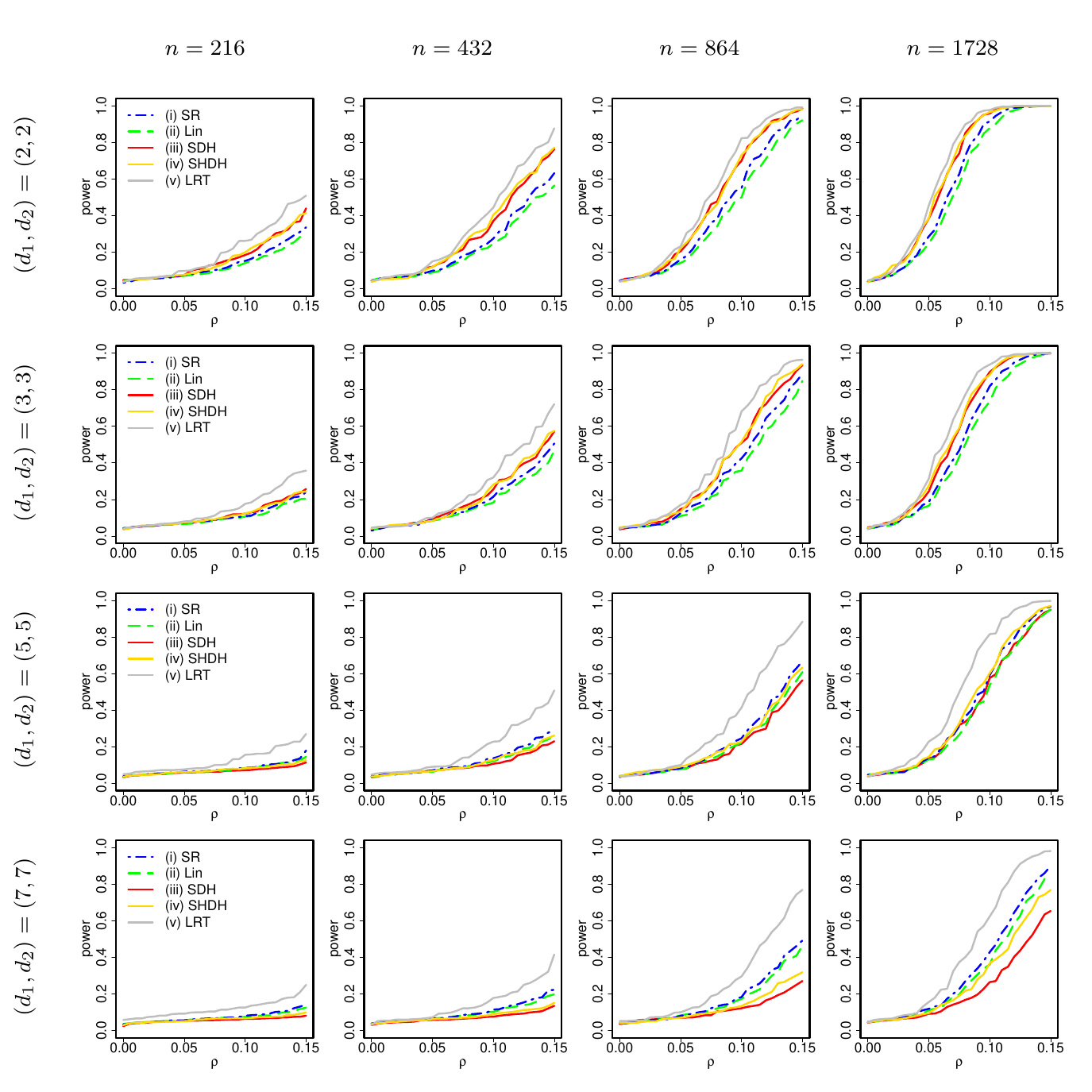}\vspace{-3mm}
\caption{Empirical powers of the  {five} competing tests 
in Example~\ref{ex:sim-power1}(b) ($\tau =0.5$, moderate within-group correlation). 
The $y$-axis represents {rejection frequencies} based on 1,000 replicates, 
the~$x$-axis represents $\rho$ (the between-group correlation), 
and the blue, green, red, and gold lines represent the performance of (i) Szekely and Rizzo's original distance covariance test, (ii) Lin's marginal rank version of the distance covariance test, (iii) Shi--Drton--Han's center-outward Wilcoxon version of the distance covariance test, (iv) the center-outward normal-score version of the distance covariance test,  {and (v) the likelihood ratio test,} respectively.\vspace{-5mm}}\label{fig:power1b}
\end{figure}

%\clearpage

\begin{figure}[!htbp]
\centering
\includegraphics[width=\textwidth]{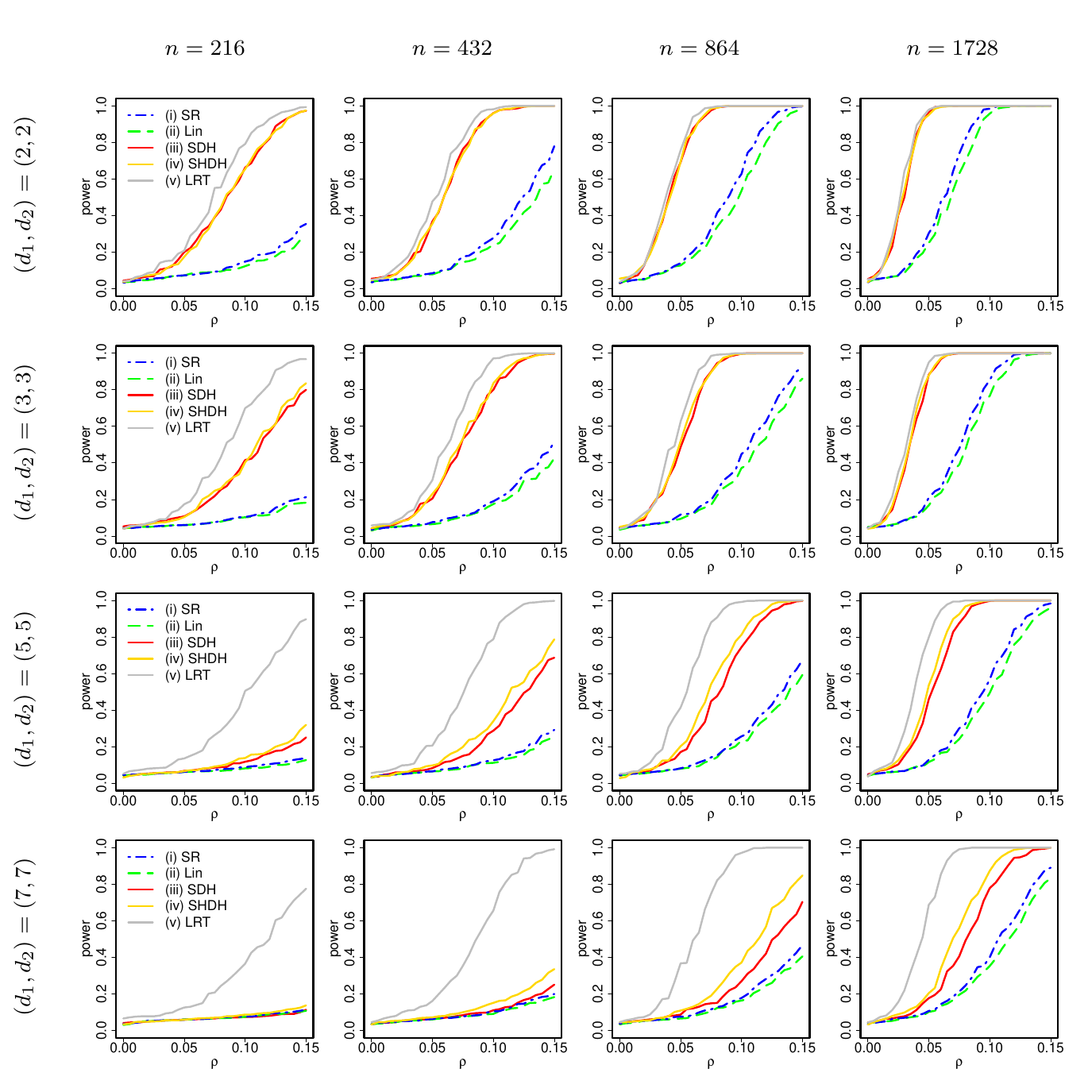}\vspace{-3mm}
\caption{Empirical powers of the  {five} competing tests 
in Example~\ref{ex:sim-power1}(c) ($\tau =0.9$, high within-group correlation). 
The $y$-axis represents {rejection frequencies} based on 1,000 replicates, 
the~$x$-axis represents $\rho$ (the between-group correlation), 
and the blue, green, red, and gold lines represent the performance of (i) Szekely and Rizzo's original distance covariance test, (ii) Lin's marginal rank version of the distance covariance test, (iii) Shi--Drton--Han's center-outward Wilcoxon version of the distance covariance test, (iv) the center-outward normal-score version of the distance covariance test,  {and (v) the likelihood ratio test,} respectively.\vspace{-5mm}}\label{fig:power1c}
\end{figure}

%\clearpage

\begin{figure}[!htbp]
\centering
\includegraphics[width=\textwidth]{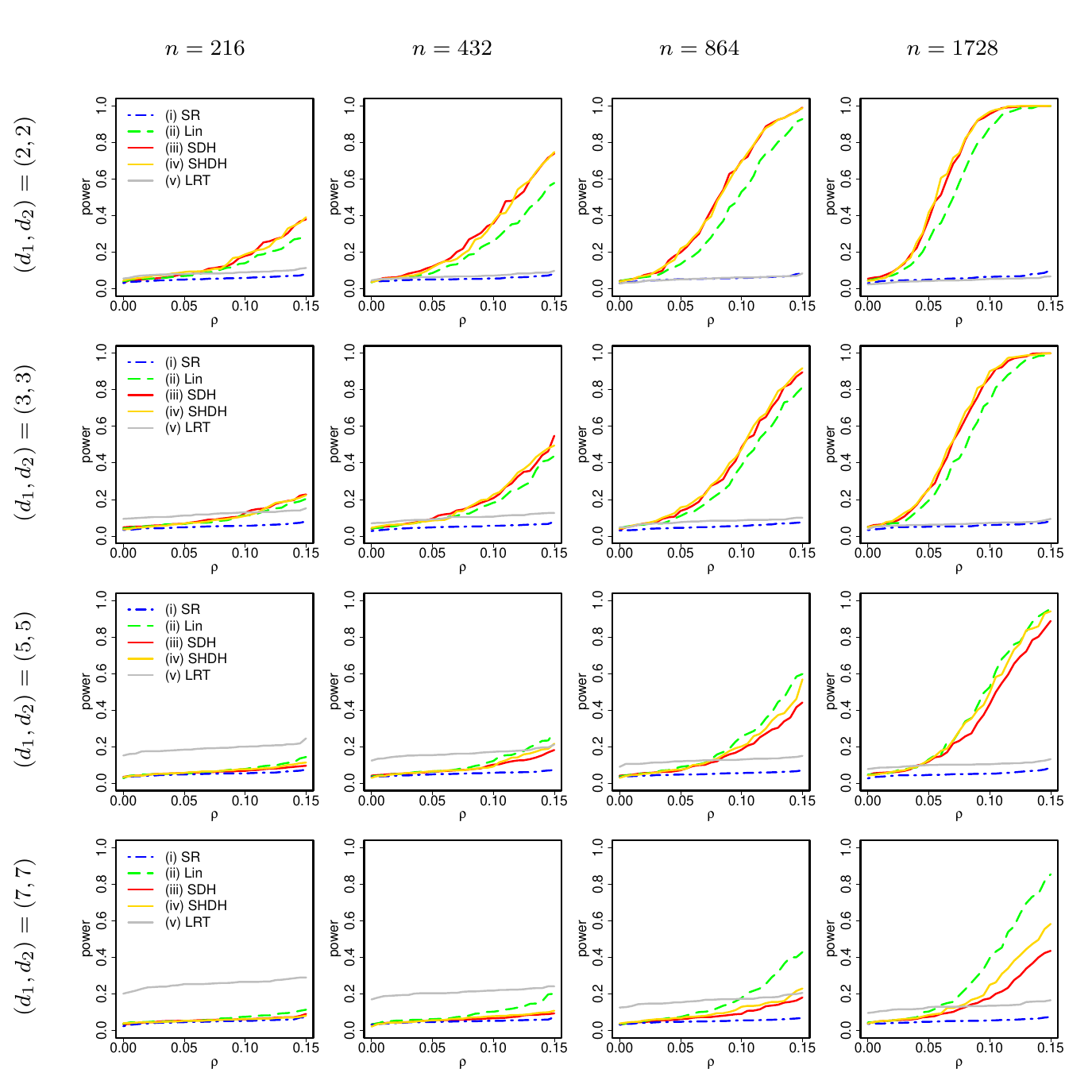}\vspace{-3mm}
\caption{Empirical powers of the  {five} competing tests 
in  {Example~\ref{ex:sim-power2}}. 
The $y$-axis represents {rejection frequencies} based on 1,000 replicates, 
the $x$-axis represents $\rho$ (the between-group correlation), 
and the blue, green, red, and gold lines represent the performance of (i) Szekely and Rizzo's original distance covariance test, (ii) Lin's marginal rank version of the distance covariance test, (iii) Shi--Drton--Han's center-outward Wilcoxon version of the distance covariance test, (iv) the center-outward normal-score version of the distance covariance test,  {and (v) the likelihood ratio test,} respectively.\vspace{-5mm}}\label{fig:cauchy}
\end{figure}

%\clearpage

%\textcolor{red}{The legend in Figures 1--3 should be adapted; if this cannot be done in the picture for lack of space, it can be done in the caption.  }

\section{Conclusion}

 {This paper provides a general framework for specifying dependence
measures}  that leverage the new
concept of center-outward ranks and signs.   {The associated independence tests
have the strong appeal of being %are
fully}
distribution-free.
%, hence can be implemented in relatively small samples.  %already in finite samples. 
 Via the use of a
flexible class of generalized symmetric covariances and the incorporation of score functions, our framework
allows one to construct  a variety  of consistent dependence measures.  This, as our
numerical experiments  demonstrate, 
%Moreover,
%we propose the incorporation of scoring functions, which
 can lead to significant gains in
power. 

The theory we develop facilitates the derivation of asymptotic
distributions   yielding easily computable approximate critical
values.  The key result is an asymptotic representation that also
allows us to establish, for the first time, a  {nontrivial local power result}
for tests of vector independence based on  center-outward ranks and signs.

%While our theory covers a wide range of settings, there remain
%important issues to be resolved in future work.  In particular, the
%current theory (cf.\ Theorem~\ref{thm:sGSC5}) has not yet confirmed the
%consistency of the normal-scores based test (against all types of dependence) although such consistency
%is to be expected. %\textcolor{red}{to be clarified: do you mean consistency against all types of dependence---not just the local Konijn ones?}

\appendix

\section{Proofs} \label{sec:proof}

Some further concepts and notation concerning U-statistics are needed in this section.
 For any symmetric kernel $h$, any integer~$\ell\in\zahl{m}$, 
and any probability measure $\Pr_{\mZ}$, recall the definition
 \[
h_{\ell}(\mz_1\ldots,\mz_{\ell}; \Pr_{\mZ}):=\E h(\mz_1\ldots,\mz_{\ell},\mZ_{\ell+1},\ldots,\mZ_m),\]
of
the kernel
 and define
\[
\widetilde h_{\ell}(\mz_1,\ldots,\mz_{\ell}; \Pr_{\mZ}) 
:=h_{\ell}(\mz_1,\ldots,\mz_{\ell};\Pr_{\mZ})
  -\E h-\sum_{k=1}^{\ell-1}\sum_{1\leq i_1<\cdots<i_k\leq\ell}
  \widetilde h_{k}(\mz_{i_1},\ldots,\mz_{i_k};\Pr_{\mZ}),%\yestag\label{eq:han-hl}
\]
where $\mZ_1,\ldots,\mZ_m$ are $m$ independent copies of $\mZ\sim\Pr_{\mZ}$ and $\E h:=\E h(\mZ_1,\ldots,\mZ_m)$. 
The kernel as well as the corresponding U-statistic are said to be \emph{degenerate} under $\Pr_{\mZ}$ if $h_1(\cdot)$ has variance zero and  \emph{completely degenerate} if the
variances of $h_1(\mZ_1),\ldots,h_{m-1}(\mZ_1,\ldots,\mZ_m)$ all are zero. 
%We let $H_n^{(\ell)}(\cdot;\Pr_{\mZ})$ be the U-statistic based on the
%completely degenerate kernel $h^{(\ell)}(\cdot;\Pr_{\mZ})$ from~(\ref{eq:han-hl}):
%\begin{equation}
%H_n^{(\ell)}(\cdot;\Pr_{\mZ}):=\mbinom{n}{\ell}^{-1}\sum_{1\le i_1< i_2<\cdots< i_\ell\le n} h^{(\ell)}\Big(\mZ_{i_1},\ldots,\mZ_{i_\ell};\Pr_{\mZ}\Big).
%\end{equation}
%We have the following Hoeffding decomposition
%\[\widehat U_n = \E h+\sum_{\ell=1}^{m} \mbinom{m}{\ell}H_n^{(\ell)}(\cdot;\Pr_{\mZ}).\]
%If we further define $\delta_{\ell}(\cdot;\Pr_{\mZ})=\binom{m}{\ell}\widetilde h_{\ell}(\cdot;\Pr_{\mZ})$, 
%and let $ D_n^{(\ell)}(\cdot;\Pr_{\mZ})$ be the corresponding U-statistic 
%based on the completely degenerate kernel $\delta^{(\ell)}(\cdot;\Pr_{\mZ})$:
%\begin{equation}
%D_{\ell}(\cdot;\Pr_{\mZ}):=\mbinom{n}{\ell}^{-1}\sum_{1\le i_1< i_2<\cdots< i_\ell\le n} \delta_{\ell}\Big(\mZ_{i_1},\ldots,\mZ_{i_\ell};\Pr_{\mZ}\Big),
%\end{equation}
We also have, for  any (possibly dependent) random vectors $\mZ^{\prime}_1,\dots, \mZ^{\prime}_n$,
\[
   \mbinom{n}{m}^{-1}\!\!\!\sum_{1\le i_1<\cdots<i_m\le n} h\Big(\mZ^{\prime}_{i_1},\ldots,\mZ^{\prime}_{i_m}\Big) 
=\E h + \sum_{\ell=1}^{m}\mbinom{n}{\ell}^{-1}\!\!\!\sum_{1\le i_1<\cdots<i_\ell\le n} \mbinom{m}{\ell}\widetilde h_{\ell}\Big(\mZ^{\prime}_{i_1},\ldots,\mZ^{\prime}_{i_\ell};\Pr_{\mZ}\Big),%\yestag
\]
 (the so-called {\it Hoeffding decomposition} with respect to $\Pr_{\mZ}$).

\vspace{0.2cm}
\noindent {\bf Notation.}
%Let $(n)_r$ denote $n!/(n-r)!$ for $r\in\zahl{n}$.
The cardinality of a set $\mathcal{S}$ is denoted as $\card(\mathcal{S})$
and its complement as $\mathcal{S}^{\complement}$. 
%For a set $\cS=\{x_1,\dots,x_n\}$ and $r\in\zahl{n}$,
%an $r$-arrangement of $\cS$ is a sequence $[x_{\sigma(i)}]_{i=1}^{r}$,
%given $\sigma$ is a bijection from $\zahl{n}$ to itself.
%For $r\in\zahl{n}$, let $I^{n}_{r}$ denote the family of all $(n)_r$ possible $r$-arrangements of set $\zahl{n}$.
%For $x\in\R$, let $x_{+}$ denote the positive part of $x$, defined as $\max\{x,0\}$.
%Let $\mx\circ\my$ and $\mx\cdot\my$ denote the Hadamard product and dot product of two vectors $\mx,\my\in\R^d$.
We use 
$\rightrightarrows$ to denote uniform convergence of functions
The cumulative distribution function and probability density function
of the univariate standard normal distribution are denoted by $\Phi$ and $\varphi$, respectively. 
Let~$\lVert X\rVert_{\sf L^{r}}:=(\E|X|^r)^{1/r}$ stand for the $\sf L^{r}$-norm of a random variable $X$.  
We use %$\rightsquigarrow$ 
%$\stackrel{\sf p}{\longrightarrow}$, 
$\stackrel{\sf L^{r}}{\longrightarrow}$ 
%and $\stackrel{\sf a.s.}{\longrightarrow}$ 
to denote 
%convergence in distribution
%convergence in probability, 
convergence of random variables in the $r$-th mean. 
%and almost sure convergence, respectively. 
For random vectors $\mX_n,\mX\in\R^d$, we write~$\mX_n\stackrel{\sf L^{r}}{\longrightarrow}\mX$ if $\lVert\mX_n-\mX\rVert\stackrel{\sf L^{r}}{\longrightarrow}0$.  
%For two random variables $X$ and $Y$, we define the total variation distance  between $X$ and $Y$ as $\lVert X-Y \rVert_{\TV}:=\sup_{B\in\cB}\lvert\Pr(X\in B)-\Pr(Y\in B)\rvert$, where $\cB$ is the family of all measurable sets. 
Let $(\cX,\cA)$ be a measurable space, and let $\P$ and $\Q$ be two
probability measures on $(\cX,\cA)$: we write $\P\ll\mu$ and
$\Q\ll\mu$ if  $\P$ and $\Q$ are absolutely continuous with respect
to a $\sigma$-finite measure $\mu$ on $(\cX,\cA)$.  The total
variation and Hellinger distances between $\Q$ and~$\P$ are denoted as $\TV(\Q,\P):=\sup_{A\in\cA}|\Q(A)-\P(A)|$ and $\HL(\Q,\P):=\{\int 2(1-\sqrt{{\d\Q}/{\d\P}})\d\P\}^{1/2}$, respectively.
We write $\Q\n\triangleleft \P\n$ for ``$\Q\n$ is contiguous to $\P\n$''. 
%We first introduce more notation. For any vector $\mv\in\mathbbm{R}^p$, we denote $\|\mv\|$ as its Euclidean norm.
%We define the $L^{\infty}$ norm of a random variable as $\lVert X\rVert_{\infty}=\inf\{t\ge 0:|X|\le t\text{ a.s.}\}$, the $\psi_2$ (sub-gaussian) norm as $\lVert X\rVert_{\psi_2}=\inf\{t>0:\E\exp(X^2/t^2)\le 2\}$, and the $\psi_1$ (sub-exponential) norm as $\lVert X\rVert_{\psi_1}=\inf\{t>0:\E\exp(|X|/t)\le 2\}$.
%For any measure $\Pr_{Z}$ and kernel $h$, we let
%$H_n^{(\ell)}(\cdot;\Pr_{Z})$ be the U-statistic based on the
%completely degenerate kernel $h^{(\ell)}(\cdot;\Pr_{Z})$ from~(\ref{eq:han-hl}):
%\begin{equation}
%H_n^{(\ell)}(\cdot;\Pr_{Z}):=\mbinom{n}{\ell}^{-1}\sum_{1\le i_1< i_2<\cdots< i_\ell\le n} h^{(\ell)}\Big(Z_{i_1},\ldots,Z_{i_\ell};\Pr_{Z}\Big).
%\end{equation}

\subsection{Proofs for Section~\ref{sec:sgc}}% of the main paper}

\subsubsection{Proof of Propostion~\ref{prop:GSC-Iconsistency}}

\begin{proof}[Proof of Propostion~\ref{prop:GSC-Iconsistency}]
The proof is entirely similar to the proof of Proposition~2 in \citet{MR3842884} and hence omitted. 
\end{proof}

%\subsubsection{Proof of Example~\ref{prop:sgc:1d}}
%
%\begin{proof}[Proof of Example~\ref{prop:sgc:1d}]
%Items~\ref{example:kendall}--\ref{example:BDY} are borrowed from \citet[Proposition~1]{MR3842884}. %, whose proof can be found in \citet[Sec.~S3.2,~Proof of Proposition~1]{MR3842884supp}. 
%\end{proof}

\subsubsection{Proof of Example~\ref{prop:sgc}}

\begin{proof}[Proof of Example~\ref{prop:sgc}]
Item~\ref{example:dCov} is stated in \citet[Sec.~3.4]{MR3178526}. 
Item~\ref{example:WDM} is given in \citet[Proposition~1]{MR3842884}. 
Item~\ref{example:mD} can be proved using Equation~(3) in \citet{MR3737307}. 
Items~\ref{example:mR} and \ref{example:mBDY} can be proved using 
Proposition~D.5 in \citet{MR4185814supp}
and Theorem~7.2 in \citet{MR4185814}, respectively. 
\end{proof}

\subsubsection{Proof of Lemma~\ref{lem:kernel:kernel2}}

\begin{proof}[Proof of Lemma~\ref{lem:kernel:kernel2}]
%Next we prove part (b) (D-consistency). 
Provided that $\E [f_1]$ and $\E [f_2]$ exist and are finite, we have
\begin{align*}
\E\:\Big[&k_{f_1,f_2,H_*^m}\Big((\mX_{11},\mX_{21}),\dots,(\mX_{1m},\mX_{2m})\Big)\Big]\\
&=\E\Big\{\sum_{\sigma\in H}\sgn(\sigma) f_1(\mX_{1\sigma(1)},\dots,\mX_{1\sigma(m)})\Big\}
\Big\{\sum_{\sigma\in H}\sgn(\sigma) f_2(\mX_{2\sigma(1)},\dots,\mX_{2\sigma(m)})\Big\}\\
&=\E\Big\{f_{1}(\mX_{11},\mX_{12},\mX_{13},\mX_{14},\mX_{15},\dots,\mX_{1m})
        -f_{1}(\mX_{11},\mX_{13},\mX_{12},\mX_{14},\mX_{15},\dots,\mX_{1m})\\
       &~~~~~~ -f_{1}(\mX_{14},\mX_{12},\mX_{13},\mX_{11},\mX_{15},\dots,\mX_{1m})
        +f_{1}(\mX_{14},\mX_{13},\mX_{12},\mX_{11},\mX_{15},\dots,\mX_{1m})\Big\}\\
 &~~~\times\Big\{
       f_{2}(\mX_{21},\mX_{22},\mX_{23},\mX_{24},\mX_{25},\dots,\mX_{2m})
        -f_{2}(\mX_{21},\mX_{23},\mX_{22},\mX_{24},\mX_{25},\dots,\mX_{2m})\\
       &~~~~~~ -f_{2}(\mX_{24},\mX_{22},\mX_{23},\mX_{21},\mX_{25},\dots,\mX_{2m})
        +f_{2}(\mX_{24},\mX_{23},\mX_{22},\mX_{21},\mX_{25},\dots,\mX_{2m})\Big\}.
%=4\E&\Big[f_{1}(\mX_{11},\mX_{12},\mX_{13},\mX_{14},\dots,\mX_{1m})\\
%\cdot\Big\{
%         &f_{2}(\mX_{21},\mX_{22},\mX_{23},\mX_{24},\dots,\mX_{2m})
%        -f_{2}(\mX_{21},\mX_{23},\mX_{22},\mX_{24},\dots,\mX_{2m})\\
%       -&f_{2}(\mX_{24},\mX_{22},\mX_{23},\mX_{21},\dots,\mX_{2m})
%        +f_{2}(\mX_{24},\mX_{23},\mX_{22},\mX_{21},\dots,\mX_{2m})\Big\}\Big]
\end{align*}
The  result follows. %D-consistency of $\mu_{f_{1},f_{2},H_{*}^m}$ with $m\ge 4$ in the class 
%\[
%\cP_{d_1+d_2}^{\ac} \bigcap \big\{\P_{(\mX_1,\mX_2)}: \E [f_1]<\infty, \E [f_2]<\infty\big\}
%\]
%follows from the D-consistency of the pair $(f_1,f_2)$ in the class $\cP_{d_1+d_2}^{\ac}$.
\end{proof}

\subsubsection{Proof of Theorem~\ref{thm:kernel:kernel2}}

\begin{proof}[Proof of Theorem~\ref{thm:kernel:kernel2}]
%  \citet[Theorem~3.1]{MR0029139} and
%  \citet[Proposition~3]{yanagimoto1970measures} prove the
%  D-consistency of the pairs of kernels used in Proposition
%  \ref{prop:sgc:1d}\ref{example:D} within the class
%  $\cP_{d_1+d_2}^{\ac}$.  The corresponding result for the kernels in
%  Proposition \ref{prop:sgc:1d}\ref{example:R} is proved on page 490 of
%  \citet{MR0125690}, and that for \ref{prop:sgc:1d}\ref{example:BDY}
%  is given in \citet[Theorem~1]{MR3178526}.

  The D-consistency %, in the class $\cP_{d_1+d_2}^{\ac}$, 
  of the
  pairs of kernels used in Example
  \ref{prop:sgc}\ref{example:dCov} has been shown in
  \citet[Theorem~3(i)]{MR2382665}, \citet[Theorem~3.11]{MR3127883} and
  \citet[Item~(iv)]{MR3813995}.  The result for
  \ref{prop:sgc}\ref{example:WDM} is given in
  \citet[Theorem~1]{MR3842884}, that for \ref{prop:sgc}\ref{example:mD}
  in \citet[Proposition~1(i)]{MR3737307}, and that for
  \ref{prop:sgc}\ref{example:mR} and \ref{prop:sgc}\ref{example:mBDY}
    in \citet[p.~3435]{MR4185814}.
\end{proof}

\subsubsection{Proof of Lemma~\ref{lem:kernel:kernel1}}

\begin{proof}[Proof of Lemma~\ref{lem:kernel:kernel1}]
The lemma directly follows  from the definition of $\mu_{f_1,f_2,H}$
(cf.~Definition~\ref{def:sc}) and the fact that $f_1$ and $f_2$ are both
orthogonally invariant.
\end{proof}

\subsubsection{Proof of Proposition~\ref{prop:kernel:kernel1}}

\begin{proof}[Proof of Proposition~\ref{prop:kernel:kernel1}]
To verify that the kernels used in Example \ref{prop:sgc}\ref{example:dCov},\ref{example:mD}--\ref{example:mBDY} are orthogonally invariant, it suffices to notice that $
%\begin{align*}
%&
\fO\bm{w}-\fO\mv=\fO(\bm{w}-\mv)$, %~~~
$(\fO\bm{w})^{\top}(\fO\mv)=\bm{w}^{\top}\fO^{\top}\fO\mv=\bm{w}^{\top}\mv,$ %\\
and %\text{and}~~~
%&
$\lVert\fO\bm{w}\rVert=%\sqrt{(\fO\bmu)^{\top}(\fO\bmu)}=%
\sqrt{\bm{w}^{\top}\bm{w}}=
\lVert\bm{w}\rVert$ 
%\end{align*}
for any orthogonal matrix $\fO \in\R^{d\times d}$ and $\bm{w},\mv\in\R^d$. 
\end{proof}

%\subsection{Proofs for Section~\ref{sec:hallin}}% of the main paper}

%\subsubsection{Proof of Proposition~\ref{prop:Hallin}}

\subsection{Proofs for Section~\ref{sec:measure}}% of the main paper}

\subsubsection{Proof of Proposition~\ref{prop:score}}

\begin{proof}[Proof of Proposition~\ref{prop:score}]
The first part is trivial. We next prove the second part.  The function $u\mapsto\big(F_{\chi^2_d}^{-1}(u)\big)^{1/2}$ is continuous over $[0,1)$, and
\[\int_{0}^{1}\Big(\big(F_{\chi^2_d}^{-1}(u)\big)^{1/2}\Big)^2\d u=\int_{0}^{1}F_{\chi^2_d}^{-1}(u)\d u=\E [F_{\chi^2_d}^{-1}(U)]=d,\]
where $U$ is uniformly distributed over $[0,1]$, and thus $F_{\chi^2_d}^{-1}(U)$ is chi-square distributed with $d$ degrees of freedom and expectation $d$. Hence, $J_{\text{\tiny{\rm vdW}}}(u)$ is weakly regular; 
%We have $\{\Phi_{d_k}^{-1}(0)\}^{1/2}=0$, if moreover $\{\Phi_{d_k}^{-1}(u)\}^{1/2}$ is strongly regular, then it will be bounded, which is not true. 
%However, $\{\Phi_{d_k}^{-1}(u)\}^{1/2}$
it is not strongly regular, however, since it is unbounded. 
\end{proof}

\subsubsection{Proof of Proposition~\ref{prop:sGSC}}

\paragraph{Proof of Proposition~\ref{prop:sGSC}\ref{prop:sGSC1}}

\begin{proof}[Proof of Proposition~\ref{prop:sGSC}\ref{prop:sGSC1}]
  This  follows
  immediately from Proposition~\ref{prop:Hallin_full}\ref{prop:Hallin3} and
  the independence between $[\fG^{(n)}_{1,\pms}(\mX_{1i})]_{i=1}^{n}$
  and $[\fG^{(n)}_{2,\pms}(\mX_{2i})]_{i=1}^{n}$ under the null hypothesis. 
\end{proof}

\paragraph{Proof of Proposition~\ref{prop:sGSC}\ref{prop:sGSC2}}

\begin{proof}[Proof of Proposition~\ref{prop:sGSC}\ref{prop:sGSC2}]
The desired result follows from combining Lemma~\ref{lem:kernel:kernel1} and Proposition~\ref{prop:Hallin_full}\ref{prop:Hallin5}. 
\end{proof}

\paragraph{Proof of Proposition~\ref{prop:sGSC}\ref{prop:sGSC3}}

\begin{proof}[Proof of Proposition~\ref{prop:sGSC}\ref{prop:sGSC3}]
We only prove the D-consistency part. 
Using Lemma~\ref{lem:kernel:kernel2}, it remains to prove that the
independence of $\fG_{1,\pms}(\mX_1)$ and $\fG_{2,\pms}(\mX_2)$ implies
the independence of $\mX_1$ and~$\mX_2$.  Notice that $\fF_{\pms}$ is $\P$-almost surely invertible for any $\P\in\cP_{d}^{\ac}$ \citep[Section 6.2.3 and Remark 6.2.11]{MR2401600}, and so is $\fG_{\pms}$. 
%; see also the proof of Lemma 3.1 in \citet[Sec.~E.4]{deb2019multivariate}. 
The independence claim follows.
\end{proof}

\paragraph{Proof of Proposition~\ref{prop:sGSC}\ref{prop:sGSC4}}

\begin{proof}[Proof of Proposition~\ref{prop:sGSC}\ref{prop:sGSC4}]
The main idea of the proof consists in  bounding $|\tenq{W}\n_{\mu}-W_{\mu}|$. 
Let~$\mY^{(n)}_{ki}$ and $\mY_{ki}$ stand for  
$\fG_{k,\pms}^{(n)}(\mX_{ki})$ and 
$\fG_{k,\pms}(\mX_{ki})$, 
respectively. 
Notice that
\begin{align*}
\tenq{W}\n_{J_1,J_2,\mu_{f_1,f_2,H}}
&=(n)_{m}^{-1}
\sum_{[i_1,\dots,i_m]\in I_{m}^{n}}k_{f_1,f_2,H}\Big((\mY^{(n)}_{1i_1},\mY^{(n)}_{2i_1}),\dots,(\mY^{(n)}_{1i_m},\mY^{(n)}_{2i_m})\Big),\\
W_{J_1,J_2,\mu_{f_1,f_2,H}}
&=(n)_{m}^{-1}
\sum_{[i_1,\dots,i_m]\in I_{m}^{n}}k_{f_1,f_2,H}\Big((\mY_{1i_1},\mY_{2i_1}),\dots,(\mY_{1i_m},\mY_{2i_m})\Big),
\end{align*}
where
\begin{align*}
k_{f_1,f_2,H}\Big((&\mx_{11},\mx_{21}),\dots,(\mx_{1m},\mx_{2m})\Big)\\
:=\;&
\Big\{\sum_{\sigma\in H}\sgn(\sigma) f_1(\mx_{1\sigma(1)},\dots,\mx_{1\sigma(m)})\Big\}
\Big\{\sum_{\sigma\in H}\sgn(\sigma) f_2(\mx_{2\sigma(1)},\dots,\mx_{2\sigma(m)})\Big\}.
\end{align*}
Since
$f_k([\mY^{(n)}_{ki_\ell}]_{\ell=1}^{m})$ and $f_k([\mY_{ki_\ell}]_{\ell=1}^{m})$
are almost surely bounded by some constant $C_{J_k,f_k}$, we deduce
\begin{align*}
\Big\lvert k_{f_1,f_2,H}&\Big([(\mY^{(n)}_{1i_\ell},\mY^{(n)}_{2i_\ell})]_{\ell=1}^{m}\Big)-k_{f_1,f_2,H}\Big([(\mY_{1i_\ell},\mY_{2i_\ell})]_{\ell=1}^{m}\Big)\Big\rvert\\
\le\;&\card(H)\cdot C_{J_1,f_1}\cdot \sum_{\sigma\in H}
\Big\lvert f_2\Big([\mY^{(n)}_{2\sigma(i_\ell)}]_{\ell=1}^{m}\Big)
          -f_2\Big([\mY_{2\sigma(i_\ell)}]_{\ell=1}^{m}\Big)\Big\rvert\\
&+\card(H)\cdot C_{J_2,f_2}\cdot \sum_{\sigma\in H}
\Big\lvert f_2\Big([\mY^{(n)}_{1\sigma(i_\ell)}]_{\ell=1}^{m}\Big)
          -f_2\Big([\mY_{1\sigma(i_\ell)}]_{\ell=1}^{m}\Big)\Big\rvert,
\end{align*}
recalling that $\card(H)$ denotes the number of permutations in the subgroup $H$.
Moreover, 
\begin{align*}
\Big\lvert \tenq{W}\n_{J_1,J_2,\mu_{f_1,f_2,H}}\mkern-40mu&\mkern40mu -W_{J_1,J_2,\mu_{f_1,f_2,H}}\Big\rvert\\
\le\;&\card(H)^2\cdot C_{J_1,f_1}\cdot \Big[(n)_{m}^{-1}\sum_{[i_1,\dots,i_m]\in I_{m}^{n}}
\Big\lvert f_2\Big([\mY^{(n)}_{2i_\ell}]_{\ell=1}^{m}\Big)
          -f_2\Big([\mY_{2i_\ell}]_{\ell=1}^{m}\Big)\Big\rvert\Big]\\
&+\card(H)^2\cdot C_{J_2,f_2}\cdot \Big[(n)_{m}^{-1}\sum_{[i_1,\dots,i_m]\in I_{m}^{n}}
\Big\lvert f_1\Big([\mY^{(n)}_{1i_\ell}]_{\ell=1}^{m}\Big)
          -f_1\Big([\mY_{1i_\ell}]_{\ell=1}^{m}\Big)\Big\rvert\Big]
\stackrel{\sf a.s.}{\longrightarrow} 0.
\end{align*}
This, together with the fact that $W_{J_1,J_2,\mu_{f_1,f_2,H}}\stackrel{\sf a.s.}{\longrightarrow}\mu_{\pms}(\mX_1,\mX_2)$ by the strong consistency of U-statistics, yields~$\tenq{W}\n_{J_1,J_2,\mu_{f_1,f_2,H}}\stackrel{\sf a.s.}{\longrightarrow}\mu_{\pms}(\mX_1,\mX_2)$.
\end{proof}

\subsubsection{Proof of Theorem~\ref{thm:sGSC5}}

We first fix some notation and prove a property that will hold for
all GSCs $\mu$ and associated kernel functions considered in Example \ref{prop:sgc}\ref{example:dCov}--\ref{example:mBDY}. 
For $k=1,2$, let $\bm{y}^{(n)}_{ki}=\fJ(\bmu^{*(n)}_{ki})$,
where $\bmu^{*(n)}_{ki}$, $i\in\zahl{n}$ are the deterministic points
forming the grid $\kG^{d_k}_{\mn}$. Writing $\mY^{(n)}_{ki}$ and
$\mY_{ki}$ for~$\fG_{k,\pms}^{(n)}(\mX_{ki})$ and~$\fG_{k,\pms}(\mX_{ki})$, 
respectively, let us show that
\begin{align}\label{eq:han-1}
\Xi_{k}^{(n)}:=\sup_{1\le i\le n}\lVert \mY^{(n)}_{ki}-\mY_{ki}\rVert\stackrel{\sf a.s.}{\longrightarrow}0, ~~~k=1,2. 
\end{align}

Recall that, by definition of strong regularity, $J_k$ is Lipschitz-continuous  with some constant~$L_k$, strictly monotone, and satisfies $J_k(0) = 0$. 
Then we immediately have $|J_k(u)|\le L_k$ for all $u\in[0,1)$, and thus 
$\mY^{(n)}_{ki}$ and $\mY_{ki}$ are almost surely bounded by $L_k$. As long as $\P_{\mX_k}\in \cP_{d_k}^{\#}$,  in order to prove that~$\Xi_{k}^{(n)}\stackrel{\sf a.s.}{\longrightarrow}0$, it suffices to show that $\lVert\fJ_k(\bmu_{k1})-\fJ_k(\bmu_{k2})\rVert\le 2L_k\lVert\bmu_{k1}-\bmu_{k2}\rVert$ for any $\bmu_{k1},\bmu_{k2}\in\R^{d_k}$ with $\lVert\bmu_{k1}\rVert,\lVert\bmu_{k2}\rVert<1$. Without loss of generality, assume that $\lVert\bmu_{k2}\rVert\le\lVert\bmu_{k1}\rVert$. If $\lVert\bmu_{k2}\rVert=0$, the claim is obvious by noticing $|J_k(u)|\le L_k u$ for $u\in[0,1)$ and then $\lVert\fJ_k(\bmu_{k1})\rVert\le L_k\lVert\bmu_{k1}\rVert$; otherwise we have
\begin{align*}
\lVert\fJ_k(\bmu_{k1})-\fJ_k(\bmu_{k2})\rVert
&\le\Big\lVert\fJ_k(\bmu_{k1})-\fJ_k\Big(\frac{\lVert\bmu_{k2}\rVert}{\lVert\bmu_{k1}\rVert}\bmu_{k1}\Big)\Big\rVert
+\Big\lVert\fJ_k\Big(\frac{\lVert\bmu_{k2}\rVert}{\lVert\bmu_{k1}\rVert}\bmu_{k1}\Big)-\fJ_k(\bmu_{k2})\Big\rVert\\
&=\Big\lvert J_k(\lVert\bmu_{k1}\rVert)-J_k(\lVert\bmu_{k2}\rVert)\Big\rvert
+\frac{J_k(\lVert\bmu_{k2}\rVert)}{\lVert\bmu_{k2}\rVert}\cdot\Big\lVert\frac{\lVert\bmu_{k2}\rVert}{\lVert\bmu_{k1}\rVert}\bmu_{k1}-\bmu_{k2}\Big\rVert\\
&\le L_k\Big\lvert \lVert\bmu_{k1}\rVert - \lVert\bmu_{k2}\rVert\Big\rvert
+L_k\Big\lVert\frac{\lVert\bmu_{k2}\rVert}{\lVert\bmu_{k1}\rVert}\bmu_{k1}-\bmu_{k2}\Big\rVert
\le 2L_k\lVert \bmu_{k1}-\bmu_{k2}\rVert.
\end{align*}
This completes the proof of \eqref{eq:han-1}.

\paragraph{Proof of Theorem~\ref{thm:sGSC5} ($h=h_{\dCov^2}$)}

\begin{proof}[Proof of Theorem~\ref{thm:sGSC5} ($h=h_{\dCov^2}$)]
Recall that
\[f_1^{\dCov}([\mw_i]_{i=1}^{4})=\frac12\lVert
\mw_1-\mw_2\rVert\quad\text{and}\quad f_2^{\dCov}([\mw_i]_{i=1}^{4})=\frac12\lVert
\mw_1-\mw_2\rVert,
\]
 with possibly different dimension for the inputs.
Now,
$\frac12\lVert\mY^{(n)}_{ki_1}-\mY^{(n)}_{ki_2}\rVert$ and 
$\frac12\lVert\mY_{ki_1}-\mY_{ki_2}\rVert$
are almost surely bounded by $L_k$, 
since $\mY^{(n)}_{ki}$ and $\mY_{ki}$ are. % almost surely bounded by $L_k$.
 Next,
\[
\Big\lvert \frac12\lVert\mY^{(n)}_{ki_1}-\mY^{(n)}_{ki_2}\rVert
          -\frac12\lVert\mY_{ki_1}-\mY_{ki_2}\rVert\Big\rvert
\le \frac12\lVert\mY^{(n)}_{ki_1}-\mY_{ki_1}\rVert
   +\frac12\lVert\mY^{(n)}_{ki_2}-\mY_{ki_2}\rVert
\le \sup_{1\le i\le n}\lVert \mY^{(n)}_{ki}-\mY_{ki}\rVert,
\]
and we deduce that
\[
(n)_{4}^{-1}\sum_{[i_1,\dots,i_4]\in I_{4}^{n}}
\Big\lvert \frac12\lVert\mY^{(n)}_{ki_1}-\mY^{(n)}_{ki_2}\rVert
          -\frac12\lVert\mY_{ki_1}-\mY_{ki_2}\rVert\Big\rvert
\le \sup_{1\le i\le n}\lVert \mY^{(n)}_{ki}-\mY_{ki}\rVert
\stackrel{\sf a.s.}{\longrightarrow} 0.
\]
Both conditions in \eqref{condition:as} are satisfied, and  the proof is thus completed. 
\end{proof}

\paragraph{Proof of Theorem~\ref{thm:sGSC5} ($h=h_{M}$)}

\begin{proof}[Proof of Theorem~\ref{thm:sGSC5} ($h=h_{M}$)]
Recall that 
$f_1^{M}([\mw_i]_{i=1}^{5})=f_2^{M}([\mw_i]_{i=1}^{5})=\frac12\ind(\mw_{1},\mw_{2}\preceq\mw_{5})$,
up to a change in input dimension for the two functions.
It is obvious that
$f_k(\{\mY^{(n)}_{ki_\ell}\}_{\ell=1}^{m})$ and~$f_k(\{\mY_{ki_\ell}\}_{\ell=1}^{m})$
are almost surely bounded. 
Next we verify the second condition in \eqref{condition:as}.  
%starting from bounding the difference between $\ind(\mY^{(n)}_{ki_1},\mY^{(n)}_{ki_2}\preceq\mY^{(n)}_{ki_5})$ and $\ind(\mY_{ki_1},\mY_{ki_2}\preceq\mY_{ki_5})$. 

We have for $k=1,2$,
\[
\Big\lvert\ind(\mY^{(n)}_{ki_1},\mY^{(n)}_{ki_2}\preceq\mY^{(n)}_{ki_5})-\ind(\mY_{ki_1},\mY_{ki_2}\preceq\mY_{ki_5})\Big\rvert
\le\ind(\cB^{\complement}_{k;i_1,i_2,i_3,i_4,i_5}),
\]
where
\[
\cB_{k;i_1,i_2,i_3,i_4,i_5}:=\Big\{
\lVert \mY^{(n)}_{ki_1}-\mY^{(n)}_{ki_5}\rVert\ge2\,\Xi_{k}^{(n)},~~~
\lVert \mY^{(n)}_{ki_2}-\mY^{(n)}_{ki_5}\rVert\ge2\,\Xi_{k}^{(n)}\Big\}.
\]
Accordingly, %{\color{red}
\begin{align*}
\; (n)_5^{-1}&\!\!\!\!\!\sum_{[i_1,\dots,i_5]\in I^n_{5}}
\Big\lvert\ind(\mY^{(n)}_{ki_1},\mY^{(n)}_{ki_2}\preceq\mY^{(n)}_{ki_5})-\ind(\mY_{ki_1},\mY_{ki_2}\preceq\mY_{ki_5})\Big\rvert\\
\le\;&(n)_3^{-1}\card\Big\{[i_1,i_2,i_5]\in I^n_{3}: 
\lVert \mY^{(n)}_{ki_1}-\mY^{(n)}_{ki_5}\rVert\!<\!2\,\Xi_{k}^{(n)}~\text{or}~~~
\lVert \mY^{(n)}_{ki_2}-\mY^{(n)}_{ki_5}\rVert\!<\!2\,\Xi_{k}^{(n)}\Big\}\\
=\;&(n)_3^{-1}\card\Big\{[i_1,i_2,i_5]\in I^n_{3}: 
\lVert \bm{y}^{(n)}_{ki_1}-\bm{y}^{(n)}_{ki_5}\rVert<2\,\Xi_{k}^{(n)}~~\text{or}~~~
\lVert \bm{y}^{(n)}_{ki_2}-\bm{y}^{(n)}_{ki_5}\rVert<2\,\Xi_{k}^{(n)}\Big\}
\stackrel{\sf a.s.}{\longrightarrow}0,\yestag\label{eq:smallo1as_hoem}
\end{align*}
which completes the proof. 
\end{proof}

\paragraph{Proof of Theorem~\ref{thm:sGSC5} ($h=h_{D}$)}

\begin{proof}[Proof of Theorem~\ref{thm:sGSC5} ($h=h_{D}$)]
Recall that 
$f_1^{D}([\mw_i]_{i=1}^{5})\! =\! f_2^{D}([\mw_i]_{i=1}^{5})\!=\!\frac12\Arc(\mw_1-\mw_5,\mw_2-\mw_5)$,
up to a change in input dimension for the two functions.
Obviously, %It is obvious that
$f_k([\mY^{(n)}_{ki_\ell}]_{\ell=1}^{m})$ and~$f_k([\mY_{ki_\ell}]_{\ell=1}^{m})$
are almost surely bounded. 
To verify the second condition in \eqref{condition:as}, 
we start by bounding the difference between $\Arc(\mY^{(n)}_{ki_1}-\mY^{(n)}_{ki_5}, \mY^{(n)}_{ki_2}-\mY^{(n)}_{ki_5})$ and $\Arc(\mY_{ki_1}-\mY_{ki_5}, \mY_{ki_2}-\mY_{ki_5})$. 

For $k=1,2$, 
consider $(\bm{y}_{k1},\bm{y}_{k2},\bm{y}_{k5})\in(\R^{d_k})^3$ such that
\[\min\{\lVert \bm{y}_{k1}-\bm{y}_{k5}\rVert, \lVert \bm{y}_{k2}-\bm{y}_{k5}\rVert\}\ge\eta
~~~\text{and}~~~
\zeta\le\Arc(\bm{y}_{k1}-\bm{y}_{k5}, \bm{y}_{k2}-\bm{y}_{k5})\le\frac12-\zeta,
\]
where $\eta$ and $\zeta$ will be specified later on. 
For $(\bm{y}^{\prime}_{k1},\bm{y}^{\prime}_{k2},\bm{y}^{\prime}_{k5})\in(\R^{d_k})^3$ satisfying $\lVert \bm{y}_{ki}-\bm{y}^{\prime}_{ki}\rVert\le\delta$ for~$i=1,2,5$,
\begin{align*}
\Arc(\bm{y}_{k1}-\bm{y}_{k5}, \bm{y}_{k1}-\bm{y}^{\prime}_{k5})&
\le \frac{1}{2\pi}\arcsin\frac{\lVert\bm{y}_{k5}-\bm{y}^{\prime}_{k5}\rVert}{\lVert\bm{y}_{k1}-\bm{y}_{k5}\rVert}
\le \frac{1}{2\pi}\arcsin\frac{\delta}{\eta},\\
\Arc(\bm{y}_{k1}-\bm{y}^{\prime}_{k5}, \bm{y}^{\prime}_{k1}-\bm{y}^{\prime}_{k5})&
\le \frac{1}{2\pi}\arcsin\frac{\lVert\bm{y}_{k1}-\bm{y}^{\prime}_{k1}\rVert}{\lVert\bm{y}_{k1}-\bm{y}^{\prime}_{k5}\rVert}
\le \frac{1}{2\pi}\arcsin\frac{\delta}{\eta-\delta},\\
\Arc(\bm{y}_{k2}-\bm{y}_{k5}, \bm{y}_{k2}-\bm{y}^{\prime}_{k5})&
\le \frac{1}{2\pi}\arcsin\frac{\lVert\bm{y}_{k5}-\bm{y}^{\prime}_{k5}\rVert}{\lVert\bm{y}_{k2}-\bm{y}_{k5}\rVert}
\le \frac{1}{2\pi}\arcsin\frac{\delta}{\eta},\\
\text{and}~~~
\Arc(\bm{y}_{k2}-\bm{y}^{\prime}_{k5}, \bm{y}^{\prime}_{k2}-\bm{y}^{\prime}_{k5})&
\le \frac{1}{2\pi}\arcsin\frac{\lVert\bm{y}_{k2}-\bm{y}^{\prime}_{k2}\rVert}{\lVert\bm{y}_{k2}-\bm{y}^{\prime}_{k5}\rVert}
\le \frac{1}{2\pi}\arcsin\frac{\delta}{\eta-\delta}.
\end{align*}
Assuming that
\[\frac{1}{2\pi}\Big(2\arcsin\frac{\delta}{\eta}+2\arcsin\frac{\delta}{\eta-\delta}\Big)\le\zeta,\yestag\label{eq:verbelow1}\]
we obtain
\[
\lvert\Arc(\bm{y}_{k1}-\bm{y}_{k5}, \bm{y}_{k2}-\bm{y}_{k5})-\Arc(\bm{y}^{\prime}_{k1}-\bm{y}^{\prime}_{k5}, \bm{y}^{\prime}_{k2}-\bm{y}^{\prime}_{k5})\rvert\le \frac{1}{2\pi}\Big(2\arcsin\frac{\delta}{\eta}+2\arcsin\frac{\delta}{\eta-\delta}\Big).
\]
For $\delta\le 1/4$, take $\eta=\sqrt{\delta}$ and $\zeta=3\sqrt{\delta}/2$ such that \eqref{eq:verbelow1} holds,
\begin{align*}
     &\frac{1}{2\pi}\Big(2\arcsin\frac{\delta}{\eta}+2\arcsin\frac{\delta}{\eta-\delta}\Big)
  =\; \frac{1}{2\pi}\Big(2\arcsin\sqrt{\delta}+2\arcsin\frac{\sqrt{\delta}}{1-\sqrt{\delta}}\Big)\\
\le\;&\frac{1}{2\pi}\Big(2\arcsin\sqrt{\delta}+2\arcsin2\sqrt{\delta}\Big)
\le\; \frac{1}{2\pi}\Big(2\frac{\pi}{2}\sqrt{\delta}+2\frac{\pi}{2}(2\sqrt{\delta})\Big)=\frac{3}{2}\sqrt{\delta}=\zeta.
\end{align*}
It follows that for $\delta\le 1/4$ and 
$(\bm{y}_{k1},\bm{y}_{k2},\bm{y}_{k5}),(\bm{y}^{\prime}_{k1},\bm{y}^{\prime}_{k2},\bm{y}^{\prime}_{k5})\in(\R^{d_k})^3$ such that
\begin{align*}
\min\{\lVert \bm{y}_{k1}-\bm{y}_{k5}\rVert, \lVert \bm{y}_{k2}-\bm{y}_{k5}\rVert\}&\ge\sqrt{\delta},
~~~\frac{3}{2}\sqrt{\delta}\le\Arc(\bm{y}_{k1}-\bm{y}_{k5}, \bm{y}_{k2}-\bm{y}_{k5})\le\frac12-\frac{3}{2}\sqrt{\delta},\\
~~~\text{and}~~~~~~\lVert \bm{y}_{ki}-\bm{y}^{\prime}_{ki}\rVert &\le\delta~~~~~~~\text{for }i=1,2,5, 
\end{align*}
we have
\[
\lvert\Arc(\bm{y}_{k1}-\bm{y}_{k5}, \bm{y}_{k2}-\bm{y}_{k5})-\Arc(\bm{y}^{\prime}_{k1}-\bm{y}^{\prime}_{k5}, \bm{y}^{\prime}_{k2}-\bm{y}^{\prime}_{k5})\rvert\le \frac{3}{2}\sqrt{\delta}.
\]
Then,  for $k=1,2$,
\begin{align*}
\;&\Big\lvert\Arc(\mY^{(n)}_{ki_1}-\mY^{(n)}_{ki_5}, \mY^{(n)}_{ki_2}-\mY^{(n)}_{ki_5})-\Arc(\mY_{ki_1}-\mY_{ki_5}, \mY_{ki_2}-\mY_{ki_5})\Big\rvert\\
\le\;&\frac{3}{2}\sqrt{\Xi_{k}^{(n)}}\cdot\ind(\cA_{k;i_1,i_2,i_3,i_4,i_5})+\Big(\frac12+\frac12\Big)\cdot\ind(\cA^{\complement}_{k;i_1,i_2,i_3,i_4,i_5})
\le\frac{3}{2}\sqrt{\Xi_{k}^{(n)}}+\ind(\cA^{\complement}_{k;i_1,i_2,i_3,i_4,i_5}),
\end{align*}
where
\begin{multline*}
\cA_{k;i_1,i_2,i_3,i_4,i_5}:=\Big\{\Xi_{k}^{(n)}\le\frac14,~~~
\lVert \mY^{(n)}_{ki_1}-\mY^{(n)}_{ki_5}\rVert\ge\sqrt{\Xi_{k}^{(n)}},~~~
\lVert \mY^{(n)}_{ki_2}-\mY^{(n)}_{ki_5}\rVert\ge\sqrt{\Xi_{k}^{(n)}},\\
\text{and}~~~
\frac{3}{2}\sqrt{\Xi_{k}^{(n)}}
\le\Arc(\mY^{(n)}_{ki_1}-\mY^{(n)}_{ki_5}, \mY^{(n)}_{ki_2}-\mY^{(n)}_{ki_5})
\le\frac{1}{2}-\frac{3}{2}\sqrt{\Xi_{k}^{(n)}}\Big\},
\end{multline*}
and, accordingly, %{\color{red}
\begin{align*}
(n)_5^{-1}&\!\!\!\!\sum_{[i_1,\dots,i_5]\in I^n_{5}}
\Big\lvert\frac12\Arc(\mY^{(n)}_{ki_1}-\mY^{(n)}_{ki_5}, \mY^{(n)}_{ki_2}-\mY^{(n)}_{ki_5})-\frac12\Arc(\mY_{ki_1}-\mY_{ki_5}, \mY_{ki_2}-\mY_{ki_5})\Big\rvert\\
\le\;&\frac12\Big(\frac{3}{2}\sqrt{\Xi_{k}^{(n)}}+\ind\Big\{\Xi_{k}^{(n)}>\frac14\Big\}\\
&\quad+(n)_3^{-1}\card\Big\{[i_1,i_2,i_5]\in I^n_{3}: 
\lVert \mY^{(n)}_{ki_1}-\mY^{(n)}_{ki_5}\rVert<\sqrt{\Xi_{k}^{(n)}},~~~\text{or}~~~
\lVert \mY^{(n)}_{ki_2}-\mY^{(n)}_{ki_5}\rVert<\sqrt{\Xi_{k}^{(n)}},\\
&\qquad\qquad\qquad~~~\text{or}~~~
\Arc(\mY^{(n)}_{ki_1}-\mY^{(n)}_{ki_5}, \mY^{(n)}_{i_2}-\mY^{(n)}_{ki_5})
\in\Big[0,\frac{3}{2}\sqrt{\Xi_{k}^{(n)}}\Big)\cup\Big(\frac12-\frac{3}{2}\sqrt{\Xi_{k}^{(n)}},\frac12\Big]\Big\}\Big)\\
=\;&\frac12\Big(\frac{3}{2}\sqrt{\Xi_{k}^{(n)}}+\ind\Big\{\Xi_{k}^{(n)}>\frac14\Big\}\\
&\quad+(n)_3^{-1}\card\Big\{[i_1,i_2,i_5]\in I^n_{3}: 
\lVert \bm{y}^{(n)}_{ki_1}-\bm{y}^{(n)}_{ki_5}\rVert<\sqrt{\Xi_{k}^{(n)}},~~~\text{or}~~~
\lVert \bm{y}^{(n)}_{ki_2}-\bm{y}^{(n)}_{ki_5}\rVert<\sqrt{\Xi_{k}^{(n)}},\\
&\qquad\qquad\qquad~~~\text{or}~~~
\Arc(\bm{y}^{(n)}_{ki_1}-\bm{y}^{(n)}_{ki_5}, \bm{y}^{(n)}_{ki_2}-\bm{y}^{(n)}_{ki_5})
\in\Big[0,\frac{3}{2}\sqrt{\Xi_{k}^{(n)}}\Big)\cup\Big(\frac12-\frac{3}{2}\sqrt{\Xi_{k}^{(n)}},\frac12\Big]\Big\}\Big).\yestag\label{eq:smallo1as}
\end{align*}
Since, for any sequence $[\delta\n]_{n=1}^{\infty}$ tending to $0$, it holds that
\begin{align*}
\;&(n)_3^{-1}\card\Big\{[i_1,i_2,i_5]\in I^n_{3}: 
\lVert \bm{y}^{(n)}_{ki_1}-\bm{y}^{(n)}_{ki_5}\rVert<\sqrt{\delta\n},~~~\text{or}~~~
\lVert \bm{y}^{(n)}_{ki_2}-\bm{y}^{(n)}_{ki_5}\rVert<\sqrt{\delta\n},\\
&\qquad\qquad~~~\text{or}~~~
\Arc(\bm{y}^{(n)}_{ki_1}-\bm{y}^{(n)}_{ki_5}, \bm{y}^{(n)}_{ki_2}-\bm{y}^{(n)}_{ki_5})
\in\Big[0,\frac{3}{2}\sqrt{\delta\n}\Big)\cup\Big(\frac12-\frac{3}{2}\sqrt{\delta\n},\frac12\Big]\Big\}\to 0,
\end{align*}%}
we have shown that \eqref{eq:smallo1as} converges to $0$ almost surely. 
This completes the proof.
\end{proof}

\paragraph{Proof of Theorem~\ref{thm:sGSC5} ($h=h_{R},\, h_{\tau^*}$)}

\begin{proof}[Proof of Theorem~\ref{thm:sGSC5} ($h=h_{R},\, h_{\tau^*}$)]
The proof is similar to the proof of Theorem~\ref{thm:sGSC5} ($h=h_{D}$) and hence omitted.
\end{proof}

%\paragraph{Proof of Theorem~\ref{thm:sGSC5} ($h=h_{\tau^*}$)}
%
%\begin{proof}[Proof of Theorem~\ref{thm:sGSC5} ($h=h_{\tau^*}$)]
%The proof is similar to the proof of Theorem~\ref{thm:sGSC5} ($h=h_{D}$) and hence omitted.
%\end{proof}

\subsection{Proofs for Section~\ref{sec:test}}% of the main paper}

\subsubsection{Proof of Proposition~\ref{prop:infea}}

\begin{proof}[Proof of Proposition~\ref{prop:infea}]
In view of Lemma~3 in \citet{MR3842884}, the claim readily follows from the theory of degenerate U-statistics \citep[Chap.~5.5.2]{MR595165}.
\end{proof}

\subsubsection{Proof of Theorem~\ref{thm:smallop}}

\begin{proof}[Proof of Theorem~\ref{thm:smallop}]
For $k=1,2$,
let $\P_{J_k,d_k}^{(n)}$ and $\P_{J_k,d_k}$ denote the distributions of $\mW_{k1}^{(n)}$ and~$\mW_{k1}$, respectively, 
and let again $\mY^{(n)}_{ki}$ and $\mY_{ki}$  stand for $\fG_{k,\pms}^{(n)}(\mX_{ki})$ and $\fG_{k,\pms}(\mX_{ki})$, respectively. 
Consider the Hoeffding decomposition 
\begin{equation}\label{eqn:Hdec-utildeW}
\tenq{W}\n_{\mu}
=\sum_{\ell=1}^{m}\underbrace{\mbinom{m}{\ell}\mbinom{n}{\ell}^{-1}
  \sum_{1\le i_1<\cdots< i_{\ell}\le n}\widetilde h_{\mu,\ell}\Big(
  (\mY^{(n)}_{1i_1},\mY^{(n)}_{2i_1}),\dots,(\mY^{(n)}_{1i_\ell},\mY^{(n)}_{2i_\ell});
  \P_{J_1,d_1}^{(n)}\otimes \P_{J_2,d_2}^{(n)}\Big)}_{\displaystyle \tenq H_{n,\ell}},
\end{equation}
of $\tenq{W}\n_{\mu}$ with respect to the product measure $\P_{J_1,d_1}^{(n)}\otimes \P_{J_2,d_2}^{(n)}$ and the Hoeffding decomposition 
\begin{equation}\label{eqn:Hdec-W}
 W_{\mu}
=\sum_{\ell=1}^{m}\underbrace{\mbinom{m}{\ell}\mbinom{n}{\ell}^{-1}
  \sum_{1\le i_1<\cdots< i_{\ell}\le n}\widetilde h_{\mu,\ell}\Big(
  (\mY_{1i_1},\mY_{2i_1}),\dots,(\mY_{1i_\ell},\mY_{2i_\ell});
  \P_{J_1,d_1}\otimes \P_{J_2,d_2}\Big)}_{\displaystyle H_{n,\ell}}.
\end{equation}
of $W_{\mu}$ with respect to product measure $\P_{J_1,d_1}\otimes \P_{J_2,d_2}$.

The proof is divided into three steps. 
The first step shows that $n\tenq H_{n,1}=nH_{n,1}=0$, the second step   that 
$n\tenq H_{n,2}-nH_{n,2}=o_{\Pr}(1)$.
The third step verifies that $n\tenq H_{n,\ell}$ and $nH_{n,\ell}$, $\ell=3,4,\dots,m$ all  are $o_{\Pr}(1)$ terms.\medskip

{\bf Step I.}
Lemma~3 in \citet{MR3842884} confirms that 
\[
\widetilde h_{\mu,1}(\cdot;\P_{J_1,d_1}^{(n)}\otimes \P_{J_2,d_2}^{(n)})=0=\widetilde h_{\mu,1}(\cdot;\P_{J_1,d_1}\otimes \P_{J_2,d_2}),
\]
and thus 
$n\tenq H_{n,1}=nH_{n,1}=0$. \medskip

{\bf Step II.}
Lemma~3 in \citet{MR3842884} shows that, %\textcolor{black}{ writing $\P_{\mY\n_{k}}$ and $\P_{\mY_{k}}$, respectively,  for the distributions of $\mY\n_{k}$ and $\mY_{k}$, $k=1,2$, }
\begin{align*}
\mbinom{m}{2}\cdot\widetilde h_{\mu,2}\Big((\bm{y}_{11},\bm{y}_{21}),(\bm{y}_{12},\bm{y}_{22});\P_{J_1,d_1}^{(n)}\otimes \P_{J_2,d_2}^{(n)}\Big)
&= g_{1}^{(n)}(\bm{y}_{11},\bm{y}_{12})g_{2}^{(n)}(\bm{y}_{21},\bm{y}_{22}),\\
\text{and}~~~
\mbinom{m}{2}\cdot\widetilde h_{\mu,2}\Big((\bm{y}_{11},\bm{y}_{21}),(\bm{y}_{12},\bm{y}_{22});\P_{J_1,d_1}\otimes \P_{J_2,d_2}\Big)
&= g_{1}(\bm{y}_{11},\bm{y}_{12})g_{2}(\bm{y}_{21},\bm{y}_{22}),
\end{align*}
where $g_{k}^{(n)}$ and $g_{k}$ are defined in \eqref{eq:WDMlemma} and \eqref{eq:suey}. 
To prove that $n\tenq H_{n,2}-nH_{n,2}=o_{\Pr}(1)$, it suffices to show~that
\begin{align*}
   \E\big[(&n\tenq H_{n,2}\!\! -nH_{n,2})^2\big]\\
&=\E\Big[\Big(\frac{1}{n-1}\sum_{(i,j)\in I_{2}^{n}}g_{1}^{(n)}(\mY^{(n)}_{1i},\mY^{(n)}_{1j})
                                                  g_{2}^{(n)}(\mY^{(n)}_{2i},\mY^{(n)}_{2j})
           -\frac{1}{n-1}\sum_{(i,j)\in I_{2}^{n}}g_{1}(\mY_{1i},\mY_{1j})
                                                  g_{2}(\mY_{2i},\mY_{2j})\Big)^2\Big]\\
&=o(1).\yestag\label{eq:dCovHallin}
\end{align*}
%where $f^{(n)}$, $g^{(n)}$, $f$, and $g$ 
%represents $d_{\mY^{(n)}}$, $d_{\mV^{(n)}}$, $d_{\mU}$, and $d_{\mV}$, respectively. 
We proceed in three sub-steps.

{\bf Step II-1.}
The theory of degenerate U-statistics (cf.\ Equation (7) of Section
1.6 in \citet{MR1075417}) yields that
\[
 \E\big[(nH_{n,2})^2\big]
%=\E\Big[\Big(\frac{1}{n-1}\sum_{(i,j)\in I_{2}^{n}}g_{1}(\mY_{1i},\mY_{1j})
%                                                   g_{2}(\mY_{2i},\mY_{2j})\Big)^2\Big]
=\frac{2n}{n-1}\E\big[g_{1}(\mY_{11},\mY_{12})^2\big]\E\big[g_{2}(\mY_{21},\mY_{22})^2\big].
\yestag\label{eq:dCovtailterm}
\]

{\bf Step II-2.}
We next deduce that
\begin{align*}
   \E\big[(&n\tenq H_{n,2})(nH_{n,2})\big]\\
=\;&\E\Big[\Big(\frac{1}{n-1}\sum_{(i,j)\in I_{2}^{n}}g_{1}^{(n)}(\mY^{(n)}_{1i},\mY^{(n)}_{1j})
                                                      g_{2}^{(n)}(\mY^{(n)}_{2i},\mY^{(n)}_{2j})\Big)
       \Big(\frac{1}{n-1}\sum_{(i,j)\in I_{2}^{n}}g_{1}(\mY_{1i},\mY_{1j})
                                                  g_{2}(\mY_{2i},\mY_{2j})\Big)\Big]\\
\to&\; 2\E\big[g_{1}(\mY_{11},\mY_{12})^2\big]\E\big[g_{2}(\mY_{21},\mY_{22})^2\big].\yestag\label{eq:dCovcrossterm}
\end{align*}
%\textcolor{red}{?? as below. ??} 
By   symmetry, we have
\begin{align*}
 \E\big[g_{k}^{(n)}(\mY^{(n)}_{ki},\mY^{(n)}_{kj})g_{k}(\mY_{ki},\mY_{kj})\big]
&=\E\big[g_{k}^{(n)}(\mY^{(n)}_{k1},\mY^{(n)}_{k2})g_{k}(\mY_{k1},\mY_{k2})\big]=:A_{k}^{(n)}, \\
 \E\big[g_{k}^{(n)}(\mY^{(n)}_{k\ell},\mY^{(n)}_{kj})g_{k}(\mY_{ki},\mY_{kj})\big]&=\E\big[g_{k}^{(n)}(\mY^{(n)}_{ki},\mY^{(n)}_{kr})g_{k}(\mY_{ki},\mY_{kj})\big]\\
&=\E\big[g_{k}^{(n)}(\mY^{(n)}_{k1},\mY^{(n)}_{k3})g_{k}(\mY_{k1},\mY_{k2})\big]=:B_{k}^{(n)}, \\
 \E\big[g_{k}^{(n)}(\mY^{(n)}_{k\ell},\mY^{(n)}_{kr})g_{k}(\mY_{ki},\mY_{kj})\big]
&=\E\big[g_{k}^{(n)}(\mY^{(n)}_{k3},\mY^{(n)}_{k4})g_{k}(\mY_{k1},\mY_{k2})\big]=:C_{k}^{(n)}
\end{align*}
for all distinct $i,j,\ell,r$, and also
\begin{align*}
A_{k}^{(n)}
   =\;&\E\big[g_{k}^{(n)}(\mY^{(n)}_{k1},\mY^{(n)}_{k2})g_{k}(\mY_{k1},\mY_{k2})\big],\yestag\label{eq:convAn}\\
A_{k}^{(n)}+(n-2)B_{k}^{(n)}
   =\;&\E\big[g_{k}^{(n)}(\mY^{(n)}_{k1},\mY^{(n)}_{k2})g_{k}(\mY_{k1},\mY_{k2})\big]
   +\sum_{\ell:\ell\ne 1,2}
       \E\big[g_{k}^{(n)}(\mY^{(n)}_{k\ell},\mY^{(n)}_{k2})g_{k}(\mY_{k2},\mY_{k2})\big] \\
  =\;&-\E\big[g_{k}^{(n)}(\mY^{(n)}_{k2},\mY^{(n)}_{k2})g_{k}(\mY_{k1},\mY_{k2})\big],\yestag\label{eq:convBn}\\
2B_{k}^{(n)}+(n-3)C_{k}^{(n)}
  =\;&\E\big[g_{k}^{(n)}(\mY^{(n)}_{k3},\mY^{(n)}_{k1})g_{k}(\mY_{k1},\mY_{k2})\big]
     +\E\big[g_{k}^{(n)}(\mY^{(n)}_{k3},\mY^{(n)}_{k2})g_{k}(\mY_{k1},\mY_{k2})\big]\\
  \;&+\sum_{\ell:\ell\ne 1,2,3}
       \E\big[g_{k}^{(n)}(\mY^{(n)}_{k3},\mY^{(n)}_{k\ell})g_{k}(\mY_{k1},\mY_{k2})\big]\\
  =\;&-\E\big[g_{k}^{(n)}(\mY^{(n)}_{k3},\mY^{(n)}_{k3})g_{k}(\mY_{k1},\mY_{k2})\big].\yestag\label{eq:convCn}
\end{align*}

We claim that
\begin{align*}
A_{k}^{(n)}\to\;&\E\big[g_{k}(\mY_{k1},\mY_{k2})^2\big], \yestag\label{eq:convA}\\
A_{k}^{(n)}+(n-2)B_{k}^{(n)}\to\;&-\E\big[g_{k}(\mY_{k2},\mY_{k2})g_{k}(\mY_{k1},\mY_{k2})\big]=0,\yestag\label{eq:convB}\\
2B_{k}^{(n)}+(n-3)C_{k}^{(n)}\to\;&-\E\big[g_{k}(\mY_{k3},\mY_{k3})g_{k}(\mY_{k1},\mY_{k2})\big]=0.\yestag\label{eq:convC}
\end{align*}
We only prove \eqref{eq:convA}, as \eqref{eq:convB} and \eqref{eq:convC} are quite similar.

%\begin{itemize}
%\item 
If Condition \eqref{eq:key-condition} holds,
we obtain, since~$\E\big[f_k([\mW_{ki_\ell}]_{\ell=1}^{m})^2\big]<\infty$,  that
\[\lVert g_{k}(\mY_{k1},\mY_{k2})\rVert_{\sf L^{1}}\le
\lVert g_{k}(\mY_{k1},\mY_{k2})\rVert_{\sf L^{2}}<\infty .\]   
To prove \eqref{eq:convA}, we still need to show that $\mY^{(n)}_{ki}\stackrel{\sf L^{2}}{\longrightarrow}\mY_{ki}$ for $k=1,2$. 
Since the scores   $J_k$, $k=1,2$ are weakly regular     (cf.~Definition~\ref{asp:score}) and square-integrable, we obtain 
\[\lim_{n\to\infty}n^{-1}\sum_{r=1}^{n}J^2\Big(\frac{r}{n+1}\Big)=\int_{0}^{1}J^2(u)\d u,\]
and thus $\E\lVert\mY^{(n)}_{ki}\rVert^2\to \E\lVert\mY_{ki}\rVert^2$. 
Notice also that $\mY^{(n)}_{ki}\stackrel{\sf a.s.}{\longrightarrow}\mY_{ki}$. 
Using Vitali's theorem \citep[Chap.~3,~Theorem~5.5]{MR3701383}
yields $\E\lVert\mY^{(n)}_{ki}-\mY_{ki}\rVert^2\to 0$. 

Because
$g_{k}^{(n)}(\bm{y}_{k1},\bm{y}_{k2})\rightrightarrows g_{k}(\bm{y}_{k1},\bm{y}_{k2})$, we have
\begin{align*}
&\E\big[\lvert g_{k}^{(n)}(\mY^{(n)}_{k1},\mY^{(n)}_{k2})-g_{k}(\mY^{(n)}_{k1},\mY^{(n)}_{k2})\rvert\cdot\lvert g_{k}(\mY_{k1},\mY_{k2})\rvert\big]\\
\le\;&\lVert g_{k}^{(n)}(\mY^{(n)}_{k1},\mY^{(n)}_{k2})-g_{k}(\mY^{(n)}_{k1},\mY^{(n)}_{k2})\rVert_{\sf L^{\infty}}\cdot\lVert g_{k}(\mY_{k1},\mY_{k2})\rVert_{\sf L^{1}}
\to 0.\yestag\label{eq:convA1}
\end{align*}
Next, since $g_{k}$ is Lipschitz-continuous, by the fact that $\mY^{(n)}_{ki}\stackrel{\sf L^{2}}{\longrightarrow}\mY_{ki}$,
\begin{align*}
&\E\big[\lvert g_{k}(\mY^{(n)}_{k1},\mY^{(n)}_{k2})-g_{k}(\mY_{k1},\mY_{k2})\rvert\cdot
\lvert g_{k}(\mY_{k1},\mY_{k2})\rvert \big]\\
\le\;&\lVert g_{k}(\mY^{(n)}_{k1},\mY^{(n)}_{k2})-g_{k}(\mY_{k1},\mY_{k2})\rVert_{\sf L^{2}}\cdot\lVert g_{k}(\mY_{k1},\mY_{k2})\rVert_{\sf L^{2}}
\to 0;\yestag\label{eq:convA2}
\end{align*}
Combining \eqref{eq:convA1} and \eqref{eq:convA2} yields
\eqref{eq:convA}.

%\item 
%\end{itemize}

Having established \eqref{eq:convA}--\eqref{eq:convC}, we obtain 
that
\[
A_{k}^{(n)}\to\E\big[g_{k}(\mY_{k1},\mY_{k2})^2\big],~~~
B_{k}^{(n)}=O(n^{-1})~~~\text{and}~~~
C_{k}^{(n)}=o(n^{-1}).
\yestag\label{eq:dCovcrosstermU}
\]
Plugging \eqref{eq:dCovcrosstermU} into the left-hand side of \eqref{eq:dCovcrossterm} gives
\begin{align*}
&\E\Big[\Big(\frac{1}{n-1}\sum_{(i,j)\in I_{2}^{n}}g_{1}^{(n)}(\mY^{(n)}_{1i},\mY^{(n)}_{1j})
                                                      g_{2}^{(n)}(\mY^{(n)}_{2i},\mY^{(n)}_{2j})\Big)
       \Big(\frac{1}{n-1}\sum_{(i,j)\in I_{2}^{n}}g_{1}(\mY_{1i},\mY_{1j})
                                                  g_{2}(\mY_{2i},\mY_{2j})\Big)\Big]\\
=\;&\frac{n(n-1)}{(n-1)^2}\Big\{2 A_{1}^{(n)}A_{2}^{(n)}+4(n-2)B_{1}^{(n)}B_{2}^{(n)}+(n-2)(n-3)C_{1}^{(n)}C_{2}^{(n)}\Big\}\\
\to\;&2\E\big[g_{1}(\mY_{11},\mY_{12})^2\big]\E\big[g_{2}(\mY_{21},\mY_{22})^2\big].
\end{align*}
This completes the proof of \eqref{eq:dCovcrossterm}.

{\bf Step II-3.}
In order to prove \eqref{eq:dCovHallin}, it remains to show that
\[
 \E\big[(n\tenq H_{n,2})^2\big]
%=\E\Big[\Big(\frac{1}{n-1}\sum_{(i,j)\in I_{2}^{n}}g_{1}^{(n)}(\mY^{(n)}_{1i},\mY^{(n)}_{1j})
%                                                   g_{2}^{(n)}(\mY^{(n)}_{2i},\mY^{(n)}_{2j})\Big)^2\Big]
\to 2\E\big[g_{1}(\mY_{11},\mY_{12})^2\big]\E\big[g_{2}(\mY_{21},\mY_{22})^2\big].\yestag\label{eq:dCovheadterm}\]
Notice that $n\tenq H_{n,2}$ is a double-indexed permutation statistic. 
Applying Equations (2.2)--(2.3) in \citet{MR857081} yields
 $
\E\big[n\tenq H_{n,2}\big]
=\; n\mu_{1}^{(n)}\mu_{2}^{(n)},
$ 
and
\begin{align*}
\Var(n\tenq H_{n,2})
=\;&\frac{4n^2}{(n-1)^3(n-2)^2}
\Big(\frac{\sum_{i=1}^{n}\{\zeta_{1i}^{(n)}\}^2}{n}\Big)
\Big(\frac{\sum_{i=1}^{n}\{\zeta_{2i}^{(n)}\}^2}{n}\Big)\\
   &+\frac{2n}{n-3}
\Big(\frac{\sum_{i\ne j}\{\eta_{1ij}^{(n)}\}^2}{n(n-1)}\Big)
\Big(\frac{\sum_{i\ne j}\{\eta_{2ij}^{(n)}\}^2}{n(n-1)}\Big),
\end{align*}
where for $k=1,2$, 
\begin{align*}
\mu_{k}^{(n)}:=\;&\frac{1}{n(n-1)}\sum_{i\ne j}g_{k}^{(n)}(\bm{y}^{(n)}_{ki}, \bm{y}^{(n)}_{kj}),\\
\zeta_{ki}^{(n)}:=\;&\sum_{j:j\ne i}\Big\{g_{k}^{(n)}(\bm{y}^{(n)}_{ki}, \bm{y}^{(n)}_{kj})-\mu_{k}^{(n)}\Big\},\\
%\text{and}~~~
\eta_{kij}^{(n)}:=\;&g_{k}^{(n)}(\bm{y}^{(n)}_{i}, \bm{y}^{(n)}_{j})
-\frac{\zeta_{ki}^{(n)}}{n-2}-\frac{\zeta_{kj}^{(n)}}{n-2}-\mu_{k}^{(n)}.
\end{align*}

Direct computation gives
\begin{align*}
\mu_{k}^{(n)}=\;&-\frac{1}{n(n-1)}\sum_{i=1}^{n}g_{k}^{(n)}(\bm{y}^{(n)}_{ki},\bm{y}^{(n)}_{ki}),\\
\zeta_{ki}^{(n)}=\;&-g_{k}^{(n)}(\bm{y}^{(n)}_{ki},\bm{y}^{(n)}_{ki})+\frac{1}{n}\sum_{j=1}^{n}g_{k}^{(n)}(\bm{y}^{(n)}_{kj},\bm{y}^{(n)}_{kj}),\\
%\text{and}~~~
\eta_{kij}^{(n)}=\;&g_{k}^{(n)}(\bm{y}^{(n)}_{i}, \bm{y}^{(n)}_{j})
+\frac{g_{k}^{(n)}(\bm{y}^{(n)}_{ki}, \bm{y}^{(n)}_{ki})}{n-2}
+\frac{g_{k}^{(n)}(\bm{y}^{(n)}_{kj}, \bm{y}^{(n)}_{kj})}{n-2}
-\frac{1}{(n-1)(n-2)}\sum_{i=1}^{n}g_{k}^{(n)}(\bm{y}^{(n)}_{ki},\bm{y}^{(n)}_{ki}).
\end{align*}

Moreover, we can write $\E\big[n\tenq H_{n,2}\big]$ and $\Var(n\tenq H_{n,2})$ in terms of $\mY_{k1}^{(n)}$ and~$\mY_{k2}^{(n)}$: 
\begin{align*}
\E\big[n\tenq H_{n,2}\big]
=\;&\frac{n}{(n-1)^2}\E\big[g_{1}^{(n)}(\mY^{(n)}_{11},\mY^{(n)}_{11})\big]\E\big[g_{2}^{(n)}(\mY^{(n)}_{21},\mY^{(n)}_{21})\big],\\
\Var(n\tenq H_{n,2})
=\;&\frac{4n^2}{(n-1)^3(n-2)^2}
\Var\big[g_{1}^{(n)}(\mY^{(n)}_{11},\mY^{(n)}_{11})\big]\Var\big[g_{2}^{(n)}(\mY^{(n)}_{21},\mY^{(n)}_{21})\big]\\
   &+\frac{2n}{n-3}
\Var\Big[g_{1}^{(n)}(\mY^{(n)}_{11},\mY^{(n)}_{12})
  +\frac{g_{1}^{(n)}(\mY^{(n)}_{11},\mY^{(n)}_{11})}{n-2}
  +\frac{g_{1}^{(n)}(\mY^{(n)}_{12},\mY^{(n)}_{12})}{n-2}\Big]\\
&\mkern60mu\times 
\Var\Big[g_{2}^{(n)}(\mY^{(n)}_{21},\mY^{(n)}_{22})
  +\frac{g_{2}^{(n)}(\mY^{(n)}_{21},\mY^{(n)}_{21})}{n-2}
  +\frac{g_{2}^{(n)}(\mY^{(n)}_{22},\mY^{(n)}_{22})}{n-2}\Big].
\end{align*}

Using once again  Condition \eqref{eq:key-condition}, and by a similar 
argument as in the  proof of~\eqref{eq:convA}, we obtain
\begin{align*}
\E\big[n\tenq H_{n,2}\big]
\to\;&\frac{n}{(n-1)^2}\E\big[g_{1}(\mY_{11},\mY_{11})\big]\E\big[g_{2}(\mY_{21},\mY_{21})\big]
\to 0,\yestag\label{eq:barbE}\\
\Var(n\tenq H_{n,2})
\to\;&\frac{4n^2}{(n-1)^3(n-2)^2}
\Var\big[g_{1}(\mY_{11},\mY_{11})\big]\Var\big[g_{2}(\mY_{21},\mY_{21})\big]\\
   &+\frac{2n}{n-3}
\Var\Big[g_{1}(\mY_{11},\mY_{12})
  +\frac{g_{1}(\mY_{11},\mY_{11})}{n-2}
  +\frac{g_{1}(\mY_{12},\mY_{12})}{n-2}\Big]\\
&\mkern60mu\times 
\Var\Big[g_{2}(\mY_{21},\mY_{22})
  +\frac{g_{2}(\mY_{21},\mY_{21})}{n-2}
  +\frac{g_{2}(\mY_{22},\mY_{22})}{n-2}\Big]\\
\to\;&2\E\big[g_{1}(\mY_{11},\mY_{12})^2\big]\E\big[g_{2}(\mY_{21},\mY_{22})^2\big].
\yestag\label{eq:barbVar-half}
\end{align*}
Combining \eqref{eq:barbE} and \eqref{eq:barbVar-half}, we deduce that \eqref{eq:dCovheadterm} holds. \smallskip

Finally, Step II is completed by combining \eqref{eq:dCovtailterm},
\eqref{eq:dCovcrossterm}, and \eqref{eq:dCovheadterm} to deduce \eqref{eq:dCovHallin}.\medskip

{\bf Step III.} 
Notice that $\sup_{i_1,\dots,i_m\in\zahl{m}}\E\big[f_k([\mW_{ki_\ell}]_{\ell=1}^{m})^2\big]<\infty$. 
Proving that $\E\big[(n\tenq H_{n,\ell})^2\big]=o(1)$ for~$\ell=3,4,\dots,m$ goes along the same steps as   the proof of Theorem 4.2 in the supplement of
\citet{shi2019distribution}; it is omitted here. 
The fact that $\E\big[(nH_{n,\ell})^2\big]=o(1)$, $\ell=3,4,\dots,m$ follows directly from the theory of degenerate U-statistics (cf.~Equation (7) of Section 1.6 in \citet{MR1075417}). The proof is thus complete.
\end{proof}

\subsubsection{Proof of Theorem~\ref{thm:smallop2}}

\begin{proof}[Proof of Theorem~\ref{thm:smallop2}]
The proof is similar to that of Theorem~\ref{thm:smallop}. The only difference lies in proving \eqref{eq:convA}--\eqref{eq:convC} and \eqref{eq:barbE}--\eqref{eq:barbVar-half}. 
\textcolor{black}{By the 
continuous mapping theorem \citep[Theorem~2.3]{MR1652247}
and the Skorokhod construction \citep[Chap.~3,~Theorem~5.7(viii)]{MR3701383}, we can assume,  without loss of generality, that $\mW^{(n)}_{ki}\stackrel{\sf a.s.}{\longrightarrow}\mW_{ki}$. If Condition \eqref{eq:key-condition2} holds, then
  \eqref{eq:convA} immediately follows from the dominated convergence
  theorem and the definitions of $g_k^{(n)}$ and $g_k$ in~\eqref{eq:WDMlemma} and \eqref{eq:suey}. 
%If Condition \eqref{eq:key-condition2} holds, then
%  \eqref{eq:convA} immediately follows fromthe dominated convergence
%  theorem and the definitions of $g_k^{(n)}$ and $g_k$ in
%  \eqref{eq:WDMlemma} and \eqref{eq:suey} when assuming, without loss of generality, that $\mW^{(n)}_{ki}\stackrel{\sf a.s.}{\longrightarrow}\mW_{ki}$ by 
%continuous mapping theorem \citep[Theorem~2.3]{MR1652247}
%and the Skorokhod construction \citep[Chap.~3,~Theorem~5.7(viii)]{MR3701383}.
The proofs for \eqref{eq:convB},  \eqref{eq:convC}, \eqref{eq:barbE}, and~\eqref{eq:barbVar-half}  are similar. }
\end{proof}

\subsubsection{Proof of Proposition~\ref{ex:smallop}}

\paragraph{Proof of Proposition~\ref{ex:smallop} ($h=h_{\dCov^2}$)}

\begin{proof}[Proof of Proposition~\ref{ex:smallop} ($h=h_{\dCov^2}$)]
Condition \eqref{eq:uvar} is obvious. 
Condition \eqref{eq:pd} is satisfied in view of Theorem~5 in \citet{MR2382665}.
We next verify that condition \eqref{eq:key-condition} is satisfied. To do so, let us 
first show that $g_{k}^{(n)}(\bm{y}_{k1},\bm{y}_{k2})\rightrightarrows
g_{k}(\bm{y}_{k1},\bm{y}_{k2})$ for $k=1,2$. By definitions 
\eqref{eq:suey} and \eqref{eq:WDMlemma},
\begin{align*}
g_{k}^{(n)}(\bm{y}_{k1},\bm{y}_{k2})&:=
\lVert\bm{y}_{k1}-\bm{y}_{k2}\rVert - \E\lVert\bm{y}_{k1}-\mW_{k3}^{(n)}\rVert
- \E\lVert\mW_{k4}^{(n)}-\bm{y}_{k2}\rVert + \E\lVert\mW_{k4}^{(n)}-\mW_{k3}^{(n)}\rVert,\\
\text{and}~~~
g_{k}(\bm{y}_{k1},\bm{y}_{k2})&:=
\lVert\bm{y}_{k1}-\bm{y}_{k2}\rVert - \E\lVert\bm{y}_{k1}-\mW_{k3}\rVert
- \E\lVert\mW_{k4}-\bm{y}_{k2}\rVert + \E\lVert\mW_{k4}-\mW_{k3}\rVert.
\end{align*}
Noting that $J_k$, $k=1,2$ are continuous, we can assume, as in the proof of  Theorem~\ref{thm:smallop2},   that 
$\mW^{(n)}_{ki}\stackrel{\sf a.s.}{\longrightarrow}\mW_{ki}$.
% by 
%the continuous mapping theorem \citep[Theorem~2.3]{MR1652247}
%and the Skorokhod construction \citep[Chap.~3,~Theorem~5.7(viii)]{MR3701383}.
Since the scores $J_k$ are square-integrable, we obtain that
$\E\lVert\mW^{(n)}_{ki}\rVert^2\to \E\lVert\mW_{ki}\rVert^2$. 
Using Vitali's theorem \citep[Chap.~3,~Theorem~5.5]{MR3701383}
yields $\mW^{(n)}_{ki}\stackrel{\sf L^{2}}{\longrightarrow}\mW_{ki}$. 
Therefore, we obtain
\begin{align*}
\Big\lvert\E\lVert\bm{y}_{k1}-\mW^{(n)}_{k3}\rVert-\E\lVert\bm{y}_{k1}-\mW_{k3}\rVert\Big\rvert&\le\E\lVert\mW^{(n)}_{k3}-\mW_{k3}\rVert,\\
\Big\lvert\E\lVert\mW^{(n)}_{k4}-\bm{y}_{k2}\rVert-\E\lVert\mW_{k4}-\bm{y}_{k2}\rVert\Big\rvert&\le\E\lVert\mW^{(n)}_{k4}-\mW_{k4}\rVert,\\
\Big\lvert\E\lVert\mW_{k4}^{(n)}-\mW_{k3}^{(n)}\rVert-\E\lVert\mW_{k4}-\mW_{k3}\Big\rvert&\le\E\lVert\mW^{(n)}_{k3}-\mW_{k3}\rVert+\E\lVert\mW^{(n)}_{k4}-\mW_{k4}\rVert,
\end{align*}
and, furthermore,
\[
\Big\lvert g_{k}^{(n)}(\bm{y}_{k1},\bm{y}_{k2})-g_{k}(\bm{y}_{k1},\bm{y}_{k2})\Big\rvert
\le 2\Big(\E\lVert\mW^{(n)}_{k3}-\mW_{k3}\rVert+\E\lVert\mW^{(n)}_{k4}-\mW_{k4}\rVert\Big).
\]
The uniform convergence $g_{k}^{(n)}(\bm{y}_{k1},\bm{y}_{k2})\rightrightarrows g_{k}(\bm{y}_{k1},\bm{y}_{k2})$ follows. 
It is obvious that $g_{k}(\bm{y}_{k1},\bm{y}_{k2})$ is Lipschitz-continuous, and $\E\big[f_k(\mW_{ki_1},\dots,\mW_{ki_4})^2\big]<\infty$ for all $i_1,\dots,i_4\in\zahl{4}$ as long as $J_1,J_2$ are weakly regular. 
\end{proof}

\paragraph{Proof of Proposition~\ref{ex:smallop} ($h=h_{M},\, h_{D}, \, h_{R}, \, h_{\tau^*}$)}

\begin{proof}[Proof of Proposition~\ref{ex:smallop} ($h=h_{M},\, h_{D}, \, h_{R}, \, h_{\tau^*}$)]
Condition \eqref{eq:uvar} is obvious. 
Condition \eqref{eq:pd} is satisfied for $h_{D}$ by Theorem 3(i) in \citet{MR3737307}. 
For $h=h_{M},\, h_{R}, \, h_{\tau^*}$, we can prove condition \eqref{eq:pd} holds as well in a similar way. 
It is clear that condition \eqref{eq:key-condition2} is satisfied for all these four kernel functions. 
\end{proof}

\subsubsection{Proof of Corollary~\ref{thm:null}}

\begin{proof}[Proof of Corollary~\ref{thm:null}]
Combining Proposition~\ref{prop:infea} and Theorem~\ref{thm:smallop},
one immediately obtains the  limiting null distribution of the rank-based 
statistic $\tenq{W}\n_\mu$. 
\end{proof}

\subsubsection{Proof of Proposition~\ref{prop:uvc}}

\begin{proof}[Proof of Proposition~\ref{prop:uvc}]
Validity is a direct corollary of Corollary~\ref{thm:null}. 
Uniform validity  then follows from  validity and   exact distribution-freeness.  
For any fixed alternative in $\cP_{d_1,d_2,\infty}^{\ac}$, it holds that $\tenq W_\mu\n\stackrel{\sf a.s.}{\longrightarrow}\mu_{\pms}(\mX_1,\mX_2)>0$ as $n\to\infty$. Thus, $n\,\tenq W_\mu\n\stackrel{\sf a.s.}{\longrightarrow}\infty$ and the result follows. 
\end{proof}

\subsubsection{Proof of Theorem~\ref{thm:power-general}}

\begin{proof}[Proof of Theorem~\ref{thm:power-general}]

Let $\mX ^*_{ni}$ and $\mX_{ni}$, $i\in\zahl{n}$ be independent copies of $\mX ^*$ and $\mX$ with $\delta=\delta\n$, respectively. 
Let $\P\n:=\otimes_{i=1}^{n}\P\n_{i}$,
$\Q\n:=\otimes_{i=1}^{n}\Q\n_{i}$, where $\P\n_{i}$ and $\Q\n_{i}$ are
the distributions of~$\mX ^*_{ni}$ and $\mX_{ni}$, respectively. 
Define 
\[\Lambda\n:=\log\frac{\d\Q\n}{\d\P\n}=\sum_{i=1}^{n}\log \frac{q_{\mX}(\mX ^*_{ni};\delta\n)}{q_{\mX}(\mX ^*_{ni};0)}\quad\text{and}\quad
%and $
T\n:=\delta\n\sum_{i=1}^{n}\dot{\ell}(\mX ^*_{ni};0).
\]

We proceed in three steps. 
First, we clarify that $\Q\n$ is contiguous to $\P\n$ in order for  Le~Cam's third lemma  \citep[Theorem~6.6]{MR1652247} to be applicable. 
Next, we derive the joint limiting null distribution of $(n\tenq{W}\n_{\mu}, \Lambda\n)^{\top}$. 
Lastly, we employ Le~Cam's third lemma to obtain the asymptotic distribution of $(n\tenq{W}\n_{\mu}, \Lambda\n)^{\top}$ under contiguous  alternatives.

{\bf Step I.} In view of 
%\citet[Sec.~3.2.1]{MR2691505}, 
\citet[Example~12.3.7]{MR2135927}, 
Assumption~\ref{asp:only} entails  the contiguity $\Q\n\triangleleft \P\n$.

{\bf Step II.} Next, we   derive the limiting joint distribution of $(n\tenq{W}\n_{\mu}, \Lambda\n)^{\top}$ under the null hypothesis.
To this end, we first obtain the limiting null distribution of $(nH_{n,2}, T\n)^{\top}$, where $H_{n,2}$ is defined in \eqref{eqn:Hdec-W}. %at the beginning of Proof of Theorem~\ref{thm:smallop}(vi).
%We write 
%by Lemma 4.2 in \citet{MR3541972}, 
By condition \eqref{eq:uvar}, we write
\[
H_{n,2}=
\frac{1}{n(n-1)}\sum_{i\ne j}
  \sum_{v=1}^{\infty}
  \lambda_{v}
  \psi_{v}(\mY_{1i},\mY_{2i})
  \psi_{v}(\mY_{1j},\mY_{2j}),
\]
where $\psi_{v}$ is the normalized eigenfunction associated with $\lambda_{v}$ and $\mY_{ki}=\fG ^*_{k,\pms}(\mX ^*_{ki})$ for $k=1,2$.  
For each positive integer $K$, consider the ``truncated'' U-statistic 
\[
H_{n,2,K}:=
\frac{1}{n(n-1)}\sum_{i\ne j}
  \sum_{v=1}^{K}
  \lambda_{v}
  \psi_{v}(\mY_{1i},\mY_{2i})
  \psi_{v}(\mY_{1j},\mY_{2j}).
\]
%and derive the limiting distribution of $nH_{n,2,K}$ as $n\to\infty$.
Note that $nH_{n,2}$ and $nH_{n,2,K}$ can be written as
\begin{align*}
n H_{n,2}&=\frac{n}{n-1}\Big\{\sum_{v=1}^{\infty}\lambda_{v} \Big(\sum_{i=1}^n \frac{\psi_{v}(\mY_{1i},\mY_{2i})}{\sqrt{n}}\Big)^2
-\sum_{v=1}^{\infty}\lambda_{v} \Big(\frac{\sum_{i=1}^n \{\psi_{v}(\mY_{1i},\mY_{2i})\}^2}{n}\Big)\Big\},\\
n H_{n,2,K}&=\frac{n}{n-1}\Big\{\sum_{v=1}^{K}\lambda_{v} \Big(\sum_{i=1}^n \frac{\psi_{v}(\mY_{1i},\mY_{2i})}{\sqrt{n}}\Big)^2
-\sum_{v=1}^{K}\lambda_{v} \Big(\frac{\sum_{i=1}^n \{\psi_{v}(\mY_{1i},\mY_{2i})\}^2}{n}\Big)\Big\}.
%\yestag\label{eq:sqsum}
\end{align*}
%and let $\widetilde R_{n,k}$ denote $n^{-1}\sum_{i=1}^n \{\psi_{k}(\mU_{1i},\mU_{2i})\}^2$ 
%hereafter.

To obtain the limiting null distribution of $(nH_{n,2}, T\n)^{\top}$, 
  first consider the limiting null distribution,  for fixed $K$, of $(nH_{n,2,K}, T\n)^{\top}$.
Let $S_{n,v}$ be a shorthand for $n^{-1/2}\sum_{i=1}^n {\psi_{v}(\mY_{1i},\mY_{2i})}$ and  observe that 
\[
\E[S_{n,v}]=\E[T\n]=0,  ~~~ 
\Var[S_{n,v}]=1,        ~~~
\Var[T\n]=\cI_{\mX}(0), ~~~\text{and}~~~
\Cov[S_{n,v}, T\n]\to \gamma_v\delta_0.
\]
where $\gamma_v:=\Cov\big[\psi_{v}(\mY_1,\mY_2),\dot{\ell}\big((\fG_{1,\pms}^{-1}(\mY_{1}), \fG_{2,\pms}^{-1}(\mY_{2}));0\big)\big]$. 
There exists at least one $v\geq 1$ such that~$\gamma_v\neq 0$. 
Indeed, applying Lemma~4.2 in \citet{MR3541972} yields 
\[
 \big\{\psi_{v}(\my)\big\}_{v\in\Z_{>0}}
=\big\{\psi_{1,v_1}(\my_1)
       \psi_{2,v_2}(\my_2)\big\}_{v_1,v_2\in\Z_{>0}},
\]
where $\psi_{k,v}(\my_k), v\in\Z_{>0}$ 
are eigenfunctions associated with the non-zero eigenvalues of the integral equations
$
\E[g_k(\my_{k},\mW_{k2})\psi_k(\mW_{k2})]=\lambda_{k}\psi_k(\my_{k})
$
for $k=1,2$. 
Since
$
\big\{\psi_{1,v_1}(\my_1)
      \psi_{2,v_2}(\my_2)\big\}_{v_1,v_2\in\Z_{\ge0}}
$
(where $\psi_{k,v}(\my_k):=1$ for $v=0$, $k=1,2$) forms a complete orthogonal basis of the set of square integrable functions, 
$\gamma_v=0$ for all $v\ge 1$ thus entails 
that $\dot{\ell}(\mx;0)$ is additively separable, 
which contradicts Assumption~\ref{asp:only}\ref{asp:only4}.
%Indeed, since $\dot{\ell}(\mx;0)$ is not additively separable by Assumption~\ref{asp:only}\ref{asp:only4},   
% $\gamma_v=0$   for all $v$ thus entails 
%\[\E\big[\dot{\ell}(\mX;0)^2\big]=\E\Big[\Big(\sum_{v=1}^{\infty}\gamma_v\psi_{v}(\mY_1,\mY_2)\Big)^2\Big]=\sum_{v=1}^{\infty}\gamma_v^2=0,\]
%which contradicts Assumption~\ref{asp:only}\ref{asp:only3}. 
Therefore,  $\gamma_{v^*}\ne0$ for some $v^*$. 
%Using asymptotic multivariate normality of several U-statistics \citet[Theorem~7.1]{MR0026294}, 
Applying the multivariate central limit theorem \citep[Equation~(18.24)]{MR855460}, we deduce
\[
(S_{n,1}, \dots,S_{n,K}, T\n)^{\top}
\stackrel{\P\n}{\rightsquigarrow}
(\xi_{1}, \dots,\xi_{K}, V_K)^{\top}
\sim N_{K+1}\bigg(\bigg(\begin{matrix}\bm0_{K}\\ 0\end{matrix}\bigg),
\bigg(\begin{matrix}\fI_p & \delta_0\mv\\
\delta_0\mv^{\top} & \delta_0^2\cI\end{matrix}\bigg)\bigg), 
\]
where $\cI:=\cI_{\mX}(0)$ and $\mv=(\gamma_1,\dots,\gamma_K)^{\top}$.  Thus, $V_K$ can be expressed as 
\[\Big(\delta_0^2\cI\Big)^{1/2}\Big\{\sum_{v=1}^{K}c_v\xi_{v} + \Big(1-\sum_{v=1}^{K}c_v^2\Big)^{1/2}\xi_0\Big\}\]
where $c_v:=\cI^{-1/2}\gamma_v$, 
and $\xi_0$ is standard Gaussian,  independent of $\xi_{1},\dots,\xi_{K}$.
Then, by the continuous mapping theorem \citep[Theorem~2.3]{MR1652247} 
and Slutsky's theorem \citep[Theorem~2.8]{MR1652247},
\[
(nH_{n,2,K}, T\n)^{\top}
\stackrel{\P\n}{\rightsquigarrow}
\bigg(\sum_{v=1}^{K}\lambda_v(\xi_{v}^2-1),\Big(\delta_0^2\cI\Big)^{1/2}\Big\{\sum_{v=1}^{K}c_v\xi_{v} + \Big(1-\sum_{v=1}^{K}c_v^2\Big)^{1/2} \xi_0\Big\}\bigg)^{\top}
\yestag\label{eq:jointKlimit}
\]
for any $K$. This entails %Moreover, we claim that
\[
(nH_{n,2}, T\n)^{\top}
\stackrel{\P\n}{\rightsquigarrow}
\bigg(\sum_{v=1}^{\infty}\lambda_v(\xi_{v}^2-1),\Big(\delta_0^2\cI\Big)^{1/2}\Big\{\sum_{v=1}^{\infty}c_v\xi_{v} + \Big(1-\sum_{v=1}^{\infty}c_v^2\Big)^{1/2}\xi_0\Big\}\bigg)^{\top}.
\yestag\label{eq:jointlimit}
\]
%via the following argument. 
Indeed, putting 
\begin{align*}
M_K&:= \sum_{v=1}^{K}\lambda_v(\xi_{v}^2-1), &  
V_K&:= \Big(\delta_0^2\cI\Big)^{1/2}\Big\{\sum_{v=1}^{K}c_v \xi_{v} + \Big(1-\sum_{v=1}^{K}c_v^2\Big)^{1/2} \xi_0\Big\},\\
M&:= \sum_{v=1}^{\infty}\lambda_v(\xi_{v}^2-1), &  \text{and}~~~
V&:= \Big(\delta_0^2\cI\Big)^{1/2}\Big\{\sum_{v=1}^{\infty}c_v \xi_{v} + \Big(1-\sum_{v=1}^{\infty}c_v^2\Big)^{1/2} \xi_0\Big\},
\end{align*}
 it suffices, in order to to prove \eqref{eq:jointlimit}, to show that, for any $a,b\in\R$,  
\[
\Big\lvert\E\Big[\exp\Big\{\sfi anH_{n,2}+\sfi bT\n\Big\}\Big] -
\E\Big[\exp\Big\{\sfi aM+\sfi bV\Big\}\Big]\Big\rvert \to 0~~~\text{as}~n\to\infty.
\yestag\label{eq:charlimit}
\]
We have
\begin{align*}
&\Big\lvert\E\Big[\exp\Big\{\sfi anH_{n,2}+\sfi bT\n\Big\}\Big] -
\E\Big[\exp\Big\{\sfi aM+\sfi bV\Big\}\Big]\Big\rvert \\
\le\;&\Big\lvert\E\Big[\exp\Big\{\sfi anH_{n,2}+\sfi bT\n\Big\}\Big] -
\E\Big[\exp\Big\{\sfi anH_{n,2,K}+\sfi bT\n\Big\}\Big]\Big\rvert \\
&+\Big\lvert\E\Big[\exp\Big\{\sfi anH_{n,2,K}+\sfi bT\n\Big\}\Big] -
\E\Big[\exp\Big\{\sfi aM_K+\sfi bV_K\Big\}\Big]\Big\rvert \\
&+\Big\lvert\E\Big[\exp\Big\{\sfi aM_K+\sfi bV_K\Big\}\Big] -
\E\Big[\exp\Big\{\sfi aM+\sfi bV\Big\}\Big]\Big\rvert=:I+I\!I+I\!I\!I,\quad\text{say,}
%\le\;&\E\Big\lvert e^{it_1n(H_{n,2}-H_{n,2,K})}-1\Big\rvert
%+\Big\lvert\E\Big[e^{it_1nH_{n,2,K}+it_2T_n}\Big] -
%\E\Big[e^{it_1M_K+it_2V_K}\Big]\Big\rvert 
%+\E\Big\lvert e^{it_1(M_K-M)+it_2(V_K-V)}-1\Big\rvert\\
%\le\;&\Big\{\E\Big\lvert t_1n(H_{n,2}-H_{n,2,K}\Big\rvert^2\Big\}^{1/2}
%+\Big\lvert\E\Big[e^{it_1nH_{n,2,K}+it_2T_n}\Big] -
%\E\Big[e^{it_1M_K+it_2V_K}\Big]\Big\rvert 
%+\Big\{\E\Big\lvert t_1(M_K-M)+t_2(V_K-V)\Big\rvert^2\Big\}^{1/2}\\
%\le\;&\Big\{\frac{2nt_1^2}{n-1}\sum_{v=K+1}^{\infty}\lambda_v^2\Big\}^{1/2}
%+\Big\lvert\E\Big[e^{it_1nH_{n,2,K}+it_2T_n}\Big] -
%\E\Big[e^{it_1M_K+it_2V_K}\Big]\Big\rvert 
%+\Big\{2\Big(2t_1^2\sum_{v=K+1}^{\infty}\lambda_v^2 + 
%2\Delta_0^2t_2^2\sum_{v=K+1}^{\infty}c_v^2\Big)\Big\}^{1/2}
\end{align*}
where it follows from  page 82 of \citet{MR1075417}
and Equation~(4.3.10) in \citet{MR1472486} that 
\begin{align*}
I 
&\le\E\Big\lvert \exp\Big\{\sfi an(H_{n,2}-H_{n,2,K})\Big\}-1\Big\rvert
\le\Big\{\E\Big\lvert an(H_{n,2}-H_{n,2,K})\Big\rvert^2\Big\}^{1/2}
=\Big\{\frac{2na^2}{n-1}\sum_{v=K+1}^{\infty}\lambda_v^2\Big\}^{1/2}
\end{align*}
and
\begin{align*}
I\!I\!I 
&\le\E\Big\lvert \exp\Big\{\sfi a(M_K-M)+\sfi b(V_K-V)\Big\}-1\Big\rvert
\le\Big\{\E\Big\lvert a(M_K-M)+b(V_K-V)\Big\rvert^2\Big\}^{1/2}\\
&\le\Big\{2\Big(2a^2\sum_{v=K+1}^{\infty}\lambda_v^2 + 
2b^2\delta_0^2\cI\sum_{v=K+1}^{\infty}c_v^2\Big)\Big\}^{1/2}.
\end{align*}
Since by condition \eqref{eq:uvar}
\[
\sum_{v=1}^{\infty}\lambda_v^2=\Var(g_{1}(\mW_{11},\mW_{12}))\cdot\Var(g_{2}(\mW_{21},\mW_{22}))\in(0,\infty)
~~~\text{and}~~~
\sum_{v=1}^{\infty}c_v^2=\cI^{-1}\sum_{v=1}^{\infty}\gamma_v^2\le 1, 
\]
we conclude that, for any $\epsilon>0$, there exists $K_0$ such that $I<\epsilon/3$ and $I\!I\!I<\epsilon/3$ for all $n$ and all~$K\ge K_0$. For this $K_0$, we also have, by \eqref{eq:jointKlimit}, that  $I\!I<\epsilon/3$ for all $n$ sufficiently large;  
%These together prove
 \eqref{eq:charlimit}, hence \eqref{eq:jointlimit},  follow.
 
Now, as in  %\citet[p.~210--214]{MR0229351},  
\citet[Theorem~7.2]{MR1652247}, 
\[\Lambda\n-T\n+\delta_0^2\cI/2\stackrel{\P\n}{\longrightarrow} 0. \yestag\label{eq:firstapprox}\]
%(see also \citealp[Appx.~B]{MR2691505}). 
Combining \eqref{eq:jointlimit} and \eqref{eq:firstapprox} yields
\[
(nH_{n,2}, \Lambda\n)^{\top}
\stackrel{\P\n}{\rightsquigarrow}
\bigg(\sum_{v=1}^{\infty}\lambda_v(\xi_{v}^2-1),\Big(\delta_0^2\cI\Big)^{1/2}\Big\{\sum_{v=1}^{\infty}c_v\xi_{v} + \Big(1-\sum_{v=1}^{\infty}c_v^2\Big)^{1/2}\xi_0\Big\}-\frac{\delta_0^2\cI}{2}\bigg)^{\top}.
\yestag\label{eq:jointlimitL}
\]
 Equation (1.6.7) in \citet[p.~30]{MR1075417}, along with  the fact that $H_{n,1}=0$, implies that $(n\tenq W\n_\mu, \Lambda\n)^{\top}$ has the same limiting distribution as \eqref{eq:jointlimitL} under $\P\n$. %\textcolor{magenta}{Don't you rather mean $(\hat\mu\n, \Lambda\n)^{\top}$, that is, $(n\tenq W\n_\mu, \Lambda\n)^{\top}$?}

{\bf Step III.} %Noticing that
%$\widetilde H_{n,k}$, $k\in\Z_+$ are uniformly bounded by some absolute constant $b$ regardless of $k$ 
%since $\sup_v\lVert\psi_v\rVert<\infty$ \citep[Theorem~3.a.1]{MR0889455}, 
Finally we employ the general form \citep[Theorem~6.6]{MR1652247} of Le Cam's third lemma, which by condition \eqref{eq:pd} entails  
\begin{align*}
   \;& \Q\n(n\tenq{W}\n_{\mu}\le q_{1-\alpha})\\
\to\;& \E \Big[\ind\Big(\sum_{v=1}^{\infty}\lambda_v(\xi_{v}^2-1)\le q_{1-\alpha}\Big) \cdot\exp\Big\{\Big(\delta_0^2\cI\Big)^{1/2}\Big(\sum_{v=1}^{\infty}c_v \xi_{v} + \Big(1-\sum_{v=1}^{\infty}c_v^2\Big)^{1/2} \xi_0\Big)-\frac{\delta_0^2\cI}{2}\Big\}\Big]\\
\le\;& \E \Big[\ind\Big\{\Big\lvert \xi_{v^*}\Big\rvert\le \Big(\frac{q_{1-\alpha}+\sum_{v=1}^{\infty}\lambda_v}{\lambda_{v^*}}\Big)^{1/2}\Big\}  
\cdot\exp\Big\{\Big(\delta_0^2\cI\Big)^{1/2}\Big(\sum_{v=1}^{\infty}c_v \xi_{v} + \Big(1-\sum_{v=1}^{\infty}c_v^2\Big)^{1/2} \xi_0\Big)-\frac{\delta_0^2\cI}{2}\Big\}\Big]\\
=\;& \E \Big[\ind\Big\{\Big\lvert \xi_{v^*}\Big\rvert\le \Big(\frac{q_{1-\alpha}+\sum_{v=1}^{\infty}\lambda_v}{\lambda_{v^*}}\Big)^{1/2}\Big\} 
\cdot\exp\Big\{\Big(\delta_0^2\cI\Big)^{1/2}\Big(c_{v^*} \xi_{v^*} + \Big(1-c_{v^*}^2\Big)^{1/2} \xi_0\Big)-\frac{\delta_0^2\cI}{2}\Big\}\Big]\\
=\;& \Phi\Big(\Big(\frac{q_{1-\alpha}+\sum_{v=1}^{\infty}\lambda_v}{\lambda_{v^*}}\Big)^{1/2} -c_{v^*}\Big(\delta_0^2\cI\Big)^{1/2}\Big) - \Phi\Big(-\Big(\frac{q_{1-\alpha}+\sum_{v=1}^{\infty}\lambda_v}{\lambda_{v^*}}\Big)^{1/2} -c_{v^*}\Big(\delta_0^2\cI\Big)^{1/2}\Big)\\
\le\;& 2\Big(\frac{q_{1-\alpha}+\sum_{v=1}^{\infty}\lambda_v}{\lambda_{v^*}}\Big)^{1/2} \varphi\Big(\Big\{\lvert c_{v^*}\rvert\cdot\Big(\delta_0^2\cI\Big)^{1/2}-\Big(\frac{q_{1-\alpha}+\sum_{v=1}^{\infty}\lambda_v}{\lambda_{v^*}}\Big)^{1/2}\Big\}_+\Big),
\end{align*}
a quantity which is arbitrarily small for   large enough $\delta_0$, irrespective of the sign of $c_{v^*}$. % is positive or negative. 
%Finally, it is easy to verify that all conditions  are satisfied for all the  $\mu$'s considered in Proposition~\ref{prop:sgc}\ref{example:dCov}--\ref{example:mBDY}. %\textcolor{magenta}{where in the proof did we use that condition?}
\end{proof}

\subsubsection{Proof of Theorem~\ref{thm:opt}}

\begin{proof}[Proof of Theorem~\ref{thm:opt}]

This result is a standard result connecting the Fisher information to
the usual lower bound of rate $n^{-1/2}$ \textcolor{black}{\citep[Chap.~6]{MR3445293}}. Recall that $\mX ^*_{ni}$ and~$\mX_{ni}$, $i\in\zahl{n}$ are independent copies of~$\mX ^*$ and~$\mX$, respectively, with $\delta=\delta\n=n^{-1/2}\delta_0$. 
Recall $\P\n:=\otimes_{i=1}^{n}\P\n_{i}$,
$\Q\n:=\otimes_{i=1}^{n}\Q\n_{i}$, where~$\P\n_{i}$ and~$\Q\n_{i}$ are
the distributions of $\mX ^*_{ni}$ and $\mX_{ni}$, respectively. 
%Recall the definition of total variation distance and Hellinger divergence. 
It suffices to prove that for any small~$0<~\!\beta~\!<1~\!-~\!\alpha$, there exists $|\delta_0|=c_\beta$ such that, for all sufficiently large $n$, $\TV(\Q\n,\P\n)<\beta$, which is implied by~$\HL(\Q\n,\P\n)<\beta$ using the fact that  total variation and Hellinger distances satisfy 
\[
\TV(\Q\n,\P\n)\le\HL(\Q\n,\P\n)
\]
\citep[Equation~(2.20)]{MR2724359}. 
It is also known \citep[p.~83]{MR2724359}  that
\[
1-\frac{\HL^2(\Q\n,\P\n)}{2}=\prod_{i=1}^{n}\Big(1-\frac{\HL^2(\Q\n_{i},\P\n_{i})}{2}\Big).
\]
%Hence, let us  evaluate $\HL^2(\Q\n,\P\n)$ in terms of $\cI_{\mX}(0)$ and $\delta_0$.  By definition,
%\[
%\HL^2(\Q\n_{i},\P\n_{i})=\E\big[2(1-L(\mX ^*_{ni};\delta\n)^{1/2})\big].
%\]
%\citet[Appendix~B, p.~105--107]{MR2691505} 
\citet[Example~13.1.1]{MR2135927} 
shows that, under Assumption~\ref{asp:only}, 
\[
%n\cdot\E\Big[2(1-L(\mX ^*_{ni};\delta\n)^{1/2})\Big]=\E\Big[\sum_{i=1}^{n}2(1-L(\mX ^*_{ni};\delta\n)^{1/2})\Big]
n\times \HL^2(\Q\n_{i},\P\n_{i})
\to \frac{\delta_0^2\cI_{\mX}(0)}{4};
\]
notice that here the definition of $\HL^2(\Q,\P)$ differs with that in \citet[Definition~13.1.3]{MR2135927} by a factor of $2$. 
%\textcolor{magenta}{??? where is that $W_n$ used? }\textcolor{red}{with $W_n:=\sum_{i=1}^{n}2(L(\mX ^*_{ni};\delta\n)^{1/2}-1)$}. 
Therefore, %we have
\[
1-\frac{\HL^2(\Q\n,\P\n)}{2} \longrightarrow \exp\Big\{-\frac{\delta_0^2\cI_{\mX}(0)}{8}\Big\}.
\]
The desired result follows by taking $c_{\beta}>0$ such that
\[
\exp\Big\{-\frac{c_{\beta}^2\cI_{\mX}(0)}{8}\Big\}=1-\frac{\beta^2}{8}.
\]
This completes the proof.
\end{proof}

\subsubsection{Proof of Example~\ref{ex:ellip}}

%Following the proof of Lemma~3.2.1 in \citet{MR2691505}, we get for the Konijn families, 
%\[
%\dot{\ell}(\mx;\delta)=
%\tr(-\fA_{\delta}^{-1}\fP)
%+(-\fA_{\delta}^{-1}\fP\fA_{\delta}^{-1})^{\top}
%\frac{\nabla q_{\mX ^*}(\fA_{\delta}^{-1}\mx)}{q_{\mX ^*}(\fA_{\delta}^{-1}\mx)}
%%\yestag\label{eq:whynontrivial}
%\]
%where
%\[
%\fP:=\frac{\partial}{\partial\delta}\fA_{\delta}=
%\Bigg(\begin{matrix}{\bm0} & \fM_1\\
%                     \fM_2 & {\bm0}\end{matrix}\Bigg).
%\]

\paragraph{Proof of Example~\ref{ex:ellip}\ref{ex:ellip1}}
\begin{proof}[Proof of Example~\ref{ex:ellip}\ref{ex:ellip1}]
We need to verify Assumption~\ref{asp:power-opt}. 
Items \ref{asp:power-opt1} and \ref{asp:power-opt2} are obvious. 
For~\ref{asp:power-opt3}, following the proof of Lemma~3.2.1 in \citet{MR2691505}, when $\mX ^*_1$ and $\mX ^*_2$ are elliptically symmetric with   parameters $\bm0_{d_1},\, \mSigma_1$ and $\bm0_{d_2},\, \mSigma_2$, respectively, we obtain
\[
\dot{\ell}(\mx;0)=
-2(\fM_1\mx_2)^{\top}\mSigma_1^{-1}\mx_1 \cdot \rho_1\Big(\mx_1^{\top}\mSigma_1^{-1}\mx_1\Big)
-2(\fM_2\mx_1)^{\top}\mSigma_2^{-1}\mx_2 \cdot \rho_2\Big(\mx_2^{\top}\mSigma_2^{-1}\mx_2\Big).
\]
Consequently,  the condition that $\E\left[\lVert\mZ ^*_k \rVert^2
          \rho_k(\lVert\mZ ^*_k\rVert^2)^2\right] <\infty$ for $k=1,2$ 
 is sufficient for $\cI_{\mX}(0)=\E\big[\dot{\ell}(\mX;0)^2\big]<\infty$. If $\cI_{\mX}(0)=0$, then we must have
\[
\rho_1\Big(\mx_1^{\top}\mSigma_1^{-1}\mx_1\Big)
=\rho_2\Big(\mx_2^{\top}\mSigma_2^{-1}\mx_2\Big)=C_{\rho}
\]
for some constant $C_{\rho}\ne 0$ and
\[(\fM_1\mx_2)^{\top}\mSigma_1^{-1}\mx_1+(\fM_2\mx_1)^{\top}\mSigma_2^{-1}\mx_2
=\mx_1^{\top}\mSigma_1^{-1}(\fM_1\mSigma_2+\mSigma_1\fM_2^{\top})\mSigma_2^{-1}\mx_2=0\]
for all $\mx_1,\mx_2$. This contradicts the assumption that $\mSigma_1\fM_2^{\top}+\fM_1\mSigma_2\ne \zero$ and completes the proof. 
\end{proof}

\paragraph{Proof of Example~\ref{ex:ellip}\ref{ex:ellip2}}
\begin{proof}[Proof of Example~\ref{ex:ellip}\ref{ex:ellip2}]
For the multivariate normal, $\phi_k(t)=\exp(-t/2)$ and $\rho_k(t)=-1/2$, so that all conditions in Example~\ref{ex:ellip}\ref{ex:ellip1} are satisfied. 
For a multivariate $t$-distribution with $\nu_k$ degrees of freedom,
\[\phi_k(t)=(1+t/\nu_k)^{-(\nu_k+d_k)/2}\quad\text{and}\quad 
\rho_k(t)=-2^{-1}(1+d_k/\nu_k)(1+t/\nu_k)^{-1}.
\]
 It is easily checked that all conditions in Example~\ref{ex:ellip}\ref{ex:ellip1} are satisfied when $\nu_k>2$; see \citet[p.~44--46]{MR2691505}. 
\end{proof}

\subsubsection{Proof of Example~\ref{ex:mixture}}

\begin{proof}[Proof of Example~\ref{ex:mixture}]
Since $q^*$ is continuous and has compact support, it is upper bounded by some constant, say $C_q > 1$, and then Assumption~\ref{asp:far}\ref{asp:far1} holds with $\delta^*=C_q^{-1}$. The rest of Assumption~\ref{asp:far} can be easily verified. 
\end{proof}

\subsubsection{Proof of Proposition~\ref{rmk:imply1}}

\begin{proof}[Proof of Propositiion~\ref{rmk:imply1}]
(1) Konijn family. It is clear that Assumption~\ref{asp:only}\ref{asp:only1},\ref{asp:only3} is satisfied. 
\citet[Appendix~B, p.~105--107]{MR2691505} shows that
Assumption~\ref{asp:power-opt} implies 
Assumption~\ref{asp:only}\ref{asp:only2}. 
To verify Assumption~\ref{asp:only}\ref{asp:only4}, notice that
\[
\dot{\ell}(\mx;0)=
-2(\fM_1\mx_2)^{\top}\Big(\nabla q_1(\mx_1)\big/q_1(\mx_1)\Big)
-2(\fM_2\mx_1)^{\top}\Big(\nabla q_2(\mx_2)\big/q_2(\mx_2)\Big)
%\yestag\label{eq:whynontrivial}
\]
following the proof of Lemma~3.2.1 in \citet{MR2691505}. 

(2) Mixture family. Direct computation yields that
\[\dot{\ell}(\mx;\delta)=\frac{q^*(\mx)-q_{1}(\mx_1)q_{2}(\mx_2)}{(1-\delta)\{q_{1}(\mx_1)q_{2}(\mx_2)\}+\delta q^*(\mx)}.\]
The rest directly follows from Theorem~12.2.1 in \citet{MR2135927}.
\end{proof}

\section{Auxiliary results} \label{sec:help}

\subsection{Auxiliary results for Section~\ref{sec:sgc}}\label{appendix:sectionB1}

The concept of GSC unifies a surprisingly large number of well-known
dependence measures.  Moreover, only two types of subgroups are
needed, namely,
$H_{\tau}^m:=\langle(1~2)\rangle=\{(1),(1~2)\}\subseteq\kS_m$ for~$m=2$ and
$H_*^m:=\langle(1~4),(2~3)\rangle=\{(1),(1~4),(2~3),(1~4)(2~3)\}\subseteq\kS_m$
for $m\ge 4$. The following result illustrates this fact with % demonstrate the unification
four classical examples of  univariate 
dependence measures, namely, the tau of \cite{Kendall1938}, the $D$ of
\cite{MR0029139}, the $R$ of \cite{MR0125690}, and the  $\tau^*$ of
\cite{MR3178526} which, as shown by \cite{MR4185806}, is connected to the work of
\cite{yanagimoto1970measures}. \textcolor{black}{Below, we write $\mw
  = (w_1,\ldots, w_m)\mapsto f_k (\mw)$, $k=1,2$ for the kernel
  functions of an  $m$th order univariate GSC; note that not all components of $\mw$ need  {to} have an impact on $ f_k (\mw)$: see, for instance the kernel $f_1$ of the 6th order Blum--Kiefer--Rosenblatt GSC, which is mapping~$\mw=(w_1,\ldots,w_6)$ to $\R_{\geq 0}$ but does not depend on $w_6$ ($f_2$ does). }

\begin{example}[Examples of univariate GSCs]\label{prop:sgc:1d} \mbox{ }
\begin{enumerate}[itemsep=-.5ex,label=(\alph*)]
\item\label{example:kendall}
  Kendall's tau is a 2nd order GSC with $H=H_{\tau}^2$ and $$f_1(\mw)=f_2(\mw)=\ind(w_1<w_2) \text{ on } \
  \mathbbm{R}^2 {,}$$
 {which can be proved as follows: %It holds that 
\begin{align*}
&\mu_{f_1,f_2,H_{\tau}^2}(X_1,X_2)
:= \E [k_{f_1,f_2,H_{\tau}^2}((X_{11},X_{21}),(X_{12},X_{22}))]\\
&\quad 
= \E [\{\ind(X_{11}<X_{12})-\ind(X_{12}<X_{11})\}
      \{\ind(X_{21}<X_{22})-\ind(X_{22}<X_{21})\}]\\
&\quad 
= \E [\sign(X_{11}-X_{12})
      \sign(X_{21}-X_{22})] =: \tau;
\end{align*}
see also Example 5 in \citet[Chapter~1.2]{MR1075417} for the last expression of Kendall's $\tau$;}
\item\label{example:D} Hoeffding's $D$ is a 5th order GSC with $H=H_*^5$ and 
$$f_1(\mw)=f_2(\mw)=\frac12\ind(\max\{w_1,w_2\}\le w_5)\ \text{ on
} \ \mathbbm{R}^5;$$
\item\label{example:R} Blum--Kiefer--Rosenblatt's $R$ is a 6th order GSC
  with $H=H_*^6$ and
$$f_1(\mw)=\frac12\ind(\max\{w_1,w_2\}\le w_5), \qquad
f_2(\mw)=\frac12\ind(\max\{w_1,w_2\}\le w_6)\ \text{ on
} \ \mathbbm{R}^6;$$
\item\label{example:BDY} Bergsma--Dassios--Yanagimoto's $\tau^*$ is a
  4th order GSC with $H=H_*^4$ and
  $$f_1(\mw)=f_2(\mw)=\ind(\max\{w_1,w_2\}< \min\{w_3,w_4\}) \text{ on } \
  \mathbbm{R}^4.$$
\end{enumerate}
\end{example} 

\begin{remark}\label{rmk:uniq}
%It is appropriate to mention here that 
Distinct choices of the kernels $f_1$ and $f_2$ do not necessarily imply distinct GSCs. For example, \citet[Proposition~1(ii)]{MR3842884} showed that
%one can verify that
 Hoeffding's~$D$ in  {Example}~\ref{prop:sgc:1d}\ref{example:D}  is a 5th order GSC with $H=H_*^5$
also for $$f_1(\mw)=f_2(\mw)=\frac12\ind(\max\{w_1,w_2\}\le w_5<\max\{w_3,w_4\}) \text{ on } \
  \mathbbm{R}^5;$$
similarly, for Blum--Kiefer--Rosenblatt's $R$,   the
kernels  in  {Example}~\ref{prop:sgc:1d}\ref{example:R} can be replaced with 
  \begin{align*}
    f_1(\mw)&=\frac12\ind(\max\{w_1,w_2\}\le w_5<\max\{w_3,w_4\}) \text{ on } \
  \mathbbm{R}^6, \\
    f_2(\mw)&=\frac12\ind(\max\{w_1,w_2\}\le w_6<\max\{w_3,w_4\}) \text{ on } \
  \mathbbm{R}^6.
  \end{align*}
\end{remark}

%\subsection{Center-outward distribution functions, ranks and signs}
\subsection{Auxiliary results for Section~\ref{sec:hallin}}\label{appendix:sectionB2}

The next proposition collects several properties of center-outward  distribution functions.

\begin{proposition} \label{prop:Hallin_full}
  Let $\fF_{\pms}$ be the
  center-outward distribution function of %a distribution
  $\P\in\cP_d^{\ac}$.  Then, 
\begin{enumerate}[itemsep=-.5ex,label=(\roman*)]
\item\label{prop:Hallin1}
  (\citealp[Proposition~4.2(i)]{hallin2017distribution},
  \citealp[Proposition~2.1(i),(iii)]{hallin2020distribution})
  $\fF_{\pms}$ is a probability integral transformation of $\R^d$,
  namely, $\mZ\sim \P$ if and only if $\fF_{\pms}(\mZ)\sim \U_d$; 
  % recall that $\U_d$ was introduced in Definition \ref{def:centerdistr};
\item\label{prop:Hallin2}
  \citep[Proposition~2.1(ii)]{hallin2020distribution}  if $\mZ\sim\P$,
%  then by construction,
   $\|\fF_{\pms}(\mZ)\|$ is uniform over $[0,1)$,
  $\fF_{\pms}(\mZ)/\|\fF_{\pms}(\mZ)\|$ is uniform over the sphere
  $\cS_{d-1}$, and they are mutually independent.
\end{enumerate}
Writing $\fF_{\pms}^{\mZ}$ for the center-outward distribution function
of %a   random vector~
$\mZ\sim\P\in~\!\cP_d^{\ac}$, %we have:
 \begin{enumerate}[itemsep=-.5ex,label=(\roman*)]
\setcounter{enumi}{2}
 \item\label{prop:Hallin5}   \citep[Proposition~2.2]{hallin2020efficient} 
  for any $\mv\in\R^d, a\in\R_{>0},$ and  orthogonal $d\times d$ matrix~$\fO$,% $\fO\in~\!\R^{d\times d}$, 
\[
\fF_{\pms}^{\mv+a\fO\mZ}(\mv+a\fO\mz)=\fO\fF_{\pms}^{\mZ}(\mz) \text{ for all }\mz\in\R^d.
\]
\end{enumerate}
Letting $\mZ_1,\ldots,\mZ_n$ be independent copies of $\mZ\sim\P\in~\!\cP_d^{\ac}$ with center-outward distribution function~$\fF_{\pms}$,
\begin{enumerate}[label=(\roman*)]
\setcounter{enumi}{3}
\item\label{prop:Hallin3} (\citealp[Proposition~6.1(ii)]{hallin2017distribution}, \citealp[Proposition~2.5(ii)]{hallin2020distribution}) for any decomposition $n_0,n_R,{n_S}$ of $n$, the random
vector $[\fF_{\pms}^{(n)}(\mZ_1),\dots,\fF_{\pms}^{(n)}(\mZ_n)]$ is
uniformly distributed over all distinct arrangements of the grid
$\kG^{d}_{\mn}$;
\item\label{prop:Hallin:add} (\citealp[Proof of Theorem~3.1]{del2018smooth}, \citealp[Proof of Proposition 3.3]{hallin2020distribution}) as $n_R$ and $n_S\to\infty$, for every $i\in\zahl{n}$, 
\[
\Big\lVert\fF_{\pms}^{(n)}(\mZ_{i})-\fF_{\pms}(\mZ_{i})\Big\rVert\stackrel{\sf a.s.}{\longrightarrow}0.
\]
\end{enumerate}
\end{proposition}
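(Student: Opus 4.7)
My plan is to prove each of the five items of Proposition~\ref{prop:Hallin_full} separately, using the defining properties of $\fF_{\pms}$ stated in Definition~\ref{def:centerdistr} together with the construction of $\fF_{\pms}^{(n)}$ from Definition~\ref{def:empdistr}. Items (i)-(iii) are consequences of the three axioms in Definition~\ref{def:centerdistr}, while items (iv)-(v) require the combinatorial/asymptotic properties of the optimal assignment~\eqref{eq:assignment}.

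For (i), I would simply unwind the definition: property (iii) of Definition~\ref{def:centerdistr} asserts $\fF_{\pms}\sharp\P=\U_d$, i.e., $\fF_{\pms}(\mZ)\sim\U_d$ when $\mZ\sim\P$; the converse follows from the $\P$-a.s.\ invertibility of $\fF_{\pms}$, which holds for $\P\in\cP_d^{\ac}$ by standard results on optimal transport maps with Lebesgue-absolutely continuous source (see, e.g., \citet{MR2401600}, Section~6.2.3). Item (ii) is then immediate from (i) together with the fact that $\U_d$ is by construction the product of the uniform law on $[0,1)$ (for the radius) and the uniform law on $\cS_{d-1}$ (for the direction), so independence and uniformity of~$\|\fF_{\pms}(\mZ)\|$ and $\fF_{\pms}(\mZ)/\|\fF_{\pms}(\mZ)\|$ follow immediately.

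For (iii), the plan is to use the uniqueness guaranteed by Definition~\ref{def:centerdistr} by verifying that $\mz\mapsto \fO\fF_{\pms}^{\mZ}(\fO^{\top}(\mz-\mv)/a)$ satisfies the three defining axioms for the law of $\mv+a\fO\mZ$: it maps $\R^d$ into $\bS_d$ (since $\fO$ is orthogonal and $a>0$); it is the gradient of the convex function $\mz\mapsto a^{-1}\Psi(\fO^{\top}(\mz-\mv)/a)$ where $\Psi$ is a convex potential whose gradient is $\fF_{\pms}^{\mZ}$ (using that the composition with an affine orthogonal map preserves convexity up to the scaling factor $a$); and it pushes the law of $\mv+a\fO\mZ$ forward to $\U_d$ by a direct change of variables using (i). Uniqueness then yields the claim.

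For (iv), the key observation is that the assignment~\eqref{eq:assignment} is equivariant under relabeling of the observations~$\mZ_1,\ldots,\mZ_n$: if $\pi\in\kS_n$, then the optimal coupling for the permuted sample is the composition of the original optimal coupling with $\pi$. Combined with exchangeability of the i.i.d.\ sample $\mZ_1,\ldots,\mZ_n$, this yields the claimed uniform distribution over arrangements of $\kG^{d}_{\mn}$. For (v), which is the main technical obstacle, I would rely on the cyclical monotonicity characterization of optimal transport maps together with the convergence of the empirical distribution of $\kG^{d}_{\mn}$ to $\U_d$ (guaranteed by the grid construction); the argument, following \citet{del2018smooth} and \citet{hallin2020distribution}, uses the stability theorem of optimal transports to pass from weak convergence of the empirical source and target measures to pointwise (and then uniform-in-$i$) convergence of the transport maps on the sample. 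This stability step is where the real work lies, since it requires handling the non-smoothness of $\fF_{\pms}$ and controlling the discrepancy between the empirical matching~$\fF_{\pms}^{(n)}$ and the population map $\fF_{\pms}$ on the (random) data points; the remaining parts (i)-(iv) are essentially definitional consequences.
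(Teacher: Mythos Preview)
Your approach to part~\ref{prop:Hallin5} is essentially the same as the paper's: verify that the candidate map satisfies the three defining properties of Definition~\ref{def:centerdistr} and invoke uniqueness. The paper in fact only gives an independent proof of this part, citing the references for the remaining items; your additional sketches for (i), (ii), (iv), (v) are reasonable but go beyond what the paper itself supplies.

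One small computational slip in your treatment of~\ref{prop:Hallin5}: the convex potential you propose, $\mz\mapsto a^{-1}\Psi(\fO^{\top}(\mz-\mv)/a)$, has gradient $a^{-2}\fO\,\nabla\Psi(\fO^{\top}(\mz-\mv)/a)$, which carries an unwanted factor $a^{-2}$. The correct potential is $\Psi^{*}(\mz):=a\,\Psi\big(a^{-1}\fO^{-1}(\mz-\mv)\big)$, whose gradient at $\mv+a\fO\mz$ is exactly $\fO\,\nabla\Psi(\mz)=\fO\fF_{\pms}^{\mZ}(\mz)$; this is the choice made in the paper. With that correction, your argument for~\ref{prop:Hallin5} coincides with the paper's.
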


\begin{proof}[Proof of Proposition~\ref{prop:Hallin_full}]
We give an independent proof of part \ref{prop:Hallin5}. In view of Definition \ref{def:centerdistr}, there exists a convex function $\Psi$ such that $\fF_{\pms}^{\mZ}=\nabla\Psi$. It is obvious that $\fF_{\pms}^{\mv+a\fO\mZ}$ defined implicitly by
$$\fF_{\pms}^{\mv+a\fO\mZ}(\mv+a\fO\mz)=\fO\fF_{\pms}^{\mZ}(\mz),$$
satisfies (ii) and (iii) in Definition \ref{def:centerdistr}. It  only remains, thus,  to construct a convex function $\Psi^{*}$ such that $\fF_{\pms}^{\mv+a\fO\mZ}=\nabla\Psi^{*}$. Noting that $\fF_{\pms}^{\mv+a\fO\mZ}(\mz)=\fO\fF_{\pms}^{\mZ}(a^{-1}\fO^{-1}(\mz-\mv))$, it is easy to check that %this aim can be achieved by taking
 $\mz\mapsto\Psi^{*}(\mz):=a\Psi\left(a^{-1}\fO^{-1}(\mz-\mv)\right)$ 
 is convex, and thus  continuous and almost everywhere differentiable,  
with $\nabla\Psi^{*}(\mv+a\fO\mZ)=\fO\nabla\Psi(\mz)$. 
\end{proof}

%By    the
%following inclusions hold among these four classes of distributions.

\begin{proposition} 
  (\citealp[Proposition~5.1]{hallin2017distribution},
\citealp[Theorem~3.1]{del2018smooth},
\citealp[Theorem~2.5]{MR4147635}, and
\citealp[Proposition~2.3]{hallin2020distribution})
  \label{prop:Hallin4_full}
Consider the following classes of distributions:
\begin{itemize}[itemsep=-.5ex]
\item the class $\cP_d^{+}$  of distributions $\P\in\cP_d^{\ac}$
  with {\it nonvanishing probability density}, namely,  with Lebesgue density $f$ such that, for all $D>0$ there exist constants
  $\lambda_{D;f}<\Lambda_{D;f}\in(0,\infty)$ such that
  $\lambda_{D;f}\le f(\mz)\le \Lambda_{D;f}$ for all
  $\lVert\mz\rVert\le D$;
\item the class $\cP_d^{\mathrm{conv}}$  of distributions
  $\P\in\cP_d^{\ac}$ with convex support 
  $\overline{\mathrm{supp}}(\P)$ and a density  that is
  nonvanishing over this support, namely,  with density $f$ such that, for all $D>0$ there exist constants
  $\lambda_{D;f}< \Lambda_{D;f}\in(0,\infty)$ such that
  $\lambda_{D;f}\le f(\mz)\le \Lambda_{D;f}$ for all $\mz\in {\mathrm{supp}}(\P)$ with~$\lVert\mz\rVert\le D$;
%
% with a probability density $q$ 
% whose support $\overline{\mathrm{spt}}(\P_q)$ is convex and 
%for all $D>0$ there exist constants $\lambda_{D;q},\Lambda_{D;q}\in(0,+\infty)$ 
%such that $\lambda_{D;q}\le q(\mz)\le \Lambda_{D;q}$ for all $\mz\in\overline{\mathrm{supp}}(\P_q)$ with $\lVert\mz\rVert\le D$;
\item the class $\cP_d^{\pms}$  of distributions $\P\in\cP_d^{\ac}$
  that are push-forwards of $\U_d$ of the form
  $\P=\nabla\Upsilon\sharp\U_d$ ($\nabla\Upsilon$  the
  gradient of a convex function)   
%  $\Upsilon$, $\nabla\Upsilon$ is
 and   a homeomorphism from the punctured ball~$\bS_d\backslash\{\bm0_d\}$ to~$\nabla\Upsilon(\bS_d\backslash\{\bm0_d\})$ such that $\nabla\Upsilon(\{\bm0_d\})$ is  compact, convex, and has Lebesgue measure zero;
%   there exists a mapping
% $\nabla\Upsilon$ pushes forward $\U_d$ to $\P_q$, 
% where $\Upsilon$ is convex and $\nabla\Upsilon$ is a homeomorphism from
% $\bS_d\backslash\{\bm0_d\}$ to $\nabla\Upsilon(\bS_d\backslash\{\bm0_d\})$
% such that $\nabla\Upsilon(\{\bm0_d\})$ is a compact convex set of Lebesgue measure zero;
\item the class  $\cP_d^{\#}$ of all distributions $\P\in\cP_d^{\ac}$ 
such that, denoting by $\fF_{\pms}^{(n)}$ the sample distribution function computed from an $n$-tuple $\mZ_1,\ldots,\mZ_n$ of  independent copies of $\mZ\sim\P$,  
\[
\max_{1\le i\le n}\Big\lVert\fF_{\pms}^{(n)}(\mZ_{i})-\fF_{\pms}(\mZ_{i})\Big\rVert\stackrel{\sf a.s.}{\longrightarrow}0
\]
as $n_R$ and $n_S\to\infty$ (a Glivenko-Cantelli property). 
%whenever $n\to\infty$ and \eqref{eq:key} holds.
\end{itemize}
  It holds that
  $\cP_d^{+}\subsetneq\cP_d^{\mathrm{conv}}\subsetneq\cP_d^{\pms}\subseteq\cP_d^{\#}\subsetneq\cP_d^{\ac}$. 
\end{proposition}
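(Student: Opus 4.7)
The plan is to establish the chain by proving the four inclusions separately, exhibiting an explicit counterexample for each of the strict ones, and invoking the cited results as black boxes for the two deep inclusions. I proceed from left to right.

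The first inclusion $\cP_d^{+}\subseteq\cP_d^{\mathrm{conv}}$ is elementary: if the density $f$ of $\P\in\cP_d^{+}$ is bounded below by some $\lambda_{D;f}>0$ on every ball of radius $D$, then $f>0$ everywhere, so $\mathrm{supp}(\P)=\R^d$, which is convex, and the nonvanishing-on-support condition transfers verbatim. Strictness is witnessed by the uniform distribution on $\bS_d$, which lies in $\cP_d^{\mathrm{conv}}\setminus\cP_d^{+}$ (its density vanishes outside the support, violating the lower bound on balls containing points beyond $\bS_d$).

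For $\cP_d^{\mathrm{conv}}\subseteq\cP_d^{\pms}$, I plan to invoke \citet[Theorem~2.5]{MR4147635}, which establishes that for any $\P\in\cP_d^{\mathrm{conv}}$, the unique Brenier map $T=\nabla\Upsilon$ pushing $\U_d$ forward to $\P$ has the required regularity: $\nabla\Upsilon$ restricts to a homeomorphism from $\bS_d\setminus\{\bm0_d\}$ onto its image inside $\mathrm{supp}(\P)$, while $\nabla\Upsilon(\{\bm0_d\})$ is a compact convex Lebesgue-null set. This step is the main obstacle of the whole proposition, as it rests on nontrivial regularity theory for the Monge--Amp\`ere equation; I plan to quote it rather than reprove it. Strictness is witnessed by taking $\Upsilon$ to be a convex potential whose gradient maps $\bS_d\setminus\{\bm0_d\}$ homeomorphically onto a non-convex open set (for instance a smooth annulus), producing an element of $\cP_d^{\pms}\setminus\cP_d^{\mathrm{conv}}$.

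The third inclusion $\cP_d^{\pms}\subseteq\cP_d^{\#}$ is the Glivenko--Cantelli-type theorem for empirical center-outward distribution functions, proved for convex support in \citet[Theorem~3.1]{del2018smooth} and extended to the present generality in \citet[Proposition~2.3]{hallin2020distribution}; I plan to invoke these directly after checking that the homeomorphism property enables the uniform approximation in \eqref{eq:GC-new}. Finally, for $\cP_d^{\#}\subsetneq\cP_d^{\mathrm{ac}}$, strictness follows by exhibiting $\P\in\cP_d^{\mathrm{ac}}$ whose Brenier potential has essential discontinuities preventing uniform a.s.\ convergence of the optimal matching---e.g.\ a density supported on a disjoint union of bounded regions, where the empirical optimal assignment oscillates between components; the construction of such a counterexample is routine but delicate, and is the one step in the proof where a fresh calculation (rather than a citation) is required.
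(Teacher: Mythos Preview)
The paper does not supply its own proof of this proposition: it is recorded purely as a compilation of results from the cited references, with no argument beyond the citations. So there is no ``paper's proof'' to compare against; your sketch is an independent reconstruction.

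Your overall architecture---four inclusions plus three counterexamples---is the natural one, and the first inclusion together with the uniform-on-$\bS_d$ counterexample is correct. The two deep inclusions you plan to cite are indeed black-boxed in the literature, though you have the attributions slightly crossed: Figalli's regularity result covers $\cP_d^{+}\subseteq\cP_d^{\pms}$ (full support), while the extension to convex support is in the del~Barrio--Gonz\'alez-Sanz--Hallin line; the Glivenko--Cantelli step $\cP_d^{\pms}\subseteq\cP_d^{\#}$ is in \citet{hallin2020distribution}.

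Two of your counterexamples need repair. For $\cP_d^{\mathrm{conv}}\subsetneq\cP_d^{\pms}$, the annulus does not work as stated: if $\nabla\Upsilon$ maps $\bS_d\setminus\{\bm0_d\}$ onto an annulus that omits a full ball around the origin, then by cyclic monotonicity the subdifferential $\partial\Upsilon(\bm0_d)$ must contain that ball and hence has positive Lebesgue measure, violating the defining condition of $\cP_d^{\pms}$. You need a non-convex image whose ``hole'' is Lebesgue-null (e.g.\ a slit disk, or a support with a lower-dimensional indentation). For $\cP_d^{\#}\subsetneq\cP_d^{\ac}$, calling the construction ``routine but delicate'' without giving it is not a proof; the intuition that disconnected support causes oscillation is plausible but not automatic, since the optimal coupling could still stabilise. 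This strictness is in fact also asserted in the cited references rather than something requiring a fresh calculation on your part, so you should either locate and cite the explicit counterexample there or produce one in full.
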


%\subsection{Computational aspects}
\subsection{Auxiliary results for Section~\ref{sec:measure}}\label{appendix:sectionB3}

%\subsubsection{Assignment problems}

The time complexity of computing the optimal matching and nearly optimal matchings 
is summarized in the following proposition.

\begin{proposition}\label{thm:matching-computation}
%\citet{altschuler2018approximating}
The optimal matching problem  \eqref{eq:assignment} yielding $[\fG_{1,\pms}^{(n)}(\mX_{1i})]_{i=1}^{n}$ and
$[\fG_{2,\pms}^{(n)}(\mX_{2i})]_{i=1}^{n}$ can be solved in $O(n^3)$ time  via the refined Hungarian algorithm \citep{MR0254076,MR0297347,MR0266680,edmonds1972theoretical}. Moreover, 
\begin{enumerate}[itemsep=-.5ex,label=(\roman*)]
\item if we assume that $c_{ij},~i,j\in\zahl{n}$  all are integers and bounded by some (positive) integer $N$, which can be achieved by scaling and rounding, then there exists an  optimal matching algorithm  solving the problem in $O(n^{5/2}\log(nN))$ time \citep{MR1015271}; 
\item if $d=2$ and $c_{ij},~i,j\in\zahl{n}$ all are  integers and bounded by some (positive) integer $N$, there exists an exact an  optimal matching algorithm  solving the problem in   $O(n^{3/2+\delta}\log(N))$ time for any arbitrarily small constant $\delta>0$ \citep{MR3205219};
\item if $d\ge3$, there is an algorithm  computing a $(1+\epsilon)$-approximate perfect matching in 
$$O\left(n^{3/2}\epsilon^{-1}\tau(n,\epsilon)\log^{4}(n/\epsilon)\log\left({\max c_{ij}}/{\min c_{ij}}\right)\right)\quad\text{ time},$$ where a $(1+\epsilon)$-approximate perfect matching for $\epsilon>0$ is a bijection $\sigma$ from $\zahl{n}$ to itself such that $\sum_{i=1}^{n}c_{i\sigma(i)}$ is no larger than $(1+\epsilon)$ times the cost of % $\sum_{i=1}^{n}c_{i\sigma_{\rm OPT}(i)}$ for 
the optimal matching %$\sigma_{\rm OPT}$,
 and $\tau(n,\epsilon)$ is the query and update time of an $\epsilon/c$-approximate nearest neighbor data structure for some constant $c>1$ \citep{MR3238983}. 
\end{enumerate}
\end{proposition}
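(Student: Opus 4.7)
\textbf{Proof proposal for Proposition~\ref{thm:matching-computation}.} The plan is to recognize that the optimization \eqref{eq:assignment} is exactly a minimum-weight perfect matching (equivalently, a linear assignment) problem on the complete bipartite graph $K_{n,n}$ whose left vertex set is $\{\mz_i\}_{i=1}^n$, whose right vertex set is the grid $\kG_{\mn}^{d}=\{\bmu_j\}_{j=1}^n$, and whose edge weight between $\mz_i$ and $\bmu_j$ is $c_{ij}:=\lVert \mz_i-\bmu_j\rVert^2$. Once this reformulation is in place, the proposition reduces to citing four classical algorithms from combinatorial optimization and computational geometry, applied to this specific weight matrix.

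First, I would verify the baseline $O(n^3)$ bound: the reformulated problem is a standard square linear assignment problem, so the Kuhn--Munkres (Hungarian) algorithm in its refined $O(n^3)$ implementation \citep{MR0254076,MR0297347,MR0266680,edmonds1972theoretical} applies directly. For claim (i), the costs $c_{ij}$, after scaling and rounding, can be made nonnegative integers bounded by some $N\in\Z_{>0}$, so the scaling algorithm of \cite{MR1015271} runs in $O(n^{5/2}\log(nN))$. For claim (ii), when $d=2$, the cost matrix is induced by the geometry of planar Euclidean points, which places the problem in the framework of \cite{MR3205219}, giving the improved $O(n^{3/2+\delta}\log N)$ bound. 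For claim (iii), the algorithm of \cite{MR3238983} computes a $(1+\epsilon)$-approximate perfect matching for Euclidean bipartite matching in any fixed dimension, using an $\epsilon/c$-approximate nearest-neighbor data structure with query/update time $\tau(n,\epsilon)$; its stated complexity is exactly $O(n^{3/2}\,\epsilon^{-1}\tau(n,\epsilon)\log^{4}(n/\epsilon)\log(\max c_{ij}/\min c_{ij}))$, which is what is claimed.

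There is essentially no mathematical obstacle beyond checking applicability: each cited algorithm is stated for precisely the minimum-weight bipartite matching problem that \eqref{eq:assignment} defines, and no structural property of the grid $\kG_{\mn}^{d}$ beyond what is already captured by the Euclidean costs $c_{ij}$ is needed. The one point deserving care is the reduction to integer weights in (i) and (ii): this requires scaling the continuous costs $c_{ij}=\lVert\mz_i-\bmu_j\rVert^2$ by a common factor and rounding, which preserves the identity of the optimal matching provided $N$ is chosen large enough that the rounding error is smaller than the minimum gap between distinct values of the objective $\sum_i c_{i\sigma(i)}$ across permutations $\sigma\in\kS_n$. Since in all three quantitative statements $N$ is allowed to appear logarithmically, this scaling step contributes only lower-order polylogarithmic factors and is absorbed into the stated bounds. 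With these routine verifications, all four parts of the proposition follow directly from the cited literature.
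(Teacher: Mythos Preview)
Your proposal is correct and matches the paper's approach: the proposition is a summary of known algorithmic results, and the paper provides no separate proof beyond the citations already embedded in the statement. Your reformulation of \eqref{eq:assignment} as a bipartite minimum-weight perfect matching and your brief check that each cited algorithm applies is exactly the right level of justification; the additional remark on scaling to integer weights is a reasonable elaboration that the paper leaves implicit.
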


%\subsubsection{Efficient computation of sample generalized symmetric covariance}

Once  $[\fG_{1,\pms}^{(n)}(\mX_{1i})]_{i=1}^{n}$ and
$[\fG_{2,\pms}^{(n)}(\mX_{2i})]_{i=1}^{n}$ are obtained, a naive
approach to the computation of~$\tenq{W}\n\!$, on the other hand,  requires at most   a $O(n^m)$ time complexity.
Great speedups are possible, however, in particular
cases and    
the next proposition summarizes the results for  the various center-outward rank-based statistics listed in Example~\ref{ex:rbsgc}.

\begin{proposition}\label{thm:correlation-computation}
Assuming that $[\fG_{1,\pms}^{(n)}(\mX_{1i})]_{i=1}^{n}$ and $[\fG_{2,\pms}^{(n)}(\mX_{2i})]_{i=1}^{n}$ have been previously obtained, one can compute 
\begin{enumerate}[itemsep=-.5ex,label=(\roman*)]
\item $\tenq{W}\n_{{\dCov}}$  in $O(n^2)$ time (\citealp[Definition~1]{MR3053543}, \citealp[Definition~2, Proposition~1]{MR3269983}, \citealp[Lemma~3.1]{MR3556612})
\item $\tenq{W}\n_{{{M}}}$    in $O(n (\log n)^{d_1+d_2-1})$ time \citep[p.~557, end of Sec.~5.2]{MR3842884},
\item $\tenq{W}\n_{{D}}$      in $O(n^3)$ time \citep[Theorem~1]{MR3737307},
\item $\tenq{W}\n_{{R}}$      in $O(n^4)$ time as proved in Section~A.3.4 of the supplement,
\item $\tenq{W}\n_{{\tau^*}}$ in $O(n^4)$ time by definition. 
\end{enumerate}
If, moreover, approximate values are allowed, one can compute 
\begin{enumerate}[itemsep=-.5ex,label=(\roman*)]
\item approximate $\tenq{W}\n_{{\dCov}}$    in $O(nK\log n)$ time (\citealp[Theorem~4.1]{MR3556612}, \citealp[Theorem~3.1]{MR3913146}),
\item approximate $\tenq{W}\n_{{D}}$      in $O(nK\log n)$ time \citep[p.~557]{MR3842884},
\item approximate $\tenq{W}\n_{{R}}$      in $O(nK\log n)$ time (\citealp[Equation~(6.1)]{MR4185806}, \citealp[p.~557]{MR3842884}, \citealp[Corollary~4]{doi:10.1137/1.9781611976465.136}), 
\item approximate $\tenq{W}\n_{{\tau^*}}$ in $O(nK\log n)$ time \citep[Corollary~4]{doi:10.1137/1.9781611976465.136}.
\end{enumerate}
These approximations consider random projections to speed up
computation;  $K$ stands for the number of random projections. See
also
% Here
% the approximation via random projects is a general computation scheme
% that provides fast implementation. The details was explicitly stated
% in
\citet[Sec.~3.1]{huang2017statistically}.
\end{proposition}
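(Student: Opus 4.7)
Most items are inherited from existing algorithms in the cited literature: once the center-outward ranks and signs $\{(\fG_{1,\pms}^{(n)}(\mX_{1i}),\fG_{2,\pms}^{(n)}(\mX_{2i}))\}_{i=1}^{n}$ have been produced, the statistics in question are precisely the sample versions of the corresponding (non-rank-based) dependence measures evaluated on these transformed data. The plan is therefore to verify that the closed-form representations derived in Example~\ref{prop:sgc}\ref{example:dCov}--\ref{example:mBDY} admit the same algorithmic shortcuts as in the original $\mX$-scale literature, and then to argue the one genuinely new item, namely the $O(n^4)$ bound for $\tenq{W}\n_R$.

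For $\tenq{W}\n_{\dCov}$, I would invoke the identity that expresses the fourth-order U-statistic induced by $k_{f_1^{\dCov},f_2^{\dCov},H_*^4}$ as a quadratic form in the doubly-centered pairwise distance matrices, which is $O(n^2)$; this is exactly the content of \cite{MR3269983,MR3556612}. For $\tenq{W}\n_M$, the marginal-ordering indicator $\ind(\mw_1,\mw_2\preceq\mw_5)$ allows the U-statistic to be rewritten as an orthogonal-range-counting query on the ranked data, yielding $O(n(\log n)^{d_1+d_2-1})$ via standard range-tree structures \citep{MR3842884}. For $\tenq{W}\n_D$ and $\tenq{W}\n_{\tau^*}$ the plan is simply to use the projection-averaging integral representations from Example~\ref{prop:sgc}\ref{example:mD}, \ref{example:mBDY} reduced to $O(n^3)$ and $O(n^4)$ respectively as in \cite{MR3737307} and by direct enumeration of the 4-arrangements.

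The only item that requires real work is $\tenq{W}\n_R$. A naive enumeration over the $m=6$ summation indices costs $O(n^6)$. The key observation I would exploit is the factorization inherited from the structure of $H_*^6$ together with the fact that $f_1^R$ depends only on indices $\{1,2,5\}$ while $f_2^R$ depends only on $\{1,2,6\}$; the indices $5$ and $6$ appear asymmetrically in the two kernels. After symmetrization under $\kS_6$, the resulting sum can be decoupled: for each fixed triple $(i_1,i_2,i_5)$ the partial sum over $i_6$ involving $f_2^R$ depends only on $(i_1,i_2,i_6)$, which after interchanging summation yields an outer $O(n^4)$ loop over $(i_1,i_2,i_5,i_6)$ whose summand can be evaluated in $O(1)$ amortized time using pre-computed tabulations of arc-functions. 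The main obstacle will be handling the correction terms arising from forbidden collisions $i_5=i_6$ and from the action of the permutations in $H_*^6$ that mix the roles of $i_5$ and $i_6$; these must be bookkept so as not to inflate the cost beyond $O(n^4)$.

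For the approximate versions, the plan is uniform across the four statistics: apply the random-projection device of \cite{doi:10.1137/1.9781611976465.136,MR3913146} to reduce each $d_k$-dimensional kernel evaluation to $K$ univariate evaluations on the projected rank vectors; on each projection, the corresponding univariate symmetric rank covariance admits an $O(n\log n)$ evaluation (via sorting for $\dCov$, and via the merge-sort-based inversion-counting recursion of \cite{MR4185806} for $D$, $R$, and $\tau^*$). Summing over the $K$ projections yields the claimed $O(nK\log n)$ bound. No additional concentration or accuracy analysis is required at the level of this proposition, as we only assert the time complexity; controlling the approximation error is handled by the cited references.
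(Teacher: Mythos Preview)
Your treatment of the items that are simply cited from the literature is fine and matches the paper's disposition of them. The gap is in item (iv), the $O(n^4)$ bound for $\tenq{W}\n_{R}$. You claim that ``for each fixed triple $(i_1,i_2,i_5)$ the partial sum over $i_6$ involving $f_2^{R}$ depends only on $(i_1,i_2,i_6)$''; this is not correct. The factor $\sum_{\sigma\in H_*^6}\sgn(\sigma)\,f_2^{R}(\mw_{\sigma(1)},\mw_{\sigma(2)},\mw_6)$ involves permutations acting on $\{1,2,3,4\}$, so it depends on all of $(i_1,i_2,i_3,i_4,i_6)$. The product of the two $H_*^6$-sums therefore couples $i_3$ and $i_4$ through both factors simultaneously, and an outer loop over $(i_1,i_2,i_5,i_6)$ still leaves an inner sum over $(i_3,i_4)$ that is $O(n^2)$ per iteration unless you make explicit which ``pre-computed tabulations'' collapse it. As written, your argument does not establish $O(n^4)$.

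The paper's proof supplies precisely the missing mechanism: U-centering in the first two arguments. One defines, for $k=1,2$, four-index arrays $b_{k\ell rst}$ given by the relevant $\Arc$ expression (with reference index $s$ for $k=1$ and $t$ for $k=2$), and then forms the U-centered versions
\[
B_{k\ell rst}=b_{k\ell rst}-\frac{1}{n-4}\sum_{i}b_{kirst}-\frac{1}{n-4}\sum_{j}b_{k\ell jst}+\frac{1}{(n-3)(n-4)}\sum_{i,j}b_{kijst}.
\]
The key identity is that the sixth-order U-statistic with kernel $h_R$ equals, up to an explicit constant, the four-index sum $\sum_{[\ell,r,s,t]\in I_4^n}B_{1\ell rst}B_{2\ell rst}$, from which the $O(n^4)$ complexity is immediate. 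This is the exact analogue of the double-centered distance-matrix formula that gives $O(n^2)$ for $\tenq{W}\n_{\dCov}$: centering over the two indices acted on by $H_*^m$ absorbs the ``dummy'' indices $i_3,i_4$. The paper also carries out the same U-centering for $\tenq{W}\n_{D}$ to get the $O(n^3)$ bound, since \cite{MR3737307} treats only the V-statistic form; your proposal glosses over this point as well. To repair your argument, you should either derive the U-centering identity above or spell out concretely which $O(n^4)$ precomputations reduce the inner $(i_3,i_4)$ sum to an $O(1)$ lookup.
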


%\subsubsection{Proof of Proposition~\ref{thm:correlation-computation}}\label{sec:appendix-proof-computation}

\begin{proof}[Proof of Proposition~\ref{thm:correlation-computation}]

%{\bf Projection-averaging multivariate Hoeffding's $D$: }
We only illustrate how to efficiently compute U-statistic estimates of 
Hoeffding's multivariate projection-averaging $D$ and
Blum--Kiefer--Rosenblatt's  multivariate projection-averaging $R$.  The other claims   straightforwardly follow from the sources provided in the proposition. 

\citet{MR3737307} showed how to efficiently compute a V-statistic
estimate of Hoeffding's multivariate projection-averaging $D$. Let us show how to efficiently compute the corresponding 
U-statistic.
% estimate of Projection-averaging multivariate Hoeffding's $D$.
We define arrays $(a_{k\ell rs})_{\ell,r,s\in\zahl{n}}$ for $k=1,2$ as
\[
\begin{cases}
a_{k\ell rs}:=\Arc(\bm{y}_{k\ell}-\bm{y}_{ks},\bm{y}_{kr}-\bm{y}_{ks})&~~~\text{if }[\ell,r,s]\in I^{n}_{3},\\
a_{k\ell rs}:=0&~~~\text{otherwise}.
\end{cases}
\]
Their U-centered versions $(A_{k\ell rs})_{\ell,r,s\in\zahl{n}}$ for $k=1,2$ are
\begin{align*}
A_{k\ell rs}&:=\begin{cases}\displaystyle a_{k\ell rs}-\frac1{n-3}\sum_{i=1}^{n}a_{ki rs}-\frac1{n-3}\sum_{j=1}^{n}a_{k\ell js}+\frac1{(n-2)(n-3)}\sum_{i,j=1}^{n}a_{kijs}
  &~~\text{if }[\ell,r,s]\in I^{n}_{3},\\
  0&~~\text{otherwise}.
\end{cases}
\end{align*}
Then, 
\[
\mbinom{n}{5}^{-1}
\sum_{i_1<\cdots<i_5}h_{D}\Big((\bm{y}_{1i_1},\bm{y}_{2i_1}),\dots,(\bm{y}_{1i_5},\bm{y}_{2i_5})\Big)=\frac{1}{n(n-1)(n-4)}\sum_{[\ell,r,s]\in I^{n}_{3}} A_{1\ell rs}A_{2\ell rs},
\]
%where $h_{D_{P}}:=\frac14\overline k_{f_{D_{P},\mX},f_{D_{P},\mY},H_{*}}$.
\textcolor{black}{which clearly has $O(n^3)$ complexity.}

%{\bf Projection-averaging multivariate Blum--Kiefer--Rosenblatt's $R$: }
Turning to Blum--Kiefer--Rosenblatt's  multivariate projection-averaging $R$,   define,  for $k=1,2$, the arrays~$(b_{k\ell rst})_{\ell,r,s,t\in\zahl{n}}$  as
\[
\begin{cases}
b_{1\ell rst}:=\Arc(\bm{y}_{1\ell}-\bm{y}_{1s},\bm{y}_{1r}-\bm{y}_{1s})~\text{and}~
b_{2\ell rst}:=\Arc(\bm{y}_{2\ell}-\bm{y}_{2t},\bm{y}_{2r}-\bm{y}_{2t})&~~~\text{if }[\ell,r,s,t]\in I^{n}_{4},\\
b_{1\ell rst}:=0~\text{and}~
b_{2\ell rst}:=0&~~~\text{otherwise}.
\end{cases}
\]
Their U-centered versions $(B_{k\ell rs})_{k,\ell,r,s\in\zahl{n}}$ for $k=1,2$ are 
\begin{align*}
B_{k\ell rst}&:=\begin{cases}\displaystyle b_{k\ell rst}-\frac1{n-4}\sum_{i=1}^{n}b_{kirst}-\frac1{n-4}\sum_{j=1}^{n}b_{k\ell jst}+\frac1{(n-3)(n-4)}\sum_{i,j=1}^{n}b_{kijst}
  &\text{if }[\ell,r,s,t]\in I^{n}_{4},\\
  0&\text{otherwise}.
  \end{cases}
\end{align*}
Then, 
\[
\mbinom{n}{6}^{-1}
\sum_{i_1<\cdots<i_6}h_{R}\Big((\bm{y}_{1i_1},\bm{y}_{2i_1}),\dots,(\bm{y}_{1i_6},\bm{y}_{2i_6})\Big)=\frac{1}{n(n-1)(n-2)(n-5)}\sum_{[\ell,r,s,t]\in I^{n}_{4}} B_{1\ell rst}B_{2\ell rst},
\]
\textcolor{black}{which clearly has $O(n^4)$ complexity.} 
This completes the proof.
\end{proof}

\nocite{ghosal2019multivariate,MR3737306,MR4147635}

{\small 
\bibliographystyle{apalike}
\bibliography{AMS}
}

\end{document}